\newcommand{\assign}{:=}
\newcommand{\longhookrightarrow}{{\lhook\joinrel\relbar\joinrel\rightarrow}}
\newcommand{\nin}{\not\in}
\newcommand{\tmdummy}{$\mbox{}$}
\newcommand{\tmmathbf}[1]{\ensuremath{\boldsymbol{#1}}}
\newcommand{\tmop}[1]{\ensuremath{\operatorname{#1}}}
\newcommand{\tmrsub}[1]{\ensuremath{_{\textrm{#1}}}}
\newcommand{\tmstrong}[1]{\textbf{#1}}
\newcommand{\tmtextit}[1]{{\itshape{#1}}}
\newenvironment{enumeratealpha}{\begin{enumerate}[a{\textup{)}}] }{\end{enumerate}}
\newenvironment{enumeratenumeric}{\begin{enumerate}[1.] }{\end{enumerate}}
\newenvironment{enumerateroman}{\begin{enumerate}[i.] }{\end{enumerate}}
\newenvironment{enumerateromancap}{\begin{enumerate}[I.] }{\end{enumerate}}
\newenvironment{itemizedot}{\begin{itemize} }{\end{itemize}}
\newenvironment{proof}{\noindent\textbf{Proof\ }}{\hspace*{\fill}$\Box$\medskip}
\definecolor{grey}{rgb}{0.75,0.75,0.75}
\definecolor{orange}{rgb}{1.0,0.5,0.5}
\definecolor{brown}{rgb}{0.5,0.25,0.0}
\definecolor{pink}{rgb}{1.0,0.5,0.5}
\newtheorem{corollary}{Corollary}
\newtheorem{definition}{Definition}
{\theorembodyfont{\rmfamily\small}\newtheorem{exercise}{Exercise}}
\newtheorem{lemma}{Lemma}
{\theorembodyfont{\rmfamily}\newtheorem{note}{Note}}
\newtheorem{proposition}{Proposition}
{\theorembodyfont{\rmfamily}\newtheorem{remark}{Remark}}
\newtheorem{theorem}{Theorem}
\begin{document}

\title{Carlson's $<_1$-relation on the class of epsilon
numbers}\author{Parm{\'e}nides Garc{\'i}a Cornejo}\maketitle

\begin{abstract}
  This article is a continuation of the ideas in {\cite{GarciaCornejo0}}.
  Based on the class of epsilon numbers, another binary relational $<^1$ in
  the ordinals is introduced. We will see that we can easily describe all the
  isomorphisms that are witnesses of $<^1$. Afterwards we will show that the
  isomorphisms witnesses of $<^1$ are ``essentially the same'' than the
  isomorphisms of the $<_1$-relation for finite subsets of ordinals that are
  closed under ``the cover construction''. Finally, through our understanding
  of $<^1$ we will see how it is that $<_1$ induces thinner $\kappa$-club
  classes of ordinals: Thinner than the thinnest class of ordinals seen in
  {\cite{GarciaCornejo0}}; indeed, as seen in {\cite{Wilken1}}, the thinnest
  class of ordinals induced by $<_1$ that we will obtain in this article will
  have as it's first element the Bachmann's ordinal $| \tmop{ID}_1 |$. An
  independent proof of this last fact will be provided in a coming article.
\end{abstract}

\section{The ordinals $\alpha$ satisfying $\alpha <_1 \alpha + \xi$, for some
$\xi \in [1, \alpha]$.}

We will first show a theorem and a corollary appearing in {\cite{Wilken1}}.
The proof of the theorem we present here is slightly different than the one
given by Wilken. Primarily, let's state the next proposition.

\begin{proposition}
  \label{alpha<less>_1alpha+l+1_implies_cofinal_sequence}Let $\alpha \in
  \tmop{OR}$ and $t \in [\alpha, \alpha 2)$. Let $l \in [0, \alpha)$ be such
  that $t = \alpha + l$. Then \\
  $\alpha <_1 t + 1 = (\alpha + l) + 1 \Longleftrightarrow$ there exists a
  strictly increasing sequence $(\xi_i)_{i \in I} \subset \alpha \cap
  \mathbbm{P}$ such that $\xi_i \underset{\tmop{cof}}{\longhookrightarrow}
  \alpha$ and $\forall i \in I.l < \xi_i \leqslant_1 \xi_i + l$.
\end{proposition}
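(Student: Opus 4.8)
The plan is to prove the two implications separately, working throughout with the description of $\leqslant_1$ by its witnessing isomorphisms from \cite{GarciaCornejo0} (morally, the $\Sigma_1$-reflection that $\alpha <_1 \beta$ encodes) and using ordinal arithmetic only in the metatheory. The recurring arithmetic fact I will lean on is that, once $\alpha \in \mathbbm{P}$, any $\xi < \alpha$ with $l < \xi$ satisfies $\xi + l < \xi 2 \leqslant \alpha$; this mirrors the hypothesis $t = \alpha + l \in [\alpha, \alpha 2)$ at the level of $\xi$, and it is exactly what the side condition $l < \xi_i$ will guarantee.

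For ($\Rightarrow$), assume $\alpha <_1 (\alpha + l) + 1$. From \cite{GarciaCornejo0} I obtain $\alpha \in \mathbbm{P}$, and from the convexity (interval property) of $\{\beta : \alpha \leqslant_1 \beta\}$ I obtain $\alpha \leqslant_1 \alpha + l$ (trivially when $l = 0$, and $\alpha <_1 \alpha + l$ when $l \geqslant 1$). Now fix any $\beta_0 \in (l, \alpha)$ and apply a witnessing isomorphism $h$ for $\alpha <_1 (\alpha + l) + 1$ to a finite set containing $\beta_0$ and the block $\{\alpha, \alpha + 1, \ldots, \alpha + l\}$. Being the identity below $\alpha$ and order preserving, $h$ fixes $\beta_0$ and sends the consecutive successors $\alpha + j$ to consecutive successors $\xi + j$, where, writing $\xi = h(\alpha)$, we have $\xi < \alpha$ and $\beta_0 < \xi$; preserving $\leqslant_1$ it carries $\alpha \leqslant_1 \alpha + l$ to $\xi \leqslant_1 \xi + l$, whence also $\xi \in \mathbbm{P}$ (automatic once $l \geqslant 1$; the case $l = 0$ is addressed at the end). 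Letting $\beta_0$ run cofinally in $\alpha$ and thinning to a strictly increasing subsequence yields the required $(\xi_i)_{i \in I} \subset \alpha \cap \mathbbm{P}$ cofinal in $\alpha$ with $l < \xi_i \leqslant_1 \xi_i + l$.

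For ($\Leftarrow$), the sequence $(\xi_i)$ exhibits $\alpha = \sup_i \xi_i$ as a limit of elements of $\mathbbm{P}$, so $\alpha \in \mathbbm{P}$ and the arithmetic remark above applies. To produce a witnessing isomorphism for $\alpha <_1 (\alpha + l) + 1$ on a given finite $A \subseteq (\alpha + l) + 1$, I would choose $i$ with $\max(A \cap \alpha) < \xi_i$ and then define the map that fixes $A \cap \alpha$ pointwise and sends each $\alpha + j \in A$ (with $0 \leqslant j \leqslant l$) to $\xi_i + j < \alpha$. The verification that this is a $\leqslant_1$-isomorphism onto its image reduces, via the witnessing isomorphism supplied by $\xi_i \leqslant_1 \xi_i + l$, to checking that the finite configuration of $\leqslant_1$ between the top block $[\alpha, \alpha + l]$ and the parameters below $\xi_i$ is faithfully reproduced between $[\xi_i, \xi_i + l]$ and those same parameters; the side condition $l < \xi_i$ (so $\xi_i + l < \alpha$) is precisely what keeps the copied block below $\alpha$ and in the analogous position.

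I expect ($\Leftarrow$) to be the main obstacle: its content is the gluing step in which the local data $\xi_i \leqslant_1 \xi_i + l$, together with cofinality of $(\xi_i)$ in $\alpha$, are assembled into the single global relation $\alpha <_1 (\alpha + l) + 1$, and making the transport of the top block precise depends on the fine description of the witnessing isomorphisms. A minor point to isolate is the degenerate case $l = 0$, where $\xi_i \leqslant_1 \xi_i + l$ is vacuous and the statement merely records that $\alpha <_1 \alpha + 1$ holds exactly when $\alpha$ is a limit of additively indecomposable ordinals; there the membership $\xi_i \in \mathbbm{P}$ is not free and must be read off directly from the cofinal sequence rather than from a $\leqslant_1$-relation.
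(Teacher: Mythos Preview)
The paper does not give an argument here at all: its proof is the single line ``Direct from \cite{GarciaCornejo0} corollary~\ref{cor_<less>^0_equivalences}'', i.e.\ the equivalence is imported wholesale from the companion paper. Your proposal, by contrast, tries to rebuild the equivalence from the witnessing-isomorphism description of $<_1$. That is a perfectly reasonable strategy, but as written it has two real gaps.

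First, in the $(\Rightarrow)$ direction you apply a witness $h$ to ``the block $\{\alpha,\alpha+1,\ldots,\alpha+l\}$'' and speak of ``consecutive successors''. This is only sensible when $l<\omega$; for arbitrary $l\in[0,\alpha)$ that block is infinite and cannot sit inside the finite test set. The repair is to use the $+$-preservation of $h$: include $l$ (and your $\beta_0>l$) in the finite set, so that $h(l)=l$ and $h(\alpha+l)=h(\alpha)+h(l)=\xi+l$ follows directly, with $l<\beta_0=h(\beta_0)<h(\alpha)=\xi$. Nothing about consecutive successors is needed.

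Second, in the $(\Leftarrow)$ direction you choose $\xi_i$ merely above $\max(A\cap\alpha)$ and then candidly flag the verification as ``the main obstacle''. The missing idea is that order-largeness is not enough: to make the copy map preserve $<_1$ between parameters $a\in A\cap\alpha$ and the shifted top block, you must choose $\xi_i$ above $\max\{m(a):a\in A\cap\alpha,\ m(a)<\alpha\}$ as well (exactly the set $C$ used later in the proof of Theorem~\ref{Wilken_Theorem1}). With that choice, $a\not<_1\alpha$ forces $m(a)<\xi_i$, hence $a\not<_1\xi_i+j$; and $a<_1\alpha$ together with $\xi_i+j<\alpha$ gives $a<_1\xi_i+j$ by $\leqslant_1$-connectedness. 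The remaining cases (pairs inside the top block, and $\alpha$ versus $\alpha+j$) are handled by corollary~\ref{m(alpha)_for_alpha_not_epsilon_number}-type facts and by $\xi_i\leqslant_1\xi_i+l$. Without the $m(a)$ bound, your transported map can fail to be a $<_1$-isomorphism.
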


\begin{proof}
  Direct from {\cite{GarciaCornejo0}} corollary
  \ref{cor_<less>^0_equivalences}.
\end{proof}

\begin{remark}
  Let $\alpha \in \mathbbm{P}$, $t \in [\alpha, \alpha 2)$ and $l \in [0,
  \alpha)$ be such that $t = \alpha + l$. For an additive principal number
  $\beta \in \mathbbm{P}$ with $\beta > l$, let's denote $t / \alpha \assign
  \beta /$ to the ordinal $\beta + l$; that is, $t / \alpha \assign \beta /$
  is simply the replacement of $\alpha$ by $\beta$ in $t = \alpha + l$. With
  this convention we can enunciate previous proposition
  \ref{alpha<less>_1alpha+l+1_implies_cofinal_sequence} as:
  $\alpha <_1 t + 1 \Longleftrightarrow \alpha \in \tmop{Lim} \{\beta \in
  \mathbbm{P} | l < \beta \wedge \beta \leqslant_1 t / \alpha \assign \beta
  /\}$.
\end{remark}

Now we present the theorem of {\cite{Wilken1}} that we mentioned before.

\begin{theorem}
  (Wilken).\label{Wilken_Theorem1}
  
  {\noindent}$\forall \alpha \in \tmop{OR} \forall \xi \in [1, \alpha) .
  \alpha <_1 \alpha + \xi \Longleftrightarrow \alpha = \omega^A$ with $A =
  \omega^{\xi} \cdot s$ for some $s \in \tmop{OR}$, $s \neq 0$.
  
  {\noindent}Remark: In the previous line, we are NOT saying that the Cantor
  Normal Form of A is $\omega^{\xi} \cdot s$.
\end{theorem}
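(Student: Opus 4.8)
The plan is to fix $\alpha$ and prove the equivalence by transfinite induction on $\xi \in [1, \alpha)$, after first disposing of the requirement that $\alpha$ be additive principal. Both sides of the biconditional force $\alpha \in \mathbbm{P}$: the right-hand side gives $\alpha = \omega^A$ outright, while from $\alpha <_1 \alpha + \xi$ together with the downward persistence of $<_1$ in its upper argument (established in \cite{GarciaCornejo0}) we obtain $\alpha <_1 \alpha + 1$, whence Proposition \ref{alpha<less>_1alpha+l+1_implies_cofinal_sequence} with $l = 0$ exhibits additive principals cofinal in $\alpha$, so $\alpha$ is a limit of additive principals and hence itself equals some $\omega^A$. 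I may therefore assume $\alpha = \omega^A$ throughout and reduce the theorem to the statement $\alpha <_1 \alpha + \xi \Longleftrightarrow \omega^{\xi} \mid A$.

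For the successor step $\xi = \eta + 1$ (which subsumes $\xi = 1$ via $\eta = 0$), I would apply Proposition \ref{alpha<less>_1alpha+l+1_implies_cofinal_sequence} with $l = \eta$ and $t = \alpha + \eta$; since $\eta < \xi < \alpha$, this is legitimate. It converts $\alpha <_1 \alpha + \eta + 1$ into the existence of a strictly increasing $(\xi_i)_{i \in I} \subset \alpha \cap \mathbbm{P}$ with $\xi_i \underset{\tmop{cof}}{\longhookrightarrow} \alpha$ and $\eta < \xi_i \leqslant_1 \xi_i + \eta$ for all $i$. When $\eta = 0$ the clause $\xi_i \leqslant_1 \xi_i$ is vacuous, so the condition is just that $\alpha$ be a limit of additive principals, i.e. $A$ a limit, i.e. $\omega \mid A$. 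When $\eta \geqslant 1$, each $\xi_i$ satisfies $\eta \in [1, \xi_i)$, so the induction hypothesis applied at $\xi_i$ rewrites $\xi_i <_1 \xi_i + \eta$ as $\xi_i = \omega^{B_i}$ with $\omega^{\eta} \mid B_i$. There then remains the arithmetic equivalence that $\omega^A$ is a limit of additive principals whose exponents are divisible by $\omega^{\eta}$ (the side constraint $\eta < \xi_i$ being harmless, as $\alpha > \eta$ lets one drop initial terms without destroying cofinality) exactly when $\omega^{\eta + 1} \mid A$. I would verify this by dividing $A = \omega^{\eta} q + r$ with $r < \omega^{\eta}$: the multiples of $\omega^{\eta}$ below $A$ are cofinal in $A$ iff $r = 0$ and $q$ is a limit, i.e. iff $A = \omega^{\eta} \cdot \omega \cdot q' = \omega^{\eta + 1} q'$ with $q' \neq 0$; transporting along the continuous increasing map $\gamma \mapsto \omega^{\gamma}$ (which preserves sups of increasing sequences) yields the claim for $\alpha = \omega^A$.

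For the limit step the Proposition does not apply, since $\alpha + \xi$ is no longer a successor; instead I would combine downward persistence with the continuity of $<_1$ at limits of its upper argument (again from \cite{GarciaCornejo0}). Because $\alpha + \xi = \sup_{\xi' < \xi} (\alpha + \xi')$, these two properties give $\alpha <_1 \alpha + \xi$ iff $\alpha <_1 \alpha + \xi'$ for every $\xi' \in [1, \xi)$. By the induction hypothesis this amounts to $\omega^{\xi'} \mid A$ for all $\xi' < \xi$, which, read off from the least exponent in the Cantor normal form of $A$ and using $\omega^{\xi} = \sup_{\xi' < \xi} \omega^{\xi'}$, holds precisely when $\omega^{\xi} \mid A$. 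Combining the two steps closes the induction and gives $\alpha <_1 \alpha + \xi \Longleftrightarrow \omega^{\xi} \mid A \Longleftrightarrow A = \omega^{\xi} s$ with $s \neq 0$.

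The principal obstacle I anticipate is not the ordinal bookkeeping but securing, in exactly the form required, the two structural facts about $<_1$ that power the limit step: downward persistence in the upper argument and continuity at limits. These are the statements I would need to extract from \cite{GarciaCornejo0} or re-derive from the $\Sigma_1$-elementarity definition of $<_1$. A secondary point of care is the bidirectional use of the induction hypothesis inside the successor step, which must be invoked at the auxiliary ordinals $\xi_i < \alpha$ rather than at $\alpha$ itself; the induction is thus organized as strong induction on $\xi$ with the equivalence quantified over all base ordinals simultaneously.
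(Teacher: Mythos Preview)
Your proposal is correct and follows essentially the same route as the paper: both arguments hinge on Proposition~\ref{alpha<less>_1alpha+l+1_implies_cofinal_sequence} for the successor step and on $\leqslant_1$-connectedness together with $\leqslant_1$-continuity for the limit step, and the ordinal arithmetic you sketch (Euclidean division of $A$ by $\omega^{\eta}$, then reading off the least exponent in the Cantor normal form) is exactly what the paper does. Your single induction on $\xi$ with the statement quantified over all $\alpha$ is equivalent to the paper's double induction (main on $\alpha$, side on $\xi$); the paper's outer induction on $\alpha$ is invoked at precisely the point you flag, namely when the hypothesis is applied to the auxiliary ordinals $\xi_i < \alpha$. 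The one place your write-up is more economical is the backward implication of the successor case: the paper spends a page constructing an explicit $(<,<_1,+)$-isomorphism witnessing $\alpha <_1 \alpha + l + 1$, whereas you simply use the backward direction of Proposition~\ref{alpha<less>_1alpha+l+1_implies_cofinal_sequence} (legitimately, since it is stated as an equivalence), which shortens the argument without changing its substance.
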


\begin{proof}
  We prove the theorem by induction on $(\tmop{OR}, <)$.
  
  Let $\alpha \in \tmop{OR}$ and suppose the claim of the theorem holds for
  any $\beta < \alpha$. \ \ \ \ \ \ \ {\tmstrong{(IH)}}
  
  We continue the proof by a side induction on $[1, \alpha)$.
  
  Let $\xi \in [1, \alpha)$ and assume the claim holds for any $z \in [1,
  \alpha) \cap \xi$. \ \ \ \ \ \ \ {\tmstrong{(SIH)}}
  
  Case $\xi = 1$. Then the claim holds by {\cite{GarciaCornejo0}} proposition
  \ref{characterization_of_alpha<less>_1alpha+1}.
  
  Case $\xi \in (1, \alpha) \cap \tmop{Lim}$.
  
  $\Longrightarrow)$ Assume $\alpha <_1 \alpha + \xi$. Then $\alpha <_1 \alpha
  + y$ for any $y \in [1, \xi)$ by $\leqslant_1$-connectedness. Then, by our
  side induction hypothesis, for any $y \in [1, \xi)$, $\alpha = \omega^A$,
  where $A = \omega^y \cdot s_y$ for some $s_y \neq 0$. From this follows that
  $A = \sup \{\omega^y \cdot s_y |y \in [1, \xi)\} \geqslant \omega^{\xi}$.
  Now, by Euclid's division algorithm for ordinals, there exist $s, \rho \in
  \tmop{OR}$ with $\rho < \omega^{\xi}$ such that $A = \omega^{\xi} \cdot s +
  \rho$. But since $\rho < \omega^{\xi}$ and \\
  $\xi \in (1, \alpha) \cap \tmop{Lim}$, then there exists $\delta \in [1,
  \xi)$ such that $\omega^{\delta} > \rho$. All this means that $A
  \underset{\text{\tmop{by} \tmop{our} \tmop{SIH}}}{=} \omega^{\delta} \cdot
  s_{\delta} = \omega^{\xi} \cdot s + \rho$, which implies that
  $\omega^{\delta}$ divides $\rho < \omega^{\delta}$; therefore $\rho = 0$ and
  $A = \omega^{\xi} \cdot s$.
  
  $\Longleftarrow)$. Assume $\alpha = \omega^A$ with $A = \omega^{\xi} \cdot
  s$ for some $s \in \tmop{OR}$, $s \neq 0$. Then, for any $y \in [1, \xi)$,
  we can write $A = \omega^y \cdot s_y$ for some $s_y \neq 0$. Then by our SIH
  we get $\forall y \in [1, \xi) . \alpha <_1 \alpha + y$. But the sequence
  $(\alpha + y)_{y \in [1, \xi)}$ is cofinal in $\alpha + \xi$, so by
  $\leqslant_1$-continuity we conclude that $\alpha <_1 \alpha + \xi$.
  
  Case $\xi \in (1, \alpha)$, $\xi = l + 1$ for some $l \in [1, \alpha)$.
  
  $\Longrightarrow)$ Assume $\alpha <_1 \alpha + l + 1$. Then by \ proposition
  \ref{alpha<less>_1alpha+l+1_implies_cofinal_sequence}, there is a strictly
  increasing sequence $(\xi_i)_{i \in I} \subset \tmop{OR}$ such that $\xi_i
  \underset{\tmop{cof}}{\longhookrightarrow} \alpha$ and $\forall i \in I.l <
  \xi_i <_1 \xi_i + l$. This implies, by our IH, that \\
  $\forall i \in I. \xi_i = \omega^{\omega^l \cdot s_i}$ for some $s_i \neq
  0$. Moreover, note that since $(\xi_i)_{i \in I}$ is strictly increasing,
  then the sequence $(s_i)_{i \in I}$ has to be also strictly increasing, and
  therefore $\sigma \assign \sup \{s_i |i \in I\} \in \tmop{Lim}$.
  
  On the other hand, consider $\sigma =_{\tmop{CNF}} \omega^{S_1} a_1 + \ldots
  + \omega^{S_m} a_m$. Then $S_m \neq 0$ (because $\sigma \in \tmop{Lim}$) and
  so we can write $\sigma = \omega \cdot s$ for some $s \in \tmop{OR}$, $s
  \neq 0$. From this and the previous paragraph we get that $\alpha = \sup
  \{\xi_i |i \in I\} = \omega^{\sup \{\omega^l \cdot s_i |i \in I\}} =
  \omega^{\omega^l \cdot \sup \{s_i |i \in I\}} = \omega^{\omega^l \cdot
  \sigma} = \omega^{\omega^l \cdot \omega \cdot s} = \omega^{\omega^{l + 1}
  \cdot s}$ for some $s \neq 0$.
  
  $\Longleftarrow)$. Assume $\alpha = \omega^A$ with $A = \omega^{l + 1} \cdot
  s = \omega^l \cdot (\omega s)$ for some $s \neq 0$.
  
  Take an arbitrary finite $B \subset_{\tmop{fin}} \alpha + l + 1$. Then $B =
  \{a_1 < \ldots < a_n < \alpha + b_1 < \ldots < \alpha + b_p \}$ for some
  $a_i, b_j$. Without loss of generality (see {\cite{GarciaCornejo0}}
  proposition \ref{iso.restriction} in the appendices section) we can assume
  that $\{a_n = \alpha, b_p = l, \alpha + l, b_1, \ldots, b_p \} \subset B$.
  
  We want to define an ($<, <_1, +$)-isomorphism $h : B \longrightarrow h [B]
  \subset \alpha$ with $h|_{\alpha} = \tmop{Id}_{\alpha}$. In order to achieve
  this, we need to do first the following observation: Let $(s_j)_{i \in J}
  \subset [1, \omega s)$ be a sequence such that $s_j
  \underset{\tmop{cof}}{\longhookrightarrow} \omega s$ (this is possible
  because $\omega s \in \tmop{Lim}$). Consider the sequence $(\gamma_j)_{i \in
  J}$, where $\gamma_j \assign \omega^{\omega^l \cdot s_j} < \omega^{\omega^l
  \cdot (\omega s)} = \alpha$. Then $\gamma_j
  \underset{\tmop{cof}}{\longhookrightarrow} \alpha$ and by our (IH), $\forall
  j \in J. \gamma_j <_1 \gamma_j + l + 1$. This shows that $\alpha \in
  \tmop{Lim} \{\gamma \in \tmop{OR} | \gamma <_1 \gamma + l\}$.
  
  On the other hand, let $C \assign \{m (a) |a \in (B \cap \alpha) \wedge m
  (a) < \alpha\}$. Since $C$ is finite and we know that $\alpha \in \tmop{Lim}
  \{\gamma \in \tmop{OR} | \gamma <_1 \gamma + l\}$, then $\emptyset \neq
  (a_{n - 1}, \alpha) \cap (\max C, \alpha) \cap \{\gamma \in \tmop{OR} |
  \gamma <_1 \gamma + l\}$. Take $\rho \in (a_{n - 1}, \alpha) \cap (\max C,
  \alpha) \cap \{\gamma \in \tmop{OR} | \gamma <_1 \gamma + l\}$. Note that
  $\rho <_1 \rho + 1$ by $\leqslant_1$-connectedness and so $\rho \in
  \tmop{Lim} \mathbbm{P} \subset \mathbbm{P}$ by {\cite{GarciaCornejo0}}
  proposition \ref{characterization_of_alpha<less>_1alpha+1}.

  We define the function $h : B \longrightarrow h [B]$ as
  
  $h (a_k) \assign a_k$ for any $k \in [1, n - 1]$,
  
  $h (b_k) \assign b_k$ for any $k \in [1, p]$,
  
  $h (\alpha) \assign \rho$ and
  
  $h (\alpha + b_k) \assign \rho + b_k$ for any $k \in [1, p]$.

  By the definition of $h$, it is clear that $h|_{\alpha} =
  \tmop{Id}_{\alpha}$ and that $h : B \longrightarrow h [B]$ is bijective. Now
  we show it is an ($<, <_1, +$)-isomorphism: The proof that $h$ is an ($<,
  +$)-isomorphism is essentially the same done within the proof of
  {\cite{GarciaCornejo0}} proposition
  \ref{characterization_of_alpha<less>_1alpha+1} (one has to check just some
  few more subcases).
  
  So let's prove that $h$ is an $<_1$-isomorphism. We need to see several
  cases:
  
  1. Since $\alpha = \omega^A$ and $A = \omega^{l + 1} \cdot s = \omega^l
  \cdot (\omega s)$, then by our SIH $\alpha <_1 \alpha + l$. On the other
  hand, note that $h (\alpha) <_1 h (\alpha + l) = h (\alpha) + l$ indeed
  holds, because $\rho \in \{\gamma \in \tmop{OR} | \gamma <_1 \gamma + l\}$.
  
  2. By 1. and $\leqslant_1$-connectedness, we have that $\alpha <_1 \alpha +
  b_k$ for any $k \in [1, p]$. But $\rho <_1 \rho + l$ and $\forall k \in [1,
  p] . \rho < \rho + b_k \leqslant \rho + l$ imply, by
  $\leqslant_1$-connectedness, that $h (\alpha) <_1 h (\alpha + b_k) = h
  (\alpha) + b_k$ also holds for any $k \in [1, p]$.
  
  3. Let $\alpha + b_i, \alpha + b_j \in (B \backslash \{\alpha\})$ with
  $\alpha + b_i < \alpha + b_j$ be arbitrary. Then $\alpha + b_i \nless_1
  \alpha + b_j$ by {\cite{GarciaCornejo0}} corollary
  \ref{corollary1.02.11.2008}, and because of the same reason $h (\alpha +
  b_i) = h (\alpha) + b_i \nless_1 h (\alpha) + b_j = h (\alpha + b_j)$.
  
  4. Let $a_i, a_j \in B \cap \alpha$ with $a_i < a_j$ be arbitrary. Then
  clearly \\
  $a_i <_1 a_j \Longleftrightarrow h (a_i) = a_i <_1 a_j = h (a_j)$.
  
  5. Let $a_i \in B \cap \alpha$. If $a_i <_1 \alpha$, then $a_i <_1 h
  (\alpha)$ holds by $\leqslant_1$-connectedness (because $a_i < \rho <
  \alpha$). If $a_i \nless_1 \alpha$, this means $m (a_i) \in C$ and therefore
  $m (a_i) < \rho$, that is, $h (a_i) = a_i \nless_1 \rho = h (\alpha)$.
  
  6. Let $a_i \in B \cap \alpha$ and $\alpha + b_j \in (B \backslash
  \{\alpha\})$. If $a_i <_1 \alpha + b_j$, then, using \\
  $a_i < \rho + b_j \underset{\tmop{because} \alpha \in \mathbbm{P}}{<}
  \alpha < \alpha + b_j$, we conclude that $h (a_i) = a_i <_1 \rho + b_j = h
  (\alpha + b_j)$ by $\leqslant_1$-connectedness. If $a_i \nless_1 \alpha +
  b_j$, then $a_i \nless_1 \alpha$ (because, by 2., we already know that
  $\alpha <_1 \alpha + b_j$; so $a_j <_1 \alpha$ would imply $a_i <_1 \alpha +
  b_j$ by $\leqslant_1$-transitivity). This means $m (a_i) \in C$ and
  therefore $m (a_i) < \rho$. Hence $h (a_i) = a_i \nless_1 \rho + b_j = h
  (\alpha + b_j)$.

  1., 2., 3., 4., 5. and 6. show that $h$ is also an $<_1$-isomorphism.
\end{proof}

\begin{corollary}
  \label{Wilken_Corollary1}(Wilken). $\forall \alpha \in \tmop{OR} . \alpha
  <_1 \alpha 2 \Longleftrightarrow \alpha \in \mathbbm{E}$
\end{corollary}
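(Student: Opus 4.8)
The plan is to reduce the corollary to Wilken's Theorem~\ref{Wilken_Theorem1}, whose only gap is that it treats exponents $\xi \in [1,\alpha)$ strictly below $\alpha$, whereas $\alpha 2 = \alpha + \alpha$ corresponds to the boundary exponent $\xi = \alpha$. I would bridge this boundary in both directions using the $\leqslant_1$-connectedness and $\leqslant_1$-continuity properties already exploited in the proof of that theorem. A preliminary observation that I would establish first is that whenever $\alpha <_1 \alpha 2$ holds, $\alpha$ must be additive principal (hence a limit): indeed $\alpha <_1 \alpha + 1$ follows by $\leqslant_1$-connectedness, and then $\alpha \in \tmop{Lim}\mathbbm{P} \subseteq \mathbbm{P}$ by \cite{GarciaCornejo0} proposition~\ref{characterization_of_alpha<less>_1alpha+1}. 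This legitimizes the sup/cofinality manipulations below, since $[1,\alpha)$ is then cofinal in $\alpha$ and $\alpha + \alpha = \sup_{\xi \in [1,\alpha)} (\alpha + \xi)$.

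For the direction ($\Leftarrow$), suppose $\alpha \in \mathbbm{E}$, i.e.\ $\alpha = \omega^{\alpha}$. For each fixed $\xi \in [1,\alpha)$ I would write $A := \alpha$ in the form demanded by Theorem~\ref{Wilken_Theorem1}: since $\alpha$ is additive principal and $\xi < \alpha$ we have $\xi + \alpha = \alpha$, so $\omega^{\xi} \cdot \alpha = \omega^{\xi} \cdot \omega^{\alpha} = \omega^{\xi + \alpha} = \omega^{\alpha} = \alpha$, exhibiting $A = \omega^{\xi}\cdot s$ with $s = \alpha \neq 0$. Hence Theorem~\ref{Wilken_Theorem1} yields $\alpha <_1 \alpha + \xi$ for every $\xi \in [1,\alpha)$. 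Since the sequence $(\alpha + \xi)_{\xi \in [1,\alpha)}$ is cofinal in $\alpha 2$, $\leqslant_1$-continuity then upgrades this to $\alpha \leqslant_1 \alpha 2$, and as $\alpha < \alpha 2$ we conclude $\alpha <_1 \alpha 2$.

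For the direction ($\Rightarrow$), suppose $\alpha <_1 \alpha 2$. By $\leqslant_1$-connectedness $\alpha <_1 \alpha + \xi$ for every $\xi \in [1,\alpha)$ (as $\alpha + \xi < \alpha 2$), so Theorem~\ref{Wilken_Theorem1} gives a single $A$ with $\alpha = \omega^{A}$ and, for each $\xi \in [1,\alpha)$, a witness $s_{\xi} \neq 0$ with $A = \omega^{\xi}\cdot s_{\xi} \geqslant \omega^{\xi}$. Taking the supremum over $\xi < \alpha$ and using that $\alpha$ is a limit gives $A \geqslant \sup_{\xi < \alpha}\omega^{\xi} = \omega^{\alpha}$. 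On the other hand $\alpha = \omega^{A}$ forces $A \leqslant \alpha$, since $\beta \mapsto \omega^{\beta}$ is inflationary. Combining with the standard inequality $\alpha \leqslant \omega^{\alpha}$ we obtain $\omega^{\alpha} \leqslant A \leqslant \alpha \leqslant \omega^{\alpha}$, whence $\alpha = \omega^{\alpha}$, i.e.\ $\alpha \in \mathbbm{E}$.

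The only real obstacle is the passage across the boundary exponent $\xi = \alpha$, which Theorem~\ref{Wilken_Theorem1} does not cover directly; everything else is ordinal arithmetic. In the ($\Leftarrow$) direction this is handled by the continuity step, and in the ($\Rightarrow$) direction by squeezing $A$ between $\omega^{\alpha}$ and $\alpha$. I expect the most delicate point to be ensuring $\alpha$ is a limit so that the cofinality of $[1,\alpha)$ in $\alpha$ (and hence of $(\alpha+\xi)_{\xi}$ in $\alpha 2$) and the continuity of $\beta \mapsto \omega^{\beta}$ at $\alpha$ are both available; this is exactly what the preliminary reduction to $\alpha \in \mathbbm{P}$ secures.
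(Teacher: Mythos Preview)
Your proof is correct and is precisely the natural argument the paper is gesturing at when it writes ``Not hard now. Left to the reader.'' Both directions reduce to Theorem~\ref{Wilken_Theorem1} applied for every $\xi\in[1,\alpha)$, with the boundary $\xi=\alpha$ handled by $\leqslant_1$-continuity in one direction and by the squeeze $\omega^{\alpha}\leqslant A\leqslant\alpha\leqslant\omega^{\alpha}$ in the other; your preliminary reduction to $\alpha\in\tmop{Lim}\mathbbm{P}$ via $\alpha<_1\alpha+1$ is exactly the right way to make the cofinality and continuity steps legitimate.
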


\begin{proof}
  Not hard now. Left to the reader.
\end{proof}

\begin{corollary}
  \label{m(alpha)_for_alpha_not_epsilon_number}Let $\alpha \in \tmop{OR}$,
  $\alpha \nin \mathbbm{E}$ be such that $\alpha =_{\tmop{CNF}} \omega^{A_1}
  a_1 + \ldots + \omega^{A_n} a_n$.
  \begin{enumeratealpha}
    \item If $n \geqslant 2$ or $a_1 \geqslant 2$, then $m (\alpha) = \alpha$.
    
    \item If $n = 1 = a_1$ and $A_1 =_{\tmop{CNF}} \omega^{B_1} b_1 + \ldots +
    \omega^{B_s} b_s$, then $m (\alpha) = \alpha + B_s$. ($B_s$ could be
    zero).
  \end{enumeratealpha}
\end{corollary}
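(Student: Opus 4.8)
The plan is to reduce everything to Wilken's Theorem~\ref{Wilken_Theorem1} together with the characterization of $\alpha <_1 \alpha + 1$ from \cite{GarciaCornejo0}, recalling that $m(\alpha)$ denotes the largest $\beta$ with $\alpha \leqslant_1 \beta$ (the $\leqslant_1$-reach of $\alpha$; this is exactly the reading under which the set $C$ in the proof of Theorem~\ref{Wilken_Theorem1} satisfies $a \nless_1 \alpha \Longleftrightarrow m(a) < \alpha$). Since $\alpha \nin \mathbbm{E}$, Corollary~\ref{Wilken_Corollary1} gives $\alpha \nless_1 \alpha 2$, hence by $\leqslant_1$-connectedness $\alpha \nless_1 \beta$ for every $\beta \geqslant \alpha 2$, so $m(\alpha) < \alpha 2$ and I only ever have to locate $m(\alpha)$ inside $[\alpha, \alpha 2)$. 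The two items correspond precisely to the dichotomy $\alpha \nin \mathbbm{P}$ (item a) versus $\alpha \in \mathbbm{P}$ (item b): indeed $\alpha =_{\tmop{CNF}} \omega^{A_1} a_1 + \ldots + \omega^{A_n} a_n \in \mathbbm{P}$ iff $n = 1 = a_1$.

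For item (a), the hypothesis $n \geqslant 2$ or $a_1 \geqslant 2$ is exactly the statement $\alpha \nin \mathbbm{P}$. Since $\tmop{Lim}\, \mathbbm{P} \subseteq \mathbbm{P}$, this forces $\alpha \nin \tmop{Lim}\, \mathbbm{P}$, and so the characterization of $\alpha <_1 \alpha + 1$ from \cite{GarciaCornejo0} proposition~\ref{characterization_of_alpha<less>_1alpha+1} yields $\alpha \nless_1 \alpha + 1$. By $\leqslant_1$-connectedness no $\beta > \alpha$ can satisfy $\alpha <_1 \beta$ (such a $\beta \geqslant \alpha + 1$ would give $\alpha <_1 \alpha + 1$), so the only $\beta$ with $\alpha \leqslant_1 \beta$ is $\beta = \alpha$, i.e. $m(\alpha) = \alpha$.

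For item (b) I have $\alpha = \omega^{A_1}$ with $A_1 =_{\tmop{CNF}} \omega^{B_1} b_1 + \ldots + \omega^{B_s} b_s$, and $A_1 < \alpha$ because $\alpha \nin \mathbbm{E}$. The key arithmetical observation is that $B_s$ is the largest exponent $\xi$ for which $A_1$ can be written as $\omega^{\xi} \cdot s'$ with $s' \neq 0$: factoring out $\omega^{B_s}$ yields $A_1 = \omega^{B_s} \cdot q$ with a nonzero quotient $q$ whose Cantor normal form has least exponent $0$ (its last summand is the finite number $b_s \geqslant 1$), hence $q$ is not divisible by $\omega$, so $\omega^{B_s + 1} \nmid A_1$, while plainly $\omega^{\xi} \mid A_1$ for all $\xi \leqslant B_s$. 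Feeding this into Wilken's Theorem~\ref{Wilken_Theorem1} gives, for every $\xi \in [1, \alpha)$, that $\alpha <_1 \alpha + \xi \Longleftrightarrow \omega^{\xi} \mid A_1 \Longleftrightarrow \xi \leqslant B_s$. If $B_s \geqslant 1$ this means $\alpha <_1 \alpha + B_s$ while $\alpha \nless_1 \alpha + (B_s + 1)$ (note $B_s + 1 < \alpha$ since $\alpha$ is a limit above $B_s$), and $\leqslant_1$-connectedness then pins down the reach at $m(\alpha) = \alpha + B_s$; if $B_s = 0$ then no $\xi \geqslant 1$ works, so $\alpha \nless_1 \alpha + 1$ and $m(\alpha) = \alpha = \alpha + B_s$. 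In both subcases $m(\alpha) = \alpha + B_s$.

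The routine but careful part of the argument is the ordinal-arithmetic lemma identifying the maximal $\xi$ with $\omega^{\xi} \mid A_1$ as the least CNF-exponent $B_s$, and matching it with Wilken's divisibility form $A = \omega^{\xi} \cdot s$; everything else is a direct reading off of Wilken's Theorem and $\leqslant_1$-connectedness, so I do not expect a genuine obstacle beyond keeping the boundary cases straight ($B_s = 0$, and the verification $B_s + 1 < \alpha$ needed to invoke Wilken's Theorem at $\xi = B_s + 1$).
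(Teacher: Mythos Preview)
Your proof is correct and follows essentially the same route as the paper's: both parts reduce to the characterization $\alpha <_1 \alpha + 1 \Longleftrightarrow \alpha \in \tmop{Lim}\,\mathbbm{P}$ for part (a), and to Wilken's Theorem~\ref{Wilken_Theorem1} together with the divisibility analysis of $A_1$ (writing $A_1 = \omega^{B_s}\cdot q$ with $\omega \nmid q$, hence $\omega^{B_s+1}\nmid A_1$) for part (b), splitting off the subcase $B_s = 0$ exactly as the paper does. Your added preliminary remark that $\alpha \nin \mathbbm{E}$ forces $m(\alpha) < \alpha 2$ via Corollary~\ref{Wilken_Corollary1} is a helpful framing the paper leaves implicit, but otherwise the arguments coincide.
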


\begin{proof}
  $a)$. Suppose $n \geqslant 2$ or $a_1 \geqslant 2$. Then, by
  {\cite{GarciaCornejo0}} proposition
  \ref{characterization_of_alpha<less>_1alpha+1}, $\alpha \nless_1 \alpha +
  1$; then $m (\alpha) = \alpha$.
  
  {\noindent}$b)$. Suppose $n = 1 = a_1$ and $A_1 =_{\tmop{CNF}} \omega^{B_1}
  b_1 + \ldots + \omega^{B_s} b_s$.
  
  Case $B_s = 0$. By {\cite{GarciaCornejo0}} proposition
  \ref{characterization_of_alpha<less>_1alpha+1}, $\alpha \nless_1 \alpha +
  1$; then $m (\alpha) = \alpha = \alpha + 0 = \alpha + B_s$.
  
  Case $B_s \neq 0$. Since by hypothesis $\alpha \nin \mathbbm{E}$ and $\alpha
  = \omega^{\omega^{B_1} b_1 + \ldots + \omega^{B_s} b_s} \in \mathbbm{P}$,
  then $B_s < \alpha > 1$ and therefore $B_s + 1 < \alpha > B_s$. \ \ \ \ \ \
  \ (*)
  
  On the other hand, let $\delta_i$ be such that $B_s + \delta_i = B_i$ for
  any $i \in [1, s]$. Then \\
  $A_1 =_{\tmop{CNF}} \omega^{B_s + \delta_1} b_1 + \ldots + \omega^{B_s +
  \delta_{s - 1}} b_{s - 1} + \omega^{B_s} b_s = \omega^{B_s} \cdot
  (\omega^{\delta_1} b_1 + \ldots + \omega^{\delta_{s - 1}} b_{s - 1} + b_s)$;
  moreover, $A_1 \neq \omega^{B_s + 1} \cdot D$ for any $D \in \tmop{OR}$
  (because of the uniqueness of the Cantor Normal Form: if \\
  $A_1 = \omega^{B_s + 1} \cdot D$ for some $D \in \tmop{OR}$, then for $D
  =_{\tmop{CNF}} \omega^{D_1} d_1 + \ldots + \omega^{D_k} d_k$ one gets $A_1
  =_{\tmop{CNF}} \omega^{B_s + 1 + D_1} d_1 + \ldots + \omega^{B_s + 1 + D_k}
  d_k$, which is different than \\
  $\omega^{B_1} b_1 + \ldots + \omega^{B_{s - 1}} b_{s - 1} + \omega^{B_s}
  b_s$ because $B_s + 1 + D_k > B_s$). The previous and (*) imply, by theorem
  \ref{Wilken_Theorem1}, that $\alpha <_1 \alpha + B_s$ and $\alpha \nless_1
  \alpha + B_s + 1$. Hence $m (\alpha) = \alpha + B_s$.
\end{proof}

\section{$<_1$ in the intervals $[\varepsilon_{\gamma}, \varepsilon_{\gamma +
1})$}

Our interest now is to describe the solutions of $x <_1 \beta$ for $\beta > x
2$. The first thing to note is that, in case we are able to find some ordinal
$x$ such that $x <_1 \beta$ for some $\beta > x 2$, then by
$\leqslant_1$-connectedness $x <_1 x 2$, and therefore, by corollary
\ref{Wilken_Corollary1}, $x \in \mathbbm{E}$. This shows that the solutions of
the inequalities we are now interested, in case they exist, have to be epsilon
numbers. Because of this, {\tmstrong{we define}} $\tmmathbf{\tmop{Class} (1)
\assign \mathbbm{E}}$ and we aim at the description of epsilon numbers $x$
such that they satisfy something of the form $x <_1 \beta$, with $\beta \in
[x, x^+)$ and $x^+ \assign \min \{e \in \mathbbm{E} | e > x\}$; since we
restrict $\beta \in [x, x^+)$, we will informally say that we are studying the
relation $<_1$ in the intervals $[\varepsilon_{\gamma}, \varepsilon_{\gamma +
1})$.

\subsection{Substitutions}

In our previous work, whenever we asserted that for certain ordinals $\alpha,
\xi$ with $\xi \in \alpha$, it holds $\alpha <_1 \alpha + \xi$, we provided,
for every $B \subset_{\tmop{fin}} \alpha + \xi$, an ($<, <_1, +$)-isomorphism
$h : B \longrightarrow h [B] \subset \alpha$ such that $h|_{\alpha} =
\tmop{Id}_{\alpha}$. The important aspect we want to stress is that the
isomorphism we constructed had the following peculiarity: we looked for an
``adequate'' $\rho \in \alpha$, we defined $h (\alpha) \assign \rho$ (we can
always consider that $\alpha \in B$ by {\cite{GarciaCornejo0}} proposition
\ref{iso.restriction} in the appendices section), $h (a) \assign a$ for any $a
\in B \cap \alpha$, and for any $\alpha + l \in B$, we defined $h (\alpha + l)
\assign \rho + l$. So $h$ just ``substituted $\alpha$ by $\rho$ and leave the
rest as it was''. This suggests to study these kind of substitutions as
witnesses of the $<_1$-relation; in particular, this will play an essential
role for our study of $<_1$ in the intervals $[\varepsilon_{\gamma},
\varepsilon_{\gamma + 1})$.

\begin{definition}
  For $x \in \tmop{OR}$, let $\tmop{Ep} (x)$ be the (finite) set of epsilon
  numbers appearing in the Cantor Normal Form of $x$, that is,
  
  {\noindent}$\tmop{Ep} (x) \assign \left\{ \begin{array}{l}
    \{x\} \text{\tmop{if}} x \in \mathbbm{E}\\
    \tmop{Ep} (L_1) \cup \ldots \cup \tmop{Ep} (L_n)  \text{\tmop{if}} x \nin
    \mathbbm{E} \wedge x =_{\tmop{CNF}} L_1 l_1 + \ldots + L_n l_n \wedge (n
    \geqslant 2 \vee l_1 \geqslant 2)\\
    \tmop{Ep} (L) \text{\tmop{if}} x \nin \mathbbm{E} \wedge x =_{\tmop{CNF}}
    \omega^L
  \end{array} \right.$
\end{definition}

\begin{definition}
  Let $\alpha, e \in \mathbbm{E}$ and $x \in \tmop{OR}$. We define the
  substitution of $\alpha$ by $e$ in the Cantor Normal Form of $x$ (and we
  denote it as $x [\alpha \assign e]$) as:

  {\noindent}$x [\alpha \assign e] \assign \left\{ \begin{array}{l}
    x \text{\tmop{if}} x \in \mathbbm{E} \wedge x \neq \alpha\\
    e \text{\tmop{if}} x = \alpha\\
    L_1 [\alpha \assign e] l_1 + \ldots + L_n [\alpha \assign e] l_n
    \text{\tmop{if}} x \nin \mathbbm{E} \wedge x =_{\tmop{CNF}} L_1 l_1 +
    \ldots + L_n l_n \wedge (n \geqslant 2 \vee l_1 \geqslant 2)\\
    \omega^{L [\alpha \assign e]} \text{\tmop{if}} x \nin \mathbbm{E} \wedge x
    =_{\tmop{CNF}} \omega^L
  \end{array} \right.$

\end{definition}

As the reader can see, the substitution $x [\alpha \assign e]$ makes sense for
any $\alpha, e \in \mathbbm{E}$ and $x \in \tmop{OR}$. We will require later
the conditions $x \in \alpha^+$ and $\tmop{Ep} (x) \cap \alpha \subset e$ in
order to guarantee that $x [\alpha \assign e]$ is a Cantor Normal Form
already: the one obtained by simply exchanging in the Cantor Normal Form of
$x$ the epsilon number $\alpha$ by the epsilon number $e$.

\begin{proposition}
  \label{[alpha:=e]_proposition1}Let $\alpha, e \in \mathbbm{E}$.
  \begin{enumeratealpha}
    \item $\tmop{Ep} (x)$ is finite for any $x \in \tmop{OR}$.
    
    \item $0 < x [\alpha \assign e]$ for any $x \in \tmop{OR} \backslash
    \{0\}$.
    
    \item $x [\alpha \assign e] = x$ for any $x \in \alpha$.
  \end{enumeratealpha}
\end{proposition}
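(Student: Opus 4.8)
The plan is to prove all three parts by transfinite induction on $x \in \tmop{OR}$, exploiting that both $\tmop{Ep}$ and the substitution $x [\alpha \assign e]$ are defined by recursion along the Cantor Normal Form. The key preliminary observation—which is what makes such an induction legitimate—is that in each of the two non-epsilon clauses of the definitions the recursive calls are made on ordinals strictly below $x$. Indeed, if $x \nin \mathbbm{E}$ and $x =_{\tmop{CNF}} L_1 l_1 + \ldots + L_n l_n$ with $n \geqslant 2$ or $l_1 \geqslant 2$, then $x > L_1 \geqslant L_i$ for every $i$, so each component $L_i$ on which we recurse satisfies $L_i < x$; and if $x \nin \mathbbm{E}$ with $x =_{\tmop{CNF}} \omega^L$, then $L < \omega^L = x$, the inequality being strict precisely because $x \nin \mathbbm{E}$. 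Thus the recursion descends strictly and the induction is well-founded. I would also fix at the outset the boundary convention $\tmop{Ep} (0) = \emptyset$ and $0 [\alpha \assign e] = 0$ (the empty Cantor Normal Form), so that $x = 0$ is covered.

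For part (a) the induction is immediate. If $x \in \mathbbm{E}$ then $\tmop{Ep} (x) = \{x\}$ is finite, and $\tmop{Ep} (0) = \emptyset$ is finite; in the first non-epsilon clause $\tmop{Ep} (x) = \tmop{Ep} (L_1) \cup \ldots \cup \tmop{Ep} (L_n)$ is a finite union of sets each finite by the induction hypothesis (as $L_i < x$), hence finite; in the second clause $\tmop{Ep} (x) = \tmop{Ep} (L)$ is finite by the induction hypothesis. For part (b) I would again argue by induction on $x \neq 0$. The two epsilon clauses give $x [\alpha \assign e] \in \{x, e\} \subset \mathbbm{E}$, so the value is a nonzero epsilon number and in particular positive. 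The clause $x =_{\tmop{CNF}} \omega^L$ gives $x [\alpha \assign e] = \omega^{L [\alpha \assign e]} \geqslant \omega^0 = 1 > 0$ with no appeal to the hypothesis. The only clause that really uses the induction hypothesis is the sum clause: there $x [\alpha \assign e] = L_1 [\alpha \assign e] l_1 + \ldots + L_n [\alpha \assign e] l_n$, and since $L_1 \neq 0$ and $L_1 < x$ we get $L_1 [\alpha \assign e] > 0$ by the hypothesis, whence $L_1 [\alpha \assign e] l_1 > 0$ (as $l_1 \geqslant 1$) and the whole ordinal is positive.

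Part (c) is the step needing slightly more care, so I would treat it last, proceeding by induction on $x$ under the standing assumption $x < \alpha$. The point is that the induction hypothesis must be applied to ordinals that are simultaneously $< x$ (for well-foundedness) and $< \alpha$ (to match the statement). If $x \in \mathbbm{E}$ then $x < \alpha$ forces $x \neq \alpha$, so $x [\alpha \assign e] = x$; the case $x = 0$ is the convention. In the sum clause every component satisfies $L_i \leqslant L_1 < x < \alpha$, so each $L_i$ is both $< x$ and $< \alpha$, the hypothesis yields $L_i [\alpha \assign e] = L_i$, and the Cantor Normal Form is reconstructed unchanged, giving $x [\alpha \assign e] = x$. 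Likewise, in the clause $x =_{\tmop{CNF}} \omega^L$ we have $L < x < \alpha$, so $L [\alpha \assign e] = L$ and $x [\alpha \assign e] = \omega^L = x$.

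I do not expect a genuine difficulty here: the content of the proposition is that the two recursions are well defined and behave as expected. Accordingly, the main (modest) obstacle is purely the bookkeeping just isolated—verifying once and for all that the defining recursion strictly decreases $x$, and, in part (c), carrying the two bounds $L_i < x$ and $L_i < \alpha$ together through the inductive step.
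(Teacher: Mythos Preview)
Your proposal is correct and is exactly the natural argument by induction on the Cantor Normal Form; the paper's own proof consists of the single word ``Easy.'' Your explicit check that each recursive call lands on a strictly smaller ordinal, together with the boundary convention for $x=0$, is precisely the bookkeeping the paper elides.
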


\begin{proof}
  Easy.
\end{proof}

\begin{proposition}
  \label{[alpha:=e]_proposition2}Let $\alpha, e \in \mathbbm{E}$ and $q, s \in
  \alpha^+$. Suppose that $\tmop{Ep} (q) \cap \alpha \subset e \supset
  \tmop{Ep} (s) \cap \alpha$. Then \\
  $q < s \Longleftrightarrow q [\alpha \assign e] < s [\alpha \assign e]$.
\end{proposition}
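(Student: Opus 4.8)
The plan is to first reduce the biconditional to the single implication $q < s \Rightarrow q [\alpha \assign e] < s [\alpha \assign e]$. Since $x \mapsto x [\alpha \assign e]$ is well-defined by recursion on the Cantor Normal Form, $q = s$ forces $q [\alpha \assign e] = s [\alpha \assign e]$; hence, once the forward implication is known, trichotomy on $q, s$ together with the same implication applied to $s < q$ yields the converse. So all the work lies in the forward implication, which I would prove by transfinite induction on $\max \{q, s\}$.

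Before the main induction I would isolate two auxiliary facts. First, a closure lemma: if $\tmop{Ep} (x) \subset e$ then $x < e$; this follows by induction on the CNF of $x$, using that $e \in \mathbbm{E}$ is closed under $+$, $\cdot$ and $\xi \mapsto \omega^{\xi}$, so that an ordinal all of whose epsilon constituents lie below $e$ cannot reach $e$. Second, a threshold lemma: for $x \in \alpha^+$ with $\tmop{Ep} (x) \cap \alpha \subset e$ one has $x < \alpha \Leftrightarrow x [\alpha \assign e] < e$. The direction $x < \alpha \Rightarrow x [\alpha \assign e] = x < e$ combines Proposition \ref{[alpha:=e]_proposition1}(c) with the closure lemma; the direction $x \geqslant \alpha \Rightarrow x [\alpha \assign e] \geqslant e$ I would prove by induction, noting that for $x \geqslant \alpha$, $x \nin \mathbbm{E}$ the leading exponent $Q_1$ of $x$ satisfies $\alpha \leqslant Q_1 < x$, so $Q_1 [\alpha \assign e] \geqslant e$ by the inductive hypothesis and hence $x [\alpha \assign e] \geqslant \omega^{Q_1 [\alpha \assign e]} \geqslant \omega^e = e$. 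Here I use repeatedly that, since $\alpha^+$ is the least epsilon number above $\alpha$, no epsilon number lies strictly between $\alpha$ and $\alpha^+$; in particular any $x \in \mathbbm{E} \cap [\alpha, \alpha^+)$ equals $\alpha$.

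With these in hand, the forward implication splits according to the position of $q, s$ relative to $\alpha$. If $s < \alpha$ then $q < s < \alpha$ and both sides are fixed by Proposition \ref{[alpha:=e]_proposition1}(c). If $q < \alpha \leqslant s$, the threshold lemma gives $q [\alpha \assign e] = q < e \leqslant s [\alpha \assign e]$. The substantive case is $\alpha \leqslant q < s$. Here I would first record that the substitution carries each CNF to a CNF order-preservingly on exponents: writing $q =_{\tmop{CNF}} \omega^{P_1} c_1 + \cdots$ and $s =_{\tmop{CNF}} \omega^{Q_1} d_1 + \cdots$, all exponents and all proper tails are ordinals $< \max \{q, s\}$ whose $\tmop{Ep} \cap \alpha$ is contained in $\tmop{Ep} (q) \cap \alpha$ or $\tmop{Ep} (s) \cap \alpha$, hence $\subset e$; so the inductive hypothesis applies to them and shows $P_1 [\alpha \assign e] > P_2 [\alpha \assign e] > \cdots$ and likewise for $s$, which is exactly the assertion (anticipated after the definition) that $\tmop{Ep} (x) \cap \alpha \subset e$ makes $x [\alpha \assign e]$ a genuine Cantor Normal Form. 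The comparison $q < s$ is then decided by the lexicographic comparison of these CNFs --- leading exponent, then leading coefficient, then tail --- and each decision is transported across the substitution by the inductive hypothesis (for the exponents $P_1$ versus $Q_1$, whose maximum is $< s$) and by Proposition \ref{[alpha:=e]_proposition1}(b) for positivity of tails. The only genuinely separate subcase is $q = \alpha$, where $q [\alpha \assign e] = e$ and I must check $e < s [\alpha \assign e]$ for $\alpha < s < \alpha^+$; this I would settle by comparing $\alpha$ with the leading exponent $Q_1 \geqslant \alpha$ of $s$ (applying the inductive hypothesis to $\alpha < Q_1 < s$ when $Q_1 > \alpha$, and otherwise using that $s > \alpha = \omega^{\alpha}$ forces either $d_1 \geqslant 2$ or a nonzero tail).

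I expect the main obstacle to be the bookkeeping in the case $\alpha \leqslant q < s$: one must simultaneously verify that every recursive call is made on an ordinal strictly smaller than $\max \{q, s\}$ (so that the induction is well-founded), that the $\tmop{Ep} \cap \alpha \subset e$ hypothesis is inherited by all exponents and tails, and that the images remain valid Cantor Normal Forms. The threshold lemma and the closure lemma are precisely the devices that handle the places where $q$ or $s$ ``crosses'' the value $\alpha$, which is where the hypothesis $\tmop{Ep} \cap \alpha \subset e$ is indispensable --- without it a sub-$\alpha$ epsilon constituent could be sent above $e$ and invert the order.
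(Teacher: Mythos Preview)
Your argument is correct; the paper's own proof is simply ``Not hard,'' so there is nothing to compare. The induction on $\max\{q,s\}$ together with the threshold lemma is exactly the natural route, and your bookkeeping (that exponents and tails lie strictly below $\max\{q,s\}$ and inherit the $\tmop{Ep}\cap\alpha\subset e$ hypothesis) is sound.
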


\begin{proof}
  Not hard.
\end{proof}

\begin{proposition}
  \label{[alpha:=e]_proposition3}Let $\alpha, e \in \mathbbm{E}$ and $s \in
  \alpha^+$.
  \begin{enumeratenumeric}
    \item If $\tmop{Ep} (s) \cap \alpha \subset e$ then $s [\alpha \assign e]
    \in e^+$, $\tmop{Ep} (s [\alpha \assign e]) \cap e = \tmop{Ep} (s) \cap
    \alpha$ and the ordinal $s [\alpha \assign e]$ is already in Cantor Normal
    Form.
    
    \item $\tmop{Ep} (s) \cap \alpha \subset e \Longleftrightarrow \tmop{Ep}
    (\omega^s) \cap \alpha \subset e$.
    
    \item If $\tmop{Ep} (s) \cap \alpha \subset e$ then $\omega^s [\alpha
    \assign e] = \omega^{s [\alpha \assign e]}$.
    
    \item If $s =_{\tmop{CNF}} A_1 a_1 + \ldots + A_m a_m$ then $\tmop{Ep} (s)
    \cap \alpha \subset e \Longleftrightarrow ( \bigcup_{1 \leqslant i
    \leqslant m} \tmop{Ep} (A_i) \cap \alpha) \subset e$.
  \end{enumeratenumeric}
\end{proposition}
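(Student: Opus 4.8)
The plan is to settle parts 2, 3 and 4 by directly unwinding the definitions of $\tmop{Ep}$ and of the substitution, and then to prove the substantive part 1 by induction on the ordinal $s$ following the three branches of its Cantor Normal Form. For part 4, I would first record that in every case the defining clauses give $\tmop{Ep} (s) = \bigcup_{1 \leqslant i \leqslant m} \tmop{Ep} (A_i)$: when $s$ is additively principal ($m = 1$, $a_1 = 1$) this reads $\tmop{Ep} (s) = \tmop{Ep} (A_1)$ with $A_1 = s$, and when $s$ is not additively principal it is the union clause verbatim. Intersecting with $\alpha$ distributes over the union, so the two sides of the asserted equivalence are literally the same statement. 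For part 2, the key remark is that $\tmop{Ep} (\omega^s) = \tmop{Ep} (s)$ always: if $s \in \mathbbm{E}$ then $\omega^s = s$ and there is nothing to prove, while if $s \nin \mathbbm{E}$ then $\omega^s \nin \mathbbm{E}$ is additively principal with $\omega^s =_{\tmop{CNF}} \omega^s$, so the third clause of the definition of $\tmop{Ep}$ gives $\tmop{Ep} (\omega^s) = \tmop{Ep} (s)$; the equivalence then holds because the intersected sets coincide.

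For part 3 I would argue by cases according to whether $s \in \mathbbm{E}$ and whether $s = \alpha$, using that $\omega^s \in \mathbbm{E} \Leftrightarrow s \in \mathbbm{E}$. If $s \nin \mathbbm{E}$, the identity $\omega^s [\alpha \assign e] = \omega^{s [\alpha \assign e]}$ is exactly the fourth clause of the substitution read with exponent $L = s$. If $s = \alpha$, both sides equal $e$, since the left side is $e$ by the second clause and the right side is $\omega^{\alpha [\alpha \assign e]} = \omega^e = e$. If $s \in \mathbbm{E} \backslash \{ \alpha \}$, both sides equal $s$. (The hypothesis $\tmop{Ep} (s) \cap \alpha \subset e$ is in fact not needed here, but carrying it costs nothing.)

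The heart of the proposition is part 1, which I would prove by strong induction on $s \in \alpha^+$, with $\tmop{Ep} (s) \cap \alpha \subset e$ as a running hypothesis, establishing a), b) and c) simultaneously. The base case $s \in \mathbbm{E}$ splits into $s = \alpha$, where $s [\alpha \assign e] = e$ and $\tmop{Ep} (s) \cap \alpha = \emptyset = \tmop{Ep} (e) \cap e$, and $s \in \mathbbm{E} \backslash \{ \alpha \}$, where $s < \alpha^+$ together with $s \in \mathbbm{E}$ forces $s < \alpha$, hence $s \in \tmop{Ep} (s) \cap \alpha \subset e$, so $s < e < e^+$ and $s [\alpha \assign e] = s$. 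For $s \nin \mathbbm{E}$ additively principal, $s =_{\tmop{CNF}} \omega^L$, I would apply the induction hypothesis to $L < s$ (which lies in $\alpha^+$ and satisfies $\tmop{Ep} (L) \cap \alpha = \tmop{Ep} (s) \cap \alpha \subset e$), and then use parts 2 and 3 to get $s [\alpha \assign e] = \omega^{L [\alpha \assign e]}$ and $\tmop{Ep} (s [\alpha \assign e]) = \tmop{Ep} (L [\alpha \assign e])$; closure of the epsilon number $e^+$ under $\omega^{( \cdot )}$ yields a), while c) follows because $L [\alpha \assign e]$ is already a Cantor Normal Form.

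The step I expect to be the main obstacle is clause c) in the remaining case $s \nin \mathbbm{E}$ not additively principal, $s =_{\tmop{CNF}} L_1 l_1 + \ldots + L_n l_n$, namely that $L_1 [\alpha \assign e] \, l_1 + \ldots + L_n [\alpha \assign e] \, l_n$ is genuinely a Cantor Normal Form. Substitution leaves the coefficients $l_i$ untouched, and each $L_i [\alpha \assign e]$ is again additively principal (by the argument of the principal case), so the only thing that can fail is the strict descent of the bases. This is precisely what Proposition \ref{[alpha:=e]_proposition2} supplies: applying it to each consecutive pair $L_{i + 1} < L_i$, both in $\alpha^+$ with $\tmop{Ep} ( \cdot ) \cap \alpha \subset e$, gives $L_{i + 1} [\alpha \assign e] < L_i [\alpha \assign e]$, so the bases remain strictly decreasing and the expression is a CNF. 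Once c) is in hand, a) follows from additive and multiplicative principality of $e^+$, and b) follows by distributing $\cap \, e$ over the union $\tmop{Ep} (s [\alpha \assign e]) = \bigcup_i \tmop{Ep} (L_i [\alpha \assign e])$ and invoking the induction hypothesis termwise.
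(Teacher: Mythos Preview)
Your proposal is correct and is exactly the natural argument: the paper itself gives no proof beyond ``Not hard'', and what you have written---reading off parts 2, 3, 4 from the defining clauses of $\tmop{Ep}$ and of the substitution, then proving part 1 by induction on $s$ along the three CNF branches, using Proposition~\ref{[alpha:=e]_proposition2} to preserve the strict descent $L_1[\alpha:=e]>\cdots>L_n[\alpha:=e]$---is the intended unwinding.
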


\begin{proof}
  Not hard.
\end{proof}

\begin{proposition}
  \label{[alpha:=e]_proposition4}Let $\alpha, e \in \mathbbm{E}$ and $q, s \in
  \alpha^+$. Suppose $\tmop{Ep} (q) \cap \alpha \subset e \supset \tmop{Ep}
  (s) \cap \alpha$. Then
  \begin{enumeratealpha}
    \item $\tmop{Ep} (q + s) \cap \alpha \subset e$ and $(q + s) [\alpha
    \assign e] = q [\alpha \assign e] + s [\alpha \assign e]$.
    
    \item $\tmop{Ep} (q \cdot s) \cap \subset e$ and $(q \cdot s) [\alpha
    \assign e] = q [\alpha \assign e] \cdot s [\alpha \assign e]$.
    
    \item $s [\alpha \assign e] [e \assign \alpha] = s$
    
    \item If $s = a + c$ for some $a, c \in \tmop{OR}$, then $\tmop{Ep} (c)
    \cap \alpha \subset e$.
    
    \item If $s = a \cdot b$ for some $a, b \in \tmop{OR}$, then $\tmop{Ep}
    (b) \cap \alpha \subset e$.
  \end{enumeratealpha}
\end{proposition}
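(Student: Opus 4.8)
The plan is to reduce every item to the way the ordinal operations act on Cantor Normal Forms, together with the fact that $\tmop{Ep}$ is, by its very definition, the union of the $\tmop{Ep}$'s of the exponents occurring in the CNF. Two structural facts will be used repeatedly: the exponents that survive in $q + s$ and in $q \cdot s$ are controlled by the exponents of the operands, and, by proposition \ref{[alpha:=e]_proposition3}, the operation $[\alpha \assign e]$ carries a CNF to a CNF acting exponent by exponent, while, by proposition \ref{[alpha:=e]_proposition2}, it is $<$-preserving on ordinals of $\alpha^+$ whose epsilon numbers below $\alpha$ lie in $e$. Throughout I will use that if $q \in \alpha^+$ then every exponent $A_i$ of its CNF again lies in $\alpha^+$ and satisfies $\tmop{Ep} (A_i) \cap \alpha \subseteq \tmop{Ep} (q) \cap \alpha \subset e$, so that propositions \ref{[alpha:=e]_proposition2} and \ref{[alpha:=e]_proposition3} legitimately apply to each of them.

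For $a)$, write $q =_{\tmop{CNF}} \omega^{A_1} a_1 + \ldots + \omega^{A_m} a_m$ and $s =_{\tmop{CNF}} \omega^{B_1} b_1 + \ldots + \omega^{B_k} b_k$. Recalling that the CNF of $q + s$ absorbs the terms of $q$ with exponent below $B_1$, possibly merges one coefficient, and then copies $s$, the exponents of $q + s$ form a subset of $\{A_i\} \cup \{B_j\}$; hence $\tmop{Ep} (q + s) \subseteq \tmop{Ep} (q) \cup \tmop{Ep} (s)$ and the intersection bound follows. Since $[\alpha \assign e]$ preserves the order among these exponents, the identical absorb/merge pattern takes place after substituting, and as the substitution acts term by term the two sides of $(q + s) [\alpha \assign e] = q [\alpha \assign e] + s [\alpha \assign e]$ coincide.

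Part $b)$ is the computational core and I expect it to be the main obstacle. The CNF of $q \cdot s$ has exponents of the form $A_1 + B_j$ for the terms of $s$ of positive exponent, together with $A_1, \ldots, A_m$ arising from the constant part of $s$. Feeding each sum $A_1 + B_j$ into $a)$ gives $\tmop{Ep} (A_1 + B_j) \subseteq \tmop{Ep} (A_1) \cup \tmop{Ep} (B_j)$, and collecting all contributions yields $\tmop{Ep} (q \cdot s) \cap \alpha \subset e$. For the substitution identity I combine $a)$ (to split each exponent $A_1 + B_j$) with part 3 of proposition \ref{[alpha:=e]_proposition3} (to handle the powers $\omega^{A_1 + B_j}$) and the term-by-term action of $[\alpha \assign e]$; the delicate point is keeping track of which block of $q$ survives and of how the constant part of $s$ multiplies into the leading term of $q$.

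For $c)$ I induct on $s$. If $s = \alpha$ then $s [\alpha \assign e] = e$ and $e [e \assign \alpha] = \alpha = s$. If $s \in \mathbbm{E} \backslash \{\alpha\}$ then $s \leqslant \alpha$ and $s \neq \alpha$ give $s < \alpha$, so $\tmop{Ep} (s) \cap \alpha = \{s\} \subset e$ forces $s < e$; thus both substitutions fix $s$. If $s \nin \mathbbm{E}$ the substitution acts on the exponents, each $< s$ and inheriting the hypothesis, so the inductive hypothesis applies; part 1 of proposition \ref{[alpha:=e]_proposition3} guarantees that $s [\alpha \assign e] \in e^+$ is a genuine CNF with $\tmop{Ep} (s [\alpha \assign e]) \cap e = \tmop{Ep} (s) \cap \alpha \subset \alpha$, so $[e \assign \alpha]$ applies and acts term by term, returning $s$. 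Finally, $d)$ and $e)$ are inheritance statements. If $s = a + c$ then $c \leqslant s$, so $c \in \alpha^+$, and the CNF of $c$ is a final segment of that of $s$, whence its exponents lie among those of $s$ and $\tmop{Ep} (c) \subseteq \tmop{Ep} (s)$. If $s = a \cdot b$ we may assume $s \neq 0$, hence $a, b \geqslant 1$ and $b \leqslant s \in \alpha^+$; moreover, writing $a =_{\tmop{CNF}} \omega^{A_1} a_1 + \ldots$, for each term $\omega^{B_j} b_j$ of $b$ with $B_j > 0$ the CNF of the $s$-exponent $A_1 + B_j$ contains every exponent of $B_j$, so $\tmop{Ep} (B_j) \subseteq \tmop{Ep} (A_1 + B_j) \subseteq \tmop{Ep} (s)$ and therefore $\tmop{Ep} (b) \subseteq \tmop{Ep} (s)$. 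Intersecting with $\alpha$ gives both claims.
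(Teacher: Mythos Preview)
Your argument is correct and supplies precisely the details the paper omits (its own proof reads only ``Not hard''); the approach via CNF arithmetic---tracking which exponents survive in $q+s$ and $q\cdot s$, and using that $[\alpha:=e]$ acts exponent-wise and is $<$-preserving by propositions \ref{[alpha:=e]_proposition2} and \ref{[alpha:=e]_proposition3}---is the natural and essentially the only route. One minor imprecision worth noting: in $d)$ the CNF of $c$ is not literally a ``final segment'' of that of $s$ (take $s=\omega^2\cdot 3$, $a=\omega^2$, $c=\omega^2\cdot 2$: same exponent, different coefficient), but what is true and what you actually need is that every exponent occurring in the CNF of $c$ already occurs in the CNF of $s$, whence $\tmop{Ep}(c)\subseteq\tmop{Ep}(s)$ as you conclude.
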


\begin{proof}
  Not hard.
\end{proof}

\begin{definition}
  \label{Definition_M(alpha,e)_Set}For $\alpha, e \in \mathbbm{E}$ we define
  $M (\alpha, e) \assign \{q \in \alpha^+ | \tmop{Ep} (q) \cap \alpha \subset
  e\}$.
\end{definition}

We can summarize our previous results in the following two corollaries:

\begin{corollary}
  \label{M(alpha,e)_sum_product_exponentiation_closed}Let $\alpha, e \in
  \mathbbm{E}$. Then:
  \begin{enumeratenumeric}
    \item $M (\alpha, e)$ is closed under the operations $+, \cdot, \lambda x.
    \omega^x$.
    
    \item $M (\alpha, e) \cap [\alpha, \alpha^+)$ is closed under the
    operations $+, \cdot, \lambda x. \omega^x$.
  \end{enumeratenumeric}
  
\end{corollary}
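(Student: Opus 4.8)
The plan is to observe that each of the two statements factors into two independent obligations: first, that the defining condition $\tmop{Ep}(\cdot) \cap \alpha \subset e$ is preserved under the operation in question, and second, that the result still lies in the ambient interval $\alpha^+$ (and, for part 2, is moreover $\geqslant \alpha$). The first obligation is exactly what Propositions \ref{[alpha:=e]_proposition4} and \ref{[alpha:=e]_proposition3} already supply, while the second is nothing more than the standard fact that an epsilon number bounds an initial segment of the ordinals that is closed under $+$, $\cdot$ and $\lambda x. \omega^x$.

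For part 1, I would fix $q, s \in M(\alpha, e)$, so that $q, s \in \alpha^+$ with $\tmop{Ep}(q) \cap \alpha \subset e \supset \tmop{Ep}(s) \cap \alpha$, and treat the three operations in turn. For addition, Proposition \ref{[alpha:=e]_proposition4}(a) gives $\tmop{Ep}(q + s) \cap \alpha \subset e$ directly, and $q + s \in \alpha^+$ because $\alpha^+ \in \mathbbm{E}$ is additively principal. Multiplication is identical, using Proposition \ref{[alpha:=e]_proposition4}(b) for the $\tmop{Ep}$-condition and the multiplicative principality of $\alpha^+$ to get $q \cdot s \in \alpha^+$. For $\lambda x. \omega^x$, Proposition \ref{[alpha:=e]_proposition3}(2) converts $\tmop{Ep}(s) \cap \alpha \subset e$ into $\tmop{Ep}(\omega^s) \cap \alpha \subset e$, and $\omega^s \in \alpha^+$ because $\alpha^+$, being an epsilon number, satisfies $\omega^{\alpha^+} = \alpha^+$ and is hence closed under exponentiation below it.

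For part 2, I would inherit everything from part 1: if $q, s \in M(\alpha, e) \cap [\alpha, \alpha^+)$ then in particular $q, s \in M(\alpha, e)$, so $q + s$, $q \cdot s$ and $\omega^s$ already belong to $M(\alpha, e)$ by the previous paragraph, and it remains only to verify the lower bound $\geqslant \alpha$. Since $s \geqslant \alpha \geqslant 1$, I get $q + s \geqslant q \geqslant \alpha$ and $q \cdot s \geqslant q \cdot 1 = q \geqslant \alpha$; and since $\alpha \in \mathbbm{E}$ gives $\omega^{\alpha} = \alpha$, monotonicity of exponentiation yields $\omega^s \geqslant \omega^{\alpha} = \alpha$.

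The main content has therefore been pushed entirely into the preceding propositions, and I expect no genuine obstacle beyond bookkeeping. The one point that deserves explicit mention rather than the substitution machinery is the membership $q + s, q \cdot s, \omega^s \in \alpha^+$, which rests solely on the closure properties of the interval below the epsilon number $\alpha^+$; this is presumably why the statement is offered as a summary of the earlier results.
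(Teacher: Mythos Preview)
Your proposal is correct and is exactly the intended argument: the paper explicitly presents this corollary as a summary of Propositions~\ref{[alpha:=e]_proposition3} and~\ref{[alpha:=e]_proposition4} and leaves the proof to the reader, so invoking those propositions for the $\tmop{Ep}$-condition and the closure of the interval below $\alpha^+ \in \mathbbm{E}$ for membership is precisely what is expected.
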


\begin{proof}
  {\color{orange} Left to the reader.}
\end{proof}

\begin{corollary}
  \label{main_[a:=e]_Isomorphism1}Let $\alpha, e \in \mathbbm{E}$. Then
  \begin{enumeratenumeric}
    \item The function \begin{tabular}{l}
      $f : M (\alpha, e) \longrightarrow f [M (\alpha, e)] \subset
      \tmop{OR}$\\
      \ \ \ \ \ \ $q \longmapsto q [\alpha \assign e]$
    \end{tabular} is an $(<, +, \cdot, \lambda x. \omega^x)$-isomorphism.
    
    \item If $e \leqslant \alpha$ then $M (e, \alpha) \cap [e, e^+) = [e,
    e^+)$ and the functions \begin{tabular}{l}
      $h : [\alpha, \alpha^+) \cap M (\alpha, e) \longrightarrow [e, e^+)$\\
      \ \ \ \ \ \ $q \longmapsto q [\alpha \assign e]$
    \end{tabular} and \begin{tabular}{l}
      $k : [e, e^+) \longrightarrow [\alpha, \alpha^+) \cap M (\alpha, e)$\\
      \ \ \ \ \ \ $q \longmapsto q [e \assign \alpha]$
    \end{tabular}are $(<, +, \cdot, \lambda x. \omega^x)$-isomorphisms with
    $h^{- 1} = k$.
  \end{enumeratenumeric}
\end{corollary}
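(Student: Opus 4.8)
The plan is to assemble both statements directly from Propositions \ref{[alpha:=e]_proposition2}, \ref{[alpha:=e]_proposition3} and \ref{[alpha:=e]_proposition4} together with the closure Corollary \ref{M(alpha,e)_sum_product_exponentiation_closed}; no genuinely new argument is needed, so the work is entirely bookkeeping about domains, ranges, and the interaction of $[\alpha \assign e]$ with the order and the operations.

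For part 1, I would first treat well-definedness, injectivity and monotonicity as a single fact: by Proposition \ref{[alpha:=e]_proposition2}, for $q, s \in M (\alpha, e)$ we have $q < s \Longleftrightarrow q [\alpha \assign e] < s [\alpha \assign e]$, which simultaneously yields order preservation, order reflection, and hence injectivity; surjectivity is automatic since the codomain is defined to be the image $f [M (\alpha, e)]$. That $f$ commutes with the three operations is exactly Proposition \ref{[alpha:=e]_proposition4}(a),(b) for $+, \cdot$ and Proposition \ref{[alpha:=e]_proposition3}(3) for $\lambda x. \omega^x$. In each case Corollary \ref{M(alpha,e)_sum_product_exponentiation_closed}(1) guarantees that the relevant combination ($q + s$, $q \cdot s$, $\omega^q$) again lies in $M (\alpha, e)$, so that the identities $f (q + s) = f (q) + f (s)$, $f (q \cdot s) = f (q) \cdot f (s)$, $f (\omega^q) = \omega^{f (q)}$ make sense; and since $f$ commutes with the operations and $M (\alpha, e)$ is closed, the image $f [M (\alpha, e)]$ is itself closed under them. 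This is exactly what being an $(<, +, \cdot, \lambda x. \omega^x)$-isomorphism onto the image requires.

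For part 2, assume $e \leqslant \alpha$. First I would record the interval identity $M (e, \alpha) \cap [e, e^+) = [e, e^+)$: the inclusion $\subset$ is trivial, and for $q \in [e, e^+)$ every epsilon number occurring in the CNF of $q$ that lies below $e$ lies in particular below $e \leqslant \alpha$, so $\tmop{Ep} (q) \cap e \subset \alpha$ and $q \in M (e, \alpha)$ --- this is the one place the hypothesis $e \leqslant \alpha$ is used. Next I would verify that $h$ and $k$ land where claimed. Since $\alpha \in M (\alpha, e)$ and $q \geqslant \alpha$, order preservation from part 1 gives $e = \alpha [\alpha \assign e] \leqslant q [\alpha \assign e]$, while Proposition \ref{[alpha:=e]_proposition3}(1) gives $q [\alpha \assign e] \in e^+$; hence $h (q) \in [e, e^+)$. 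Symmetrically, for $q \in [e, e^+) = M (e, \alpha) \cap [e, e^+)$, Proposition \ref{[alpha:=e]_proposition3}(1) with the roles of $\alpha, e$ interchanged gives $q [e \assign \alpha] \in \alpha^+$ with $\tmop{Ep} (q [e \assign \alpha]) \cap \alpha = \tmop{Ep} (q) \cap e \subset e$, so $q [e \assign \alpha] \in M (\alpha, e)$, and order preservation gives $q [e \assign \alpha] \geqslant \alpha$; hence $k (q) \in [\alpha, \alpha^+) \cap M (\alpha, e)$.

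Finally, $h$ and $k$ are mutually inverse by Proposition \ref{[alpha:=e]_proposition4}(c): for $q \in [\alpha, \alpha^+) \cap M (\alpha, e)$ it gives $k (h (q)) = q [\alpha \assign e] [e \assign \alpha] = q$, and the same proposition applied with $\alpha, e$ interchanged (legitimate since $[e, e^+) \subset M (e, \alpha)$) gives $h (k (q)) = q$ for $q \in [e, e^+)$. Thus $h^{- 1} = k$ and both are bijections between the two intervals; being restrictions of the maps of part 1 (used once with $\alpha, e$ for $h$ and once with $e, \alpha$ for $k$) to the subsets $[\alpha, \alpha^+) \cap M (\alpha, e)$ and $[e, e^+)$, which are closed under $+, \cdot, \lambda x. \omega^x$ by Corollary \ref{M(alpha,e)_sum_product_exponentiation_closed}(2), they are $(<, +, \cdot, \lambda x. \omega^x)$-isomorphisms. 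I expect the only mildly delicate point to be surjectivity onto the \emph{prescribed} codomains rather than onto an a priori unknown image, which is precisely what the inversion identity of Proposition \ref{[alpha:=e]_proposition4}(c) supplies; everything else is a direct citation.
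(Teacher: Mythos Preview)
Your proposal is correct and follows essentially the same approach as the paper. Part 1 is virtually identical to the paper's argument (citing Propositions \ref{[alpha:=e]_proposition2}, \ref{[alpha:=e]_proposition3}, \ref{[alpha:=e]_proposition4} and Corollary \ref{M(alpha,e)_sum_product_exponentiation_closed} in the same roles); for part 2 the paper simply writes ``left to the reader'', and the details you supply --- the interval identity via $e \leqslant \alpha$, the range checks via Proposition \ref{[alpha:=e]_proposition3}(1), and the inversion via Proposition \ref{[alpha:=e]_proposition4}(c) in both directions --- are exactly the intended verification.
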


\begin{proof}

  {\noindent}1.\\
  Proposition \ref{[alpha:=e]_proposition2} guarantees that $f$ preserves the
  relation $<$ (this, subsequently, implies that $f$ is injective, and
  therefore $f : M (\alpha, e) \longrightarrow f [M (\alpha, e)]$ is a
  bijection). Moreover, propositions \ref{[alpha:=e]_proposition3} and
  \ref{[alpha:=e]_proposition4} guarantee that $f$ preserves the operations
  $\lambda x. \omega^x, +, \cdot$ too. Finally, since by corollary
  \ref{M(alpha,e)_sum_product_exponentiation_closed} $M (\alpha, e)$ is ($+,
  \cdot, \lambda x. \omega^x$)-closed, then we do not have to worry about $f$
  preserving the \\
  ($+, \cdot, \lambda x. \omega^x$)-closure of it's domain $M (\alpha, e)$
  (that is, $f$ is an ($<, +, \cdot, \lambda x. \omega^x$)-isomorphism in the
  usual sense).
  
  {\noindent}2.\\
  {\color{orange} Left to the reader.}
\end{proof}

\begin{corollary}
  \label{[a:=e]_iso_B_contained_M(a,e)}Let $\alpha, e \in \mathbbm{E}$ and $B
  \subset M (\alpha, e)$. Then the function $h : B \longrightarrow h [B]$, \\
  $h (x) \assign x [\alpha \assign e]$ is an $(<, +, \cdot, \lambda x.
  \omega^x)$- isomorphism.
\end{corollary}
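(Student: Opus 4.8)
The plan is to derive this immediately from Corollary \ref{main_[a:=e]_Isomorphism1}.1 by observing that $h$ is nothing but the restriction to $B$ of the global map $f : M (\alpha, e) \longrightarrow f [M (\alpha, e)]$, $q \longmapsto q [\alpha \assign e]$. Since $B \subset M (\alpha, e)$, every element on which $h$ acts already lies in the domain of $f$, so that $h = f|_B$ and $h [B] = f [B] \subset f [M (\alpha, e)]$. The whole corollary then reduces to the general fact that the restriction of an isomorphism to a subset is again an isomorphism onto its image, in the appropriate substructure sense.

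First I would handle the relation $<$. For arbitrary $x, y \in B$ we have $x, y \in M (\alpha, e)$, so Corollary \ref{main_[a:=e]_Isomorphism1}.1 yields $x < y \Longleftrightarrow f (x) < f (y)$, that is, $x < y \Longleftrightarrow h (x) < h (y)$. In particular $h$ preserves and reflects $<$, hence is injective, and therefore $h : B \longrightarrow h [B]$ is a bijection.

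Next I would treat the operations $+, \cdot, \lambda x. \omega^x$ in the substructure sense appropriate for an arbitrary (not necessarily operation-closed) subset: whenever the operands and the result of an operation all happen to lie in $B$, the map must commute with that operation. Concretely, if $x, y, x + y \in B$, then all three already lie in $M (\alpha, e)$, so by the isomorphism property of $f$ we get $h (x + y) = f (x + y) = f (x) + f (y) = h (x) + h (y)$, and this value lies in $f [B] = h [B]$. The verifications for $x \cdot y$ and for $\omega^x$ are word-for-word identical, replacing $+$ by $\cdot$ and by $\lambda x. \omega^x$ respectively, again invoking only Corollary \ref{main_[a:=e]_Isomorphism1}.1.

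I do not anticipate any genuine obstacle here. The single point deserving a moment's care is that $B$ need not be closed under $+, \cdot, \lambda x. \omega^x$, so one must read ``$(<, +, \cdot, \lambda x. \omega^x)$-isomorphism'' in its subset formulation (preservation of an operation is required only when the operands and the result remain inside $B$) rather than in the full structural formulation used for $M (\alpha, e)$ in Corollary \ref{main_[a:=e]_Isomorphism1}.1. But this weaker condition is exactly what restricting a global isomorphism to a subset produces, so the argument is complete once the two displayed computations above are recorded.
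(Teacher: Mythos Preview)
Your approach is essentially the paper's: both reduce the claim to Corollary~\ref{main_[a:=e]_Isomorphism1}.1 by recognizing $h$ as the restriction $f|_B$, then check order and the operations. There is, however, one point you have left out. In this paper's sense an $(<,+,\cdot,\lambda x.\omega^x)$-isomorphism between subsets must also \emph{reflect} closure: for $\beta,\gamma\in B$ one needs not only ``$\beta\Box\gamma\in B\Rightarrow h(\beta)\Box h(\gamma)\in h[B]$'' but also the converse ``$h(\beta)\Box h(\gamma)\in h[B]\Rightarrow \beta\Box\gamma\in B$''. The paper checks this second implication explicitly, while your ``subset formulation'' paragraph suggests only the forward direction is required.

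The fix is short and fits your framework perfectly: if $h(\beta)\Box h(\gamma)=h(\delta)$ for some $\delta\in B$, then since $\beta,\gamma\in M(\alpha,e)$ and $M(\alpha,e)$ is closed under $\Box$ (Corollary~\ref{M(alpha,e)_sum_product_exponentiation_closed}), we have $\beta\Box\gamma\in M(\alpha,e)$ and $f(\beta\Box\gamma)=f(\beta)\Box f(\gamma)=f(\delta)$; injectivity of $f$ on $M(\alpha,e)$ then gives $\beta\Box\gamma=\delta\in B$. So your general principle ``restriction of an isomorphism is an isomorphism'' is indeed correct here, but it relies on the ambient domain $M(\alpha,e)$ being closed and $f$ being injective on all of it---facts you should invoke rather than subsume under a weaker reading of the word ``isomorphism''.
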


\begin{proof}
  By previous corollary \ref{main_[a:=e]_Isomorphism1}, we already know that
  $h$ preserves $<, +, \cdot$ and $\lambda x. \omega^x$. Moreover, the fact
  that $h$ preserves $<$ implies that $h$ is an injection, and therefore $h$
  is a bijection from it's domain to it's image. So it only remains to show
  that $h$ preserves the $(+, \cdot, \lambda x. \omega^x)$-closure of $B$.
  This is not hard: Let $\beta, \gamma \in B$. Let's denote as $\beta \Box
  \gamma$ to any of $\beta + \gamma$, $\beta \cdot \gamma$ or
  $\omega^{\gamma}$.
  
  Suppose $\beta \Box \gamma \in B$. Then $\beta \Box \gamma = \delta$ for
  some $\delta \in B$. Then $h (\beta) \Box h (\gamma) = h (\beta \Box \gamma)
  = h (\delta) \in h [B]$.
  
  Suppose $h (\beta) \Box h (\gamma) \in h [B]$. Then $\beta [\alpha \assign
  e] \Box \gamma [\alpha \assign e] = h (\beta) \Box h (\gamma) = h (\delta) =
  \delta [\alpha \assign e]$ for some $\delta \in B \subset M (\alpha, e)$. \
  (*). On the other hand, since $\beta, \gamma \in M (\alpha, e)$, then $\beta
  \Box \gamma \in M (\alpha, e)$ and \\
  $\beta [\alpha \assign e] \Box \gamma [\alpha \assign e] = (\beta \Box
  \gamma) [\alpha \assign e]$. \ (**). From (*) and (**) follow $(\beta \Box
  \gamma) [\alpha \assign e] = \delta [\alpha \assign e]$, and since the
  function $x \longmapsto x [\alpha \assign e]$ is a bijection in $M (\alpha,
  e)$, then $\beta \Box \gamma = \delta \in B$.
\end{proof}

\subsubsection{Substitutions and $<_1$ in intervals $(\varepsilon_{\gamma},
\varepsilon_{\gamma + 1})$.}

The next two results are the main reason why we are caring so much about our
substitutions $x \longmapsto x [\alpha \assign e]$.

\begin{proposition}
  \label{M(alpha,e)_Int_(alpha,alpha^+)_closure}Let $\alpha, e \in
  \mathbbm{E}$ and $A \assign (\alpha, \alpha^+) \cap M (\alpha, e)$. Then $A$
  is closed under the operations $+, \cdot, \lambda x. \omega^x$ and $m$.
\end{proposition}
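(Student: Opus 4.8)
The plan is to show closure of $A = (\alpha, \alpha^+) \cap M(\alpha, e)$ under each of the four operations separately. For the three algebraic operations $+, \cdot, \lambda x.\omega^x$, I would rely almost entirely on the work already done: corollary \ref{M(alpha,e)_sum_product_exponentiation_closed} part 2 tells us that $M(\alpha, e) \cap [\alpha, \alpha^+)$ is closed under $+, \cdot, \lambda x.\omega^x$. The only gap between that set and $A$ is the endpoint $\alpha$ itself, so I would just check that the relevant outputs land strictly above $\alpha$. Concretely, for $\beta, \gamma \in A$ (so $\alpha < \beta, \gamma < \alpha^+$), we have $\beta + \gamma > \alpha$, $\beta \cdot \gamma > \alpha$ and $\omega^\gamma > \gamma > \alpha$ trivially, while membership in $M(\alpha, e)$ and in $[\alpha, \alpha^+)$ comes directly from the cited corollary; hence these outputs lie in $A$.

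The real content is closure under $m$. Here I would take an arbitrary $x \in A$ and analyze $m(x)$, the largest ordinal such that $x <_1 m(x)$ (equivalently, the point determined by $x <_1 x + \xi$ for the maximal admissible $\xi$). The natural tool is the earlier classification: corollary \ref{m(alpha)_for_alpha_not_epsilon_number} computes $m(\alpha)$ explicitly in terms of the Cantor Normal Form for non-epsilon numbers, and the case $x \in \mathbb{E}$ is governed by corollary \ref{Wilken_Corollary1}. So I would split according to whether $x$ is an epsilon number. Since $\alpha < x < \alpha^+$ and $\alpha^+$ is the next epsilon number after $\alpha$, there are no epsilon numbers strictly between $\alpha$ and $\alpha^+$; therefore $x \notin \mathbb{E}$, and every element of $A$ falls squarely into the non-epsilon case of corollary \ref{m(alpha)_for_alpha_not_epsilon_number}.

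Writing $x =_{\tmop{CNF}} \omega^{A_1} a_1 + \ldots + \omega^{A_n} a_n$, corollary \ref{m(alpha)_for_alpha_not_epsilon_number} gives $m(x) = x$ when $n \geqslant 2$ or $a_1 \geqslant 2$, and $m(x) = x + B_s$ in the remaining case $n = 1 = a_1$, where $B_s$ is the exponent of the last CNF term of $A_1$. In the first case $m(x) = x \in A$ and we are done immediately. In the second case I must verify two things: that $x + B_s$ is still in $(\alpha, \alpha^+)$, and that it lies in $M(\alpha, e)$, i.e. $\tmop{Ep}(x + B_s) \cap \alpha \subset e$. Staying below $\alpha^+$ holds because $\alpha^+$ is additively principal (being an epsilon number) and $x, B_s < \alpha^+$, so $x + B_s < \alpha^+$; staying above $\alpha$ is clear since $x > \alpha$. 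For the $\tmop{Ep}$-condition, $B_s$ appears as an exponent inside $A_1$, hence inside $x$, so by proposition \ref{[alpha:=e]_proposition3} part 4 every epsilon number occurring in $B_s$ already occurs in $x$; combined with $x \in M(\alpha, e)$ this yields $\tmop{Ep}(B_s) \cap \alpha \subset e$, and then proposition \ref{[alpha:=e]_proposition4}(a) gives $\tmop{Ep}(x + B_s) \cap \alpha \subset e$, so $x + B_s \in M(\alpha, e)$.

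The main obstacle I anticipate is bookkeeping the extraction of $B_s$ from the Cantor Normal Form and confirming it sits inside the CNF of $x$ as a genuine subterm, so that the $\tmop{Ep}$-propagation lemmas apply cleanly; once that is pinned down, the closure under $m$ reduces to the two algebraic facts cited above. Everything else is a direct appeal to the previously established corollaries and propositions, so the proof should be short.
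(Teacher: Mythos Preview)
Your proposal is correct and matches the approach the paper clearly has in mind: the proof is marked ``left to the reader,'' but the subsequent corollary \ref{A_[alpha:=e]_isomorphisms} explicitly invokes ``$\tmop{Ep}(Z) \cap \alpha \subset e$ by the proof of previous proposition \ref{M(alpha,e)_Int_(alpha,alpha^+)_closure},'' which is exactly the key step you isolate in the $m$-closure argument. One small citation refinement: proposition \ref{[alpha:=e]_proposition3} part 4 alone gives you $\tmop{Ep}(\omega^{B_s}) \cap \alpha \subset e$ from $\tmop{Ep}(A_1) \cap \alpha \subset e$; to pass from $\omega^{B_s}$ down to $B_s$ (and from $x = \omega^{A_1}$ down to $A_1$) you also want part 2 of the same proposition, or simply unfold the recursive definition of $\tmop{Ep}$.
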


\begin{proof}
  {\color{orange} Left to the reader.}
\end{proof}

\begin{corollary}
  \label{A_[alpha:=e]_isomorphisms}Let $\alpha, e \in \mathbbm{E}$ and $A
  \assign (\alpha, \alpha^+) \cap M (\alpha, e)$. Then
  \begin{enumeratenumeric}
    \item The function \begin{tabular}{l}
      $h : A \longrightarrow h [A] \subset (e, e^+)$\\
      \ \ $q \longmapsto q [\alpha \assign e]$
    \end{tabular} is an $(<, +, \cdot, \lambda x. \omega^x, m)$- isomorphism.
    
    \item If $\alpha \leqslant e$ then $A = (\alpha, \alpha^+)$ and then the
    function \\
    \begin{tabular}{l}
      $h : (\alpha, \alpha^+) \longrightarrow h [(\alpha, \alpha^+)] \subset
      (e, e^+)$\\
      \ \ \ \ \ \ $q \longmapsto q [\alpha \assign e]$
    \end{tabular} is an $(<, +, \cdot, \lambda x. \omega^x, m)$- isomorphism.
  \end{enumeratenumeric}
\end{corollary}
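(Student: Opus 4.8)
The plan is to reduce the statement to results already in hand and then treat the single genuinely new ingredient: that $h$ commutes with $m$. Since $A \assign (\alpha, \alpha^+) \cap M (\alpha, e) \subset M (\alpha, e)$, Corollary \ref{[a:=e]_iso_B_contained_M(a,e)} applied with $B \assign A$ already gives that $h : A \longrightarrow h [A]$, $q \longmapsto q [\alpha \assign e]$, is an $(<, +, \cdot, \lambda x. \omega^x)$-isomorphism. One checks $h [A] \subset (e, e^+)$ quickly: for $q \in A$ we have $q [\alpha \assign e] \in e^+$ by Proposition \ref{[alpha:=e]_proposition3}.1, while $q > \alpha$ together with $<$-preservation and $\alpha [\alpha \assign e] = e$ forces $q [\alpha \assign e] > e$. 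By Proposition \ref{M(alpha,e)_Int_(alpha,alpha^+)_closure}, $A$ is closed under $m$; so once I establish $h (m (q)) = m (h (q))$ for every $q \in A$, it will follow (because $h$ is a bijection onto $h [A]$) that $h [A]$ is closed under $m$ and that $h^{- 1}$ commutes with $m$ as well, upgrading $h$ to a full $(<, +, \cdot, \lambda x. \omega^x, m)$-isomorphism.

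The heart of the argument is therefore the identity $m (q) [\alpha \assign e] = m (q [\alpha \assign e])$ for $q \in A$. Since $q \in (\alpha, \alpha^+)$ with $\alpha, \alpha^+ \in \mathbbm{E}$, we have $q \nin \mathbbm{E}$, so I can compute $m (q)$ via Corollary \ref{m(alpha)_for_alpha_not_epsilon_number}. The crucial observation is that the substitution preserves the shape of the Cantor Normal Form: if $q =_{\tmop{CNF}} \omega^{A_1} a_1 + \ldots + \omega^{A_n} a_n$, then by Propositions \ref{[alpha:=e]_proposition3} and \ref{[alpha:=e]_proposition4} the ordinal $q [\alpha \assign e] =_{\tmop{CNF}} \omega^{A_1 [\alpha \assign e]} a_1 + \ldots + \omega^{A_n [\alpha \assign e]} a_n$ is a genuine CNF with the \emph{same} number of summands $n$ and the \emph{same} coefficients $a_i$. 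I then split along the two clauses of Corollary \ref{m(alpha)_for_alpha_not_epsilon_number}.

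In the case $n \geqslant 2$ or $a_1 \geqslant 2$, clause (a) gives $m (q) = q$; since $q [\alpha \assign e]$ has the same $n$ and leading coefficient, the same clause yields $m (q [\alpha \assign e]) = q [\alpha \assign e] = m (q) [\alpha \assign e]$. In the case $n = 1 = a_1$, write $A_1 =_{\tmop{CNF}} \omega^{B_1} b_1 + \ldots + \omega^{B_s} b_s$, so $m (q) = q + B_s$; applying clause (b) to $q [\alpha \assign e] = \omega^{A_1 [\alpha \assign e]}$ with $A_1 [\alpha \assign e] =_{\tmop{CNF}} \omega^{B_1 [\alpha \assign e]} b_1 + \ldots + \omega^{B_s [\alpha \assign e]} b_s$ gives $m (q [\alpha \assign e]) = q [\alpha \assign e] + B_s [\alpha \assign e]$. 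It then remains to verify $(q + B_s) [\alpha \assign e] = q [\alpha \assign e] + B_s [\alpha \assign e]$, which is exactly Proposition \ref{[alpha:=e]_proposition4}(a); its hypotheses $\tmop{Ep} (q) \cap \alpha \subset e$ and $\tmop{Ep} (B_s) \cap \alpha \subset e$ hold because $q \in M (\alpha, e)$ and because $B_s < A_1 < \alpha^+$ with $\tmop{Ep} (B_s) \cap \alpha \subset e$ following from $\tmop{Ep} (q) \cap \alpha \subset e$ through Proposition \ref{[alpha:=e]_proposition3}.2 and .4. This completes the proof that $h$ commutes with $m$, and hence part 1.

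For part 2, when $\alpha \leqslant e$ every $q \in (\alpha, \alpha^+)$ satisfies $\tmop{Ep} (q) \cap \alpha \subset \alpha \subset e$, so $q \in M (\alpha, e)$; thus $A = (\alpha, \alpha^+)$ and the claim is immediate from part 1. The only delicate point is the bookkeeping in the $n = 1 = a_1$ case above, namely confirming that substitution carries the CNF of $q$ to a genuine CNF with identical structural parameters so that Corollary \ref{m(alpha)_for_alpha_not_epsilon_number} applies verbatim on both sides of the identity; this is precisely what Propositions \ref{[alpha:=e]_proposition3} and \ref{[alpha:=e]_proposition4} were prepared to supply, so I expect no further obstacle.
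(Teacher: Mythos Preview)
Your proposal is correct and follows essentially the same approach as the paper's proof: both reduce the $(<,+,\cdot,\lambda x.\omega^x)$ part to the earlier substitution results, then verify $m$-preservation by the two-case analysis via Corollary~\ref{m(alpha)_for_alpha_not_epsilon_number}, and invoke the closure of $A$ from Proposition~\ref{M(alpha,e)_Int_(alpha,alpha^+)_closure}. The only cosmetic differences are that you package the first step through Corollary~\ref{[a:=e]_iso_B_contained_M(a,e)} rather than the underlying propositions, and you justify $\tmop{Ep}(B_s)\cap\alpha\subset e$ directly from Proposition~\ref{[alpha:=e]_proposition3} whereas the paper appeals to the (unwritten) proof of Proposition~\ref{M(alpha,e)_Int_(alpha,alpha^+)_closure}.
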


\begin{proof}
  Clearly 2. follows from 1.

  We prove 1. Let $q \in A$ be arbitrary.

  $\alpha < q \underset{\text{\tmop{proposition}
  \ref{[alpha:=e]_proposition2}}}{\Longrightarrow} e = \alpha [\alpha \assign
  e] < q [\alpha \assign e] \underset{\text{\tmop{proposition}
  \ref{[alpha:=e]_proposition3}}}{<} e^+$. This shows that $h [A] \subset (e,
  e^+)$.

  On the other hand, proposition \ref{[alpha:=e]_proposition2} guarantees that
  $h$ preserves the relation $<$. Moreover, propositions
  \ref{[alpha:=e]_proposition3} and \ref{[alpha:=e]_proposition4} guarantee
  that $h$ preserves the operations $\lambda x. \omega^x, +, \cdot$ too.

  Now we prove that $h$ preserves $m$ too.
  
  Since $q \in (\alpha, \alpha^+)$, then $q \nin \mathbbm{E}$. Then we have
  the following cases:
  
  Case $q =_{\tmop{CNF}} B_1 b_1 + \ldots + B_n b_n$ with $n \geqslant 2 \vee
  b_1 \geqslant 2$. Then $m (q) = q$ by corollary
  \ref{m(alpha)_for_alpha_not_epsilon_number}. On the other hand $q [\alpha
  \assign e] =_{\tmop{CNF}} B_1 [\alpha \assign e] b_1 + \ldots + B_n [\alpha
  \assign e] b_n$ with $n \geqslant 2 \vee b_1 \geqslant 2$; so, by corollary
  \ref{m(alpha)_for_alpha_not_epsilon_number}, $m (q [\alpha \assign e]) = q
  [\alpha \assign e] = m (q) [\alpha \assign e]$.
  
  Case $q =_{\tmop{CNF}} \omega^B$ with $B =_{\tmop{CNF}} \omega^{B_1} b_1 +
  \ldots + \omega^{B_{n - 1}} b_{n - 1} + \omega^Z b_n$. Then $m (q)
  = q + Z$ by corollary \ref{m(alpha)_for_alpha_not_epsilon_number};
  moreover, $\tmop{Ep} (Z) \cap \alpha \subset e$ by the proof of previous
  proposition \ref{M(alpha,e)_Int_(alpha,alpha^+)_closure}. \ \ \ \ \ \ \ (*).
  
  On the other hand $q [\alpha \assign e] =_{\tmop{CNF}} \omega^{B [\alpha
  \assign e]}$ with $B [\alpha \assign e] =_{\tmop{CNF}} \omega^{B_1 [\alpha
  \assign e]} b_1 + \ldots + \omega^{Z [\alpha \assign e]} b_n$; thus $m (q
  [\alpha \assign e]) \underset{\text{\tmop{corollary}
  \ref{m(alpha)_for_alpha_not_epsilon_number}}}{=} q [\alpha \assign e] + Z
  [\alpha \assign e] \underset{\text{by (*) and proposition } \ref{[alpha:=e]_proposition4}}{=} (q + Z) [\alpha
  \assign e] = m (q) [\alpha \assign e]$.
  
  All the previous shows that $h$ preserves $m$.

  Finally, since by proposition \ref{M(alpha,e)_Int_(alpha,alpha^+)_closure}
  $A$ is ($+, \cdot, \lambda x. \omega^x, m$)-closed, then we do not have to
  worry about $h$ preserving the ($+, \cdot, \lambda x. \omega^x, m$)-closure
  of it's domain $A$ (that is, $h$ is an \\
  ($<, +, \cdot, \lambda x. \omega^x, m$)-isomorphism in the usual sense).
\end{proof}

\begin{remark}
  \label{remark_m-iso_<less>_1-iso}The function $h : A \longrightarrow h [A]$
  of previous corollary \ref{A_[alpha:=e]_isomorphisms} is an
  $<_1$-isomorphism too: For any $\beta, \gamma \in A$, the ordinals $m
  (\beta), m (\gamma) \in A$ (because by proposition
  \ref{M(alpha,e)_Int_(alpha,alpha^+)_closure}, $A$ is $m$-closed) and we have
  that $\beta \leqslant_1
  \gamma\underset{\leqslant_1-\tmop{connectedness}}{\Longleftrightarrow}\beta
  \leqslant \gamma \leqslant m (\beta)\underset{\tmop{corollary }
  \ref{A_[alpha:=e]_isomorphisms}}{\Longleftrightarrow}$\\
  $h (\beta) \leqslant h (\gamma) \leqslant h (m (\beta)) = m (h
  (\beta))\underset{\leqslant_1-\tmop{connectedness}}{\Longleftrightarrow}h(\beta) \leqslant_1 h (\gamma)$.
\end{remark}

\subsection{The relation $<^1$.}

With the purpose of extending our understanding between the substitutions $x
\longmapsto [\alpha \assign e]$ and the $<_1$-relation, we introduce the
following

\begin{definition}
  For $\alpha, \beta \in \tmop{OR}$, $\alpha <^1 \beta$ means $\alpha < \beta$
  and $\forall Z \subset_{\tmop{fin}} \beta \exists \tilde{Z}
  \subset_{\tmop{fin}} \alpha \exists h$ such that
  
  $(i)$ \ \ $h : (Z, <, <_1, +, \lambda x. \omega^x) \rightarrow ( \tilde{Z},
  <, <_1, +, \lambda x. \omega^x)$ is an isomorphism.
  
  $(i i)$ \ $h|_{Z \cap \alpha} = \tmop{Id} |_{Z \cap \alpha}$, where
  $\tmop{Id} |_{Z \cap \alpha} : Z \cap \alpha \longrightarrow Z \cap \alpha$
  is the identity function.

  By $\alpha \leqslant^1 \beta$ we mean that $\alpha <_1 \beta$ or $\alpha =
  \beta$. We abbreviate $h|_{Z \cap \alpha} = \tmop{Id} |_{Z \cap \alpha}$ as
  $h|_{\alpha} = \tmop{Id} |_{\alpha}$.
\end{definition}

\begin{proposition}
  \label{<less>=^1_implies_<less>=_1}Let $\alpha, \beta, \gamma \in \tmop{OR}$
  and $(\xi_i)_{i \in I} \subset \tmop{OR}$. Then
  \begin{enumeratenumeric}
    \item $\alpha \leqslant^1 \beta \Longrightarrow \alpha \leqslant_1 \beta$.
    
    \item If $\alpha \leqslant \beta \leqslant \gamma \wedge \alpha
    \leqslant^1 \gamma$ then $\alpha \leqslant^1 \beta$. \ \ \ \ \ \ \ \ \ \ \
    \ \ ($\leqslant^1$-connectedness)
    
    \item If $\forall i \in I. \alpha \leqslant^1 \xi_i \wedge \xi_i
    \underset{\tmop{cof}}{\longhookrightarrow} \beta$ then $\alpha \leqslant^1
    \beta$. \ \ \ \ \ \ \ ($\leqslant^1$-continuity)
  \end{enumeratenumeric}
\end{proposition}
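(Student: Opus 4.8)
The plan is to derive all three items almost directly from the definition of $<^1$, exploiting the single structural observation that, for a finite set $Z \subset_{\tmop{fin}} \tmop{OR}$, the induced structure $(Z, <, <_1, +, \lambda x. \omega^x)$ together with the pointwise-identity condition on $Z \cap \alpha$ depends only on $Z$ itself and not on the particular ordinal we regard $Z$ as sitting inside. Since the relations $<$, $<_1$ and the operations $+$, $\lambda x. \omega^x$ are absolute with respect to $Z$, whenever a finite set $Z$ lies inside two different ordinals the witnessing requirement of the definition of $<^1$ reads the same way in both. Since the cases $\alpha = \beta$ (resp. $\alpha = \gamma$) make every conclusion trivial through the ``$\alpha = \beta$'' clause of $\leqslant^1$, I would at the outset reduce each item to its strict version, assuming $\alpha < \beta$ (and, in item 2, also $\alpha < \gamma$) and aiming to produce $\alpha <^1 \beta$.

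For item 1, I would simply observe that every witness for $\alpha <^1 \beta$ is, after forgetting the operations $+$ and $\lambda x. \omega^x$, a witness for $\alpha <_1 \beta$: given $Z \subset_{\tmop{fin}} \beta$, the isomorphism $h : (Z, <, <_1, +, \lambda x. \omega^x) \to (\tilde{Z}, <, <_1, +, \lambda x. \omega^x)$ with $h|_{Z \cap \alpha} = \tmop{Id}|_{Z \cap \alpha}$ is in particular a $(<, <_1)$-isomorphism fixing $Z \cap \alpha$ pointwise, which is exactly what the defining finite-substructure property of $<_1$ demands. Hence $\alpha <^1 \beta \Rightarrow \alpha <_1 \beta$, and the reflexive case $\alpha = \beta$ supplies the $\leqslant$-version.

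For item 2 ($\leqslant^1$-connectedness), assume $\alpha < \beta \leqslant \gamma$ and $\alpha <^1 \gamma$. Given an arbitrary $Z \subset_{\tmop{fin}} \beta$, I would note that $Z \subset_{\tmop{fin}} \gamma$ as well, since $\beta \leqslant \gamma$. Applying $\alpha <^1 \gamma$ to this same $Z$ yields $\tilde{Z} \subset_{\tmop{fin}} \alpha$ and an isomorphism $h$ satisfying $(i)$ and $(ii)$; by the absoluteness observation above, $h$ witnesses precisely the requirement for $Z$ in the definition of $\alpha <^1 \beta$. As $Z$ was arbitrary, $\alpha <^1 \beta$. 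For item 3 ($\leqslant^1$-continuity), assume $\alpha < \beta$, that $\forall i \in I.\, \alpha \leqslant^1 \xi_i$, and $\xi_i \underset{\tmop{cof}}{\longhookrightarrow} \beta$. Given $Z \subset_{\tmop{fin}} \beta$, I would use cofinality to choose $i \in I$ with $\xi_i > \max(Z \cup \{\alpha\})$, so that $Z \subset_{\tmop{fin}} \xi_i$ and $\alpha < \xi_i$; the latter upgrades $\alpha \leqslant^1 \xi_i$ to $\alpha <^1 \xi_i$. The witness furnished by $\alpha <^1 \xi_i$ for this $Z$ then serves verbatim as a witness for $\alpha <^1 \beta$ on $Z$.

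The proofs are genuinely short, and the closest thing to an obstacle is conceptual rather than computational: one must explicitly recognize that the finite induced structure $(Z, <, <_1, +, \lambda x. \omega^x)$, with the identity condition on $Z \cap \alpha$, is insensitive to the ambient ordinal, so that no new isomorphism ever needs to be built. Once that is granted, in each of items 2 and 3 the very same map $h$ produced for the larger parameter ($\gamma$, respectively $\xi_i$) is reused unchanged, and the only remaining care is the bookkeeping reduction to the strict cases.
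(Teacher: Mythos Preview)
Your proposal is correct and follows essentially the same approach as the paper, which simply notes that item 1 is direct from the definition and that items 2 and 3 are proved exactly as $\leqslant_1$-connectedness and $\leqslant_1$-continuity are. Your write-up merely spells out those routine details; the one cosmetic point is that the relation $<_1$ in the companion paper is defined via $(<,<_1,+)$-isomorphisms rather than just $(<,<_1)$-isomorphisms, but since your $<^1$-witness already preserves $+$ this changes nothing in the argument.
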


\begin{proof}
  1. follows direct from the definition of $\leqslant^1$. The proofs of
  $\leqslant^1$-connectedness and \\
  $\leqslant^1$-continuity are as easy as the proofs of
  $\leqslant_1$-connectedness and $\leqslant_1$-continuity.
\end{proof}

Now we show that the $<^1$-relation is closely related with the substitutions
$x \longmapsto x [\alpha \assign e]$. We first make the following

\begin{definition}
  Let $q \in \tmop{OR}$ with $q =_{\tmop{CNF}} L_1 q_1 + \ldots + L_n q_n$.
  Let \\
  $S_{\tmop{CNF}} (q) \assign \{L_1 q_1, \ldots, L_n q_n \} \cup \{\Sigma_{i =
  1}^j L_i q_i |j \in \{1, \ldots, n\}\} \cup \bigcup_{\{A_i |i \in [1, n]
  \wedge L_i \nin \mathbbm{E} \wedge L_i =_{\tmop{CNF}} \omega^{A_i} \}}
  S_{\tmop{CNF}} (A_i)$.
\end{definition}

\begin{proposition}
  \label{Isomorphisms_are_substitutions}Let $\alpha \in \mathbbm{E}$ be an
  arbitrary epsilon number.
  \begin{enumeratenumeric}
    \item Let $t \in [\alpha, \alpha^+)$ and $B (t) \assign S_{\tmop{CNF}} (t)
    \cup \{L j|L q \in S_{\tmop{CNF}} (t) \wedge L \in \mathbbm{P} \wedge q
    \in [1, \omega) \wedge j \in \{1, \ldots, q\}\}$. Note $t \in B (t)
    \subset_{\tmop{fin}} t + 1$.\\
    Then any $h : B (t) \longrightarrow h [B (t)] \subset \alpha$ that is an
    $(<, <_1, +, \lambda x. \omega^x)$ isomorphism with $h|_{\alpha} =
    \tmop{Id}_{\alpha}$ satisfies $h (\alpha) \in \mathbbm{E} \cap \alpha$ and
    $\forall s \in B (t) . \tmop{Ep} (s) \cap \alpha \subset h (\alpha) \wedge
    h (s) = s [\alpha \assign h (\alpha)]$.
    
    \item Let $t \in (\alpha, \alpha^+)$ and suppose $\alpha <^1 t$. Let $B
    \subset_{\tmop{fin}} t$. Then there exists $\gamma \in \mathbbm{E} \cap
    \alpha$ such that $\forall s \in B. \tmop{Ep} (s) \cap \alpha \subset
    \gamma$ and the function $h : B \longrightarrow h [B] \subset \alpha$, $s
    \longmapsto s [\alpha \assign \gamma]$ is an \\
    $(<, <_1, +, \lambda x. \omega^x)$ isomorphism with $h|_{\alpha} =
    \tmop{Id}_{\alpha}$.
  \end{enumeratenumeric}
\end{proposition}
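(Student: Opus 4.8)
The plan is to prove the two items separately, with item 1 doing the real work and item 2 reducing to it. For item 1, write $e \assign h (\alpha)$. First I would note that $\alpha \in B (t)$: since $t \in [\alpha, \alpha^+)$ the epsilon number $\alpha$ necessarily occurs somewhere in the Cantor Normal Form tree of $t$ (chase the leading exponent downward; it is well-founded and the only epsilon number in $[\alpha, \alpha^+)$ is $\alpha$), and the multiples clause of $B (t)$ extracts $\alpha$ from any $\alpha \cdot q$. Because $\alpha \in \mathbbm{E}$ we have $\omega^{\alpha} = \alpha \in B (t)$, so applying preservation of $\lambda x. \omega^x$ to this single relation gives $e = h (\alpha) = h (\omega^{\alpha}) = \omega^{h (\alpha)}$, i.e. $e \in \mathbbm{E}$; and $e < \alpha$ since $\alpha \in B (t)$ and $h [B (t)] \subset \alpha$. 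Hence $e \in \mathbbm{E} \cap \alpha$. (Note this uses only the $\lambda x. \omega^x$-part of the isomorphism.)

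The heart of item 1 is then a well-founded induction on $s \in B (t)$, proving simultaneously that $\tmop{Ep} (s) \cap \alpha \subset e$ and $h (s) = s [\alpha \assign e]$. The structural fact to isolate first is that $B (t)$ is closed under passing to Cantor-Normal-Form pieces: for $s \in B (t)$ its CNF-summands, its partial sums, the exponent of a principal summand, and the finite multiples $L j$ of a principal $L$ occurring in $s$ all lie again in $B (t)$ — this is exactly what the definitions of $S_{\tmop{CNF}}$ and $B (t)$ arrange, the multiples $L j$ being included precisely to simulate finite multiplication by iterated addition. Granting closure, the cases are: for $s < \alpha$, $h (s) = s = s [\alpha \assign e]$ by $h|_{\alpha} = \tmop{Id}$ and proposition \ref{[alpha:=e]_proposition1}; for $s = \alpha$, $h (\alpha) = e = \alpha [\alpha \assign e]$; and for $s \geqslant \alpha$, $s \nin \mathbbm{E}$ (the only epsilon number that can occur is $\alpha$) I split along the CNF. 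In the additive / finite-multiple subcase I peel off one summand (resp. one copy of a principal $L$, using $L j \in B (t)$), apply preservation of $+$ together with the induction hypothesis, and reassemble via proposition \ref{[alpha:=e]_proposition4}; in the subcase $s =_{\tmop{CNF}} \omega^L$ I use $L \in B (t)$, preservation of $\lambda x. \omega^x$, the hypothesis on $L$, and proposition \ref{[alpha:=e]_proposition3} to get $h (\omega^L) = \omega^{h (L)} = \omega^{L [\alpha \assign e]} = \omega^L [\alpha \assign e]$. The inclusion $\tmop{Ep} (s) \cap \alpha \subset e$ runs in parallel: any epsilon $f \in \tmop{Ep} (s)$ with $f < \alpha$ already lies in $B (t)$ by closure, so $f = h (f) < h (\alpha) = e$ by $h|_{\alpha} = \tmop{Id}$ and preservation of $<$, and propagation through $+$ and $\lambda x. \omega^x$ is handled by propositions \ref{[alpha:=e]_proposition4} and \ref{[alpha:=e]_proposition3}.

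For item 2 I would reduce to item 1. Given $B \subset_{\tmop{fin}} t$ with $\alpha <^1 t$, enlarge $B$ to the least finite set $Z$ containing $B \cup \{\alpha\}$ and closed under taking CNF-components; since passing to exponents strictly decreases the ordinal, this closure terminates and $Z$ is finite, and all components of elements $\leqslant \max (B \cup \{\alpha\}) < t$ stay below $t$, so $Z \subset_{\tmop{fin}} t$. Applying $\alpha <^1 t$ to $Z$ yields $\tilde{Z} \subset_{\tmop{fin}} \alpha$ and an $(<, <_1, +, \lambda x. \omega^x)$-isomorphism $g : Z \longrightarrow \tilde{Z}$ with $g|_{\alpha} = \tmop{Id}$. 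The argument of item 1 now applies to $g$ on $Z$ unchanged — it used only that the domain is CNF-closed, contains $\alpha$, and sits inside $\alpha^+$ — so, setting $\gamma \assign g (\alpha) \in \mathbbm{E} \cap \alpha$, we obtain $\tmop{Ep} (s) \cap \alpha \subset \gamma$ and $g (s) = s [\alpha \assign \gamma]$ for all $s \in Z$, in particular for all $s \in B$. Restricting to $B$: the conditions $\tmop{Ep} (s) \cap \alpha \subset \gamma$ say $Z \subset M (\alpha, \gamma)$, hence $B \subset M (\alpha, \gamma)$, so by corollary \ref{[a:=e]_iso_B_contained_M(a,e)} the map $s \longmapsto s [\alpha \assign \gamma]$ is already a $(<, +, \cdot, \lambda x. \omega^x)$-isomorphism on $B$; and since it equals $g|_B$, preservation of the binary relation $<_1$ is inherited by restriction. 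This furnishes the required $h$ with $h|_{\alpha} = \tmop{Id}$.

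The main obstacle I expect is the bookkeeping in item 1: making the CNF-closure of $B (t)$ (and of $Z$) precise enough that every sub-call of the induction lands back in the domain, and matching the three clauses of the substitution definition with the three ways $h$ decomposes an element (via $+$, via finite multiplicity, via $\lambda x. \omega^x$). The design point to handle carefully is the role of the extra points $L j$, which let the isomorphism — which sees no multiplication directly — recover finite products as iterated sums. All the purely algebraic facts about the substitution (that it commutes with $+$, $\cdot$ and $\lambda x. \omega^x$, preserves $<$, and respects membership in $M (\alpha, \gamma)$) are already packaged in propositions \ref{[alpha:=e]_proposition1}--\ref{[alpha:=e]_proposition4} and corollary \ref{[a:=e]_iso_B_contained_M(a,e)}, so no new computation is needed there.
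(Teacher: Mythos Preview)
Your proposal is correct and follows essentially the same approach as the paper: item~1 is handled by observing $h(\alpha)=h(\omega^{\alpha})=\omega^{h(\alpha)}\in\mathbbm{E}$ and then running a well-founded induction along the CNF structure of $B(t)$ (the extra multiples $Lj$ serving exactly the purpose you identify), and item~2 is reduced to item~1 by enlarging $B$ to a finite CNF-closed set containing $\alpha$, pulling an isomorphism from $\alpha<^1 t$, and reading off $\gamma=g(\alpha)$. The only cosmetic differences are that the paper proves $\tmop{Ep}(s)\cap\alpha\subset h(\alpha)$ in one stroke before the induction (rather than simultaneously), and for item~2 uses the union $\bigcup_{s\in(B\cup\{\alpha\})\cap[\alpha,\alpha^+)}B(s)$ in place of your CNF-closure $Z$ and then restricts the resulting isomorphism to $B$ directly; your packaging via corollary~\ref{[a:=e]_iso_B_contained_M(a,e)} is an equally clean way to conclude.
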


\begin{proof}
  We prove 2. first.
  
  Suppose $t \in (\alpha, \alpha^+)$, $\alpha <^1 t$ and $B
  \subset_{\tmop{fin}} t$. Consider the set $C \assign \bigcup_{s \in (B \cup
  \{\alpha\}) \cap [\alpha, \alpha^+)} B (s) \subset_{\tmop{fin}} t$, where $B
  (s)$ is the set defined in 1. Now, since $\alpha <^1 t$, then there exists
  an $(<, <_1, +, \lambda x. \omega^x)$-isomorphism $H : C \longrightarrow H
  [C] \subset \alpha$ with $H|_{\alpha} = \tmop{Id}_{\alpha}$. Note that
  $\alpha \in B (\alpha) \subset C$ and therefore, by 1., \\
  $H (\alpha) \in \mathbbm{E} \cap \alpha$. Let $\gamma \assign H (\alpha)$.
  
  Let's show that $\forall s \in B. \tmop{Ep} (s) \cap \alpha \subset \gamma$.
  Let $s \in B$. If $s \in B \cap \alpha$, then $s = H (s) < H (\alpha) =
  \gamma$ because the relation $<$ is preserved by $H$ and so $\tmop{Ep} (s)
  \cap \alpha \subset H (\alpha) = \gamma$. If $s \in B \cap [\alpha, t)$,
  then $s \in B (s) \subset C$ and then, by 1., $\tmop{Ep} (s) \cap \alpha
  \subset H (\alpha) = \gamma$.
  
  Finally, to show that the function $h : B \longrightarrow h [B]$, $h (s)
  \assign s [\alpha \assign \gamma]$ is an ($<, <_1, +, \lambda x. \omega^x$)
  isomorphism with $h|_{\alpha} = \tmop{Id}_{\alpha}$ it is enough to show
  that $h = H|_B$ (since $H|_B : B \longrightarrow H|_B [B] \subset \alpha$ is
  already an ($<, <_1, +, \lambda x. \omega^x$) isomorphism with $(H|_B)
  |_{\alpha} = \tmop{Id}_{\alpha}$ by {\cite{GarciaCornejo0}} proposition
  \ref{iso.restriction} in the appendices). So let $s \in B$. If $s < \alpha$,
  then $s < \gamma = H (\alpha)$ and so $h (s) = s [\alpha \assign \gamma] = s
  = H|_B (s)$. If $s \geqslant \alpha$, then $s \in B (s)$ and then by 1. we
  have that $H|_B (s) = H (s) = s [\alpha \assign H (\alpha)] = s [\alpha
  \assign \gamma] = h (s)$.

  We prove 1.
  
  Let $t \in [\alpha, \alpha)$, $B (t)$ and $h$ as in our hypothesis. Then $h
  (\alpha) = h (\omega^{\alpha}) = \omega^{h (\alpha)}$. So $h (\alpha) \in
  \mathbbm{E}$. Moreover, from the definition of $B (t)$ and using that $h$
  preserves the $<$ relation, it follows that \\
  $\forall s \in B (t) . \tmop{Ep} (s) \cap \alpha \subset B (t)$ and $\forall
  l \in \tmop{Ep} (s) \cap \alpha .l = h (l) < h (\alpha)$; that is, \\
  $\forall s \in B (t) . \tmop{Ep} (s) \cap \alpha \subset h (\alpha)$.
  
  We now show $\forall s \in B (t) .h (s) = s [\alpha \assign h (\alpha)]$ by
  induction on the set $B (t)$ (with the usual order $<$ on the ordinals):
  
  Let $s \in B$ with $s =_{\tmop{CNF}} \omega^{A_1} a_1 + \ldots +
  \omega^{A_u} a_u$.
  
  Suppose $\forall y \in s \cap B (t) .h (y) = y [\alpha \assign h (\alpha)]$.
  \ \ \ \ \ \ \ {\tmstrong{(IH)}}.
  
  If $u \geqslant 2$, then by IH $h (\omega^{A_1} a_1) = \omega^{A_1} a_1
  [\alpha \assign h (\alpha)], \ldots$,$h (\omega^{A_u} a_u) = \omega^{A_1}
  a_u [\alpha \assign h (\alpha)]$ and therefore $h (s) = h (\omega^{A_1} a_1)
  + \ldots + h (\omega^{A_u} a_u) = \omega^{A_1} a_1 [\alpha \assign h
  (\alpha)] + \ldots + \omega^{A_u} a_u [\alpha \assign h (\alpha)] =$\\
  $(\omega^{A_1} a_1 + \ldots + \omega^{A_u} a_u) [\alpha \assign h (\alpha)]
  = s [\alpha \assign h (\alpha)]$.
  
  If $u = 1$ and $a_1 \geqslant 2$, then by IH $h (\omega^{A_1} (a_1 - 1)) =
  \omega^{A_1} (a_1 - 1) [\alpha \assign h (\alpha)]$ and\\
  $h (\omega^{A_1}) = \omega^{A_1} [\alpha \assign h (\alpha)]$. Then $h (s) =
  h (\omega^{A_1} (a_1 - 1)) + h (\omega^{A_1}) =$\\
  \ \ \ \ \ \ \ \ $= \omega^{A_1} (a_1 - 1) [\alpha \assign h (\alpha)] +
  \omega^{A_1} [\alpha \assign h (\alpha)] = \omega^{A_1} a_1 [\alpha \assign
  h (\alpha)] = s [\alpha \assign h (\alpha)]$.
  
  If $u = 1$ and $a_1 = 1$ (that is, $s =_{\tmop{CNF}} \omega^{A_1}$) we have
  two subcases:
  
  $\bullet$ $A_1 < s$. Then by IH $h (A_1) = A_1 [\alpha \assign h (\alpha)]$
  and so $h (s) = \omega^{h (A_1)} = \omega^{A_1 [\alpha \assign h (\alpha)]}
  =$\\
  \ \ \ \ \ \ \ $= \omega^{A_1} [\alpha \assign h (\alpha)] = s [\alpha
  \assign h (\alpha)]$.
  
  $\bullet$ $A_1 = s$. Then $s \in \mathbbm{E}$. If $s < \alpha$, then $h (s)
  = s = s [\alpha \assign h (\alpha)]$ because $h|_{\alpha} =
  \tmop{Id}_{\alpha}$. If $s \geqslant \alpha$, \ \ \ \ \ \ \ \ \ \ then $s =
  \alpha$ (because $\alpha \leqslant s < t < \alpha^+$). So $h (s) = h
  (\alpha) = \alpha [\alpha \assign h (\alpha)] = s [\alpha \assign h
  (\alpha)]$.
\end{proof}

\subsubsection{Cofinality properties of $<^1$.}

What are the ordinals $\alpha$ such that $\alpha <^1 \alpha + 1$?. Well, one
can prove the following:

\begin{exercise}
  \label{a<less>^1a+1} $\forall \alpha \in \tmop{OR} . \alpha <^1 \alpha + 1
  \Longleftrightarrow \alpha \in \tmop{Lim} \mathbbm{E}$.
\end{exercise}

\begin{exercise}
  \label{teo.<less>^1.reaches_alpha2} $\forall \alpha \in \tmop{OR} . \alpha
  \in \tmop{Lim} \mathbbm{E} \Longrightarrow \forall \xi \in (0, \alpha) .
  \alpha <^1 \alpha + \xi$.
\end{exercise}

The consideration of the previous exercises revels two very important
properties of the relation $<^1$ which we prove now:

\begin{proposition}
  \label{<less>^1.implies.cofinal.sequence}(First fundamental cofinality
  property of $<^1$). Let $\alpha \in \mathbbm{E}$ and suppose $\alpha <^1 s$
  for some $s \in (\alpha, \alpha^+)$. Then for any $t \in [\alpha, s)$ there
  exists a sequence $(c_{\xi})_{\xi \in X} \subset \alpha \cap \mathbbm{E}$
  such that $\tmop{Ep} (t) \cap \alpha \subset c_{\xi}$, $c_{\xi}
  \underset{\tmop{cof}}{\longhookrightarrow} \alpha$ and $c_{\xi} <_1 t
  [\alpha \assign c_{\xi}]$.
\end{proposition}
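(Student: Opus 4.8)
The plan is to reduce the whole statement to Proposition~\ref{Isomorphisms_are_substitutions}.2, exploiting the freedom to choose the finite witness set both to force $t$ into the domain of the substitution isomorphism and to make the resulting epsilon numbers climb toward $\alpha$. I treat the substantive case $t \in (\alpha, s)$; for $t = \alpha$ one has $\tmop{Ep} (t) \cap \alpha = \emptyset$ and $t [\alpha \assign c_{\xi}] = c_{\xi}$, so the content lies entirely in $t > \alpha$. First I would extract from the hypothesis the single $<_1$-fact to be transported: by Proposition~\ref{<less>=^1_implies_<less>=_1}.1, $\alpha <^1 s$ gives $\alpha <_1 s$, and since $\alpha < t < s$, $\leqslant_1$-connectedness yields $\alpha <_1 t$.

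The crucial point is that Proposition~\ref{Isomorphisms_are_substitutions}.2 must be applied to $s$, not to $t$: a set $B \subset_{\tmop{fin}} t$ cannot contain $t$, yet I need $t$ itself (alongside $\alpha$) inside the finite witness set, both to read off $\tmop{Ep} (t) \cap \alpha$ and to push $\alpha <_1 t$ down through the isomorphism. So, fixing an arbitrary threshold $\rho \in \alpha$, I would take $B \assign \{\rho, \alpha, t\} \subset_{\tmop{fin}} s$ (legitimate, since $\rho < \alpha < t < s$) and apply Proposition~\ref{Isomorphisms_are_substitutions}.2 using $\alpha <^1 s$. This produces $\gamma \in \mathbbm{E} \cap \alpha$ with $\tmop{Ep} (s') \cap \alpha \subset \gamma$ for every $s' \in B$, together with an $(<, <_1, +, \lambda x. \omega^x)$-isomorphism $h : B \longrightarrow h [B] \subset \alpha$, $s' \longmapsto s' [\alpha \assign \gamma]$, satisfying $h|_{\alpha} = \tmop{Id}_{\alpha}$.

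From this $\gamma$ the three required properties drop out simultaneously. It lies in $\mathbbm{E} \cap \alpha$; taking $s' = t$ gives $\tmop{Ep} (t) \cap \alpha \subset \gamma$; since $\rho, \alpha \in B$ and $h$ preserves $<$ with $h|_{\alpha} = \tmop{Id}_{\alpha}$, we get $\rho = h (\rho) < h (\alpha) = \gamma$, so $\gamma > \rho$; and since $\alpha <_1 t$ with $\alpha, t \in B$ and $h$ preserves $<_1$, we get $\gamma = h (\alpha) <_1 h (t) = t [\alpha \assign \gamma]$. Writing $c_{\rho} \assign \gamma$ for the ordinal produced above and letting $\rho$ range over $\alpha$, the family $(c_{\rho})_{\rho \in \alpha} \subset \alpha \cap \mathbbm{E}$ satisfies $c_{\rho} > \rho$ for every $\rho$, hence $c_{\rho} \underset{\tmop{cof}}{\longhookrightarrow} \alpha$ (one may thin it to a strictly increasing enumeration if desired), while each $c_{\rho}$ also satisfies $\tmop{Ep} (t) \cap \alpha \subset c_{\rho}$ and $c_{\rho} <_1 t [\alpha \assign c_{\rho}]$. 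The only genuine obstacle is the bookkeeping just stressed --- invoking Proposition~\ref{Isomorphisms_are_substitutions}.2 at $s$ rather than at $t$ so that $t$ sits inside the substitution isomorphism's domain --- after which everything follows mechanically from $h$ preserving $<$ and $<_1$ and fixing $\alpha$ pointwise from below; in particular I do not expect to need the $m$-transfer of Remark~\ref{remark_m-iso_<less>_1-iso}.
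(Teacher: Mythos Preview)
Your proof is correct and follows essentially the same approach as the paper's: for each threshold below $\alpha$, apply the $<^1$ hypothesis to a finite witness set containing the threshold, $\alpha$, and $t$, obtain a substitution-type isomorphism, and read off the required epsilon number $\gamma$ together with $\gamma <_1 t[\alpha \assign \gamma]$. The only difference is packaging --- you invoke Proposition~\ref{Isomorphisms_are_substitutions}.2 on the minimal set $\{\rho,\alpha,t\}$, whereas the paper uses the larger set $B(t)\cup\{\delta\}$ and appeals to part~1 directly --- but the underlying argument is identical (and your explicit derivation of $\alpha <_1 t$ before transporting it through $h$ is in fact cleaner than the paper's slightly compressed phrasing).
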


\begin{proof}
  Let $\alpha \in \mathbbm{E}$ and suppose $\alpha <^1 s$ for some $s \in
  (\alpha, \alpha^+)$. Let $t \in [\alpha, s)$. We define \\
  $M \assign \max (\tmop{Ep} (t) \cap \alpha)$. Let $\delta \in [M + 1,
  \alpha)$ be arbitrary.
  
  Consider the set \\
  $B_{\delta} \assign S_{\tmop{CNF}} (t) \cup \{L j|L q \in S_{\tmop{CNF}} (t)
  \wedge L \in \mathbbm{P} \wedge q \in [1, \omega) \wedge j \in \{1, \ldots,
  q\}\} \cup \{\delta\} \subset_{\tmop{fin}} t + 1 \leqslant s$. By
  hypothesis, there exists an $(<, <_1, +, \lambda x. \omega^x)$ isomorphism
  $h_{\delta} : B_{\delta} \longrightarrow h [B_{\delta}] \subset \alpha$ with
  \\
  $h_{\delta} |_{\alpha} = \tmop{Id}_{\alpha}$. Moreover, by proposition
  \ref{Isomorphisms_are_substitutions}, $h_{\delta} (\alpha) \in \mathbbm{E}$,
  $\tmop{Ep} (t) \cap \alpha \subset h_{\delta} (\alpha)$ and \\
  $\delta = h_{\delta} (\delta) < h_{\delta} (\alpha) <_1 h_{\delta} (t) = t
  [\alpha \assign h_{\delta} (\alpha)]$. Therefore, the set $P (t) \assign
  \{h_{\delta} (\alpha) | \delta \in [M + 1, \alpha)\} \subset \alpha$ is
  confinal in $\alpha$ and it satisfies $\forall c \in P (t) . \tmop{Ep} (t)
  \cap \alpha \subset c \wedge c <_1 t [\alpha \assign c]$. From this follows
  the claim of this proposition.
\end{proof}

The next result is a more general version of lemma 3.11 appearing in
{\cite{Wilken1}}. It's proof uses the main argument used in Wilken's proof.
There is, however, one difference that we want to stress (something that may
be overlooked by the reader): For a class of ordinals $\emptyset \neq X
\subset \tmop{OR}$, we have defined $\tmop{Lim} (X) \assign \{\alpha \in
\tmop{OR} | \sup (X \cap \alpha) = \alpha\}$; that is, in general, $\tmop{Lim}
(X) X$. This notion is very important in the whole of our work (and
particularly, in the next proposition).

\begin{proposition}
  \label{2nd_Fund_Cof_Property_<less>^1}(Second fundamental cofinality
  property of $<^1$) \\
  Let $\alpha \in \mathbbm{E}$ and $t \in [\alpha, \alpha^+)$. Assume $\alpha
  \in \tmop{Lim} \{\gamma \in \mathbbm{E} | \tmop{Ep} (t) \cap \alpha \subset
  \gamma \wedge \gamma \leqslant_1 t [\alpha \assign \gamma]\}$. \\
  Then $\forall s \in [\alpha, t + 1] . \alpha \leqslant^1 s$.
\end{proposition}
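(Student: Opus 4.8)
The plan is to prove the equivalent statement $\alpha <^1 t+1$ by induction on $t \in [\alpha,\alpha^+)$; once this is in hand, $\leqslant^1$-connectedness (proposition~\ref{<less>=^1_implies_<less>=_1}) immediately upgrades it to $\alpha \leqslant^1 s$ for every $s \in [\alpha,t+1]$. Throughout write $G_r \assign \{\gamma \in \mathbbm{E} \mid \tmop{Ep}(r)\cap\alpha \subset \gamma \wedge \gamma \leqslant_1 r[\alpha\assign\gamma]\}$ for $r \in [\alpha,\alpha^+)$, so that the hypothesis reads $\alpha \in \tmop{Lim}(G_t)$. The structural observation that makes the whole argument go through is that a finite set $Z \subset_{\tmop{fin}} t+1$ consists of ordinals $\leqslant t$: there is no point of $Z$ strictly above $t$, so the largest ``upper'' point any isomorphism must treat is $t$ itself, for which the relation $\gamma <_1 t[\alpha\assign\gamma]$ is handed to us by $\gamma \in G_t$ (strictness holds because $t>\alpha$ forces $t[\alpha\assign\gamma] > \gamma$ via proposition~\ref{[alpha:=e]_proposition2}).

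First I would extract from the induction the anchor $\alpha \leqslant_1 t$. If $t=\alpha$ this is vacuous. If $t$ is a limit, then for each $t' \in [\alpha,t)$ one checks $\alpha \in \tmop{Lim}(G_{t'})$: every sufficiently large $\gamma \in G_t$ also lies in $G_{t'}$, since $\alpha \leqslant t' < t$ gives $\gamma \leqslant t'[\alpha\assign\gamma] \leqslant t[\alpha\assign\gamma]$ (proposition~\ref{[alpha:=e]_proposition2}) and $\leqslant_1$-connectedness then yields $\gamma \leqslant_1 t'[\alpha\assign\gamma]$; the induction hypothesis gives $\alpha <^1 t'$, and $\leqslant^1$-continuity (proposition~\ref{<less>=^1_implies_<less>=_1}) produces $\alpha \leqslant^1 t$. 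If $t=t''+1$, then $(t''+1)[\alpha\assign\gamma] = t''[\alpha\assign\gamma]+1$ (proposition~\ref{[alpha:=e]_proposition4}), and the same connectedness argument gives $\alpha \in \tmop{Lim}(G_{t''})$, so the induction hypothesis applied to $t''$ already delivers $\alpha \leqslant^1 t''+1 = \alpha \leqslant^1 t$. In every case $\alpha \leqslant^1 t$, hence $\alpha \leqslant_1 t$ by proposition~\ref{<less>=^1_implies_<less>=_1}.

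For the bump to $\alpha <^1 t+1$, fix $Z \subset_{\tmop{fin}} t+1$; by \cite{GarciaCornejo0} proposition~\ref{iso.restriction} I may assume $\alpha \in Z$. Put $C \assign \{m(a) \mid a \in Z\cap\alpha \wedge m(a)<\alpha\}$ and, using $\alpha \in \tmop{Lim}(G_t)$, choose $\gamma \in G_t$ larger than $\max(Z\cap\alpha)$, $\max C$, and every epsilon number of $\bigcup_{x\in Z}\tmop{Ep}(x)\cap\alpha$; the last condition forces $Z \subset M(\alpha,\gamma)$, so the upper points of $Z$ lie in $A \assign (\alpha,\alpha^+)\cap M(\alpha,\gamma)$. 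Define $h:Z \longrightarrow h[Z]$ by $h(x)\assign x[\alpha\assign\gamma]$. Then $h[Z]\subset\alpha$, $h|_{Z\cap\alpha}=\tmop{Id}$ (proposition~\ref{[alpha:=e]_proposition1}), and $h$ preserves $<,+,\lambda x.\omega^x$ by corollary~\ref{[a:=e]_iso_B_contained_M(a,e)}. It remains to check that $h$ preserves $<_1$, which I would do by the same kind of case split used at the end of the proof of theorem~\ref{Wilken_Theorem1}, with $\gamma$ playing the role of $\rho$: pairs below $\alpha$ are fixed; a pair of upper points is handled by remark~\ref{remark_m-iso_<less>_1-iso} applied to $A$ (corollary~\ref{A_[alpha:=e]_isomorphisms}); for $\alpha$ against an upper point $y \leqslant t$ both $\alpha <_1 y$ and $\gamma <_1 h(y)$ follow by $\leqslant_1$-connectedness from the anchors $\alpha <_1 t$ and $\gamma <_1 t[\alpha\assign\gamma]$ (these upper cases being vacuous when $t=\alpha$, where one may instead invoke exercise~\ref{a<less>^1a+1}); for a lower $x$ against $\alpha$ one uses $m(x)\geqslant\alpha \Rightarrow m(x)>\gamma$ versus $m(x)\in C \Rightarrow m(x)<\gamma$; and the mixed lower-against-upper case is closed by $\leqslant_1$-transitivity through $\gamma$.

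The main obstacle is the apparent circularity: verifying that $h$ preserves $<_1$ seems to demand knowing $\alpha <_1 y$ for the source pairs before $\alpha <^1 t+1$ has been proved. This dissolves precisely because $Z \subset_{\tmop{fin}} t+1$ can contain no ordinal above $t$, so the single source anchor required is $\alpha <_1 t$, and that anchor is supplied by the induction on $t$ rather than by the statement under proof. Everything remaining is routine bookkeeping with the substitution calculus of propositions~\ref{[alpha:=e]_proposition2}--\ref{[alpha:=e]_proposition4} together with the connectedness, continuity and transitivity of $\leqslant_1$.
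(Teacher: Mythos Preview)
Your proposal is correct and follows essentially the same approach as the paper. The only cosmetic difference is the packaging of the induction: the paper fixes $t$ once and inducts on $s \in [\alpha,t+1]$ (so the anchor $\alpha \leqslant_1 l$ in the successor step $s=l+1$ comes directly from the inductive hypothesis $\alpha \leqslant^1 l$), whereas you induct on $t$ itself and recover the anchor $\alpha \leqslant_1 t$ by first pushing the hypothesis down to $G_{t'}$ for $t'<t$ and then applying your inductive hypothesis together with $\leqslant^1$-continuity/connectedness. The construction of the witness $h(x)=x[\alpha\assign\gamma]$ and the case-by-case verification that $h$ preserves $<_1$ are identical in both proofs.
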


\begin{proof}
  Let $\alpha \in \mathbbm{E}$, $t \in [\alpha, \alpha^+)$ and assume $\alpha
  \in \tmop{Lim} \{\gamma \in \mathbbm{E} | \tmop{Ep} (t) \cap \alpha \subset
  \gamma \wedge \gamma \leqslant_1 t [\alpha \assign \gamma]\}$.
  
  We prove by induction: $\forall s \in [\alpha, t + 1] . \alpha \leqslant^1
  s$.
  
  For $s = \alpha$ it is clear the claim holds. So, from now on, suppose $s >
  \alpha$.
  
  Case $s \in \tmop{Lim} \cap [\alpha, t + 1]$. Our induction hypothesis is
  $\alpha \leqslant^1 \beta$ for all $\beta \in [\alpha, t + 1] \cap s$. Thus
  $\alpha \leqslant^1 s$ by $\leqslant^1$-continuity.
  
  Suppose $s = l + 1 \in [\alpha, t + 1]$. Our induction hypothesis is $\alpha
  \leqslant^1 l$. \ \ \ \ \ \ \ \ \ \ {\tmstrong{(IH)}}
  
  Let $B \subset_{\tmop{fin}} s = l + 1$. Without loss of generality, suppose
  $\alpha, l \in B$ and write $B = X \cup Y$ where $X \assign B \cap \alpha$,
  $Y \assign B \cap [\alpha, l]$, $Y \assign \{y_1, \ldots, y_m | \alpha = y_1
  < y_2 < \ldots < y_m = l\}$.

  Note $l \in [\alpha, t] \subset [\alpha, \alpha^+) \ni t$ implies that
  $\forall e \in \tmop{Ep} (l) \cup \tmop{Ep} (t) .e \leqslant \alpha$;
  moreover, since $\tmop{Ep} (l) \cup \tmop{Ep} (t)$ is finite and $\alpha \in
  \tmop{Lim} \{\gamma \in \mathbbm{E} | \tmop{Ep} (t) \cap \alpha \subset
  \gamma \wedge \gamma \leqslant_1 t [\alpha \assign \gamma]\}$, then actually
  \\
  $\alpha \in \tmop{Lim} \{\gamma \in \mathbbm{E} | (\tmop{Ep} (l) \cup
  \tmop{Ep} (t)) \cap \alpha \subset \gamma \wedge \gamma \leqslant_1 t
  [\alpha \assign \gamma]\}$. \ \ \ \ \ \ (*). \\
  But for any $\gamma \in \mathbbm{E}$ such that $(\tmop{Ep} (l) \cup
  \tmop{Ep} (t)) \cap \alpha \subset \gamma$ we have $\gamma \leqslant l
  [\alpha \assign \gamma] \leqslant t [\alpha \assign \gamma]$; therefore, by
  $\leqslant_1$-connectedness and (*) we conclude $\alpha \in \tmop{Lim}
  \{\gamma \in \mathbbm{E} | \tmop{Ep} (l) \cap \alpha \subset \gamma \wedge
  \gamma \leqslant_1 l [\alpha \assign \gamma]\}$.

  Let \ $p \assign \max \bigcup_{i \in \{1, \ldots, m\}} (\tmop{Ep} (y_i) \cap
  \alpha)$ and consider the set \\
  $M \assign \{\gamma \in \alpha \cap \mathbbm{E} |p < \gamma \supset X \wedge
  \gamma \leqslant_1 l [\alpha \assign \gamma]\}$. Let $C \assign \{m (a) |a
  \in (B \cap \alpha) \wedge m (a) < \alpha\}$. Since $C \subset_{\tmop{fin}}
  \alpha$ and by our previous observations $M$ is confinal in $\alpha$, then
  $(\max C, \alpha) \cap M \neq \emptyset$. \\
  Let $\gamma \assign \min (M \cap (\max C, \alpha)) \in M$. We define the
  function $h : B \longrightarrow h [B] \subset \alpha$ as \\
  $h (x) \assign x [\alpha \assign \gamma]$ for all $x \in B$.

  Let's see that $h$ is an $(<, <_1, +, \lambda x. \omega^x)$-isomorphism.
  
  That $h$ preserves is an $(<, +, \lambda x. \omega^x)$-isomorphism follows
  directly from the fact that \\
  $X \cup \bigcup_{i \in \{1, \ldots, m\}} (\tmop{Ep} (y_i) \cap \alpha)
  \subset \gamma$ and corollary \ref{[a:=e]_iso_B_contained_M(a,e)}.

  Let's see that $h$ also preserves $<_1$.
  \begin{itemizedot}
    \item First observe that by IH $\alpha \leqslant^1 l$ and so $\alpha
    \leqslant_1 l$; subsequently, by $\leqslant_1$-connectedness it follows
    $\alpha \leqslant_1 y_i$ for any $y_i \in Y$. So we need to show $h
    (\alpha) \leqslant_1 h (y_i)$ for any $y_i \in Y$. But this is easy
    because $h (\alpha) = \gamma \leqslant_1 l [\alpha \assign \gamma]$ by the
    way we took $\gamma$, and since \\
    $\forall y_i \in Y.h (\alpha) \leqslant h (y_i) \leqslant h (l) = l
    [\alpha \assign \gamma]$, then $y_i \in Y.h (\alpha) \leqslant_1 h (y_i)$
    by $\leqslant_1$-connectedness.
    
    \item Clearly $x_1 \leqslant_1 x_2 \Longleftrightarrow h (x_1) = x_1
    \leqslant_1 x_2 = h (x_2)$ for any $x_1, x_2 \in X$.
    
    \item For $x \in X$ and $y_i \in Y$, $x \leqslant_1 y_i \Longrightarrow h
    (x) = x \leqslant_1 h (y_i)$ by $\leqslant_1$-connectedness (because \\
    $x = h (x) \leqslant h (y_i) \leqslant y_i$ for any $i \in \{1, \ldots,
    m\}$).
    
    \item For $x \in X$, and $y_i \in Y$, $x \nleqslant_1 y_i \Longrightarrow
    x \nleqslant_1 \alpha$ (otherwise, using the fact that we know $\alpha
    \leqslant_1 y_i$ for all $i \in \{1, \ldots, m\}$, we would have $x
    \leqslant_1 y_i$ by $\leqslant_1$-transitivity). So $m (x) \in C$ and
    then\\
    $x < m (x) < \gamma \leqslant h (y_i)$; therefore $h (x) = x \nleqslant_1
    h (y_i)$.
    
    \item For $y_i, y_j \in Y \cap (\alpha, \alpha^+)$, $y_i \leqslant_1 y_j
    \Longleftrightarrow y_i \leqslant y_j \leqslant m (y_i)
    \underset{\text{\tmop{corollary}
    \ref{A_[alpha:=e]_isomorphisms}}}{\Longleftrightarrow}$\\
    $h (y_i) = y_i [\alpha \assign \gamma] \leqslant y_j [\alpha \assign
    \gamma] = h (y_j) \leqslant m (y_i) [\alpha \assign \gamma] = m (y_i
    [\alpha \assign \gamma]) = m (h (y_i)) \Longleftrightarrow$\\
    $h (y_i) \leqslant_1 h (y_j)$.
  \end{itemizedot}
  All the previous cases show that $h$ preserves $<_1$ too and from all our
  work we have that $h$ is indeed an $(<, =, <_1, +, \lambda x.
  \omega^x)$-isomorphism. This shows $\alpha \leqslant^1 l + 1$.
\end{proof}

\section{Covering theorem}

From the definition of $<^1$ it is very easy to see that $\alpha <^1 \beta
\Longrightarrow \alpha <_1 \beta$. But, what about the implication $\alpha <^1
\beta \Longleftarrow \alpha <_1 \beta$?. From exercise \ref{a<less>^1a+1} (or
by use of the first fundamental cofinality property of $<^1$) it follows that
this implication does not hold in general. The motivation for the whole of
this section is the study of such implication: The main result is lemma
\ref{CoveringLemma1} (covering lemma), which has two important corollaries:
The proof of the minimality of the substitutions as witnesses of $\alpha <_1
\beta$ for $\beta$ which are closed under the cover construction and the
solution to the question when $\alpha <^1 \beta \Longleftarrow \alpha <_1
\beta$.

We introduce the following definitions as a preparation for the covering
lemma.

\begin{definition}
  The following functions will be used in the main lemma of this section. For
  an ordinal $t =_{\tmop{CNF}} \omega^{T_1} t_1 + \ldots + \omega^{T_n} t_n$
  we define the ordinals\\
  $d q \assign \left\{ \begin{array}{l}
    0 \text{ iff } q \nin \mathbbm{P}\\
    Q_m \text{ iff } q = \omega^Q \text{ with } Q =_{\tmop{CNF}}
    \omega^{Q_1} q_1 + \ldots + \omega^{Q_m} q_m
  \end{array} \right.$,\\
  $\pi t \assign \omega^{T_1}$, and\\
  $\eta t \assign \max \{t, \pi t + d \pi t\}$.
\end{definition}

\begin{proposition}
  \label{pi.eta.properties}Let $\alpha, t, s \in \tmop{OR}$. Then
  \begin{enumeratenumeric}
    \item $\pi (t + 1) = \pi t$; moreover, if $t =_{\tmop{CNF}} \omega^{T_1}
    t_1 + \ldots + \omega^{T_n} t_n$, then $d \pi t \leqslant T_1 \leqslant
    \omega^{T_1} = \pi t$.
    
    \item Suppose $t \leqslant s$. Then $\pi t \leqslant \pi s$, $\pi t + d
    \pi t \leqslant \pi s + d \pi s$ and therefore $\eta t \leqslant \eta s$.
    
    \item If $t \geqslant \alpha \in \mathbbm{E}$ then $\alpha 2 \leqslant
    \eta t$
    
    \item $\pi (\pi t) = \pi t$, $\pi (\pi t + d \pi t) = \pi t$ and so $\eta
    (\eta t) = \eta t$
  \end{enumeratenumeric}
\end{proposition}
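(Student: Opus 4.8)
The plan is to prove each of the four items by unwinding the Cantor Normal Form definitions of $\pi$, $d$, and $\eta$, since all four are essentially bookkeeping facts about how these operations interact with the ordering and with the CNF. The main point to keep straight is the two-level structure of $d$: for a principal $q = \omega^Q$, the value $dq$ reaches \emph{inside} the exponent $Q$ and extracts the exponent $Q_m$ of the \emph{last} CNF term of $Q$, whereas $\pi$ merely extracts the \emph{leading} term.

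For item 1, I would observe that $t+1$ and $t$ have the same leading CNF term whenever $t$ is infinite (and the finite cases are trivial), giving $\pi(t+1) = \pi t$ immediately. For the inequality chain $d\pi t \leqslant T_1 \leqslant \omega^{T_1} = \pi t$: writing $\pi t = \omega^{T_1}$, by definition $d\pi t = (T_1)_m$, the exponent of the last CNF term of $T_1$; since that last term $\omega^{(T_1)_m}(\cdots)$ is a summand of $T_1$ we get $(T_1)_m \leqslant \omega^{(T_1)_m} \leqslant T_1$, and $T_1 \leqslant \omega^{T_1}$ is the standard fact $\beta \leqslant \omega^\beta$. (If $\pi t$ is finite, $d\pi t = 0$ and the chain is clear.)

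For item 2, monotonicity of $\pi$ under $t \leqslant s$ is the standard fact that the leading exponent is monotone, so $\pi t = \omega^{T_1} \leqslant \omega^{S_1} = \pi s$. The delicate inequality is $\pi t + d\pi t \leqslant \pi s + d\pi s$; here I would split on whether $T_1 < S_1$ or $T_1 = S_1$. If $T_1 = S_1$ then $\pi t = \pi s$ and $d\pi t = d\pi s$ and equality holds. If $T_1 < S_1$, then using item 1 we have $\pi t + d\pi t \leqslant \pi t + \pi t = \omega^{T_1}\cdot 2 \leqslant \omega^{S_1} \leqslant \pi s \leqslant \pi s + d\pi s$, where $\omega^{T_1}\cdot 2 \leqslant \omega^{S_1}$ follows from $T_1 < S_1$ (so $T_1 + 1 \leqslant S_1$ and $\omega^{T_1}\cdot 2 \leqslant \omega^{T_1}\cdot\omega = \omega^{T_1+1} \leqslant \omega^{S_1}$). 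Then $\eta t \leqslant \eta s$ follows since $\eta$ is the max of two quantities each of which I have shown is monotone. I expect this case split, and getting the $\omega^{T_1}\cdot 2 \leqslant \omega^{S_1}$ estimate right, to be the main obstacle, since it is the one spot where the additive-principal absorption has to be applied carefully rather than just quoted.

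For item 3, from $t \geqslant \alpha \in \mathbb{E}$ and monotonicity (item 2) I get $\eta t \geqslant \eta \alpha$, so it suffices to treat $t = \alpha$: since $\alpha \in \mathbb{E}$ means $\alpha = \omega^\alpha$, we have $\pi\alpha = \omega^\alpha = \alpha$ and $d\pi\alpha = d\alpha = \alpha_m$ where $\alpha =_{\mathrm{CNF}} \omega^\alpha$ has a single term with exponent $\alpha$, so $d\alpha = \alpha$; hence $\pi\alpha + d\pi\alpha = \alpha + \alpha = \alpha 2$, and $\eta\alpha = \max\{\alpha, \alpha 2\} = \alpha 2$. For item 4, the idempotence facts follow because $\pi t = \omega^{T_1}$ is already additively principal, so its own leading term is itself, giving $\pi(\pi t) = \pi t$; and $\pi t + d\pi t$ has leading term $\pi t = \omega^{T_1}$ (since by item 1 $d\pi t \leqslant \pi t$, so the summand $d\pi t$ does not create a larger leading term), giving $\pi(\pi t + d\pi t) = \pi t$. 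Finally $\eta(\eta t) = \eta t$ follows by computing $\eta$ of whichever of $t$ or $\pi t + d\pi t$ attains the max, using the two idempotence identities just established together with $\eta t \geqslant \pi t + d\pi t \geqslant \pi t$.
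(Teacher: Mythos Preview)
Your proposal is correct and supplies exactly the routine CNF bookkeeping that the paper omits (the paper's proof is simply ``Left to the reader''). The case split in item~2 on $T_1<S_1$ versus $T_1=S_1$, with the absorption estimate $\omega^{T_1}\cdot 2\leqslant\omega^{T_1+1}\leqslant\omega^{S_1}$, is the natural way to handle the only nonobvious inequality; the remaining items are direct unwindings of the definitions, and your treatment of the $T_1\in\mathbbm{E}$ edge case (where $d\pi t=\pi t$ and $\pi t+d\pi t=\omega^{T_1}\cdot 2$) in item~4 is implicitly covered by your phrasing, since the leading term of $\omega^{T_1}\cdot 2$ is still $\omega^{T_1}$.
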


\begin{proof}
  {\color{orange} Left to the reader.}
\end{proof}

\begin{proposition}
  \label{eta(t)_m(t)_and_<less>_1}(Properties of $\eta t$ and $<_1$). Let
  $\alpha \in \mathbbm{E}$ and $t \in (\alpha, \alpha^+)$. Then
  \begin{enumeratenumeric}
    \item $t \nin \mathbbm{P} \Longrightarrow m (t) = t$; moreover, $t \in
    \mathbbm{P} \Longrightarrow m (t) = \pi t + d \pi t = \max \{t, \pi t + d
    \pi t\} = \eta t$. \\
    Particularly, $m (t) \leqslant \eta t$.
    
    \item $\forall u \in (\alpha, t] .m (u) \leqslant \eta t$. Therefore,
    $\forall s \in [\alpha, \alpha^+) . \eta s = \left\{ \begin{array}{l}
      \max \{m (u) |u \in (\alpha, s]\} \text{ iff } s > \alpha 2\\
      \alpha 2 \text{ iff } s \leqslant \alpha 2
    \end{array} \right.$.
    
    \item $\alpha <_1 t \Longleftrightarrow \alpha <_1 \eta t$
    
    \item If $l \in [\alpha, t]$, then $\eta l \leqslant \eta t$
    
    \item It indeed happens that $m (t) < \eta t$.
  \end{enumeratenumeric}
\end{proposition}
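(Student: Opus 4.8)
The plan is to establish the five items in the order 1, 4, 2, 3, 5, since each later one feeds on the earlier ones. Item 1 is a direct reading of corollary \ref{m(alpha)_for_alpha_not_epsilon_number}, which applies because $t \in (\alpha, \alpha^+)$ forces $t \nin \mathbbm{E}$: if $t \nin \mathbbm{P}$ its CNF has $n \geqslant 2$ or leading coefficient $\geqslant 2$, so $m(t) = t \leqslant \max\{t, \pi t + d \pi t\} = \eta t$; if $t \in \mathbbm{P}$, write $t = \omega^{A}$, so that $\pi t = t$ and $d t$ is, by definition, the last CNF-exponent $B_s$ of $A$, and corollary \ref{m(alpha)_for_alpha_not_epsilon_number}(b) gives $m(t) = t + B_s = \pi t + d \pi t = \eta t$ (using $\pi t + d \pi t \geqslant t$ for the last equality). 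Item 4 needs no work: it is the monotonicity of $\eta$ already recorded in proposition \ref{pi.eta.properties}(2).

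Item 2 is where the real content lies. The bound $m(u) \leqslant \eta t$ for $u \in (\alpha, t]$ is immediate from item 1 ($m(u) \leqslant \eta u$) and proposition \ref{pi.eta.properties}(2) ($\eta u \leqslant \eta t$), so it remains to pin down $\eta s$ in the two regimes. When $s \leqslant \alpha 2$, the squeeze $\alpha = \omega^{\alpha} \leqslant s \leqslant \alpha 2$ forces $\pi s = \omega^{\alpha} = \alpha$, and since the CNF of $\alpha$ is $\omega^{\alpha}$ we get $d \pi s = \alpha$, hence $\pi s + d \pi s = \alpha 2$ and $\eta s = \max\{s, \alpha 2\} = \alpha 2$ (proposition \ref{pi.eta.properties}(3) already gives $\eta s \geqslant \alpha 2$). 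When $s > \alpha 2$ I must produce a $u \in (\alpha, s]$ with $m(u) = \eta s$, and I would split on the value of $\eta s$. If $\eta s = s$, then $u = s$ works, because $\eta s = s$ forces $m(s) = s$ (directly when $s \nin \mathbbm{P}$, and through item 1 when $s \in \mathbbm{P}$). If $\eta s = \pi s + d \pi s > s$, then $\eta s > \alpha 2$ rules out $\pi s = \alpha$, so $u \assign \pi s$ is a non-epsilon additive principal in $(\alpha, s] \subset (\alpha, \alpha^+)$, and item 1 together with $\pi(\pi s) = \pi s$ from proposition \ref{pi.eta.properties}(4) gives $m(\pi s) = \pi s + d \pi s = \eta s$.

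With item 2 in hand, item 3 is short. The implication $\alpha <_1 \eta t \Rightarrow \alpha <_1 t$ is $\leqslant_1$-connectedness applied to $\alpha < t \leqslant \eta t$. For the converse, if $t \leqslant \alpha 2$ then $\eta t = \alpha 2$ and both sides hold, since $\alpha \in \mathbbm{E}$ yields $\alpha <_1 \alpha 2$ by corollary \ref{Wilken_Corollary1} (and then $\alpha <_1 t$ by connectedness); if $t > \alpha 2$, item 2 supplies $u_0 \in (\alpha, t]$ with $m(u_0) = \eta t$, so $u_0 \leqslant_1 m(u_0) = \eta t$ by the defining property of $m$ (from \cite{GarciaCornejo0}), while $\alpha <_1 t$ and connectedness give $\alpha <_1 u_0$; $\leqslant_1$-transitivity then delivers $\alpha <_1 \eta t$.

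Item 5 only asks for one witness of strictness: taking $t \assign \omega^{\alpha 2} + 1 \in (\alpha, \alpha^+)$ we have $t \nin \mathbbm{P}$, so $m(t) = t$ by item 1, whereas $\pi t = \omega^{\alpha 2}$ has $d \pi t = \alpha$ (the last CNF-exponent of $\alpha 2 = \omega^{\alpha} 2$), so $\eta t = \omega^{\alpha 2} + \alpha > \omega^{\alpha 2} + 1 = t = m(t)$. I expect the main obstacle to be the existence half of item 2 in the regime $s > \alpha 2$: choosing the correct witness ($u = s$ versus $u = \pi s$) and, in the second case, ruling out the boundary possibility $\pi s = \alpha$; once that is settled, the remaining items are bookkeeping with the CNF together with the connectedness and transitivity of $\leqslant_1$ from \cite{GarciaCornejo0}.
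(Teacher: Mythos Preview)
Your proof is correct and is precisely the kind of argument the paper implicitly expects: the paper's own ``proof'' of this proposition is just ``Left to the reader,'' and your write-up supplies exactly the CNF bookkeeping via corollary \ref{m(alpha)_for_alpha_not_epsilon_number} and proposition \ref{pi.eta.properties}, together with $\leqslant_1$-connectedness and transitivity, that one would fill in. There is nothing to compare against beyond that.
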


\begin{proof}
  {\color{orange} Left to the reader.}
\end{proof}

\begin{definition}
  \label{C(delta)_covering}For any $L \in \mathbbm{P}$, let
  
  {\noindent}$F (L) \assign \left\{ \begin{array}{l}
    \begin{array}{l}
      \{\omega^{\omega^{V_1} v_1 + \omega^{V_2} v_2 + \ldots + \omega^{V_g}
      \cdot j} |g \in [1, t], j \in [1, v_g]\} \cup\\
      \{\omega^{\omega^{V_1} v_1 + \omega^{V_2} v_2 + \ldots + \omega^{V_g}
      \cdot j} + V_g |g \in [1, t], j \in [1, v_g]\}
    \end{array} \text{\tmop{if}} \begin{array}{l}
      L = \omega^Z \nin \mathbbm{E} \wedge\\
      Z =_{\tmop{CNF}} \sum_{j = 1}^t \omega^{V_j} v_j
    \end{array}\\
    \\
    \\
    \{L\} \text{\tmop{if}} L \in \mathbbm{E} \cup \{1\}
  \end{array} \right.$

  Now, for any $\delta \in \tmop{OR}$ with $\delta =_{\tmop{CNF}} L_1 l_1 +
  \ldots + L_n l_n$, let

  {\noindent}$C_1 (\delta) \assign \bigcup_{L_i \nin \mathbbm{E}} F (L_i)$ and

  {\noindent}$C_2 (\delta) \assign \{L_i j|i \in [1, n], j \in [1, l_i]\}
  \cup \{\Sigma_{i = 1}^j L_i l_i |j \in [1, n]\}$.

  Finally, for any $\delta \in \tmop{OR}$ with $\delta =_{\tmop{CNF}} L_1 l_1
  + \ldots + L_n l_n$, we define (by recursion on \\
  $(\tmop{OR}, <)$) the set $C (\delta)$ as

  {\noindent}$C (\delta) \assign C_1 (\delta) \cup \bigcup_{\sigma \in C_1
  (\delta)} C_2 (\sigma) \cup C_2 (\delta) \cup \bigcup_{V \in Y (\delta)} C
  (V)$, where \\
  $Y (\delta) \assign \{V_{i j} | \exists L_i \nin \mathbbm{E} .L_i = \omega^Z
  \wedge Z =_{\tmop{CNF}} \sum_{j = 1}^{t (i)} \omega^{V_{i j}} v_{i j} \}$
  (observe $Y \subset \delta$).
\end{definition}

\begin{proposition}
  \label{C(ro)_contained_in_C(delta)}Let $\delta \in \tmop{OR}$. Then $\forall
  \rho \in C (\delta) .C (\rho) \subset C (\delta)$.
\end{proposition}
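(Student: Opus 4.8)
The plan is to prove the statement by transfinite induction on $\delta$ along $(\tmop{OR}, <)$, taking as induction hypothesis (IH) that $C(\rho') \subseteq C(\beta)$ holds for every $\beta < \delta$ and every $\rho' \in C(\beta)$. (An induction on $\rho$ directly is not available, because elements of $C(\delta)$ need not be $< \delta$.) Fix $\rho \in C(\delta)$; by the defining equation $C(\delta) = C_1(\delta) \cup \bigcup_{\sigma \in C_1(\delta)} C_2(\sigma) \cup C_2(\delta) \cup \bigcup_{V \in Y(\delta)} C(V)$, the ordinal $\rho$ lies in one of the four displayed pieces, and I would treat these as separate cases. Two containments I would isolate and use repeatedly are: (a) $\bigcup_{V \in Y(\delta)} C(V) \subseteq C(\delta)$ together with the definitional facts $C_1(V), C_2(V), \bigcup_{V' \in Y(V)} C(V') \subseteq C(V)$, which let every contribution coming from one of the exponents $V_g \in Y(\delta) \subset \delta$ be absorbed into $C(\delta)$; and (b) the purely set-theoretic observation that if $\sigma \in C_1(\delta)$ then $C_2(\sigma) \subseteq \bigcup_{\tau \in C_1(\delta)} C_2(\tau) \subseteq C(\delta)$.

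The case $\rho \in \bigcup_{V \in Y(\delta)} C(V)$ is immediate: then $\rho \in C(V)$ for some $V \in Y(\delta) \subset \delta$, so the IH gives $C(\rho) \subseteq C(V) \subseteq C(\delta)$. The case $\rho \in C_2(\delta)$ is routine: whether $\rho$ is a multiple $L_i k$ ($k \leqslant l_i$) or a proper initial sum $\sum_{i \leqslant j} L_i l_i$, its Cantor Normal Form is assembled from a subcollection of the CNF-data of $\delta$, and I would check directly that each of the four constituents of $C(\rho)$ embeds into the matching constituent of $C(\delta)$, using $F(L_i) \subseteq C_1(\delta)$, $\{L_i k : k \leqslant l_i\} \subseteq C_2(\delta)$, and $\{V_{ij}\} \subseteq Y(\delta)$.

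The heart of the argument is the two cases where $\rho$ is produced by $F$. If $\rho \in C_1(\delta)$, then $\rho \in F(L_i)$ for some non-epsilon term $L_i =_{\tmop{CNF}} \omega^Z$ with $Z =_{\tmop{CNF}} \sum_{h=1}^{t} \omega^{V_h} v_h$, so $\rho$ is either the power $\omega^W$ with truncated exponent $W = \omega^{V_1} v_1 + \cdots + \omega^{V_g}\cdot j$ ($g \leqslant t$, $j \leqslant v_g$), or $\rho = \omega^W + V_g$ with $V_g =_{\tmop{CNF}} \sum_k \omega^{U_k} u_k$. I would verify the four constituents of $C(\rho)$ one at a time, keeping in mind that $V_g \in Y(\delta)$ so $C(V_g) \subseteq C(\delta)$: (i) $C_1(\rho)$ decomposes as $F(\omega^W) \cup \bigcup_k F(\omega^{U_k})$, where $F(\omega^W) \subseteq F(L_i) \subseteq C_1(\delta)$ because $W$ is an initial truncation of $Z$, while each $\omega^{U_k}$ is a CNF-term of $V_g$ so $F(\omega^{U_k}) \subseteq C_1(V_g) \subseteq C(V_g)$; (ii) $C_2(\rho) \subseteq \bigcup_{\tau \in C_1(\delta)} C_2(\tau)$ directly by fact (b), since $\rho \in C_1(\delta)$; (iii) the closure $\bigcup_{\tau \in C_1(\rho)} C_2(\tau)$ is absorbed by the same two mechanisms as in (i); and (iv) the exponents occurring in $Y(\rho)$ are either among $V_1, \ldots, V_g \in Y(\delta)$ or lie in $Y(V_g)$, so $\bigcup_{V \in Y(\rho)} C(V) \subseteq C(\delta)$ by fact (a).

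Finally, if $\rho \in \bigcup_{\sigma \in C_1(\delta)} C_2(\sigma)$, I would write $\sigma = \omega^W$ or $\sigma = \omega^W + V_g$ and enumerate $C_2(\sigma)$: every such $\rho$ is either $\omega^W$ (already in $C_1(\delta)$, so the previous case applies), a multiple $\omega^{U_k} m$ of a CNF-term of $V_g$ (whence $\rho \in C_2(V_g) \subseteq C(V_g)$ with $V_g < \delta$, so the IH yields $C(\rho) \subseteq C(V_g) \subseteq C(\delta)$), or a partial sum $\omega^W + (\text{initial segment of } V_g)$, whose four constituents are handled exactly as in the $C_1(\delta)$ case, the only extra point being that its own partial sums are again elements of $C_2(\sigma)$ and hence already in $C(\delta)$. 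I expect the main obstacle to be precisely this bookkeeping for the two $F$-cases: one must compute the Cantor Normal Form of each kind of element that $F$ and $C_2 \circ F$ can generate — the truncated powers $\omega^W$, the tails $\omega^W + V_g$, and their partial sums — and then confirm that each of the four sets constituting $C(\rho)$ lands in $C(\delta)$. Facts (a) and (b), together with the single genuine appeal to the IH at the exponents $V_g \in Y(\delta)$, are what make these verifications succeed; the remaining CNF comparisons are elementary and I would leave them to the reader in the style of the surrounding propositions.
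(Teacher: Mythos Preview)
Your proposal is correct and follows exactly the approach the paper indicates: transfinite induction on $\delta$ together with a case analysis on which of the four constituents of $C(\delta)$ the element $\rho$ belongs to. The paper's own proof consists only of the two-line sketch ``By induction on the ordinals \ldots\ It is necessary to check the ways how $\rho$ may be in $C(\delta)$. The details are left to the reader,'' so your write-up is in fact considerably more detailed than what appears there; the isolating of facts (a) and (b) and the explicit bookkeeping for the $F$-generated elements are precisely the checks the paper leaves implicit.
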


\begin{proof}
  By induction on the ordinals one shows $\forall \delta \in \tmop{OR} .
  \forall \rho \in C (\delta) .C (\rho) \subset C (\delta)$. It is necessary
  to check the ways how $\rho$ may be in $C (\delta)$. {\color{orange} The
  details are left to the reader.}
\end{proof}

We prove now the covering lemma.

\begin{lemma}
  \label{CoveringLemma1}(Cover for one ordinal). Let $\alpha \in \mathbbm{E}$
  and $\delta \in \alpha^+$ with $\delta =_{\tmop{CNF}} L_1 l_1 + \ldots + L_n
  l_n$. Let \\
  $D (\alpha, \delta) \assign C (\delta) \cup \{\alpha, \alpha 2\}$. Then
  \begin{enumerateroman}
    \item $C (\delta)$ is a finite set.
    
    \item $\bullet$ $\{\delta, L_1 l_1, \ldots, L_n l_n \} \subset C (\delta)
    \subset \max \{\delta + 1, L_1 + d (L_1) + 1\} = \max \{\delta, L_1 + d
    (L_1)\} + 1 = \eta \delta + 1$\\
    $\bullet$ If $\delta \geqslant \alpha$ then $\eta \delta \in D (\alpha,
    \delta) \subset \max \{\delta + 1, L_1 + d (L_1) + 1\} = \max \{\delta,
    L_1 + d (L_1)\} + 1 = \eta \delta + 1$
    
    \item Suppose $\delta \in [\alpha, \alpha^+)$ and $h : D (\alpha, \delta)
    \longrightarrow h [D (\alpha, \delta)]$ is an $(<, <_1, +)$-isomorphism
    such that $h|_{\alpha} = \tmop{Id}_{\alpha}$. Then $h (\alpha) \in
    \mathbbm{E}$ and $\forall x \in D (\alpha, \delta) . (\tmop{Ep} (x) \cap
    \alpha) \subset h (\alpha) \wedge x [\alpha \assign h (\alpha)] \leqslant
    h (x)$.
  \end{enumerateroman}
\end{lemma}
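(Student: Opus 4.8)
The plan is to prove the three parts of the lemma in the order (i), (ii), (iii), since the finiteness and the containment bounds are needed to make sense of the isomorphism argument in (iii). For (i), I would argue by induction on $(\tmop{OR}, <)$ following the recursive definition of $C(\delta)$: the sets $C_1(\delta)$, $C_2(\delta)$ and $\bigcup_{\sigma \in C_1(\delta)} C_2(\sigma)$ are each manifestly finite (they are indexed by finitely many pairs $g, j$ or $i, j$ coming from the finite Cantor Normal Forms involved), and the remaining piece $\bigcup_{V \in Y(\delta)} C(V)$ is a finite union — because $Y(\delta)$ is finite — of sets $C(V)$ with $V < \delta$, each finite by the induction hypothesis. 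For (ii), I would examine each of the four defining blocks of $C(\delta)$ and check that every element is bounded by $\eta\delta = \max\{\delta, \pi\delta + d\,\pi\delta\} = \max\{\delta, L_1 + d(L_1)\}$, using Proposition~\ref{pi.eta.properties} to control $\pi$ and $d\pi$; the inclusions $\{\delta, L_1 l_1, \ldots, L_n l_n\} \subset C(\delta)$ are immediate from $C_2(\delta)$, and when $\delta \geqslant \alpha$ the point $\eta\delta$ lands in $D(\alpha,\delta)$ because either $\eta\delta = \delta \in C(\delta)$ or $\eta\delta = \pi\delta + d\,\pi\delta$ appears among the ``$+V_g$'' elements of $F(L_1)$, while $\alpha, \alpha 2 \leqslant \eta\delta$ by Proposition~\ref{pi.eta.properties}(3).

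The substance of the lemma is part (iii), and I would proceed as follows. First, since $\delta \in [\alpha, \alpha^+)$ and $\alpha \in \mathbbm{E}$, the set $D(\alpha,\delta)$ is closed enough under the Cantor Normal Form decomposition that the argument of Proposition~\ref{Isomorphisms_are_substitutions}(1) applies almost verbatim: from $h(\omega^\alpha) = h(\alpha)$ and $h$ preserving $\lambda x.\omega^x$ one gets $h(\alpha) = \omega^{h(\alpha)} \in \mathbbm{E}$, and from $h$ preserving $<$ together with the fact that every epsilon number below $\alpha$ occurring in some $x \in D(\alpha,\delta)$ is itself an element of $D(\alpha,\delta)$ fixed by $h$, one gets $\tmop{Ep}(x) \cap \alpha \subset h(\alpha)$ for all $x$. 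The genuinely new claim is the inequality $x[\alpha \assign h(\alpha)] \leqslant h(x)$, which replaces the equality of Proposition~\ref{Isomorphisms_are_substitutions}; I would prove it by induction on $x \in D(\alpha,\delta)$ along $<$, mirroring the case split there (the $u \geqslant 2$ additive case, the $a_1 \geqslant 2$ repeated-term case, and the monomial $\omega^{A_1}$ case with its subcases $A_1 < x$ and $A_1 = x$). In each case the inductive hypothesis supplies $\leqslant$ rather than $=$, and monotonicity of $+$ and of $\lambda x.\omega^x$ propagates the inequality.

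The main obstacle — and the reason one only obtains $\leqslant$ instead of $=$ here — is that $D(\alpha,\delta)$, unlike the set $B(t)$ of Proposition~\ref{Isomorphisms_are_substitutions}, need not contain all the sub-coefficients $Lj$ that witness the exact reconstruction of $h(x)$ from its constituents; the covering set $C(\delta)$ is engineered to be smaller. Concretely, when $x =_{\tmop{CNF}} \omega^{A_1} a_1 + \cdots + \omega^{A_u} a_u$ but the intermediate partial sums or the lower-exponent data are not guaranteed to lie in $D(\alpha,\delta)$, I cannot split $h(x)$ additively term-by-term as an equality; instead I must use that $h$ preserves $+$ and $<$ wherever the relevant points \emph{are} present, and bound the missing contributions from below. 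The key mechanism is that $h$ preserves $<_1$ and hence, via $\leqslant_1$-connectedness, forces $h(x)$ to sit above $x[\alpha \assign h(\alpha)]$ even when it cannot be shown equal; the cover $C(\delta)$ contains exactly the points $\eta$-below $\delta$ needed to pin down $h(\alpha) = \gamma$ and to witness $\gamma \leqslant_1 \delta[\alpha \assign \gamma]$, so that the substitution value is a lower bound for the isomorphic image. I expect the verification that every case of the induction genuinely closes with $\leqslant$ — particularly the monomial case where $A_1$ may or may not be a covered point — to be where the care lies, and I would lean on Proposition~\ref{C(ro)_contained_in_C(delta)} to ensure that descending into exponents keeps us inside the cover.
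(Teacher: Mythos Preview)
Your approach to parts (i) and (ii) is essentially the paper's, and is fine. The problem is part (iii): you have misread the hypothesis. The isomorphism $h$ is only assumed to preserve $(<, <_1, +)$, \emph{not} $\lambda x.\omega^x$. So you cannot conclude $h(\alpha) \in \mathbbm{E}$ from ``$h(\omega^\alpha) = \omega^{h(\alpha)}$''; the paper instead uses that $\alpha, \alpha 2 \in D(\alpha,\delta)$ and $\alpha <_1 \alpha 2$, so $h(\alpha) <_1 h(\alpha)2$, whence $h(\alpha) \in \mathbbm{E}$ by Corollary~\ref{Wilken_Corollary1}.

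More seriously, your inductive scheme for $x[\alpha := h(\alpha)] \leqslant h(x)$ breaks down precisely at the monomial case $x = \omega^Z$: without $\lambda x.\omega^x$-preservation you cannot write $h(\omega^Z) = \omega^{h(Z)}$ and invoke monotonicity of exponentiation. Mirroring the case split of Proposition~\ref{Isomorphisms_are_substitutions} does not work here, because that proposition had the stronger hypothesis. The paper's actual mechanism is quite different and is the real content of the lemma: for $x = \omega^Z$ with $Z =_{\tmop{CNF}} \omega^{R_1}r_1 + \cdots + \omega^{R_k}r_k$, the cover $C(\delta)$ was designed to contain the intermediate powers $\omega^{\omega^{R_1}r_1 + \cdots + \omega^{R_i}j}$ together with the points $\omega^{\omega^{R_1}r_1 + \cdots + \omega^{R_i}j} + R_i$. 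By Theorem~\ref{Wilken_Theorem1} each such power is $<_1$-related to the corresponding ``$+R_i$'' point, and since $h$ preserves $<_1$ and $+$, the image $h(\omega^{\omega^{R_1}r_1 + \cdots + \omega^{R_i}j})$ must again satisfy a relation of that form; applying Theorem~\ref{Wilken_Theorem1} in the converse direction forces $h(\omega^{\cdots}) = \omega^{\omega^{h(R_i)}S(\cdots)}$ for some nonzero $S(\cdots)$. A chain of order-preservations then bounds the $S(\cdots)$ from below by the original coefficients, yielding $h(\omega^Z) \geqslant \omega^{\omega^{h(R_1)}r_1 + \cdots + \omega^{h(R_k)}r_k}$, and the induction hypothesis on the $R_i$ finishes. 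This use of $<_1$ via Wilken's theorem to recover exponential structure from a $(<,<_1,+)$-isomorphism is the heart of the argument, and your sketch does not contain it.
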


\begin{proof}

  {\noindent}$i$.\\
  By induction on $\delta$. Suppose $\forall r < \delta$. $C (r)$ is finite. \
  \ \ \ \ \ \ {\tmstrong{(IH1)}}\\
  $|C (\delta) | \leqslant |C_2 (\delta) | + |C_1 (\delta) | + | \bigcup_{R
  \in Y (\delta)} C (R) | + | \bigcup_{\sigma \in C_1 (\delta)} C_2 (\sigma) |
  \leqslant$
  
  \ \ \ \ \ $ |\{L_i j|i \in \{1, \ldots, n\}, j \in \{1, \ldots, l_i \}\}| +
  |\{\Sigma_i^j L_i l_i |j \in \{1, \ldots, n\}\}| +$\\
  \ \ \ \ \ \ \ \ \ \ $| \bigcup_{L_i \nin \mathbbm{E}} F (L_i) | + |
  \bigcup_{R \in Y (\delta)} C (R) | + | \bigcup_{\sigma \in C_1 (\delta)} C_2
  (\sigma) | \leqslant$\\
  \ \ \ \ \ \ \ \ $l_1 + l_2 + \ldots + l_n + \sum_{j = 1}^k j + \sum_{L_i
  \nin \mathbbm{E}} |F (L_i) | + \sum_{R \in Y (\delta)} |C (R) | + |
  \bigcup_{\sigma \in C_1 (\delta)} C_2 (\sigma) | < \omega$, where the last
  inequality holds because:
  
  (1). For any $V \in Y (\delta)$, $V < \delta$, and so $C (V)$ is finite by
  our (IH1); moreover, the set\\
  $Y (\delta) = \{V_{i j} | \exists L_i \nin \mathbbm{E} .L_i = \omega^Z
  \wedge Z =_{\tmop{CNF}} \sum_{j = 1}^{t (i)} \omega^{V_{i j}} v_{i j} \}$ is
  finite too, since there are only a finite number of $L_i$'s, and for each
  one of the $L_i \nin \mathbbm{E}$ with $L_i = \omega^Z$ and $Z
  =_{\tmop{CNF}} \sum_{j = 1}^{t (i)} \omega^{V_{i j}} v_{i j}$, there are
  only a finite number of $V_{i j}$. Thus $\sum_{R \in Y (\delta)} |C (R) | <
  \omega$.
  
  (2). For any $L_i \nin \mathbbm{E}$, it is easy to see that $F (L_i)$ is
  finite too; moreover, as we already said, there are only a finite number of
  $L_i$'s. So $|C_1 (\delta) | \leqslant \sum_{L_i \nin \mathbbm{E}} |F (L_i)
  | < \omega$.
  
  (3). $C_2 (\sigma)$ is finite for any $\sigma \in C_1 (\delta)$ (exactly by
  the same reason why $C_2 (\delta)$ is finite) and $C_1 (\delta)$ is finite
  too (as argued in previous subcase (2)); therefore $| \bigcup_{\sigma \in
  C_1 (\delta)} C_2 (\sigma) | < \omega$.

  {\noindent}$i i$.
  
  $\bullet$ We show that $\{\delta, L_1 l_1, \ldots, L_n l_n \} \subset C
  (\delta) \subset \max \{\delta + 1, L_1 + d (L_1) + 1\}$.

  Clearly $\{\delta, L_1 l_1, \ldots, L_n l_n \} \subset C (\delta)$.

  Let's prove by induction $\forall \delta .C (\delta) \subset \max \{\delta
  + 1, L_1 + d (L_1) + 1\}$.

  Suppose $\forall r < \delta .C (r) \subset \max \{r + 1, \pi (r) + d (\pi
  (r)) + 1\}$. \ \ \ \ \ \ \ {\tmstrong{(IH2)}}

  Clearly $\{L_i j|i \in \{1, \ldots, n\}, j \in \{1, \ldots, l_i \}\} \cup
  \{\Sigma_i^j L_i l_i |j \in \{1, \ldots, n\}\} \subset \delta + 1$. \ \ \ \
  \ \ \ {\tmstrong{(ii1*)}}

  Now, take $V \in Y (\delta)$. By definition it means there exist $i, j \in
  \omega$, where $L_i \nin \mathbbm{E}$ is an additive principal number in the
  Cantor normal form of $\delta$, $L_i = \omega^Z$, $Z =_{\tmop{CNF}} \sum_{j
  = 1}^{t (i)} \omega^{V_{i j}} v_{i j}$ and $V = V_{i j}$. Observe that $d
  (\pi (V)) \leqslant \pi (V) \leqslant V < L_i$ ($V < L_i \leqslant \delta$
  holds because equality would imply $L_i \in \mathbbm{E}$ and we know that is
  not the case), and since $L_i \in \mathbbm{P}$, then $\pi (V) + d (\pi (V))
  < L_i$. So both \\
  $\pi (V) + d (\pi (V)) + 1, V + 1 \leqslant L_i \leqslant L_1 \leqslant
  \delta < \delta + 1$. Since the previous holds for any $V \in Y$, then
  $\bigcup_{R \in Y (\delta)} C (V)\underset{\text{by our (IH2)}}{\subset}
  \bigcup_{R \in Y (\delta)} \max \{V + 1, \pi (V) + d (\pi (V)) + 1\}
  \subset \delta + 1$. \ \ \ \ \ \ \ {\tmstrong{(ii2*)}}

  We now check what happens with $C_1 (\delta) = \bigcup_{L_i \nin
  \mathbbm{E}} F (L_i)$. By definition, for any $L_i \nin \mathbbm{E}$ with \\
  $L_i = \omega^Z$ and $Z =_{\tmop{CNF}} \sum_{j = 1}^{t (i)} \omega^{V_{i j}}
  v_{i j}$ \\
  $F (L_i) = \{\omega^{\omega^{V_{i 1}} v_{i 1} + \omega^{V_{i 2}} v_{i 2} +
  \ldots + \omega^{V_{i g}} \cdot j} |g \in \{1, \ldots, t (i)\}, j \in \{1,
  \ldots, v_{i g} \}\} \cup$
  
  \ \ \ \ \ $ \{\omega^{\omega^{V_{i 1}} v_{i 1} + \omega^{V_{i 2}} v_{i 2} +
  \ldots + \omega^{V_{i g}} \cdot j} + V_{i g} |g \in \{1, \ldots, t (i)\}, j
  \in \{1, \ldots, v_{i g} \}\}$.

  Clearly $\{\omega^{\omega^{V_{i 1}} v_{i 1} + \omega^{V_{i 2}} v_{i 2} +
  \ldots + \omega^{V_{i g}} \cdot j} |g \in \{1, \ldots, t (i)\}, j \in \{1,
  \ldots, v_{i g} \}\} \subset$\\
  $L_i + 1 \leqslant L_1 + 1 \leqslant \delta + 1$. \ \ \ \ \ \ \
  {\tmstrong{(ii3*)}}
  
  On the other hand for any $g \in \{1, \ldots, t (i) - 1\}, j \in \{1,
  \ldots, v_{i g} \}$, \\
  $\omega^{\omega^{V_{i 1}} v_{i 1} + \omega^{V_{i 2}} v_{i 2} + \ldots +
  \omega^{V_{i g}} \cdot j} + V_{i g}$ $\leqslant \omega^{\omega^{V_{i 1}} v_{i
  1} + \omega^{V_{i 2}} v_{i 2} + \ldots + \omega^{V_{i g}} \cdot j} +
  \omega^{\omega^{V_{i 1}} v_{i 1} + \omega^{V_{i 2}} v_{i 2} + \ldots +
  \omega^{V_{i g}} \cdot j}=$ \\ $\omega^{\omega^{V_{i 1}} v_{i 1} + \omega^{V_{i
  2}} v_{i 2} + \ldots + \omega^{V_{i g}} \cdot j} 2$ $< \omega^{\omega^{V_{i
  1}} v_{i 1} + \omega^{V_{i 2}} v_{i 2} + \ldots + \omega^{V_{i g}} \cdot j}
  \omega$ $= \omega^{\omega^{V_{i 1}} v_{i 1} + \omega^{V_{i 2}} v_{i 2} +
  \ldots + \omega^{V_{i g}} \cdot j + 1}\leqslant$ \\ $\omega^{\omega^{V_{i 1}}
  v_{i 1} + \omega^{V_{i 2}} v_{i 2} + \ldots + \omega^{V_{i g}} v_{i g} + 1}$
  $\leqslant \omega^{\omega^{V_{i 1}} v_{i 1} + \omega^{V_{i 2}} v_{i 2} +
  \ldots + \omega^{V_{i (t (i) - 1)}} v_{i (t (i) - 1)} + 1} \leqslant$\\
  $\omega^{\omega^{V_{i 1}} v_{i 1} + \omega^{V_{i 2}} v_{i 2} + \ldots +
  \omega^{V_{i (t (i) - 1)}} v_{i (t (i) - 1)} + \omega^{V_{i t (i)}} v_{i t
  (i)}} = L_i \leqslant L_1 \leqslant \delta < \delta + 1$. \ \ \ \ \ \ \
  {\tmstrong{(ii4*)}}
  
  For the case $g = t (i), j \in \{1, \ldots, v_{i g} \}$,
  $\omega^{\omega^{V_{i 1}} v_{i 1} + \omega^{V_{i 2}} v_{i 2} + \ldots +
  \omega^{V_{i g}} \cdot j} + V_{i g} \leqslant$ \\ $L_i + V_{i g} = L_i + d (L_i)
  \leqslant$
  $\left\{ \begin{array}{l}
    L_i 2 < L_{i - 1} < L_1 + d (L_1) + 1 \text{ if } i \geqslant 2\\
    \\
    L_1 + d (L_1) < L_1 + d (L_1) + 1 \text{ if } i = 1\\
    
  \end{array} \right.$. \ \ \ \ \ \ \ {\tmstrong{(ii5*)}}

  So, by (ii3*), (ii4*) and (ii5*), we conclude $C_1 (\delta) \subset \max
  \{\delta + 1, L_1 + d (L_1) + 1\}$. \ \ \ \ \ \ \ {\tmstrong{(ii6*)}}

  We now show that $\bigcup_{\sigma \in C_1 (\delta)} C_2 (\sigma) \subset
  \max \{\delta + 1, L_1 + d (L_1) + 1\}$ too. By the same argument used in
  (ii1*), $\forall \beta \in \bigcup_{\sigma \in C_1 (\delta)} C_2 (\sigma) .
  \beta \leqslant \max \{\sigma | \sigma \in C_1 (\delta)\}\underset{\text{by
  (ii6*)}}{\leqslant}\max \{\delta, L_1 + d (L_1)\}$. \\
  Hence $\bigcup_{\sigma \in C_1 (\delta)} C_2 (\sigma) \subset \max \{\delta
  + 1, L_1 + d (L_1) + 1\}$. \ \ \ \ \ \ \ {\tmstrong{(ii7*)}}

  From (ii1*), (ii2*), (ii6*) and (ii7*) we conclude $C (\delta) \subset \max
  \{\delta + 1, L_1 + d (L_1) + 1\}$.

  $\bullet$ Suppose $\delta \geqslant \alpha$.
  
  Then $D (\alpha, \delta) \subset \max \{\delta + 1, L_1 + d (L_1) + 1\} =
  \max \{\delta, L_1 + d (L_1)\} + 1$ holds because $\alpha 2 \leqslant \max
  \{\delta, L_1 + d (L_1)\}$ by proposition \ref{pi.eta.properties}.
  
  Let's prove that $\eta \delta \in D (\alpha, \delta)$. If $\delta = \eta
  \delta$, then (we just proved that) $\eta \delta = \delta \in C (\delta)
  \subset D (\alpha, \delta)$. So suppose $\delta \neq \eta \delta = \max
  \{\delta, \pi \delta + d \pi \delta\}$. If $\delta \in [\alpha, \alpha 2)$,
  then $\eta \delta = \alpha 2 \in D (\alpha, \delta)$. Suppose $\delta >
  \alpha 2$. Consider $\delta =_{\tmop{CNF}} L_1 l_1 + \ldots + L_n l_n$. Note
  $l_1 = 1$, (otherwise $\pi d + d \pi d \leqslant L_1 + L_1 = L_1 2 \leqslant
  L_1 l_1 \leqslant \delta$ and then we would have that $\delta = \eta
  \delta$); moreover, $L_1 \nin \mathbbm{E}$ (otherwise $L_1 = \alpha$ and
  then $\delta < \alpha 2$). This way, $L_1 \in \mathbbm{P} \backslash
  \mathbbm{E}$ and $L_1 =_{\tmop{CNF}} \omega^Z$ for some $Z \in \tmop{OR}$,
  where $Z =_{\tmop{CNF}} \omega^{R_1} r_1 + \ldots + \omega^{R_u} r_u$ for
  some ordinals $R_i \in \tmop{OR}$ and $r_i \in [1, \omega)$. Therefore,
  $\eta \delta = \pi \delta + d \pi \delta = L_1 + R_u \in F (L_1) \subset C
  (\delta) \subset D (\alpha, \delta)$.

  {\noindent}$i i i$.\\
  Suppose $\delta \in [\alpha, \alpha^+)$ and $h : D (\alpha, \delta)
  \longrightarrow h [D (\alpha, \delta)]$ is an $(<, <_1, +)$-isomorphism with
  $h|_{\alpha} = \tmop{Id}_{\alpha}$. Notice from $\alpha <_1 \alpha 2$
  follows $h (\alpha) <_1 h (\alpha) 2$, which is equivalent to $h (\alpha)
  \in \mathbbm{E}$.
  
  We now prove the claim $\forall x \in D (\alpha, \delta) . \tmop{Ep} (x)
  \cap \alpha \subset h (\alpha) \wedge x [\alpha \assign h (\alpha)]
  \leqslant h (x)$ by induction on the well order $(D (\alpha, \delta), <)$.
  
  Let $x \in D (\alpha, \delta)$. Our induction hypothesis is
  
  $\forall y \in x \cap D (\alpha, \delta) . \tmop{Ep} (y) \cap \alpha
  \subset h (\alpha) \wedge y [\alpha \assign h (\alpha)] \leqslant h (y)$. \
  \ \ \ \ \ \ {\tmstrong{(IH)}}

  If $x =_{\tmop{CNF}} T_1 t_1 + \ldots + T_m t_m$, with $m \geqslant 2$, then
  $x \in C (\delta)$ and then by our (IH) and prop.
  \ref{C(ro)_contained_in_C(delta)} we have that $\tmop{Ep} (T_i) \cap \alpha
  \subset h (\alpha)$ and $h (T_i) \geqslant T_i [\alpha \assign h (\alpha)]$
  for all $i \in \{1, \ldots, m\}$; therefore \\
  $\tmop{Ep} (x) \cap \alpha \subset h (\alpha)$ and $h (x) = h (T_1) t_1 +
  \ldots + h (T_m) t_m \geqslant$\\
  $T_1 [\alpha \assign h (\alpha)] t_1 + \ldots + T_m [\alpha \assign h
  (\alpha)] t_m = x [\alpha \assign h (\alpha)]$.

  If $x =_{\tmop{CNF}} T_1 t_1$ with $t_1 \geqslant 2$ then, $x = \alpha 2$ or
  $x \in C (\delta)$. In any case, proceeding similarly as in the previous
  case, $\tmop{Ep} (x) \cap \alpha \subset h (\alpha)$ and $h (x) \geqslant x
  [\alpha \assign h (\alpha)]$.

  So suppose $x =_{\tmop{CNF}} T_1$.

  If $T_1 \in \mathbbm{E}$ then $T_1 = \alpha$ or $T_1 \in \alpha \cap
  \mathbbm{E}$ (because $x \leqslant \max \{\delta, L_1 + d (L_1)\} + 1 <
  \alpha^+$). If $T_1 = \alpha$, then $\tmop{Ep} (x) \cap \alpha = \emptyset
  \subset h (\alpha) \in \mathbbm{E}$ and $h (x) = h (\alpha) = x [\alpha
  \assign h (\alpha)]$. If $T_1 \in \alpha \cap \mathbbm{E}$, then $x = h (x)
  < h (\alpha)$, and so $\tmop{Ep} (x) \cap \alpha \subset h (\alpha)$;
  moreover $h (x) = x = x [\alpha \assign h (\alpha)]$.
  
  So suppose $T_1 \nin \mathbbm{E}$. Then $T_1 = \omega^Z$, with $Z
  =_{\tmop{CNF}} \omega^{R_1} r_1 + \ldots + \omega^{R_k} r_k$. Notice that
  since \\
  $\forall i \in \{1, \ldots, k\} .R_i \in D (\alpha, \delta) \wedge R_i
  \leqslant Z < T_1 = x$, then by (IH) $\bigcup_{1 \leqslant i \leqslant k}
  \tmop{Ep} (R_i) \cap \alpha \subset h (\alpha)$ and therefore $\tmop{Ep} (Z)
  \cap \alpha \subset h (\alpha)$. Thus $\tmop{Ep} (x) \cap \alpha \subset h
  (\alpha)$. So it only rest to show that the inequality holds. For the case
  $T_1 < \alpha$, we have $h (x) = x = x [\alpha \assign h (\alpha)]$. So the
  interesting case is $\alpha < T_1 = \omega^Z \nin \mathbbm{E}$.

  We have the sets of inequalities (I0) and (I1):

  {\noindent}$\omega^Z > R_1 > R_2 > \ldots > R_k$; \ \ \ \ \ \ \ \ \ \ \ \ \
  \ \ \ \ \ \ \ \ \ \ \ \ \ \ \ \ \ \ \ \ \ \ \ \ \ \ \ \ \ \ \ \ \ \ \ \ \ \
  \ \ (I0) \ {
  
  }$\omega^{R_1} r_1 > \omega^{R_1} (r_1 - 1) \ldots > \omega^{R_1} 3 >
  \omega^{R_1} 2 > \omega^{R_1} \geqslant R_1$ \ \ \ \ \ \ \ \ \ \ \ \ \ \ \ \
  \ \ \ \ \ \ \ \ \ \ \ \ \ \ \ \ \ \ \ \ \ \ \

  {\noindent}$\omega^{R_1} r_1 + \omega^{R_2} r_2 > \omega^{R_2} r_2 >
  \omega^{R_2} (r_2 - 1) \ldots > \omega^{R_2} 3 > \omega^{R_2} 2 >
  \omega^{R_2} \geqslant R_2$ \ \ \ \ \ \ \ \ \ \ \ \ \ \ \ \ \ \ \ \ \ \ \ \
  \ (I1) \ {
  
  }$\ldots$ {
  
  }$\omega^{R_1} r_1 + \omega^{R_2} r_2 + \ldots + \omega^{R_k} r_k >
  \omega^{R_k} r_k > \omega^{R_r} (r_k - 1) \ldots > \omega^{R_k} 3 >
  \omega^{R_k} 2 > \omega^{R_k} \geqslant R_k$

  On the other hand, from the inequalities
  
  {\noindent}$R_1 \leqslant \omega^{\omega^{R_1}} < \omega^{\omega^{R_1} 2} <
  \ldots < \omega^{\omega^{R_1} r_1}$\\
  $R_2 < \omega^{\omega^{R_1} r_1 + \omega^{R_2}} < \ldots <
  \omega^{\omega^{R_1} r_1 + \omega^{R_2} r_2}$\\
  $R_{k - 1} < \omega^{\omega^{R_1} r_1 + \omega^{R_2} r_2 + \ldots +
  \omega^{R_{k - 2}} r_{k - 2} + \omega^{R_{k - 1}}} < \ldots <
  \omega^{\omega^{R_1} r_1 + \omega^{R_2} r_2 + \ldots + \omega^{R_{k - 2}}
  r_{k - 2} + \omega^{R_{k - 1}} r_{k - 1}}$\\
  $R_k < \omega^{\omega^{R_1} r_1 + \omega^{R_2} r_2 + \ldots + \omega^{R_{k -
  1}} r_{k - 1} + \omega^{R_k}}$\\
  and theorem \ref{Wilken_Theorem1} we get the inequalities (I2):

  {\noindent}$\left\{ \begin{array}{l}
    R_1 = \alpha < \omega^{\omega^{R_1} 2} \text{ (because } \alpha < T_1
    = \omega^{\omega^{R_1} r_1 + \ldots + \omega^{R_k} r_k}) \text{ if }
    R_1 = \omega^{\omega^{R_1}}\\
    \\
    \omega^{\omega^{R_1}} <_1 \omega^{\omega^{R_1}} + R_1 \leqslant
    \omega^{\omega^{R_1}} + \omega^{\omega^{R_1}} < \omega^{\omega^{R_1}}
    \omega = \omega^{\omega^{R_1} + 1} \leqslant \omega^{\omega^{R_1} 2} 
    \text{ if } R_1 < \omega^{\omega^{R_1}}
  \end{array} \right.$\\
  $\omega^{\omega^{R_1} 2} <_1 \omega^{\omega^{R_1} 2} + R_1 \leqslant
  \omega^{\omega^{R_1} 2} + \omega^{\omega^{R_1} 2} < \omega^{\omega^{R_1} 2}
  \omega = \omega^{\omega^{R_1} 2 + 1} \leqslant${
  
  }$\omega^{\omega^{R_1} 3} <_1 \omega^{\omega^{R_1} 3} + R_1 \leqslant
  \omega^{\omega^{R_1} 3} + \omega^{\omega^{R_1} 3} < \omega^{\omega^{R_1} 3}
  \omega = \omega^{\omega^{R_1} 3 + 1} \leqslant${
  
  }$\omega^{\omega^{R_1} (r_1 - 1)} <_1 \omega^{\omega^{R_1} (r_1 - 1)} + R_1
  \leqslant \omega^{\omega^{R_1} (r_1 - 1)} + \omega^{\omega^{R_1} (r_1 - 1)}
  < \omega^{\omega^{R_1} (r_1 - 1)} \omega = \omega^{\omega^{R_1} (r_1 - 1) +
  1} \leqslant$ {
  
  }$\omega^{\omega^{R_1} r_1} <_1 \omega^{\omega^{R_1} r_1} + R_1 \leqslant
  \omega^{\omega^{R_1} r_1} + R_1 \leqslant \omega^{\omega^{R_1} r_1} +
  \omega^{\omega^{R_1} r_1} < \omega^{\omega^{R_1} r_1} \omega =
  \omega^{\omega^{R_1} r_1 + 1} \leqslant$\\
  $\omega^{\omega^{R_1} r_1 + \omega^{R_2}} <_1 \omega^{\omega^{R_1} r_1 +
  \omega^{R_2}} + R_2 \leqslant \omega^{\omega^{R_1} r_1 + \omega^{R_2} + 1}
  \leqslant${
  
  }$\omega^{\omega^{R_1} r_1 + \omega^{R_2} 2} <_1 \omega^{\omega^{R_1} r_1 +
  \omega^{R_2} 2} + R_2 \leqslant \omega^{\omega^{R_1} r_1 + \omega^{R_2} 2 +
  1} \leqslant \ldots \leqslant${
  
  }$\omega^{\omega^{R_1} r_1 + \omega^{R_2} r_2} <_1 \omega^{\omega^{R_1} r_1
  + \omega^{R_2} r_2} + R_2 \leqslant \omega^{\omega^{R_1} r_1 + \omega^{R_2}
  r_2 + 1} \leqslant$\\
  $\ldots$\\
  $\omega^{\omega^{R_1} r_1 + \omega^{R_2} r_2 + \ldots + \omega^{R_{k - 1}}}
  <_1 \omega^{\omega^{R_1} r_1 + \omega^{R_2} r_2 + \ldots + \omega^{R_{k -
  1}}} + R_{k - 1} \leqslant \omega^{\omega^{R_1} r_1 + \omega^{R_2} r_2 +
  \ldots + \omega^{R_{k - 1}} + 1} \leqslant$ {
  
  }$\omega^{\omega^{R_1} r_1 + \omega^{R_2} r_2 + \ldots + \omega^{R_{k - 1}}
  2} <_1 \omega^{\omega^{R_1} r_1 + \omega^{R_2} r_2 + \ldots + \omega^{R_{k -
  1}} 2} + R_{k - 1} \leqslant \omega^{\omega^{R_1} r_1 + \omega^{R_2} r_2 +
  \ldots + \omega^{R_{k - 1}} 2 + 1} \leqslant \ldots \leqslant$ {
  
  }$\omega^{\omega^{R_1} r_1 + \omega^{R_2} r_2 + \ldots + \omega^{R_{k - 1}}
  r_{k - 1}} <_1 \omega^{\omega^{R_1} r_1 + \omega^{R_2} r_2 + \ldots +
  \omega^{R_{k - 1}} r_{k - 1}} + R_{k - 1} \leqslant \omega^{\omega^{R_1} r_1
  + \omega^{R_2} r_2 + \ldots + \omega^{R_{k - 1}} r_{k - 1} + 1} \leqslant$\\
  $\left\{ \begin{array}{l}
    \begin{array}{l}
      \omega^{\omega^{R_1} r_1 + \omega^{R_2} r_2 + \ldots + \omega^{R_k}} <_1
      \omega^{\omega^{R_1} r_1 + \omega^{R_2} r_2 + \ldots + \omega^{R_k}} +
      R_k \leqslant \omega^{\omega^{R_1} r_1 + \omega^{R_2} r_2 + \ldots +
      \omega^{R_k} + 1} \leqslant\\
      \omega^{\omega^{R_1} r_1 + \omega^{R_2} r_2 + \ldots + \omega^{R_k} 2}
      <_1 \omega^{\omega^{R_1} r_1 + \omega^{R_2} r_2 + \ldots + \omega^{R_k}
      2} + R_k \leqslant \omega^{\omega^{R_1} r_1 + \omega^{R_2} r_2 + \ldots
      + \omega^{R_k} 2 + 1} \leqslant \ldots \leqslant\\
      \omega^{\omega^{R_1} r_1 + \omega^{R_2} r_2 + \ldots + \omega^{R_k} r_k}
      <_1 \omega^{\omega^{R_1} r_1 + \omega^{R_2} r_2 + \ldots + \omega^{R_k}
      r_k} + R_k = \omega^Z + d (\omega^Z)
    \end{array} \text{\hspace*{8mm} if } R_k \neq 0\\
    \\
    \begin{array}{l}
      \omega^{\omega^{R_1} r_1 + \omega^{R_2} r_2 + \ldots + \omega^{R_{k -
      1}} r_{k - 1}} + \omega^{\omega^{R_1} r_1 + \omega^{R_2} r_2 + \ldots +
      \omega^{R_k}} = \omega^{\omega^{R_1} r_1 + \omega^{R_2} r_2 + \ldots +
      \omega^{R_k}}\\
      \omega^{\omega^{R_1} r_1 + \omega^{R_2} r_2 + \ldots + \omega^{R_k}} +
      \omega^{\omega^{R_1} r_1 + \omega^{R_2} r_2 + \ldots + \omega^{R_k} 2} =
      \omega^{\omega^{R_1} r_1 + \omega^{R_2} r_2 + \ldots + \omega^{R_k} 2}\\
      \omega^{\omega^{R_1} r_1 + \omega^{R_2} r_2 + \ldots + \omega^{R_k} 2} +
      \omega^{\omega^{R_1} r_1 + \omega^{R_2} r_2 + \ldots + \omega^{R_k} 3} =
      \omega^{\omega^{R_1} r_1 + \omega^{R_2} r_2 + \ldots + \omega^{R_k} 3},
      \ldots\\
      \omega^{\omega^{R_1} r_1 + \omega^{R_2} r_2 + \ldots + \omega^{R_{k -
      1}} (r_k - 1)} + \omega^{\omega^{R_1} r_1 + \omega^{R_2} r_2 + \ldots +
      \omega^{R_k} r_k} = \omega^{\omega^{R_1} r_1 + \omega^{R_2} r_2 + \ldots
      + \omega^{R_k} r_k} = \omega^Z + d (\omega^Z)
    \end{array} \text{ if } R_k = 0
  \end{array} \right.$ \ \ \ \

  Therefore, from (I1) and (I2) we get the inequalities:\\
  $\forall i \in \{1, \ldots, k - 1\} \forall j \in \{1, \ldots, r_i \}$.
  
  $h (\omega^{\omega^{R_1} r_1 + \omega^{R_2} r_2 + \ldots + \omega^{R_i}
  \cdot j}) <_1 h (\omega^{\omega^{R_1} r_1 + \omega^{R_2} r_2 + \ldots +
  \omega^{R_i} \cdot j}) + h (R_i)$; \ \ \ \ \ \ \ \ \ \ \ \ \ \ \ \ (J1)
  
  Remark: In (J1), the case $i = 1$, $j = 1$ is $h (\omega^{\omega^{R_1}}) <_1
  h (\omega^{\omega^{R_1}}) + h (R_1)$ and it holds for two different reasons:
  If $R_1 = \omega^{\omega^{R_1}}$, then $R_1 = \alpha$ (because $\alpha < x
  \leqslant \max \{\delta, L_1 + d (L_1)\} + 1 < \alpha^+$) and so $h (\alpha)
  = h (\omega^{\omega^{R_1}}) = \omega^{\omega^{h (R_1)}} <_1
  \omega^{\omega^{h (R_1)}} + h (R_1) = h (\alpha) 2$ holds because we know
  $\alpha <_1 \alpha 2$. If \\
  $R_1 < \omega^{\omega^{R_1}}$, then $h (\omega^{\omega^{R_1}}) <_1 h
  (\omega^{\omega^{R_1}}) + h (R_1)$ holds because $\omega^{\omega^{R_1}} <_1
  \omega^{\omega^{R_1}} + R_1$.

  Moreover, from (I1) and (I2) we get the inequalities and equations (J3):\\
  For $1 \leqslant j \leqslant r_k$,
  
  $\left\{ \begin{array}{l}
    h (\omega^{\omega^{R_1} r_1 + \omega^{R_2} r_2 + \ldots + \omega^{R_k}
    \cdot j}) <_1 h (\omega^{\omega^{R_1} r_1 + \omega^{R_2} r_2 + \ldots +
    \omega^{R_k} \cdot j}) + h (R_k), \text{\hspace*{41mm} if } R_k \neq 0\\
    \text{\tmstrong{(\tmop{Observe} \tmop{here} \tmop{we} \tmop{use} :} } 
    \tmmathbf{\omega^Z + d (\omega^Z) \in \tmop{Dom} h)}\\
    \\
    \begin{array}{l}
      h (\omega^{\omega^{R_1} r_1 + \omega^{R_2} r_2 + \ldots + \omega^{R_{k -
      1}} r_{k - 1}}) + h (\omega^{\omega^{R_1} r_1 + \omega^{R_2} r_2 +
      \ldots + \omega^{R_k}}) = h (\omega^{\omega^{R_1} r_1 + \omega^{R_2} r_2
      + \ldots + \omega^{R_k}})\\
      h (\omega^{\omega^{R_1} r_1 + \omega^{R_2} r_2 + \ldots + \omega^{R_k}})
      + h (\omega^{\omega^{R_1} r_1 + \omega^{R_2} r_2 + \ldots + \omega^{R_k}
      2}) = h (\omega^{\omega^{R_1} r_1 + \omega^{R_2} r_2 + \ldots +
      \omega^{R_k} 2})\\
      h (\omega^{\omega^{R_1} r_1 + \omega^{R_2} r_2 + \ldots + \omega^{R_k}
      2}) + h (\omega^{\omega^{R_1} r_1 + \omega^{R_2} r_2 + \ldots +
      \omega^{R_k} 3}) = h (\omega^{\omega^{R_1} r_1 + \omega^{R_2} r_2 +
      \ldots + \omega^{R_k} 3}), \ldots\\
      h (\omega^{\omega^{R_1} r_1 + \omega^{R_2} r_2 + \ldots + \omega^{R_{k -
      1}} (r_k - 1)}) + h (\omega^{\omega^{R_1} r_1 + \omega^{R_2} r_2 +
      \ldots + \omega^{R_k} r_k}) = h (\omega^{\omega^{R_1} r_1 + \omega^{R_2}
      r_2 + \ldots + \omega^{R_k} r_k})
    \end{array} \text{ if } R_k = 0
  \end{array} \right.$

  Now, from (J1) and theorem \ref{Wilken_Theorem1} we get\\
  For $i \in \{1, \ldots, k - 1\}$, {
  
  }$h (\omega^{\omega^{R_1} r_1 + \ldots + \omega^{R_{i - 1}} r_{i - 1} +
  \omega^{R_i} \cdot j}) = \omega^{\omega^{h (R_i)} S (r_1, \ldots, r_{i - 1},
  j)}$ for some $S (r_1, \ldots, r_{i - 1}, j) \neq 0$.

  On the other hand, from $\omega^{\omega^{R_1}} < \omega^{\omega^{R_1} 2}
  \ldots < \omega^{\omega^{R_1} r_1}$ it follows \\
  $h (\omega^{\omega^{R_1}}) < h (\omega^{\omega^{R_1} 2}) < \ldots < h
  (\omega^{\omega^{R_1} r_1})$, which gives us, using the equality in the
  previous paragraph, $\omega^{\omega^{h (R_1)} S (1)} < \omega^{\omega^{h
  (R_1)} S (2)} < \ldots < \omega^{\omega^{h (R_1)} S (r_1 - 1)} <
  \omega^{\omega^{h (R_1)} S (r_1)}$. This implies\\
  $S (1) < S (2) < \ldots < S (r_1)$, which subsequently implies $r_1
  \leqslant S (r_1)$. Now, notice the following inductive argument: For any $j
  \in \{1, \ldots, r_2 \}$, $h (\omega^{\omega^{R_1} r_1}) = \omega^{\omega^{h
  (R_1)} S (r_1)} < h (\omega^{\omega^{R_1} r_1 + \omega^{R_2} j}) =
  \omega^{\omega^{h (R_2)} S (r_1, j)}$; this way, $\omega^{h (R_1)} S (r_1) <
  \omega^{h (R_2)} S (r_1, j)$ and $h (R_1) > h (R_2)$ (by (I0)). Thus $S
  (r_1, j) > \omega^{- h (R_2) + h (R_1)} S (r_1)$ (otherwise $\omega^{h
  (R_1)} S (r_1) \geqslant \omega^{h (R_2)} S (r_1, j)$) and therefore it
  exists $q (r_1, j) \in \tmop{OR}$ such that $S (r_1, j) = \omega^{- h (R_2)
  + h (R_1)} S (r_1) + q (r_1, j)$. Then $h (\omega^{\omega^{R_1} r_1 +
  \omega^{R_2} j}) = \omega^{\omega^{h (R_2)} S (r_1, j)} = \omega^{\omega^{h
  (R_2)} \omega^{- h (R_2) + h (R_1)} S (r_1) + q (r_1, j)} =
  \omega^{\omega^{h (R_1)} S (r_1) + \omega^{h (R_2)} q (r_1, j)}$. Moreover,
  observe the chain of inequalities $h (\omega^{\omega^{R_1} r_1 +
  \omega^{R_2}}) < h (\omega^{\omega^{R_1} r_1 + \omega^{R_2} 2}) < \ldots < h
  (\omega^{\omega^{R_1} r_1 + \omega^{R_2} r_2})$ implies $q (r_1, r_2)
  \geqslant r_2$. But for any $j \in \{1, \ldots, r_3 \}$, $h
  (\omega^{\omega^{R_1} r_1 + \omega^{R_2} r_2}) < h (\omega^{\omega^{R_1} r_1
  + \omega^{R_2} r_2 + \omega^{R_3} j}) = \omega^{\omega^{h (R_3)} S (r_1,
  r_2, j)}$, and so $\omega^{h (R_1)} S (r_1) + \omega^{h (R_2)} q (r_1, r_2)
  < \omega^{h (R_3)} S (r_1, r_2, j)$; since $h (R_3) < h (R_2) < h (R_1)$,
  then $S (r_1, r_2, j) > \omega^{- h (R_3) + h (R_1)} S (r_1) + \omega^{- h
  (R_3) + h (R_2)} q (r_1, r_2)$ and so it exists $q (r_1, r_2, j) \in
  \tmop{OR}$ such that \\
  $S (r_1, r_2, j) = \omega^{- h (R_3) + h (R_1)} S (r_1) + \omega^{- h (R_3)
  + h (R_2)} q (r_1, r_2) + q (r_1, r_2, j)$. Then $h (\omega^{\omega^{R_1}
  r_1 + \omega^{R_2} r_2 + \omega^{R_3} j}) = \omega^{\omega^{h (R_3)} S (r_1,
  r_2, j)} = \omega^{\omega^{h (R_1)} S (r_1) + \omega^{h (R_2)} q (r_1, r_2)
  + \omega^{h (R_3)} q (r_1, r_2, j)}$. Moreover, the chain of inequalities $h
  (\omega^{\omega^{R_1} r_1 + \omega^{R_2} r_2 + \omega^{R_3}}) < h
  (\omega^{\omega^{R_1} r_1 + \omega^{R_2} r_2 + \omega^{R_3} 2}) < \ldots < h
  (\omega^{\omega^{R_1} r_1 + \omega^{R_2} r_2 + \omega^{R_3} r_3})$ implies
  $q (r_1, r_2, r_3) \geqslant r_3$. Inductively, we obtain \\
  $h (\omega^{\omega^{R_1} r_1 + \ldots + \omega^{R_{k - 1}} r_{k - 1}}) =
  \omega^{\omega^{h (R_1)} S (r_1) + \omega^{h (R_2)} q (r_1, r_2) + \ldots +
  \omega^{h (R_{k - 1})} q (r_1, \ldots, r_{k - 1})}$ with \\
  $S (r_1) \geqslant r_1$, $q (r_1, r_2) \geqslant r_2, q (r_1, r_2, r_3)
  \geqslant r_3, \ldots, q (r_1, \ldots, r_{k - 1}) \geqslant r_{k - 1}$.

  For the case $R_k \neq 0$, doing once more the previous procedure with the
  equalities $h (\omega^{\omega^{R_1} r_1 + \ldots + \omega^{R_k} \cdot j}) =
  \omega^{\omega^{h (R_k)} S (r_1, \ldots, r_{k - 1}, j)}$, we obtain:\\
  $h (\omega^{\omega^{R_1} r_1 + \ldots + \omega^{R_k} r_k}) =
  \omega^{\omega^{h (R_1)} S (r_1) + \omega^{h (R_2)} q (r_1, r_2) + \ldots +
  \omega^{h (R_k)} q (r_1, \ldots, r_k)}$ with \\
  $S (r_1) \geqslant r_1$, $q (r_1, r_2) \geqslant r_2, q (r_1, r_2, r_3)
  \geqslant r_3, \ldots, q (r_1, \ldots, r_k) \geqslant r_k$; \ therefore \\
  $h (\omega^{\omega^{R_1} r_1 + \ldots + \omega^{R_k} r_k}) \geqslant
  \omega^{\omega^{h (R_1)} r_1 + \omega^{h (R_2)} r_2 + \ldots + \omega^{h
  (R_k)} r_k}$. \ \ \ \ \ \ \ {\tmstrong{(**1**)}}

  For the case $R_k = 0$ the additions in (J3) imply:\\
  $h (\omega^{\omega^{R_1} r_1 + \omega^{R_2} r_2 + \ldots + \omega^{R_k}})
  \geqslant \omega^{\omega^{h (R_1)} S (r_1) + \omega^{h (R_2)} q (r_1, r_2) +
  \ldots + \omega^{h (R_{k - 1})} q (r_1, \ldots, r_{k - 1}) + 1}$\\
  $h (\omega^{\omega^{R_1} r_1 + \omega^{R_2} r_2 + \ldots + \omega^{R_k} 2})
  \geqslant \omega^{\omega^{h (R_1)} S (r_1) + \omega^{h (R_2)} q (r_1, r_2) +
  \ldots + \omega^{h (R_{k - 1})} q (r_1, \ldots, r_{k - 1}) + 2}$\\
  $\ldots$\\
  $h (\omega^{\omega^{R_1} r_1 + \omega^{R_2} r_2 + \ldots + \omega^{R_k}
  r_k}) \geqslant \omega^{\omega^{h (R_1)} S (r_1) + \omega^{h (R_2)} q (r_1,
  r_2) + \ldots + \omega^{h (R_{k - 1})} q (r_1, \ldots, r_{k - 1}) + r_k}$\\
  and therefore $h (\omega^{\omega^{R_1} r_1 + \ldots + \omega^{R_k} r_k})
  \geqslant \omega^{\omega^{h (R_1)} r_1 + \omega^{h (R_2)} r_2 + \ldots +
  \omega^{h (R_k)} r_k}$ \ \ \ \ \ \ \ {\tmstrong{(**2**)}}

  Finally, to conclude, in any case $R_k \neq 0$ or $R_k = 0$, from (**1**)
  and (**2**) we have \\
  $h (x) = (\omega^Z) = h (\omega^{\omega^{R_1} r_1 + \ldots + \omega^{R_k}
  r_k}) \geqslant \omega^{\omega^{h (R_1)} r_1 + \omega^{h (R_2)} r_2 + \ldots
  + \omega^{h (R_k)} r_k} \underset{\tmop{IH}}{\geqslant}$\\
  $\omega^{\omega^{R_1 [\alpha \assign h (\alpha)]} r_1 + \omega^{R_2 [\alpha
  \assign h (\alpha)]} r_2 + \ldots + \omega^{R_k [\alpha \assign h (\alpha)]}
  r_k} = \omega^{Z [\alpha \assign h (\alpha)]} = \omega^Z [\alpha \assign h
  (\alpha)] = x [\alpha \assign h (\alpha)]$.

  This finishes the proof of this lemma.
\end{proof}

\subsection{Cover of a finite set $B$.}

Now we extend the construction of the covering for a finite set.

\begin{definition}
  (Cover of a finite set). Let $\alpha \in \mathbbm{E}$ and $B
  \subset_{\tmop{fin}} \alpha^+$. We define \\
  $\Delta (\alpha, B) \assign B \cup\underset{\delta \in B \cap [\alpha,
  \alpha^+)}{\bigcup}D (\alpha, \delta)$, where $D (\alpha, \delta)$ is
  the set defined in previous lemma \ref{CoveringLemma1}.
\end{definition}

\begin{proposition}
  \label{Delta_contained_in_eta(t)}Let $\alpha \in \mathbbm{E}$ and $B
  \subset_{\tmop{fin}} \alpha^+$. If $B \cap [\alpha, \alpha^+) = \emptyset$,
  then $\Delta (\alpha, B) = B \subset \alpha$.\\
  If $B \cap [\alpha, \alpha^+) \neq \emptyset$, then for $t \assign \max B$,
  $\Delta (\alpha, B) \subset_{\tmop{fin}} \eta t + 1 \subset \alpha^+$. In
  any case, $\Delta (\alpha, B)$ is finite.
\end{proposition}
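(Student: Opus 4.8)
The plan is to derive this proposition almost directly from the covering lemma \ref{CoveringLemma1}, whose parts $(i)$ and $(ii)$ already contain all the real content; the proof here is just a matter of assembling the pieces and taking a union.

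First I would dispose of the case $B \cap [\alpha, \alpha^+) = \emptyset$. Here the index set of $\bigcup_{\delta \in B \cap [\alpha, \alpha^+)} D (\alpha, \delta)$ is empty, so $\Delta (\alpha, B) = B \cup \emptyset = B$. Since $B \subset \alpha^+$ and no element of $B$ lies in $[\alpha, \alpha^+)$, every element of $B$ is $< \alpha$, i.e. $B \subset \alpha$; in particular $\Delta (\alpha, B) = B$ is finite. Next, assume $B \cap [\alpha, \alpha^+) \neq \emptyset$ and set $t \assign \max B$. Since some element of $B$ is $\geqslant \alpha$, we get $t \in [\alpha, \alpha^+)$. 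The key bound is lemma \ref{CoveringLemma1} $(ii)$: for every $\delta \in B \cap [\alpha, \alpha^+)$ (so $\delta \geqslant \alpha$) one has $D (\alpha, \delta) \subset \eta \delta + 1$. As $\delta \leqslant t$, monotonicity of $\eta$ (proposition \ref{pi.eta.properties} part 2, or equivalently proposition \ref{eta(t)_m(t)_and_<less>_1} part 4) gives $\eta \delta \leqslant \eta t$, whence $D (\alpha, \delta) \subset \eta t + 1$. On the other hand $B \subset t + 1 \subset \eta t + 1$ because $t \leqslant \eta t$. Taking the union over the finitely many such $\delta$ then yields $\Delta (\alpha, B) \subset \eta t + 1$.

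It remains to verify $\eta t + 1 \subset \alpha^+$ and finiteness. For the inclusion, note $t < \alpha^+$, while $\pi t \leqslant t < \alpha^+$ and $d \pi t \leqslant \pi t < \alpha^+$ by proposition \ref{pi.eta.properties} part 1; since the epsilon number $\alpha^+$ is additively principal, $\pi t + d \pi t < \alpha^+$, and therefore $\eta t = \max \{t, \pi t + d \pi t\} < \alpha^+$. As $\alpha^+$ is a limit ordinal, this forces $\eta t + 1 \subset \alpha^+$. For finiteness, each $D (\alpha, \delta) = C (\delta) \cup \{\alpha, \alpha 2\}$ is finite by lemma \ref{CoveringLemma1} $(i)$, and $B \cap [\alpha, \alpha^+)$ is a finite index set, so $\Delta (\alpha, B)$ is a finite union of finite sets together with the finite set $B$, hence finite.

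There is essentially no hard step: the entire difficulty was absorbed into the covering lemma, and the proposition is a bookkeeping corollary. The only point deserving a moment's care is the verification that $\eta t < \alpha^+$, which rests solely on the additive closure of the epsilon number $\alpha^+$ applied to the summand $\pi t + d \pi t$.
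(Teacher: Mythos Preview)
Your proof is correct and follows essentially the same approach as the paper's: both reduce the proposition to the bound $D(\alpha,\delta)\subset\eta\delta+1$ from lemma~\ref{CoveringLemma1}(ii) together with the monotonicity $\eta\delta\leqslant\eta t$, then observe that $\Delta(\alpha,B)$ is a finite union of finite sets. The only cosmetic differences are that the paper treats the subcase $\delta=\alpha$ separately (computing $D(\alpha,\alpha)=\{\alpha,\alpha2\}$ directly) whereas you let the covering lemma absorb it, and you spell out the inclusion $\eta t+1\subset\alpha^+$ and the bound $B\subset\eta t+1$ more explicitly than the paper does.
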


\begin{proof}
  That $B \cap [\alpha, \alpha^+) = \emptyset$ implies $\Delta (\alpha, B) = B
  \subset \alpha$ is clear.
  
  Suppose $B \cap [\alpha, \alpha^+) \neq \emptyset$ and let $t \assign \max
  B$. Let $\delta \in B \cap [\alpha, \alpha^+)$ be arbitrary. If $\delta =
  \alpha$, then $D (\alpha, \delta) = \{\alpha, \alpha 2\}
  \underset{\text{\tmop{by} \tmop{proposition} \ref{pi.eta.properties},
  \tmop{claim} 3.}}{\subset} \eta t + 1$. If $\delta > \alpha$, then $\delta
  \leqslant t$ and so \\
  $D (\alpha, \delta) \underset{\text{\tmop{by} \tmop{prop} .
  \ref{pi.eta.properties} \tmop{claim} 3, \tmop{and} \tmop{by} \tmop{prop} .
  \ref{C(ro)_contained_in_C(delta)}}}{\subset} \eta \delta + 1
  \underset{\text{\tmop{by} \tmop{proposition} \ref{eta(t)_m(t)_and_<less>_1}
  \tmop{claim} 4}}{\leqslant} \eta t + 1 < \alpha^+$.
  
  Finally, $\Delta (\alpha, B)$ is finite because it is finite union of finite
  sets.
\end{proof}

\begin{theorem}
  \label{theorem_cover_simple_substitutions}(Covering theorem). Let $\alpha
  \in \mathbbm{E}$ and $B \subset_{\tmop{fin}} \alpha^+$ be such that $B \cap
  [\alpha, \alpha^+) \neq \emptyset$. Consider $F B \assign \{h : \Delta
  (\alpha, B) \longrightarrow h [\Delta (\alpha, B)] \subset \alpha |h
  \text{\tmop{is} \tmop{an}} ( <, <_1, +) \text{- \tmop{isomorphism}
  \tmop{with}} h|_{\alpha} = \tmop{Id}_{\alpha} \}$. Then for any $h \in F B$
  the ordinal $h (\alpha) \in \alpha \cap \mathbbm{E}$ and
  \begin{enumeratealpha}
    \item $\forall x \in \Delta (\alpha, B) . \tmop{Ep} (x) \cap \alpha
    \subset h (\alpha) \wedge x [\alpha \assign h (\alpha)] \leqslant h (x)$.
    
    \item If $\alpha \leqslant_1 \max \Delta (\alpha, B)$, then the function
    $H : \Delta (\alpha, B) \longrightarrow H [\Delta (\alpha, B)]$, $H (x) :
    = x [\alpha : = h (\alpha)]$ is an $(<, <_1, +, \lambda x.
    \omega^x)$-isomorphism with $H|_{\alpha} = \tmop{Id}_{\alpha}$.
  \end{enumeratealpha}
\end{theorem}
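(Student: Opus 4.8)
The plan is to prove the two assertions about an arbitrary $h \in FB$ by leveraging the per-ordinal covering lemma \ref{CoveringLemma1} applied to each $\delta \in B \cap [\alpha, \alpha^+)$, and then bootstrapping from the inequality in part a) to the actual substitution equality and $<_1$-preservation in part b).

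First I would establish that $h(\alpha) \in \alpha \cap \mathbbm{E}$. Since $B \cap [\alpha, \alpha^+) \neq \emptyset$, by proposition \ref{Delta_contained_in_eta(t)} the set $\Delta(\alpha, B)$ contains $\alpha$ and $\alpha 2$ (each $D(\alpha, \delta)$ contributes them), so $h(\alpha)$ and $h(\alpha 2) = h(\alpha) 2$ both lie in the image; from $\alpha <_1 \alpha 2$ and the fact that $h$ is an $(<, <_1, +)$-isomorphism we get $h(\alpha) <_1 h(\alpha) 2$, which by corollary \ref{Wilken_Corollary1} forces $h(\alpha) \in \mathbbm{E}$, and $h(\alpha) < \alpha$ because $h[\Delta(\alpha,B)] \subset \alpha$. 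For part a), I would note that for each $\delta \in B \cap [\alpha, \alpha^+)$ the restriction $h|_{D(\alpha,\delta)}$ is again an $(<,<_1,+)$-isomorphism fixing $\alpha$ pointwise below (by proposition \ref{iso.restriction} of \cite{GarciaCornejo0}), so lemma \ref{CoveringLemma1} part iii directly yields $\tmop{Ep}(x) \cap \alpha \subset h(\alpha)$ and $x[\alpha := h(\alpha)] \leqslant h(x)$ for every $x \in D(\alpha, \delta)$; since $\Delta(\alpha, B) = B \cup \bigcup_{\delta} D(\alpha, \delta)$ and the elements of $B \cap \alpha$ are fixed by $h$, this covers all $x \in \Delta(\alpha, B)$.

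For part b), the key is to upgrade the inequality $x[\alpha := h(\alpha)] \leqslant h(x)$ to an equality. The plan is to use the hypothesis $\alpha \leqslant_1 \max \Delta(\alpha, B)$ together with the second fundamental cofinality property, proposition \ref{2nd_Fund_Cof_Property_<less>^1}, or more directly to argue by a counting/minimality comparison: by corollary \ref{main_[a:=e]_Isomorphism1} the map $x \mapsto x[\alpha := h(\alpha)]$ is itself an $(<, +, \cdot, \lambda x. \omega^x)$-isomorphism on $M(\alpha, h(\alpha))$, and every $x \in \Delta(\alpha, B)$ lies in $M(\alpha, h(\alpha))$ precisely because $\tmop{Ep}(x) \cap \alpha \subset h(\alpha)$ from part a). Thus $H$ is a well-defined $(<, +, \lambda x.\omega^x)$-isomorphism with $H|_\alpha = \tmop{Id}_\alpha$. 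The remaining obligation is that $H$ preserves $<_1$: here I would invoke that $H$ is the substitution isomorphism and use the connectedness/continuity machinery exactly as in the proof of corollary \ref{A_[alpha:=e]_isomorphisms} and remark \ref{remark_m-iso_<less>_1-iso}, checking the cases where one argument is $\alpha$ (or $\geqslant \alpha$) against an element below, using $\alpha \leqslant_1 \max \Delta(\alpha, B)$ to guarantee $h(\alpha) = H(\alpha) \leqslant_1 H(\max \Delta(\alpha, B))$.

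The main obstacle I anticipate is showing that the inequality in a) is actually an equality, i.e. $h(x) = x[\alpha := h(\alpha)]$ rather than merely $\geqslant$; this is where the hypothesis $\alpha \leqslant_1 \max \Delta(\alpha, B)$ must do real work, since without it $h$ could send $\alpha$ too low and leave room for strict inequality. I expect the resolution to come from $<_1$-connectedness applied along the chain reaching $\max \Delta(\alpha, B)$: because $\alpha \leqslant_1 \max \Delta(\alpha, B)$ and $h$ preserves $<_1$, the image $h(\alpha)$ must satisfy $h(\alpha) \leqslant_1 H(\max \Delta(\alpha, B))$, and this forces $h(\alpha)$ to be a limit of substitution-witnesses in the sense of proposition \ref{2nd_Fund_Cof_Property_<less>^1}, pinning down $h$ to coincide with the canonical substitution $x \mapsto x[\alpha := h(\alpha)]$ exactly. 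Once equality is in hand, the $<_1$-isomorphism property of $H$ follows from remark \ref{remark_m-iso_<less>_1-iso} applied to the relevant intervals.
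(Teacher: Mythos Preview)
Your treatment of $h(\alpha)\in\mathbbm{E}$ and of part a) is essentially the paper's argument and is correct.

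For part b), however, you have misidentified the obstacle. You write that ``the key is to upgrade the inequality $x[\alpha:=h(\alpha)]\leqslant h(x)$ to an equality,'' and you spend your final paragraph trying to force $H=h$. This is neither needed nor true: corollary \ref{minimal_cover-iso_Class(1)} shows precisely that the substitution $H$ is the \emph{minimal} element of $FB$, so a generic $h\in FB$ may satisfy $H(x)<h(x)$ strictly. The theorem only asks you to show that $H$ itself is an $(<,<_1,+,\lambda x.\omega^x)$-isomorphism, not that it coincides with $h$.

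The genuine difficulty in part b) is the one case of $<_1$-preservation you glossed over: for $b_j\in\Delta(\alpha,B)$ with $b_j>\alpha$, why does $H(\alpha)<_1 H(b_j)$ hold? Remark \ref{remark_m-iso_<less>_1-iso} and corollary \ref{A_[alpha:=e]_isomorphisms} handle pairs strictly above $\alpha$, and the cases involving elements below $\alpha$ are routine, but the relation $H(\alpha)<_1 H(b_j)$ cannot be read off from those results alone. The paper's argument uses the inequality from part a) \emph{as an inequality}: from $\alpha\leqslant_1\max\Delta(\alpha,B)$ and $\leqslant_1$-connectedness one has $\alpha<_1 b_j$; since $h$ preserves $<_1$, $h(\alpha)<_1 h(b_j)$; and since part a) gives $H(\alpha)=h(\alpha)<H(b_j)\leqslant h(b_j)$, $\leqslant_1$-connectedness yields $H(\alpha)<_1 H(b_j)$. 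That is the step where the hypothesis $\alpha\leqslant_1\max\Delta(\alpha,B)$ does its real work, and your proposed route through proposition \ref{2nd_Fund_Cof_Property_<less>^1} does not supply it.
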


\begin{proof}
  Let $\alpha$ and $B$ as stated. Let $h \in F B$.
  
  First note that $\alpha, \alpha 2 \in \Delta (\alpha, B)$ (because $B \cap
  [\alpha, \alpha^+) \neq \emptyset$) and since $\alpha <_1 \alpha 2$, then $h
  (\alpha) <_1 h (\alpha) 2$. This implies that $h (\alpha) \in \mathbbm{E}$.

  Now we show $a)$.
  
  Let $x \in \Delta (\alpha, B)$.
  
  If $x < \alpha$, then $x = h (x) < h (\alpha)$ because $h$ is an
  $<$-isomorphism such that $h|_{\alpha} = \tmop{id}_{\alpha}$. Therefore
  $\tmop{Ep} (x) \cap \alpha \subset h (\alpha)$ and $x = x [\alpha \assign h
  (\alpha)]$.
  
  If $x = \alpha$, then clearly $\tmop{Ep} (x) \cap \alpha \subset h (\alpha)$
  and $x [\alpha \assign h (\alpha)] = h (x)$.
  
  Case $x > \alpha$. Then $x \in D (\alpha, x) \subset \Delta (\alpha, B)$
  and $h|_{D (\alpha, x)} : D (\alpha, x) \longrightarrow h [D (\alpha, x)]
  \subset \alpha$ is an \\
  $(<, <_1, +)$-isomorphism with $h |_{D (\alpha, x)} |_{\alpha} =
  \tmop{Id}_{\alpha}$ by {\cite{GarciaCornejo0}} proposition
  \ref{iso.restriction} in the appendices section. Therefore, by lemma
  \ref{CoveringLemma1}, $\tmop{Ep} (x) \cap \alpha \subset h|_{D (\alpha, x)}
  (\alpha) = h (\alpha)$ and $x [\alpha \assign h (\alpha)] \leqslant h|_{D
  (\alpha, x)} (x) = h (x)$.
  
  The previous shows $a)$.

  We show $b)$.
  
  Suppose $\alpha \leqslant_1 \max \Delta (\alpha, B)$.
  
  By $a)$ we know $\forall x \in \Delta (\alpha, B) . \tmop{Ep} (x) \cap
  \alpha \subset h (\alpha)$; so, by corollary
  \ref{[a:=e]_iso_B_contained_M(a,e)}, the function $H$ is an \ \\
  $(<, +, \cdot, \lambda x. \omega^x)$- isomorphism. Moreover, it is also
  clear that $H|_{\alpha} = \tmop{Id}_{\alpha}$. So we just need to prove that
  $H$ preserves the relation $<_1$ too. Let $\Delta (\alpha, B) \cap \alpha =
  \{a_1, \ldots, a_N \}$ and \\
  $\Delta (\alpha, B) \cap [\alpha, \alpha^+) = \{\alpha = b_1, \ldots, b_M
  \}$. Then:

  $\bullet$ Note $\alpha <_1 \max \Delta (\alpha, B)$ and
  $\leqslant_1$-connectedness imply that $\alpha <_1 b_j$ for any $b_j \neq
  \alpha$. So we need to show $H (\alpha) <_1 H (b_j)$ for any $b_j \neq
  \alpha$. But by $a)$ we know $H (b_j) = b_j [\alpha \assign h (\alpha)]
  \leqslant h (b_j)$; moreover, we know $h (\alpha) = H (\alpha) < H (b_j)$
  and $h (\alpha) <_1 h (b_j)$ for any $b_j \neq \alpha$. Thus by
  $\leqslant_1$-connectedness, $H (\alpha) <_1 H (b_j)$ for any $b_j \neq
  \alpha$.
  
  $\bullet$ $a_i <_1 a_j \Longleftrightarrow a_i = H (a_i) <_1 H (a_j) = a_j$.
  
  $\bullet$ $a_i <_1 \alpha \Longleftrightarrow H (a_i) = a_i = h (a_i) <_1 H
  (\alpha) = h (\alpha)$ because $h$ is an $<_1$isomorphism.
  
  $\bullet$ If $a_i <_1 b_j$, then $H (a_i) = a_i <_1 H (b_j) < \alpha < b_j$
  by $<_1$-connectedness.
  
  $\bullet$ If $H (a_i) <_1 H (b_j)$, then $H (a_i) <_1 H (\alpha)$ by
  $<_1$-connectedness. But \\
  $h (a_i) = H (a_i) <_1 H (\alpha) = h (\alpha) \Longleftrightarrow a_i <_1
  \alpha$ (because $h$ is an $<_1$isomorphism) and since $\alpha \leqslant_1
  b_j$, then $a_i <_1 b_j$ follows by $<_1$-transitivity.
  
  $\bullet$ For $b_i \neq \alpha \neq b_j$, $b_i <_1 b_j
  \underset{\text{\tmop{corollary} \ref{A_[alpha:=e]_isomorphisms}}
  }{\Longleftrightarrow} H (b_i) <_1 H (b_j)$.
  
  The previous shows $b)$.
\end{proof}

\subsubsection{Consequences of the covering theorem.}

Consider a finite set of ordinals $L \subset_{\tmop{fin}} \tmop{OR}$ and $F L
\subset \{k|k : L \longrightarrow \tmop{OR}\}$ a class of functionals. Then $F
L$ is well ordered under the lexicographic order $<_{F L, \tmop{lex}}$; that
is, for $h, k \in F L$, $h <_{\tmop{FL}, \tmop{lex}} k : \Longleftrightarrow$
$\exists y \in L.h (y) \neq k (y)$ and for $m \assign \min \{x \in L|h (x)
\neq k (x)\}$ it holds $h (m) < k (m)$. Moreover, in case $F L \neq
\emptyset$, we can consider $\min (F L)$, the minimum element in $F L$ with
respect to $<_{F L, \tmop{lex}}$. The next corollary uses this concepts.

\begin{corollary}
  \label{minimal_cover-iso_Class(1)}Let $\alpha \in \mathbbm{E}$ and $\beta
  \in (\alpha, \alpha^+)$ be with $\alpha <_1 \beta$. Suppose $B
  \subset_{\tmop{fin}} \beta$ is such that \\
  $\Delta (\alpha, B) \subset \beta$. Consider \\
  $F B \assign \{h : \Delta (\alpha, B) \longrightarrow h [\Delta (\alpha, B)]
  \subset \alpha |h \text{\tmop{is} \tmop{an}} ( <, <_1, +) \text{-
  \tmop{isomorphism} \tmop{with}} h|_{\alpha} = \tmop{Id}_{\alpha} \}$. Then
  \\
  $\mu \assign \min (F B)$ exists, $\mu (\alpha) \in \alpha \cap \mathbbm{E}$
  and $\mu$ is the substitution $x \longmapsto x [\alpha \assign \mu
  (\alpha)]$.
\end{corollary}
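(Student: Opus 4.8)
The plan is to first secure that $F B \neq \emptyset$, then extract its minimum via the lexicographic well-order, and finally use the two conclusions of the covering theorem (Theorem \ref{theorem_cover_simple_substitutions}) to pin that minimum down to a substitution. Since $\Delta (\alpha, B) \subset_{\tmop{fin}} \beta$ and $\alpha <_1 \beta$, the witnessing property of $<_1$ from \cite{GarciaCornejo0} furnishes an $(<, <_1, +)$-isomorphism $h : \Delta (\alpha, B) \longrightarrow h [\Delta (\alpha, B)] \subset \alpha$ with $h|_{\alpha} = \tmop{Id}_{\alpha}$; that is, $F B \neq \emptyset$. Because the lexicographic order $<_{F B, \tmop{lex}}$ is a well-order on $F B$, the element $\mu \assign \min (F B)$ exists. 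We are in the standing case $B \cap [\alpha, \alpha^+) \neq \emptyset$ of Theorem \ref{theorem_cover_simple_substitutions}, so $\alpha, \alpha 2 \in \Delta (\alpha, B)$; hence that theorem yields $\mu (\alpha) \in \alpha \cap \mathbbm{E}$. Put $\gamma \assign \mu (\alpha)$.

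Next I would verify the hypothesis of part $b)$ of the covering theorem for $h = \mu$. Since $\alpha \in \Delta (\alpha, B) \subset \beta$ we have $\alpha \leqslant \max \Delta (\alpha, B) < \beta$, so from $\alpha <_1 \beta$ and $\leqslant_1$-connectedness we obtain $\alpha \leqslant_1 \max \Delta (\alpha, B)$. Therefore Theorem \ref{theorem_cover_simple_substitutions} $b)$ applies, and the substitution $H : \Delta (\alpha, B) \longrightarrow H [\Delta (\alpha, B)]$, $H (x) \assign x [\alpha \assign \gamma]$, is an $(<, <_1, +, \lambda x. \omega^x)$-isomorphism with $H|_{\alpha} = \tmop{Id}_{\alpha}$. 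Its image lies in $\alpha$: for $x \in \Delta (\alpha, B) \cap \alpha$ we have $H (x) = x < \alpha$, while for $x \in [\alpha, \alpha^+)$ part $a)$ gives $\tmop{Ep} (x) \cap \alpha \subset \gamma$, whence by proposition \ref{[alpha:=e]_proposition3} $H (x) = x [\alpha \assign \gamma] \in \gamma^+ \leqslant \alpha$ (using $\gamma < \alpha$ and $\gamma, \alpha \in \mathbbm{E}$). Consequently $H \in F B$.

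Finally I would close the argument by a minimality squeeze. By part $a)$ of the covering theorem applied to $\mu$, $H (x) = x [\alpha \assign \gamma] = x [\alpha \assign \mu (\alpha)] \leqslant \mu (x)$ for every $x \in \Delta (\alpha, B)$; hence either $H = \mu$, or $H$ and $\mu$ first differ at some $m$ where necessarily $H (m) < \mu (m)$, so in either case $H \leqslant_{F B, \tmop{lex}} \mu$. On the other hand, minimality of $\mu$ gives $\mu \leqslant_{F B, \tmop{lex}} H$. Since $<_{F B, \tmop{lex}}$ is a total well-order, these two inequalities force $\mu = H$, i.e. $\mu (x) = x [\alpha \assign \mu (\alpha)]$ for all $x \in \Delta (\alpha, B)$. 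This is exactly the assertion that $\mu$ is the substitution $x \longmapsto x [\alpha \assign \mu (\alpha)]$, and together with $\mu (\alpha) \in \alpha \cap \mathbbm{E}$ it completes the proof.

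The main obstacle, I expect, is showing that the substitution map $H$ really belongs to $F B$: this is the point where all the machinery must be invoked simultaneously — part $b)$ of the covering theorem (itself resting on the verification $\alpha \leqslant_1 \max \Delta (\alpha, B)$) to know that $H$ preserves $<_1$, and the Cantor-normal-form bookkeeping of proposition \ref{[alpha:=e]_proposition3} to know that $H$ maps into $\alpha$. Once $H \in F B$ is in hand, the pointwise domination $H (x) \leqslant \mu (x)$ from part $a)$ turns the minimality of $\mu$ into the desired identity $\mu = H$ immediately.
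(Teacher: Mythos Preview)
Your proof is correct and follows essentially the same approach as the paper's: show $F B \neq \emptyset$ from $\alpha <_1 \beta$, take $\mu = \min(F B)$, invoke the covering theorem to obtain both $\mu(\alpha) \in \alpha \cap \mathbbm{E}$ and the substitution $H \in F B$ with $H(x) \leqslant \mu(x)$ pointwise, and conclude $H = \mu$ by minimality. You in fact fill in two details that the paper's proof leaves implicit---the verification of the hypothesis $\alpha \leqslant_1 \max \Delta(\alpha, B)$ needed for part~$b)$ of Theorem~\ref{theorem_cover_simple_substitutions}, and the check that $H[\Delta(\alpha, B)] \subset \alpha$---so your argument is if anything more complete.
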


\begin{proof}
  Since $\alpha <_1 \beta$ and $\Delta (\alpha, B) \subset_{\tmop{fin}}
  \beta$, then $F B \neq \emptyset$ and so $\mu \assign \min F B$ exists. Now,
  by previous theorem \ref{theorem_cover_simple_substitutions}, $\mu (\alpha)
  \in \mathbbm{E} \cap \alpha$ and the function $H : \Delta (\alpha, B)
  \longrightarrow H [\Delta (\alpha, B)]$, $H (x) : = x [\alpha : = \mu
  (\alpha)]$ is well defined and satisfies the following two things: $H \in F
  B$ and $\forall x \in \Delta (\alpha, B) .H (x) \leqslant \mu (x)$. Thus,
  from the minimality of the function $\mu$, it follows $H = \mu$.
\end{proof}

The following is the main result that relates $<_1$ with $<^1$.

\begin{corollary}
  \label{<less>_1.iff.<less>^1}Let $\alpha \in \mathbbm{E}$ and $t \in
  [\alpha, \alpha^+)$. Then $\alpha <_1 \eta t + 1 \Longleftrightarrow \alpha
  <^1 \eta t + 1$.
\end{corollary}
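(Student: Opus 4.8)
The forward direction $\alpha <^1 \eta t + 1 \Longrightarrow \alpha <_1 \eta t + 1$ is immediate from Proposition~\ref{<less>=^1_implies_<less>=_1}(1), so all the work is in the converse $\alpha <_1 \eta t + 1 \Longrightarrow \alpha <^1 \eta t + 1$. The plan is to verify the defining condition of $<^1$ directly: given an arbitrary finite $Z \subset_{\tmop{fin}} \eta t + 1$, I must produce $\tilde Z \subset_{\tmop{fin}} \alpha$ and an $(<,<_1,+,\lambda x.\omega^x)$-isomorphism $h : Z \longrightarrow \tilde Z$ with $h|_{Z \cap \alpha} = \tmop{Id}$. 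The covering machinery is built precisely for this. I would first enlarge $Z$ to its cover $\Delta(\alpha, Z)$, which by Proposition~\ref{Delta_contained_in_eta(t)} satisfies $\Delta(\alpha, Z) \subset_{\tmop{fin}} \eta t + 1$ (note $\max Z \leqslant \eta t$ forces $\eta(\max Z) \leqslant \eta t$ by Proposition~\ref{eta(t)_m(t)_and_<less>_1}(4) and idempotency of $\eta$ from Proposition~\ref{pi.eta.properties}(4)).

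\textbf{Building the isomorphism.}
The hypothesis $\alpha <_1 \eta t + 1$ means that for every finite subset of $\eta t + 1$ there is an $(<,<_1,+)$-isomorphism into $\alpha$ fixing $\alpha$ pointwise below; in particular the family $F\!\cdot\!\Delta(\alpha,Z)$ of such isomorphisms on $\Delta(\alpha, Z)$ is nonempty. I would then invoke Theorem~\ref{theorem_cover_simple_substitutions}: since $\eta t \in \Delta(\alpha, Z)$ (by Lemma~\ref{CoveringLemma1}(ii), as $\eta t \geqslant \alpha$) and $\alpha <_1 \eta t$ is equivalent to $\alpha <_1 \eta t + 1$ which in turn is equivalent to $\alpha <_1 t$ by Proposition~\ref{eta(t)_m(t)_and_<less>_1}(3), the hypothesis $\alpha \leqslant_1 \max \Delta(\alpha, Z)$ of part~(b) is satisfied. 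Part~(b) then hands me an $(<,<_1,+,\lambda x.\omega^x)$-isomorphism $H : \Delta(\alpha, Z) \longrightarrow H[\Delta(\alpha, Z)] \subset \alpha$ of the substitution form $H(x) = x[\alpha \assign h(\alpha)]$ with $H|_{\alpha} = \tmop{Id}_{\alpha}$.

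\textbf{Descending to $Z$.}
Finally I restrict $H$ to the original set $Z$. Since $Z \subset \Delta(\alpha, Z)$, the restriction $h := H|_Z : Z \longrightarrow H[Z] \subset \alpha$ is again an $(<,<_1,+,\lambda x.\omega^x)$-isomorphism (restrictions of such isomorphisms remain isomorphisms by \cite{GarciaCornejo0} proposition~\ref{iso.restriction}), and it still satisfies $h|_{Z \cap \alpha} = \tmop{Id}$. This exhibits the required witness, and since $Z$ was arbitrary we conclude $\alpha <^1 \eta t + 1$.

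\textbf{Main obstacle.}
The delicate point is not the restriction bookkeeping but ensuring the hypotheses of Theorem~\ref{theorem_cover_simple_substitutions}(b) genuinely hold, i.e.\ translating $\alpha <_1 \eta t + 1$ into $\alpha \leqslant_1 \max \Delta(\alpha, Z)$. This hinges on the chain of equivalences $\alpha <_1 \eta t + 1 \Longleftrightarrow \alpha <_1 \eta t \Longleftrightarrow \alpha <_1 t$ and on the fact that $\eta t$ actually lies in $\Delta(\alpha, Z)$ and realizes $\max \Delta(\alpha, Z)$ up to $<_1$-connectedness; I would check carefully that the element of $\Delta(\alpha, Z)$ witnessing the maximum is $\leqslant_1$-reachable from $\alpha$, using $\leqslant_1$-connectedness together with $\alpha <_1 \eta t$. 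Once this is in place, everything else is a direct application of the covering theorem and the restriction lemma.
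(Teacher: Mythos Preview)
Your approach is essentially identical to the paper's: enlarge $Z$ to its cover $\Delta(\alpha, Z) \subset_{\tmop{fin}} \eta t + 1$, use the hypothesis $\alpha <_1 \eta t + 1$ to obtain an $(<, <_1, +)$-isomorphism on the cover, apply Theorem~\ref{theorem_cover_simple_substitutions}(b) to upgrade it to the substitution $H(x) = x[\alpha \assign h(\alpha)]$, and then restrict back to $Z$ via proposition~\ref{iso.restriction}. Two small corrections are in order. First, you must separate out the trivial case $Z \subset \alpha$ (where the identity map suffices), since Theorem~\ref{theorem_cover_simple_substitutions} explicitly requires $B \cap [\alpha, \alpha^+) \neq \emptyset$. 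Second, your intermediate claim that $\eta t \in \Delta(\alpha, Z)$ is false in general (take $Z = \{\alpha\}$ and any $t$ with $\eta t > \alpha 2$; then $\Delta(\alpha,Z) = \{\alpha,\alpha 2\}$), and the chain of equivalences you invoke is also not quite right ($\alpha <_1 \eta t$ does not in general imply $\alpha <_1 \eta t + 1$). Fortunately none of this is needed, and the verification of the hypothesis of part~(b) is simpler than your ``Main obstacle'' paragraph suggests: from $\Delta(\alpha, Z) \subset \eta t + 1$ one has $\alpha \leqslant \max \Delta(\alpha, Z) \leqslant \eta t$, and then $\alpha <_1 \eta t + 1$ yields $\alpha \leqslant_1 \max \Delta(\alpha, Z)$ directly by $\leqslant_1$-connectedness.
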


\begin{proof}
  The implication $\Longleftarrow)$ is already known.
  
  {\noindent}Let's show $\Longrightarrow)$.
  
  Let $B \subset_{\tmop{fin}} \eta t + 1$. If $B \subset \alpha$, then $I : B
  \longrightarrow B$, $I (x) \assign x$ is an $(<, <_1, +, \lambda x.
  \omega^x)$- isomorphism with $I|_{\alpha} = \tmop{Id}_{\alpha}$. So suppose
  $B \cap [\alpha, \alpha^+) \neq \emptyset$. Let $l \assign \max B \geqslant
  \alpha$. Proposition \ref{Delta_contained_in_eta(t)} guarantees that $\Delta
  (\alpha, B) \subset_{\tmop{fin}} \eta l + 1$; but $\eta l\underset{\text{prop. }
  \ref{eta(t)_m(t)_and_<less>_1}}{\leqslant}\eta \eta t\underset{\text{prop. }
  \ref{pi.eta.properties}}{=}\eta t$, so $\Delta (\alpha, B)
  \subset_{\tmop{fin}} \eta l + 1 \leqslant \eta t + 1$. Moreover, since by
  hypothesis $\alpha <_1 \eta t + 1$, then there exists $h : \Delta (\alpha,
  B) \longrightarrow h [\Delta (\alpha, B)] \subset \alpha$ an $(<, <_1,
  +)$-isomorphism with $h|_{\alpha} = \tmop{Id}_{\alpha}$. Therefore, by
  theorem \ref{theorem_cover_simple_substitutions}, the function $H : \Delta
  (\alpha, B) \longrightarrow H [\Delta (\alpha, B)] \subset \alpha$ defined
  as $H (x) \assign x [\alpha \assign h (\alpha)]$ is an $(<, <_1, +, \lambda
  x. \omega^x)$-isomorphism with $H|_{\alpha} = \tmop{Id}_{\alpha}$. Then, by
  {\cite{GarciaCornejo0}} proposition \ref{iso.restriction} in the appendices
  section, $H|_B : B \longrightarrow H|_B [B]$ is an $(<, <_1, +, \lambda x.
  \omega^x)$-isomorphism with $H |_B |_{\alpha} = \tmop{Id}_{\alpha}$.
\end{proof}

\begin{corollary}
  \label{alpha<less>_1alpha^+.iff.alpha<less>^1alpha^+}$\forall \alpha \in
  \mathbbm{E} . \alpha <_1 \alpha^+ \Longleftrightarrow \alpha <^1 \alpha^+$
\end{corollary}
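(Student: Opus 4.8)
The plan is to derive the nontrivial direction $\alpha <_1 \alpha^+ \Longrightarrow \alpha <^1 \alpha^+$ from the already established corollary \ref{<less>_1.iff.<less>^1} by a cofinality argument, approximating $\alpha^+$ from below by ordinals of the form $\eta t + 1$ and then passing to the limit via $\leqslant^1$-continuity. The reverse implication $\alpha <^1 \alpha^+ \Longrightarrow \alpha <_1 \alpha^+$ is immediate from proposition \ref{<less>=^1_implies_<less>=_1}, claim 1, so no work is needed there.

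Assume $\alpha <_1 \alpha^+$. First I would check that every relevant approximant stays below $\alpha^+$: for $t \in [\alpha, \alpha^+)$ with $t =_{\tmop{CNF}} \omega^{T_1} t_1 + \ldots$, proposition \ref{pi.eta.properties} gives $d \pi t \leqslant \pi t = \omega^{T_1}$, and since $\alpha^+ \in \mathbbm{E}$ is closed under $+, \cdot, \lambda x. \omega^x$ below it, one gets $\omega^{T_1} < \alpha^+$ and hence $\pi t + d \pi t \leqslant \pi t \cdot 2 < \alpha^+$; together with $t < \alpha^+$ this yields $\eta t = \max \{t, \pi t + d \pi t\} < \alpha^+$, so $\eta t + 1 < \alpha^+$. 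Also $\alpha < \alpha 2 \leqslant \eta t$ by proposition \ref{pi.eta.properties}, claim 3. Consequently $\alpha < \eta t + 1 < \alpha^+$, so $\leqslant_1$-connectedness applied to the hypothesis $\alpha <_1 \alpha^+$ gives $\alpha <_1 \eta t + 1$, and then corollary \ref{<less>_1.iff.<less>^1} gives $\alpha <^1 \eta t + 1$, in particular $\alpha \leqslant^1 \eta t + 1$.

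To finish, I would observe that the family $(\eta t + 1)_{t \in [\alpha, \alpha^+)}$ is cofinal in $\alpha^+$: indeed $\eta t + 1 > t$ for every $t$, and the interval $[\alpha, \alpha^+)$ is plainly cofinal in $\alpha^+$, so for any $\beta < \alpha^+$ the choice $t = \max \{\alpha, \beta\}$ gives $\eta t + 1 > \beta$. Applying $\leqslant^1$-continuity (proposition \ref{<less>=^1_implies_<less>=_1}, claim 3) to this cofinal family then yields $\alpha \leqslant^1 \alpha^+$, and since $\alpha \neq \alpha^+$ this is exactly $\alpha <^1 \alpha^+$. I do not anticipate a genuine obstacle: the argument is a clean continuity passage, and the only points needing care are the two routine verifications above, namely the bound $\eta t < \alpha^+$ (which makes corollary \ref{<less>_1.iff.<less>^1} applicable and leaves room for $\leqslant_1$-connectedness) and the cofinality of $(\eta t + 1)_t$ in $\alpha^+$, both of which rest solely on $\alpha^+$ being an epsilon number and on the elementary estimates for $\eta$ recorded in proposition \ref{pi.eta.properties}.
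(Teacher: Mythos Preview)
Your proof is correct and follows precisely the approach the paper intends: the paper leaves this corollary to the reader, but the very same cofinality argument (using that $(\eta t + 1)_{t \in [\alpha,\alpha^+)}$ is cofinal in $\alpha^+$, invoking corollary~\ref{<less>_1.iff.<less>^1} pointwise, and then passing to the limit via $\leqslant^1$-continuity) is exactly what the paper spells out later in the proofs of corollary~\ref{kapa<less>^1kapa^+} and proposition~\ref{Intersection_A(t)_equiv_alpha<less>^1alpha^+}. Your two routine verifications ($\eta t + 1 < \alpha^+$ and cofinality) are handled cleanly.
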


\begin{proof}
  {\color{orange} Easy. Left to the reader.}
\end{proof}

\begin{corollary}
  $\forall \alpha \in \mathbbm{E} . \alpha <_1 \alpha^+ \Longleftrightarrow$\\
  $\alpha \in \{\beta \in \mathbbm{E} | \forall t \in [\beta, \beta^+) \exists
  (c_{\xi})_{\xi \in X} \subset \mathbbm{E} \cap \beta . \tmop{Ep} (t) \cap
  \beta \subset c_{\xi} \wedge c_{\xi} <_1 t [\beta \assign c_{\xi}] \wedge
  c_{\xi} \underset{\tmop{cof}}{\longhookrightarrow} \beta\}$.
\end{corollary}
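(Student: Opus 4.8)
The plan is to route everything through the relation $<^1$ and its two fundamental cofinality properties, using corollary \ref{alpha<less>_1alpha^+.iff.alpha<less>^1alpha^+} to translate between $<_1$ and $<^1$ at both ends. For $t \in [\alpha, \alpha^+)$ write $S_t \assign \{\gamma \in \mathbbm{E} \mid \tmop{Ep} (t) \cap \alpha \subset \gamma \wedge \gamma \leqslant_1 t [\alpha \assign \gamma]\}$. Then the right-hand side of the claimed equivalence asserts precisely that $\alpha \in \tmop{Lim} (S_t)$ for every $t \in [\alpha, \alpha^+)$: a strictly increasing cofinal sequence $(c_{\xi})_{\xi \in X} \subset \mathbbm{E} \cap \alpha$ with $\tmop{Ep} (t) \cap \alpha \subset c_{\xi}$ and $c_{\xi} <_1 t [\alpha \assign c_{\xi}]$ consists of elements of $S_t$ (since $<_1$ implies $\leqslant_1$) witnessing $\sup (S_t \cap \alpha) = \alpha$, and conversely any such limit membership yields the sequence.

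For the direction $\Longrightarrow$, I would first apply corollary \ref{alpha<less>_1alpha^+.iff.alpha<less>^1alpha^+} to pass from $\alpha <_1 \alpha^+$ to $\alpha <^1 \alpha^+$. Fix an arbitrary $t \in [\alpha, \alpha^+)$ and choose any $s \in (t, \alpha^+)$ (possible since $t < \alpha^+$). Since $\alpha \leqslant s \leqslant \alpha^+$ and $\alpha <^1 \alpha^+$, $\leqslant^1$-connectedness (proposition \ref{<less>=^1_implies_<less>=_1}) gives $\alpha \leqslant^1 s$, hence $\alpha <^1 s$ because $s > \alpha$. As $t \in [\alpha, s)$, the first fundamental cofinality property (proposition \ref{<less>^1.implies.cofinal.sequence}) produces exactly the desired sequence $(c_{\xi})_{\xi \in X} \subset \alpha \cap \mathbbm{E}$ with $\tmop{Ep} (t) \cap \alpha \subset c_{\xi}$, $c_{\xi} \underset{\tmop{cof}}{\longhookrightarrow} \alpha$ and $c_{\xi} <_1 t [\alpha \assign c_{\xi}]$. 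Since $t$ was arbitrary, $\alpha$ belongs to the right-hand set.

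For the direction $\Longleftarrow$, I would fix $t \in [\alpha, \alpha^+)$, read off $\alpha \in \tmop{Lim} (S_t)$ from the hypothesis as above, and invoke the second fundamental cofinality property (proposition \ref{2nd_Fund_Cof_Property_<less>^1}) to obtain $\alpha \leqslant^1 s$ for all $s \in [\alpha, t + 1]$; in particular $\alpha \leqslant^1 t$. Letting $t$ range over all of $[\alpha, \alpha^+)$ and noting that these ordinals are cofinal in $\alpha^+$ (as $\alpha^+ \in \mathbbm{E} \subset \tmop{Lim}$), $\leqslant^1$-continuity (proposition \ref{<less>=^1_implies_<less>=_1}) yields $\alpha \leqslant^1 \alpha^+$, and thus $\alpha <^1 \alpha^+$ since $\alpha^+ > \alpha$. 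A final application of corollary \ref{alpha<less>_1alpha^+.iff.alpha<less>^1alpha^+} returns $\alpha <_1 \alpha^+$.

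The argument is essentially a matter of matching hypotheses, so I do not expect a serious obstacle. The one point demanding care is the bookkeeping that identifies the existence of the cofinal sequence in the statement with the membership $\alpha \in \tmop{Lim} (S_t)$ required by the second fundamental cofinality property — in particular checking that the strict $<_1$ in the statement is exactly what feeds (via $\leqslant_1$) into that proposition, and that the family $[\alpha, \alpha^+)$ really is cofinal in the limit ordinal $\alpha^+$ so that $\leqslant^1$-continuity applies.
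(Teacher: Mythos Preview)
Your proposal is correct and follows exactly the route the paper intends: the corollary is placed immediately after corollary \ref{alpha<less>_1alpha^+.iff.alpha<less>^1alpha^+} and marked ``Not hard. Left to the reader'' precisely because it is meant to be read off from that equivalence together with the two fundamental cofinality properties (propositions \ref{<less>^1.implies.cofinal.sequence} and \ref{2nd_Fund_Cof_Property_<less>^1}) and $\leqslant^1$-continuity, which is what you do.
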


\begin{proof}
  {\color{orange} Not hard. Left to the reader.}
\end{proof}

We want to conclude this section with a characterization of the case $\alpha
<^1 t + 1$ for ordinals $\alpha \in \mathbbm{E}$ and $t \in [\alpha,
\alpha^+)$. For this (and also for our work on the next section), it will be
convenient to prove following

\begin{proposition}
  \label{pi.eta.substitutions}Let $\alpha, \beta, t \in \tmop{OR}$ such that
  $\alpha, \beta \in \mathbbm{E}$ and $t \in [\alpha, \alpha^+) \wedge
  \tmop{Ep} (t) \cap \alpha \subset \beta$. Then
  \begin{enumeratealpha}
    \item $(\tmop{Ep} (\pi t) \cap \alpha) \cup (\tmop{Ep} (d \pi t) \cap
    \alpha) \cup (\tmop{Ep} (\eta t) \cap \alpha) \subset \beta$
    
    \item $\pi (t [\alpha \assign \beta]) = (\pi t) [\alpha \assign \beta]$
    
    \item $d \pi (t [\alpha \assign \beta]) = (d \pi t) [\alpha \assign
    \beta]$
    
    \item $\pi (t [\alpha \assign \beta]) + d \pi (t [\alpha \assign \beta]) =
    (\pi t + d \pi t) [\alpha \assign \beta]$
    
    \item $\eta (t [\alpha \assign \beta]) = (\eta t) [\alpha \assign \beta]$.
  \end{enumeratealpha}
\end{proposition}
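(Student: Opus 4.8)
The plan is to reduce every one of a)--e) to the single structural fact, recorded in proposition \ref{[alpha:=e]_proposition3}, that whenever $\tmop{Ep} (s) \cap \alpha \subset \beta$ the substitution $s \longmapsto s [\alpha \assign \beta]$ merely replaces the epsilon number $\alpha$ by $\beta$ inside the Cantor Normal Form of $s$, leaving its shape (number of summands, coefficients, nesting of exponents) untouched. Since $\pi$, $d \pi$ and $\eta$ are all defined by reading off pieces of the Cantor Normal Form, each will commute with the substitution once the relevant $\tmop{Ep} \cap \alpha \subset \beta$ side conditions are in place. So I would first fix $t =_{\tmop{CNF}} \omega^{T_1} t_1 + \ldots + \omega^{T_n} t_n$ and note that $t \geqslant \alpha \in \mathbbm{E}$ forces $\pi t = \omega^{T_1} \geqslant \alpha$, hence $T_1 \in [\alpha, \alpha^+)$ with $T_1 \neq 0$; moreover $\pi t, d \pi t \leqslant t < \alpha^+$, so every ordinal I manipulate lies in $\alpha^+$ and proposition \ref{[alpha:=e]_proposition3} is applicable to it.

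For part a) I would unwind $\tmop{Ep} (t) \cap \alpha \subset \beta$ through proposition \ref{[alpha:=e]_proposition3}: claim 4 applied to $t$ gives $\tmop{Ep} (\omega^{T_1}) \cap \alpha \subset \beta$, i.e. $\tmop{Ep} (\pi t) \cap \alpha \subset \beta$; claim 2 then yields $\tmop{Ep} (T_1) \cap \alpha \subset \beta$, and a second application of claim 4 to $T_1 =_{\tmop{CNF}} \omega^{Q_1} q_1 + \ldots + \omega^{Q_m} q_m$ followed by claim 2 gives $\tmop{Ep} (Q_m) \cap \alpha \subset \beta$, that is $\tmop{Ep} (d \pi t) \cap \alpha \subset \beta$. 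Finally, since $\pi t, d \pi t \in \alpha^+$ both satisfy the condition, proposition \ref{[alpha:=e]_proposition4} a) gives $\tmop{Ep} (\pi t + d \pi t) \cap \alpha \subset \beta$; as $\eta t$ equals $t$ or $\pi t + d \pi t$, claim a) follows.

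With a) in hand, b)--e) form a cascade. For b), the Cantor Normal Form of $t [\alpha \assign \beta]$ (a genuine normal form by proposition \ref{[alpha:=e]_proposition3} claim 1) has leading summand $(\omega^{T_1}) [\alpha \assign \beta] = \omega^{T_1 [\alpha \assign \beta]}$ (claim 3), so reading off $\pi$ gives $\pi (t [\alpha \assign \beta]) = (\pi t) [\alpha \assign \beta]$. For c), I use b) to write $\pi (t [\alpha \assign \beta]) = \omega^{T_1 [\alpha \assign \beta]}$ and apply claim 1 to the exponent $T_1$: its normal form is substituted termwise, so the last exponent of $T_1 [\alpha \assign \beta]$ is $Q_m [\alpha \assign \beta] = (d \pi t) [\alpha \assign \beta]$, which is precisely $d \pi (t [\alpha \assign \beta])$. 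Part d) is then immediate from b), c) and proposition \ref{[alpha:=e]_proposition4} a), since by a) both $\pi t$ and $d \pi t$ lie in $\alpha^+$ and satisfy $\tmop{Ep} \cap \alpha \subset \beta$. For e), both $t$ and $\pi t + d \pi t$ lie in $M (\alpha, \beta) \cap \alpha^+$, so by proposition \ref{[alpha:=e]_proposition2} the substitution preserves their $<$-order; as $\eta$ is just the $\max$ of the two and $\max$ is order-determined, the substitution commutes with it, giving $\eta (t [\alpha \assign \beta]) = (\eta t) [\alpha \assign \beta]$ through d).

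The genuinely delicate points, and where I would spend the care, are the boundary cases the compact argument glosses over: when $T_1 = \alpha$ (the only epsilon number in $[\alpha, \alpha^+)$, whence $\pi t = \alpha$ and $d \pi t = \alpha$), when $t$ is itself additively principal (a single summand), and the need to re-verify that each intermediate ordinal $\pi t$, $d \pi t$, $\pi t + d \pi t$, $T_1$, $Q_m$ actually lies in $\alpha^+$ and carries the hypothesis $\tmop{Ep} \cap \alpha \subset \beta$ before proposition \ref{[alpha:=e]_proposition3} claim 1 is invoked. None of these is hard, but they are exactly the places where a careless application of claim 1 could fail, so each deserves an explicit line.
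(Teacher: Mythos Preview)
Your proposal is correct and is precisely the natural argument the paper has in mind: the paper's own proof is simply ``Not hard. Left to the reader,'' and what you wrote is the intended unwinding of the definitions of $\pi$, $d$ and $\eta$ through propositions \ref{[alpha:=e]_proposition2}, \ref{[alpha:=e]_proposition3} and \ref{[alpha:=e]_proposition4}. The boundary case you flag ($T_1 = \alpha$, so $\pi t = d\pi t = \alpha$) is indeed the only place requiring a separate line, and you handle it correctly.
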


\begin{proof}
  {\color{orange} Not hard. Left to the reader.}
\end{proof}

\begin{note}
  Because of the previous proposition, whenever we have such hypothesis, we
  will simply write $\pi t [\alpha \assign \beta]$, $d \pi t [\alpha \assign
  \beta]$ and $\eta t [\alpha \assign \beta]$ to the ordinals $\pi (t [\alpha
  \assign \beta])$, $d \pi (t [\alpha \assign \beta])$ and $\eta (t [\alpha
  \assign \beta])$ respectively.
\end{note}

\begin{corollary}
  \label{eta(t)+1<less>_1_Equivalences}Let $\alpha \in \mathbbm{E}$ and $t \in
  [\alpha, \alpha^+)$. The following are equivalent
  \begin{enumeratealpha}
    \item $\alpha <^1 t + 1$
    
    \item $\alpha \in \tmop{Lim} \{\xi \in \mathbbm{E} | \tmop{Ep} (t) \cap
    \alpha \subset \xi \wedge \xi \leqslant_1 t [\alpha \assign \xi]\}$
    
    \item $\alpha <^1 \eta t + 1$
    
    \item $\alpha <_1 \eta t + 1$
  \end{enumeratealpha}
\end{corollary}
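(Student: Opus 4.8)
The plan is to prove the four conditions equivalent by first establishing (a) $\Leftrightarrow$ (b) for an arbitrary parameter in $[\alpha, \alpha^+)$, then transporting that equivalence from $t$ to $\eta t$ so as to capture (c), and finally invoking Corollary \ref{<less>_1.iff.<less>^1} to bring in (d). Throughout, for $u \in [\alpha, \alpha^+)$ I abbreviate $S (u) \assign \{\xi \in \mathbbm{E} \mid \tmop{Ep} (u) \cap \alpha \subset \xi \wedge \xi \leqslant_1 u [\alpha \assign \xi]\}$, so that condition (b) reads $\alpha \in \tmop{Lim} (S (t))$.

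First I would prove the auxiliary equivalence $\alpha <^1 u + 1 \Leftrightarrow \alpha \in \tmop{Lim} (S (u))$ for every $u \in [\alpha, \alpha^+)$. For $\Rightarrow$, note $u + 1 \in (\alpha, \alpha^+)$ since $\alpha^+ \in \mathbbm{E}$ is closed under successor; applying the first fundamental cofinality property (Proposition \ref{<less>^1.implies.cofinal.sequence}) with $s = u + 1$ and inner ordinal $u \in [\alpha, s)$ produces a sequence $(c_{\xi}) \subset \alpha \cap \mathbbm{E}$ with $\tmop{Ep} (u) \cap \alpha \subset c_{\xi}$, $c_{\xi} \underset{\tmop{cof}}{\longhookrightarrow} \alpha$ and $c_{\xi} <_1 u [\alpha \assign c_{\xi}]$; this sequence lies in $S (u)$ and witnesses $\alpha \in \tmop{Lim} (S (u))$. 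For $\Leftarrow$, the hypothesis $\alpha \in \tmop{Lim} (S (u))$ is exactly the assumption of the second fundamental cofinality property (Proposition \ref{2nd_Fund_Cof_Property_<less>^1}), which yields $\alpha \leqslant^1 s$ for every $s \in [\alpha, u + 1]$; taking $s = u + 1 > \alpha$ gives $\alpha <^1 u + 1$.

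Next I would instantiate this equivalence at $u = t$ and at $u = \eta t$. The latter is legitimate because $\eta t \in [\alpha, \alpha^+)$: indeed $\eta t \geqslant t \geqslant \alpha$, while $\pi t + d \pi t \leqslant \pi t \cdot 2 < \alpha^+$ by Proposition \ref{pi.eta.properties}, so $\eta t < \alpha^+$. Thus (a) $\Leftrightarrow \alpha \in \tmop{Lim} (S (t))$, which is (b), and (c), namely $\alpha <^1 \eta t + 1$, is equivalent to $\alpha \in \tmop{Lim} (S (\eta t))$. Granting the bridge $\alpha \in \tmop{Lim} (S (t)) \Leftrightarrow \alpha \in \tmop{Lim} (S (\eta t))$, we then have (a) $\Leftrightarrow$ (b) $\Leftrightarrow$ (c), and (c) $\Leftrightarrow$ (d) follows at once from Corollary \ref{<less>_1.iff.<less>^1}, closing the chain.

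The heart of the matter, and the step I expect to be the main obstacle, is the bridge: that $S (t)$ and $S (\eta t)$ are cofinal in $\alpha$ simultaneously. I would fix an ordinal $\beta_0 < \alpha$ with $\tmop{Ep} (t) \cap \alpha \subset \beta_0$; by Proposition \ref{pi.eta.substitutions}(a) this forces $\tmop{Ep} (\eta t) \cap \alpha \subset \beta_0$ too. Hence for every $\xi \in \mathbbm{E}$ with $\beta_0 \leqslant \xi < \alpha$ both $\tmop{Ep}$-conditions are automatic, so $\xi \in S (t) \Leftrightarrow \xi \leqslant_1 t [\alpha \assign \xi]$ and $\xi \in S (\eta t) \Leftrightarrow \xi \leqslant_1 (\eta t) [\alpha \assign \xi]$. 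For such $\xi$, Proposition \ref{[alpha:=e]_proposition3} gives $t [\alpha \assign \xi] \in \xi^+$, and (when $t > \alpha$) $t [\alpha \assign \xi] > \xi$, so Proposition \ref{eta(t)_m(t)_and_<less>_1}(3) applied at $\xi$ yields $\xi <_1 t [\alpha \assign \xi] \Leftrightarrow \xi <_1 \eta (t [\alpha \assign \xi])$, while Proposition \ref{pi.eta.substitutions}(e) identifies $\eta (t [\alpha \assign \xi]) = (\eta t) [\alpha \assign \xi]$. Thus the two membership conditions coincide on $\mathbbm{E} \cap [\beta_0, \alpha)$, which gives the bridge. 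The points demanding care are the degenerate case $t = \alpha$, where $t [\alpha \assign \xi] = \xi$ so the $\leqslant_1$ in the definition of $S$ must be read reflexively (here $\eta \alpha = \alpha 2$ and one checks directly via Corollary \ref{Wilken_Corollary1} that both $S (\alpha)$ and $S (\eta \alpha)$ equal $\mathbbm{E}$), together with the bookkeeping separating $\leqslant_1$ from $<_1$ when passing through Proposition \ref{eta(t)_m(t)_and_<less>_1}(3).
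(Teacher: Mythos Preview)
Your proof is correct and uses essentially the same approach as the paper, relying on Propositions \ref{<less>^1.implies.cofinal.sequence} and \ref{2nd_Fund_Cof_Property_<less>^1} for (a)$\Leftrightarrow$(b), Corollary \ref{<less>_1.iff.<less>^1} for (c)$\Leftrightarrow$(d), and Propositions \ref{eta(t)_m(t)_and_<less>_1}(3) and \ref{pi.eta.substitutions}(e) to transport between $t$ and $\eta t$. The only minor organizational difference is that the paper gets (c)$\Rightarrow$(a) for free via $\leqslant^1$-connectedness (since $t+1\leqslant\eta t+1$) and so only argues the single direction (b)$\Rightarrow$(c), whereas you establish a symmetric bridge $\tmop{Lim}(S(t))\Leftrightarrow\tmop{Lim}(S(\eta t))$; both routes are valid.
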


\begin{proof}
  Let $\alpha \in \mathbbm{E}$, $t \in [\alpha, \alpha^+)$.
  
  $a) \Longleftrightarrow b)$ holds because of propositions
  \ref{2nd_Fund_Cof_Property_<less>^1} and
  \ref{<less>^1.implies.cofinal.sequence}.
  
  $c) \Longleftrightarrow d)$ is corollary \ref{<less>_1.iff.<less>^1}.
  
  $c) \Longrightarrow a)$ holds because of $<^1$-connectedness, and so $c)
  \Longrightarrow b)$ (because $a) \Longleftrightarrow b$)).
  
  So it suffices to prove $b) \Longrightarrow c)$.

  Suppose $\alpha \in \tmop{Lim} \{\xi \in \mathbbm{E} | \tmop{Ep} (t) \cap
  \alpha \subset \xi \wedge \xi \leqslant_1 t [\alpha \assign \xi]\}$. Let
  $(c_j)_{j \in J} \subset \alpha \cap \mathbbm{E}$ be a sequence such that
  $\tmop{Ep} (t) \cap \alpha \subset c_j
  \underset{\tmop{cof}}{\longhookrightarrow} \alpha$ and $\forall j \in J.c_j
  \leqslant_1 t [\alpha \assign c_j]$. Note $\forall j \in J.t [\alpha \assign
  c_j] \in (c_j, c_j^+)$, and then, by proposition
  \ref{eta(t)_m(t)_and_<less>_1} claim 4., $\forall j \in J.c_j \leqslant_1
  \eta (t [\alpha \assign c_j]) = (\eta t) [\alpha \assign c_j])$, where the
  last equality holds because, by proposition \ref{pi.eta.substitutions},
  $\tmop{Ep} (\eta t) \cap \alpha \subset c_j$ and $\eta (t [\alpha \assign
  c_j]) = (\eta t) [\alpha \assign c_j])$ for any $j \in J$. So, summarizing,
  $(c_j)_{j \in J} \subset \alpha \cap \mathbbm{E}$ is a sequence of epsilon
  numbers such that for $\eta t \in [\alpha, \alpha^+)$, $\tmop{Ep} (\eta t)
  \cap \alpha \subset c_j \underset{\tmop{cof}}{\longhookrightarrow} \alpha$
  and $\forall j \in J. \mathbbm{E} \ni c_j \leqslant_1 (\eta t) [\alpha
  \assign c_j]$; therefore, by proposition
  \ref{2nd_Fund_Cof_Property_<less>^1}, $\alpha <^1 \eta t + 1$.
\end{proof}

\begin{corollary}
  \label{a_limit{e|m(e)=eta(t)[a:=e]}}Let $\alpha \in \mathbbm{E}$.
  \begin{enumeratealpha}
    \item $\forall e \in \alpha \cap \mathbbm{E} .m (e) \in [e, e^+)
    \Longrightarrow \exists t \in [\alpha, \alpha^+) . \eta t = t \wedge m (e)
    = t [\alpha \assign e]$.
    
    \item Suppose $m (\alpha) \in [\alpha, \alpha^+)$. Then\\
    $\forall t \in [\alpha, m (\alpha)) .t = \eta t \Longrightarrow \{\delta
    \in \mathbbm{E} \cap \alpha | \tmop{Ep} (t) \cap \alpha \subset \delta
    \wedge m (\delta) = t [\alpha \assign \delta]\}$ is confinal in $\alpha$.
  \end{enumeratealpha}
\end{corollary}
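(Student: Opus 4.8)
The plan is to prove both parts by producing, for each relevant epsilon number, a witness $t \in [\alpha,\alpha^+)$ (respectively a cofinal family of $\delta$'s) that is $\eta$-fixed and satisfies the prescribed substitution identity. The key technical fact to exploit is Corollary~\ref{m(alpha)_for_alpha_not_epsilon_number}, which tells us exactly what $m(\beta)$ is in terms of the Cantor Normal Form of $\beta$, together with Proposition~\ref{eta(t)_m(t)_and_<less>_1} (relating $m$ and $\eta$) and the substitution machinery of Corollary~\ref{main_[a:=e]_Isomorphism1} and Proposition~\ref{pi.eta.substitutions} (so that $\eta$ and $m$ commute with $[\alpha\assign\beta]$ under the $\tmop{Ep}$-condition). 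The central idea is that for $e\in\alpha\cap\mathbbm{E}$ with $m(e)\in[e,e^+)$, the ordinal $m(e)$ lives in the interval $[e,e^+)$ and is of the form $\eta$ of itself, so lifting it back to $[\alpha,\alpha^+)$ via the inverse substitution $[e\assign\alpha]$ should produce the desired $t$.

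\textbf{Part $a$).}
First I would fix $e\in\alpha\cap\mathbbm{E}$ with $m(e)\in[e,e^+)$ and set $t\assign m(e)[e\assign\alpha]$, using the inverse-substitution isomorphism $k$ of Corollary~\ref{main_[a:=e]_Isomorphism1}(2) (valid since $e\leqslant\alpha$). By that corollary $t\in[\alpha,\alpha^+)$ and $t[\alpha\assign e]=m(e)[e\assign\alpha][\alpha\assign e]=m(e)$, giving the substitution identity. The remaining point is $\eta t=t$. Here I would first argue $\eta(m(e))=m(e)$: by Proposition~\ref{eta(t)_m(t)_and_<less>_1}(1), $m(e)=\eta(m(e))$ whenever $m(e)\in\mathbbm{P}$ (the case $m(e)=e$ is immediate since $e\in\mathbbm{E}$, and when $m(e)>e$ Corollary~\ref{m(alpha)_for_alpha_not_epsilon_number}(b) forces $e$'s successor principal number to be additively principal, so $m(e)\in\mathbbm{P}$ and equals $\pi(m(e))+d\pi(m(e))=\eta(m(e))$). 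Then I would transport this through the substitution using Proposition~\ref{pi.eta.substitutions}(e): since $\tmop{Ep}(m(e))\cap e\subset\alpha$ trivially (it is $\subset e\subset\alpha$), we get $\eta t=\eta(m(e)[e\assign\alpha])=(\eta\,m(e))[e\assign\alpha]=m(e)[e\assign\alpha]=t$, as required.

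\textbf{Part $b$).}
Assume $m(\alpha)\in[\alpha,\alpha^+)$ and fix $t\in[\alpha,m(\alpha))$ with $t=\eta t$. By Proposition~\ref{eta(t)_m(t)_and_<less>_1}(2)--(3) and $\leqslant_1$-connectedness, $t=\eta t<m(\alpha)$ means $\alpha<_1 t+1$ fails to be blocked below $t$, i.e.\ $\alpha\leqslant_1 t$; I would then invoke Corollary~\ref{eta(t)+1<less>_1_Equivalences} ($d\Rightarrow b$, applied to $t$, using $\eta t=t$) to conclude $\alpha\in\tmop{Lim}\{\xi\in\mathbbm{E}\mid \tmop{Ep}(t)\cap\alpha\subset\xi\wedge\xi\leqslant_1 t[\alpha\assign\xi]\}$. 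This yields a cofinal family of epsilon numbers $\delta<\alpha$ with $\tmop{Ep}(t)\cap\alpha\subset\delta$ and $\delta\leqslant_1 t[\alpha\assign\delta]$. The task is then to upgrade $\delta\leqslant_1 t[\alpha\assign\delta]$ to the exact identity $m(\delta)=t[\alpha\assign\delta]$ for cofinally many such $\delta$. For this I would use that $t[\alpha\assign\delta]\in[\delta,\delta^+)$ with $\eta(t[\alpha\assign\delta])=(\eta t)[\alpha\assign\delta]=t[\alpha\assign\delta]$ (Proposition~\ref{pi.eta.substitutions}(e)), so that $t[\alpha\assign\delta]$ is $\eta$-fixed; combined with $\delta\leqslant_1 t[\alpha\assign\delta]$ and Proposition~\ref{eta(t)_m(t)_and_<less>_1}, the value $m(\delta)$ must coincide with this $\eta$-fixed ordinal, giving $m(\delta)=t[\alpha\assign\delta]$.

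\textbf{Main obstacle.}
The delicate step is the final equality $m(\delta)=t[\alpha\assign\delta]$ in part $b$): the cofinality property only supplies the inequality $\delta\leqslant_1 t[\alpha\assign\delta]$, and one must rule out $m(\delta)$ being strictly larger. The leverage is the hypothesis $t<m(\alpha)$ together with $\eta$-fixedness: passing the strict inequality $\eta t=t<m(\alpha)$ through the substitution (and using that $m$ and $\eta$ commute with $[\alpha\assign\delta]$ on the relevant $\tmop{Ep}$-closed ordinals via Corollary~\ref{A_[alpha:=e]_isomorphisms} and Proposition~\ref{pi.eta.substitutions}) should show $t[\alpha\assign\delta]<m(\alpha)[\alpha\assign\delta]$, which pins $m(\delta)$ down to exactly $t[\alpha\assign\delta]$ for a cofinal set of $\delta$. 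Verifying that this strictness survives the substitution, and that the resulting $\delta$'s remain cofinal in $\alpha$, is where the real care is needed.
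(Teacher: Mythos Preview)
Your overall strategy for part $a)$ matches the paper's: set $t \assign m(e)[e \assign \alpha]$ and use Proposition~\ref{pi.eta.substitutions} to transport $\eta$-fixedness. However, your justification that $\eta(m(e)) = m(e)$ is flawed. The claim that $m(e) \in \mathbbm{P}$ whenever $m(e) > e$ is false: for instance $m(\varepsilon_0) = \varepsilon_0 \cdot 2 \notin \mathbbm{P}$. Moreover, even granting $m(e) \in \mathbbm{P}$, Proposition~\ref{eta(t)_m(t)_and_<less>_1}(1) applied with $t = m(e)$ would yield $m(m(e)) = \eta(m(e))$, not $m(e) = \eta(m(e))$. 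The paper's argument is direct: if $\eta(m(e)) > m(e)$, then by claim~3 of that same proposition (applied with $t = m(e) \in (e,e^+)$) one gets $e <_1 \eta(m(e)) \geqslant m(e) + 1$, contradicting the definition of $m(e)$. (Incidentally, the case $m(e)=e$ never arises, since $e \in \mathbbm{E}$ forces $m(e) \geqslant e2$; and note $\eta(e)=e2\neq e$, so your remark that this case is ``immediate'' is also off.)

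The more serious gap is in part $b)$. You correctly obtain from Corollary~\ref{eta(t)+1<less>_1_Equivalences} a cofinal family of $\delta < \alpha$ with $\tmop{Ep}(t)\cap\alpha\subset\delta$ and $\delta \leqslant_1 t[\alpha \assign \delta]$, but your upgrade to the \emph{equality} $m(\delta) = t[\alpha \assign \delta]$ does not go through. Knowing that $t[\alpha \assign \delta]$ is $\eta$-fixed and that $\delta \leqslant_1 t[\alpha \assign \delta]$ only yields $m(\delta) \geqslant t[\alpha \assign \delta]$; nothing prevents $m(\delta)$ from being strictly larger. Your proposed bound via $m(\alpha)[\alpha \assign \delta]$ has no justification: there is no general inequality $m(\delta) \leqslant m(\alpha)[\alpha \assign \delta]$, and indeed many $\delta$ in the cofinal family may satisfy $m(\delta) \geqslant \delta^+$ (for example any $\delta \in \tmop{Class}(2)$), which exceeds every ordinal in $[\delta,\delta^+)$. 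The paper's key maneuver, which your proposal misses entirely, is a minimality argument: for each $\gamma < \alpha$ take the \emph{least} $e$ in $(\gamma,\alpha] \cap \{\xi \in \mathbbm{E} \mid \tmop{Ep}(t) \cap \alpha \subset \xi \wedge \xi \leqslant_1 t[\alpha \assign \xi]\}$, and then argue by contradiction. If $e <_1 t[\alpha \assign e] + 1 = \eta(t[\alpha\assign e])+1$, then Corollary~\ref{eta(t)+1<less>_1_Equivalences} applied at $e$ shows that $e$ is itself a limit of that same set, so some $\varphi \in (\gamma, e)$ already lies in it---contradicting the minimality of $e$. Hence $e \not<_1 t[\alpha \assign e] + 1$, which together with $e \leqslant_1 t[\alpha \assign e]$ gives $m(e) = t[\alpha \assign e]$ exactly.
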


\begin{proof}
  Let $\alpha \in \mathbbm{E}$.
  
  {\noindent}$a)$. Let $e \in \alpha \cap \mathbbm{E}$ and suppose $m (e) \in
  [e, e^+)$. Then $\eta (m (e)) \ngtr m (e)$ (otherwise by proposition
  \ref{eta(t)_m(t)_and_<less>_1} claim 4, $e <_1 \eta (m (e)) \geqslant m (e)
  + 1$ which is impossible); but by definition $\eta (m (e)) =$\\
  $\max \{m (e), \pi (m (e)) + d \pi (m (e))\} \geqslant m (e)$, thus $\eta (m
  (e)) = m (e)$. This way, for $t \assign m (e) [e \assign \alpha]$, \\
  $\eta t = \eta (m (e) [e \assign \alpha])\underset{\text{proposition }
  \ref{pi.eta.substitutions}}{=}\eta (m (e)) [e \assign \alpha] = m (e) [e
  \assign \alpha] = t$ and clearly $m (e) = t [\alpha \assign e]$.

  {\noindent}$b)$. Suppose $m (\alpha) \in [\alpha, \alpha ( +^1))$ and let $t
  \in [\alpha, m (\alpha))$ be such that $t = \eta t$.
  
  Take $\gamma \in \alpha$ arbitrary.
  
  First note that $\alpha < \eta t + 1 \leqslant m (\alpha)$, implies, by
  $\leqslant_1$-connectedness, $\alpha <_1 \eta t + 1$. Subsequently, by
  previous corollary \ref{eta(t)+1<less>_1_Equivalences}, $\alpha \in
  \tmop{Lim} \{\xi \in \mathbbm{E} | \tmop{Ep} (t) \cap \alpha \subset \xi
  \wedge \xi \leqslant_1 t [\alpha \assign \xi]\} =$\\
  $\tmop{Lim} \{\xi \in \mathbbm{E} | \tmop{Ep} (\eta t) \cap \alpha \subset
  \xi \wedge \xi \leqslant_1 (\eta t) [\alpha \assign \xi]\}$. \\
  Let $e \assign \min (\gamma, \alpha] \cap \{\xi \in \mathbbm{E} | \tmop{Ep}
  (\eta t) \cap \alpha \subset \xi \wedge \xi \leqslant_1 (\eta t) [\alpha
  \assign \xi]\}$. Then \\
  $\gamma < e \leqslant_1 (\eta t) [\alpha \assign e]\underset{\text{proposition }
  \ref{pi.eta.substitutions}}{=}\eta t [\alpha \assign e]$. We assure $e
  \nless_1 \eta t [\alpha \assign e] + 1$. Suppose the opposite $e <_1 \eta t
  [\alpha \assign e] + 1$. Then by previous corollary
  \ref{eta(t)+1<less>_1_Equivalences}, \\
  $e \in \tmop{Lim} \{\xi \in \mathbbm{E} | \tmop{Ep} (t [\alpha \assign e])
  \cap e \subset \xi \wedge \xi \leqslant_1 t [\alpha \assign e] [e \assign
  \xi]\}\underset{\text{proposition } \ref{[alpha:=e]_proposition3}}{=}
  \tmop{Lim} \{\xi \in \mathbbm{E} | \tmop{Ep} (t) \cap \alpha \subset \xi
  \wedge \xi \leqslant_1 t [\alpha \assign \xi]\} =
  \tmop{Lim} \{\xi \in \mathbbm{E} | \tmop{Ep} (\eta t) \cap \alpha \subset
  \xi \wedge \xi \leqslant_1 (\eta t) [\alpha \assign \xi]\}$. \\
  The latter implies that there exist some ordinal $\varphi < e$ with \\
  $\varphi \in (\gamma, \alpha] \cap \{\xi \in \mathbbm{E} | \tmop{Ep} (\eta
  t) \cap \alpha \subset \xi \wedge \xi \leqslant_1 (\eta t) [\alpha \assign
  \xi]\}$. But this is impossible since by definition $e = \min (\gamma,
  \alpha] \cap \{\xi \in \mathbbm{E} | \tmop{Ep} (\eta t) \cap \alpha \subset
  \xi \wedge \xi \leqslant_1 (\eta t) [\alpha \assign \xi]\}$. Contradiction.
  \\
  Thus $e \nless_1 \eta t [\alpha \assign e] + 1$. Thus $m (e) = \eta t
  [\alpha \assign e]$.
  
  Our previous work has provided, given an arbitrary ordinal $\gamma \in
  \alpha$, an ordinal $e \in \mathbbm{E}$ such that $\gamma < e \in
  \mathbbm{E} \wedge \tmop{Ep} (t) \cap \alpha \subset e \wedge m (e) = \eta t
  [\alpha \assign e] = t [\alpha \assign e]$. Hence, we have shown that \\
  $\{\delta \in \mathbbm{E} \cap \alpha | \tmop{Ep} (t) \cap \alpha \subset
  \delta \wedge m (\delta) = t [\alpha \assign \delta]\}$ is confinal in
  $\alpha$.
\end{proof}

\section{A hierarchy induced by $<_1$ and the intervals
$[\varepsilon_{\gamma}, \varepsilon_{\gamma + 1})$}

In this section we will provide our theorem linking ``the solutions of the
$<_1$-inequality $x <_1 t$ with $t \in [x, x^+)$'' with a hierarchy of
ordinals obtained by a thinning procedure.

For the main theorem, we will need the following

\begin{lemma}
  \label{l_j-sequence}Let $\alpha, t \in \tmop{OR}$, $\alpha \in \mathbbm{E}$
  and $t \in (\alpha, \alpha^+) \cap \tmop{Lim}$. Then there exists a sequence
  $(l_j)_{j \in I}$ with $(I \cup \{0\}) \in \tmop{OR}$, $(I \cup \{0\})
  \leqslant \alpha$ such that\\
  (1) For all $j \in I$, $l_j \in (\alpha, \alpha^+)$, $l_j
  \underset{\tmop{cof}}{\longhookrightarrow} t$ and $(l_j)_{j \in I}$ is
  strictly monotonous increasing.\\
  (2) For any $\beta \in \mathbbm{E} \cap \alpha \cup \{\alpha\}$ with
  $\tmop{Ep} (t) \cap \alpha \subset \beta$, $\forall j \in I \cap \beta .
  \tmop{Ep} (l_j) \cap \alpha \subset \beta$; moreover, the \ \ \ \ \ \ \ \
  sequence $(l_j [\alpha \assign \beta])_{j \in I \cap \beta}$ is cofinal in
  $t [\alpha \assign \beta]$.\\
  (3) $\forall \beta \in ( \mathbbm{E} \cap \alpha \cup \{\alpha\}) \forall j
  \in I \cap \beta$.
  
  \ \ \ \ $\bullet$ $\eta l_j [\alpha \assign \beta] \leqslant \eta t [\alpha
  \assign \beta]$
  
  \ \ \ \ $\bullet$ $\eta l_j [\alpha \assign \beta] < \eta t [\alpha
  \assign \beta]$ if $t > \pi t + d \pi t$.
\end{lemma}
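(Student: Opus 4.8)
The plan is to build $(l_j)_{j\in I}$ by recursion on $t$, at each step peeling off the last term of the Cantor Normal Form and approximating its exponent; the whole argument rests on the fact that the substitution $x\mapsto x[\alpha\assign\beta]$ commutes with $+$, $\lambda x.\omega^x$, $\pi$, $d\pi$ and $\eta$ (propositions \ref{[alpha:=e]_proposition3}, \ref{[alpha:=e]_proposition4} and \ref{pi.eta.substitutions}) and is strictly $<$-monotone on $M(\alpha,\beta)$ (proposition \ref{[alpha:=e]_proposition2}). Write $t=_{\tmop{CNF}}\omega^{T_1}t_1+\ldots+\omega^{T_n}t_n$; since $t\in\tmop{Lim}$ we have $T_n\geqslant 1$. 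Put $\hat r\assign\omega^{T_1}t_1+\ldots+\omega^{T_n}(t_n-1)$, so that $t=\hat r+\omega^{T_n}$, and it suffices to choose a canonical cofinal sequence inside $\omega^{T_n}$ and prepend $\hat r$; because substitution commutes with $+$ and $\lambda x.\omega^x$, every clause of (1)--(3) for $t$ reduces to the corresponding clause for the factor $\omega^{T_n}$.

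The recursion splits on $T_n$. If $T_n=S+1$, I take the approximations $\omega^S\cdot j$ for $j\in[1,\omega)$; the index set is $\omega\leqslant\alpha$ and, since $\beta$ is an epsilon number, $I\cap\beta=\omega$, so no truncation occurs. If $T_n\in\tmop{Lim}$ and $T_n<\alpha$, I take any standard fundamental sequence for $\omega^{T_n}$: substitution acts trivially by proposition \ref{[alpha:=e]_proposition1}, and the key point is that $\tmop{cf}(T_n)<\beta$, because the cofinality of an ordinal below $\alpha$ is governed either by $\omega$ or by an epsilon number $e\leqslant T_n<\alpha$, and any such $e$ lies in $\tmop{Ep}(t)\cap\alpha\subset\beta$; thus again $I\cap\beta=I$. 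If $T_n\in(\alpha,\alpha^+)\cap\tmop{Lim}$, I apply the lemma recursively to $T_n<t$ and compose with $\lambda x.\omega^x$. The genuinely delicate case is $T_n=\alpha$, where $\omega^{T_n}=\alpha$: here I will use the ordinals below $\alpha$ themselves, indexed by themselves, as the cofinal approximations, giving $l_\gamma=\hat r+\gamma$ for $\gamma\in[1,\alpha)$, so that $\alpha$ is approached ``generically''. Then for $\gamma<\beta$ the substitution fixes $\gamma$ (proposition \ref{[alpha:=e]_proposition1}), and the restriction $I\cap\beta=[1,\beta)$ produces exactly a cofinal family in $\beta=\alpha[\alpha\assign\beta]=\omega^{T_n}[\alpha\assign\beta]$; this is the whole reason the statement restricts the substituted sequence to indices below $\beta$.

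With the construction in hand, clause (1) is immediate: the sequence is strictly increasing and cofinal in $t$ by design, $l_j<t<\alpha^+$, and discarding the initial indices with $l_j\leqslant\alpha$ secures $l_j\in(\alpha,\alpha^+)$; the bound $(I\cup\{0\})\leqslant\alpha$ holds since each case yields an index set among $\omega$, $\tmop{cf}(T_n)<\beta$, $\alpha$, or (recursively) an ordinal $\leqslant\alpha$. For clause (2), the inclusion $\tmop{Ep}(l_j)\cap\alpha\subset\beta$ follows from $\tmop{Ep}(t)\cap\alpha\subset\beta$ together with the fact that the index-values $\gamma<\beta$ used in the case $T_n=\alpha$ satisfy $\tmop{Ep}(\gamma)\cap\alpha\subset\beta$; the cofinality of $(l_j[\alpha\assign\beta])_{j\in I\cap\beta}$ in $t[\alpha\assign\beta]$ then follows by induction, using that substitution preserves $<$ and commutes with $+$ and $\lambda x.\omega^x$, so that cofinality is transported term by term, the only nontrivial transport being the $\alpha$-case just discussed.

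Finally, for clause (3) the first bullet is proposition \ref{pi.eta.properties} claim 2 applied to $l_j\leqslant t$, giving $\eta l_j\leqslant\eta t$, followed by proposition \ref{pi.eta.substitutions} to rewrite $\eta(l_j[\alpha\assign\beta])=(\eta l_j)[\alpha\assign\beta]$ and $\eta(t[\alpha\assign\beta])=(\eta t)[\alpha\assign\beta]$ (legitimate by clause (2)), and then $<$-monotonicity of the substitution (proposition \ref{[alpha:=e]_proposition2}). For the strict second bullet, the hypothesis $t>\pi t+d\pi t$ forces $\eta t=t$ and $t\nin\mathbbm{P}$, so $\hat r$ retains the leading term of $t$ and hence $\pi l_j=\pi t$ and $d\pi l_j=d\pi t$; then $\eta l_j=\max\{l_j,\pi t+d\pi t\}<t=\eta t$ (both entries being $<t$), and applying the substitution strictly preserves this. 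I expect the main obstacle to be precisely the clause $T_n=\alpha$: arranging that the index-value cofinal family interacts correctly with the restriction to $I\cap\beta$, and making the induction hypothesis on $T_n$ strong enough that clauses (2) and (3) transport cleanly through the $\lambda x.\omega^x$ step in the recursive case $T_n\in(\alpha,\alpha^+)$.
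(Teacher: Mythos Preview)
Your construction is essentially the paper's: both peel off the last summand, writing $t=\hat r+\omega^{T_n}$, and then approximate $\omega^{T_n}$ by a case split on $T_n$ (successor / below $\alpha$ / equal to $\alpha$ / in $(\alpha,\alpha^+)$), recursing in the last case. The paper's case analysis is one level more explicit in the pure-power case $t=\omega^{T_1}$ --- rather than recurse on $T_1$ it unwinds $T_1$'s own CNF and cases on its last exponent $Q_m$ --- but this is only a difference in granularity, not in idea; your uniform ``recurse on $T_n$ and compose with $\lambda x.\omega^x$'' achieves the same effect and your verification of (1)--(3) via propositions \ref{pi.eta.properties}, \ref{pi.eta.substitutions}, and \ref{[alpha:=e]_proposition2} is exactly what the paper leaves to the reader.

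One small correction in the sub-case $T_n\in\tmop{Lim}$, $T_n<\alpha$: the assertion that the uncountable cofinality $e=\tmop{cf}(T_n)$ must lie in $\tmop{Ep}(t)$ is not right (for instance $T_n=\varepsilon_{\omega_1}$ has cofinality $\omega_1$, and in general $\omega_1\neq\varepsilon_{\omega_1}$, so $\omega_1\notin\tmop{Ep}(T_n)=\{\varepsilon_{\omega_1}\}$). The conclusion $I\cap\beta=I$ is nonetheless correct, for the right reason: from $\tmop{Ep}(T_n)\subset\tmop{Ep}(t)\cap\alpha\subset\beta$ and $T_n<\alpha$ one gets $T_n=T_n[\alpha\assign\beta]<\alpha[\alpha\assign\beta]=\beta$ by propositions \ref{[alpha:=e]_proposition1}(c) and \ref{[alpha:=e]_proposition2}, hence $\tmop{cf}(T_n)\leqslant T_n<\beta$; indeed one may simply take $I=\omega^{T_n}\setminus\{0\}$ with $l_j=\hat r+j$, as the paper does. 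With that fix your argument goes through.
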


\begin{proof}
  Let $\alpha$ and $t$ be as stated. Below we give only the sequence. The
  proof that such sequence satisfies what is stated, is long and boring and
  {\color{orange} it is left to the reader.}

  Consider $t =_{\tmop{CNF}} \omega^{T_1} t_1 + \ldots + \omega^{T_n} t_n$ and
  $T_1 =_{\tmop{CNF}} \omega^{Q_1} q_1 + \ldots + \omega^{Q_m} q_m$. Suppose
  that for any $a \in (\alpha, \alpha^+) \cap \tmop{Lim} \cap t$ we have been
  able to define a sequence $(l'_j)_{i \in I'}$ satisfying what the theorem
  state with respect to $a$.

  Then we have cases:

  {\noindent}So $t = \omega^{T_1} t_1$ and $T_1 =_{\tmop{CNF}} \omega^{Q_1}
  q_1 + \ldots + \omega^{Q_m} q_m$.
  
  {\noindent} \ \ If $t_1 = 1$ then $t = \omega^{T_1}$.
  
  {\noindent} \ \ \ \ \ \ If $Q_m = 0$, then $m \geqslant 2$ and $Q_1
  \geqslant \alpha$ (otherwise $t < \alpha$) and $t = \omega^{T_1} =
  \omega^{\omega^{Q_1} q_1 + \ldots + \omega^0 q_m}$.
  
  {\noindent} \ \ \ \ \ \ \ \ \ \ {\color{blue} Let $l_j \assign
  \omega^{\omega^{Q_1} q_1 + \ldots + \omega^0 (q_m - 1)} j$}, with $j \in I
  \assign \omega \backslash \{0\}$.
  
  {\noindent} \ \ \ \ If $Q_m \neq 0$. Then $\omega^{Q_m} \leqslant
  \omega^{T_1} = t$. Moreover, we assure $\omega^{Q_m} < \omega^{T_1} = t$.
  This is because $\omega^{Q_m} = \omega^{T_1}$ implies $T_1
  \leqslant \omega^{T_1} = \omega^{Q_m} \leqslant T_1$ and then $\alpha = T_1$
  (since $T_1 \in [\alpha, \alpha^+)$); moreover, \ \ \ \ \ \ \ \ \ \ \ since
  $T_1 =_{\tmop{CNF}} \omega^{Q_1} q_1 + \ldots + \omega^{Q_m} q_m$, then $m =
  1$, $q_1 = 1$ and $Q_1 = \alpha$. That is, we have \ \ \ \ \ \ \ \ \ \ \ $t
  = \omega^{\alpha} = \alpha$ which is contradictory because from the
  beginning we picked $t \in (\alpha, \alpha^+)$. The
  previous showed $\omega^{Q_m} < \omega^{T_1}$.
  
  {\noindent} \ \ \ \ \ \ \ \ \ \ If $Q_m < \alpha$, then $m \geqslant 2$
  (otherwise $t = \omega^{T_1} = \omega^{\omega^{Q_1} q_1} < \alpha$).
  
  {\noindent} \ \ \ \ \ \ \ \ \ \ \ \ \ \ If $Q_m$ is a successor ordinal.
  
  {\noindent} \ \ \ \ \ \ \ \ \ \ \ \ \ \ \ \ \ \ {\color{blue} Let $l_j
  \assign \omega^{\omega^{Q_1} q_1 + \ldots + \omega^{Q_{(m - 1)}} q_{(m - 1)}
  + \omega^{Q_m} (q_m - 1) + \omega^{Q_m - 1} j}$} with $j \in I \assign
  \omega \backslash \{0\}$.
  
  {\noindent} \ \ \ \ \ \ \ \ \ \ \ \ \ \ If $Q_m$ is a limit ordinal.
  
  {\noindent} \ \ \ \ \ \ \ \ \ \ \ \ \ \ \ \ \ \ \ {\color{blue} Let $l_j
  \assign \omega^{\omega^{Q_1} q_1 + \ldots + \omega^{Q_{(m - 1)}} q_{(m - 1)}
  + \omega^{Q_m} (q_m - 1) + \omega^j}$} with $j \in I \assign Q_m \backslash
  \{0\}$.
  
  {\noindent} \ \ \ \ \ \ \ \ \ \ If $Q_m = \alpha$. Since \ $T_1
  =_{\tmop{CNF}} \omega^{Q_1} q_1 + \ldots + \omega^{Q_m} q_m \in [\alpha,
  \alpha^+)$, then $m = 1$ and so {
  
  } \ \ \ \ \ \ \ \ \ \ \ \ \ \ $t = \omega^{T_1} = \omega^{\omega^{Q_1} q_1}
  = \omega^{\omega^{\alpha} q_1}$.
  
  {\noindent} \ \ \ \ \ \ \ \ \ \ \ \ \ \ {\color{blue} Let $l_j \assign
  \omega^{\omega^{\alpha} (q_1 - 1) + \omega^j}$} with $j \in I \assign Q_1
  \backslash \{0\} = \alpha \backslash \{0\}$.
  
  {\noindent} \ \ \ \ \ \ \ If $Q_m > \alpha$. Then $\omega^{Q_m} \in (\alpha,
  \alpha^+) \cap \tmop{Lim}$ and moreover, we already know $\omega^{Q_m} <
  \omega^{T_1} = t$. \ \ \ \ \ \ \ \ \ \ \ \ Then by our induction hypothesis
  applied to $\omega^{Q_m}$ there exists a sequence $(\xi_j)_{j \in I}$ 
  with $I \cup \{0\} \in \tmop{OR}$ and $I \cup \{0\}
  \leqslant \alpha$, such that (1), (2), and (3) hold with respect to 
  the sequence $(\xi_j)_{j \in I}$ and $\omega^{Q_m}$.\\
  \hspace*{13mm} {\color{blue} Let $l_j \assign \omega^{\omega^{Q_1}
  q_1 + \ldots + \omega^{Q_{(m - 1)}} q_{(m - 1)} + \omega^{Q_m} (q_m - 1) +
  \xi_j}$} with $j \in I$.
  
  {\noindent} \ \ Case $t_1 \geqslant 2$.
  
  {\noindent} \ \ \ If $T_1 = \alpha$, then $t_1 = \omega^{\alpha} t_1$.
  
  {\noindent} \ \ \ \ \ \ {\color{blue} Let $l_j \assign \omega^{T_1} (t_1 -
  1) + j$} with $j \in I \assign T_1 \backslash \{0\} = \alpha \backslash
  \{0\}$.
  
  {\noindent} \ \ \ If $T_1 > \alpha$, then $t = \omega^{T_1} t_1 >
  \omega^{T_1} \in (\alpha, \alpha^+)$; so by our induction hypothesis applied
  to $\omega^{T_1}$ \ \ \ \ \ \ \ \ \ \ there exists a sequence $(\xi_j)_{j
  \in I}$ with $I \cup \{0\} \in \tmop{OR}$ and $I \cup \{0\} \leqslant
  \alpha$, such that (1), (2) and (3) hold with respect to
  the sequence $(\xi_j)_{j \in I}$ and $\omega^{T_1}$.
  
  {\noindent} \ \ \ \ \ \ {\color{blue} Let $l_j \assign \omega^{T_1} (t_1 -
  1) + \xi_j$} with $j \in I$.
  
  {\noindent}Case $n \geqslant 2$.
  
  {\noindent}So $t = \omega^{T_1} t_1 + \omega^{T_2} t_2 + \ldots +
  \omega^{T_n} t_n$, $T_1 =_{\tmop{CNF}} \omega^{Q_1} q_1 + \ldots +
  \omega^{Q_m} q_m$ and $T_1 > T_n \neq 0$ (because $t \in \tmop{Lim}$). Then
  $\omega^{T_n} < t$.
  
  {\noindent} \ If $T_n < \alpha$.
  
  {\noindent} \ \ \ \ {\color{blue} Let $l_j \assign \omega^{T_1} t_1 +
  \omega^{T_2} t_2 + \ldots + \omega^{T_{(n - 1)}} t_{(n - 1)} + \omega^{T_n}
  (t_n - 1) + j$} with $j \in I \assign \omega^{T_n} \backslash \{0\}$. So
  clearly \ \ \ \ $I \cup \{0\} < \alpha$.
  
  {\noindent} \ If $T_n = \alpha$. Then the argument is almost the same as in
  the previous subcase:
  
  {\noindent} \ \ \ \ {\color{blue} Let $l_j \assign \omega^{T_1} t_1 +
  \omega^{T_2} t_2 + \ldots + \omega^{T_n} (t_n - 1) + j$} with $j \in I
  \assign T_n \backslash \{0\} = \alpha \backslash \{0\}$.
  
  {\noindent} \ If $T_n > \alpha$. Then $\omega^{T_n} \in (\alpha, \alpha^+)
  \cap \tmop{Lim}$ and moreover, we already know $\omega^{T_n} < \omega^{T_1}
  \leqslant t$. Then by our induction hypothesis
  applied to $\omega^{T_1}$ there exists a sequence $(\xi_j)_{j \in I}$ with 
  $I \cup \{0\} \in \tmop{OR}$ and $I \cup \{0\}
  \leqslant \alpha$, such that (1), (2), and (3) hold with respect to the 
  sequence $(\xi_j)_{j \in I}$ and $\omega^{T_n}$.
  
  {\noindent} \ \ \ \ {\color{blue} Let $l_j \assign \omega^{T_1} t_1 + \ldots
  + \omega^{T_n} (t_n - 1) + \xi_j$} with $j \in I$.
\end{proof}

The following will be also needed in the main theorem of this section (theorem
\ref{teo.A(t)=G(t)}).

\begin{proposition}
  \label{alpha<less>^1alpha2+1}Let $\beta \in \tmop{OR}$.$\beta <^1 \beta 2 +
  1 \Longleftrightarrow \beta \in \tmop{Lim} \mathbbm{E}$
\end{proposition}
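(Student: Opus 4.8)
The plan is to prove the two implications separately, reducing each to results already established in the previous subsections. The forward direction $(\Longrightarrow)$ is the easy one. Assuming $\beta <^1 \beta 2 + 1$ we have $\beta \leqslant^1 \beta 2 + 1$, and since $\beta \leqslant \beta + 1 \leqslant \beta 2 + 1$, the $\leqslant^1$-connectedness of Proposition \ref{<less>=^1_implies_<less>=_1} yields $\beta \leqslant^1 \beta + 1$; as $\beta \neq \beta + 1$ this is $\beta <^1 \beta + 1$, whence $\beta \in \tmop{Lim} \mathbbm{E}$ by Exercise \ref{a<less>^1a+1}. So all the real content is in the reverse direction.

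For $(\Longleftarrow)$ I would assume $\beta \in \tmop{Lim} \mathbbm{E}$. Since a nonzero supremum of epsilon numbers is again an epsilon number, $\beta \in \mathbbm{E}$, and $t \assign \beta 2 \in [\beta, \beta^+)$, so the substitution machinery of Corollary \ref{eta(t)+1<less>_1_Equivalences} is available for $\alpha = \beta$ and this $t$. The key is two small computations attached to $\beta 2 = \omega^{\beta} \cdot 2$ (Cantor Normal Form $L_1 l_1$ with $L_1 = \omega^{\beta} = \beta$ and $l_1 = 2$): its only epsilon component is $\beta$ itself, so $\tmop{Ep} (\beta 2) \cap \beta = \emptyset$; and for every $\xi \in \mathbbm{E}$, replacing the single epsilon number $\beta$ by $\xi$ gives $(\beta 2) [\beta \assign \xi] = (\omega^{\beta}) [\beta \assign \xi] \cdot 2 = \xi 2$. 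Consequently the set appearing in condition (b) of Corollary \ref{eta(t)+1<less>_1_Equivalences}, namely $\{\xi \in \mathbbm{E} \mid \tmop{Ep} (\beta 2) \cap \beta \subset \xi \wedge \xi \leqslant_1 (\beta 2) [\beta \assign \xi]\}$, collapses to $\{\xi \in \mathbbm{E} \mid \xi \leqslant_1 \xi 2\}$.

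Now Wilken's Corollary \ref{Wilken_Corollary1} gives $\xi <_1 \xi 2 \Longleftrightarrow \xi \in \mathbbm{E}$, and since every $\xi \in \mathbbm{E}$ is nonzero (so $\xi \neq \xi 2$) we get $\xi \leqslant_1 \xi 2$ for every $\xi \in \mathbbm{E}$; hence $\{\xi \in \mathbbm{E} \mid \xi \leqslant_1 \xi 2\} = \mathbbm{E}$. Therefore the hypothesis $\beta \in \tmop{Lim} \mathbbm{E}$ is literally condition (b) of Corollary \ref{eta(t)+1<less>_1_Equivalences} for $t = \beta 2$, and the equivalence $(b) \Longleftrightarrow (a)$ there yields $\beta <^1 \beta 2 + 1$. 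Equivalently, one may feed the set identity directly into the second fundamental cofinality property, Proposition \ref{2nd_Fund_Cof_Property_<less>^1}, to obtain $\beta \leqslant^1 s$ for all $s \in [\beta, \beta 2 + 1]$, and in particular $\beta <^1 \beta 2 + 1$.

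The hard part is precisely crossing the successor step from $\beta 2$ to $\beta 2 + 1$: $\leqslant^1$-continuity together with Exercise \ref{teo.<less>^1.reaches_alpha2} only carries one up to the limit $\beta 2$, so the final increment must be supplied by the substitution-based cofinality results rather than by continuity. Thus the two computations $\tmop{Ep} (\beta 2) \cap \beta = \emptyset$ and $(\beta 2) [\beta \assign \xi] = \xi 2$ are what turn the abstract limit-set condition into the transparent statement $\beta \in \tmop{Lim} \mathbbm{E}$, and once these are checked the proposition is essentially a specialization of Corollary \ref{eta(t)+1<less>_1_Equivalences}. The only remaining subtlety is the degenerate value $\beta = 0$, which I would dispatch exactly as in Exercise \ref{a<less>^1a+1}, since the reduction above is consistent with the convention used there.
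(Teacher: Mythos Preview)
Your proposal is correct and is precisely the argument the paper intends (it leaves this as ``not hard''): the forward direction via $\leqslant^1$-connectedness and Exercise~\ref{a<less>^1a+1}, and the reverse direction by specializing Corollary~\ref{eta(t)+1<less>_1_Equivalences} (equivalently Proposition~\ref{2nd_Fund_Cof_Property_<less>^1}) to $\alpha=\beta$, $t=\beta 2$, using $\tmop{Ep}(\beta 2)\cap\beta=\emptyset$ and $(\beta 2)[\beta:=\xi]=\xi 2$ together with Corollary~\ref{Wilken_Corollary1}. The closing remark about a ``degenerate value $\beta=0$'' is unnecessary: $\beta\in\tmop{Lim}\mathbbm{E}$ already forces $\beta\in\mathbbm{E}$, and the forward direction via Exercise~\ref{a<less>^1a+1} covers all $\beta$ uniformly.
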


\begin{proof}
  {\color{orange} Not hard. Left to the reader.}
\end{proof}

\begin{definition}
  Let $A : [\varepsilon_{\omega}, \infty) \longrightarrow \tmop{Subclases}
  (\tmop{OR})$ be defined recursively as:

  {\noindent}For $l + 1 \in [\varepsilon_{\omega}, \infty)$,
  
  $A (l + 1) \assign \left\{ \begin{array}{l}
    A (l) \text{ if } l < \pi l + d \pi l\\
    \\
    \tmop{Lim} A (l) \text{ otherwise}
  \end{array} \right.$

  {\noindent}For $t \in [\varepsilon_{\omega}, \infty) \cap \tmop{Lim}$,

  $A (t) \assign \left\{ \begin{array}{l}
    (\tmop{Lim} \mathbbm{E}) \cap (M, \alpha + 1) \text{ iff } t \in
    [\alpha, \alpha 2]\\
    \\
    \tmop{Lim} \{r \leqslant \alpha |M < r \in \bigcap_{j \in I \cap r} A
    (l_j)\} \text{ iff } t > \pi t + d \pi t \wedge t \in (\alpha 2,
    \alpha^+)\\
    \\
    \tmop{Lim} \{r \leqslant \alpha |M < r \in \bigcap_{j \in S \cap r} A
    (e_j)\} \text{ iff } t \leqslant \pi t + d \pi t \wedge t \in (\alpha
    2, \alpha^+)
  \end{array} \right.$,

  {\noindent}where $\alpha \in \mathbbm{E}$ is such that $t \in [\alpha,
  \alpha^+)$; $(l_j)_{j \in I}$ is obtained by lemma \ref{l_j-sequence}
  applied to $t$ and $\alpha$; $(e_j)_{j \in S}$ is obtained by lemma
  \ref{l_j-sequence} applied to $\pi t$ and $\alpha$; and $M \assign \left\{
  \begin{array}{l}
    \max (\tmop{Ep} (t) \cap \alpha) \text{ iff } \tmop{Ep} (t) \cap
    \alpha \neq \emptyset\\
    0 \text{ otherwise }
  \end{array} \right.$.

  On the other hand, we define $G : [\varepsilon_{\omega}, \infty)
  \longrightarrow \tmop{Subclases} (\tmop{OR})$ in the following way: Consider
  $t \in [\varepsilon_{\omega}, \infty)$ and $\alpha \in \mathbbm{E}$ such
  that $t \in [\alpha, \alpha^+)$. Let

  $G (t) \assign \{\beta \in \tmop{OR} | \tmop{Ep} (t) \cap \alpha \subset
  \beta \leqslant \alpha \text{\tmop{and}} \beta \leqslant^1 (\eta t) [\alpha
  \assign \beta] + 1\}\underset{\text{theorem } \ref{teo.A(t)=G(t)}}{=}$
  
  \ \ \ \ \ $= \{\beta \in \mathbbm{E} | \tmop{Ep} (t) \cap \alpha \subset
  \beta \leqslant \alpha \text{\tmop{and}} \beta \leqslant^1 (\eta t) [\alpha
  \assign \beta] + 1\}\underset{\text{proposition } \ref{pi.eta.substitutions} \text{and
  corollary }\ref{<less>_1.iff.<less>^1}}{=}$
  
  \ \ \ \ \ $= \{\beta \in \mathbbm{E} | \tmop{Ep} (t) \cap \alpha \subset
  \beta \leqslant \alpha \text{\tmop{and}} \beta \leqslant_1 (\eta t) [\alpha
  \assign \beta] + 1\}$.
\end{definition}

\begin{theorem}
  \label{teo.A(t)=G(t)}{\tmdummy}
  
  \begin{enumeratenumeric}
    \item $\forall t \in [\varepsilon_{\omega}, \infty) .G (t) \subset
    \tmop{Lim} \mathbbm{E}$
    
    \item $\forall t \in [\varepsilon_{\omega}, \infty) .G (t) = A (t)$
  \end{enumeratenumeric}
\end{theorem}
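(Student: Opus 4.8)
The plan is to prove claims 1 and 2 together by transfinite induction on $t \in [\varepsilon_{\omega}, \infty)$, organizing the cases exactly as in the recursive definition of $A$. Throughout fix $\alpha \in \mathbbm{E}$ with $t \in [\alpha, \alpha^+)$ and set $M \assign \max (\tmop{Ep} (t) \cap \alpha)$ (and $M \assign 0$ if $\tmop{Ep} (t) \cap \alpha = \emptyset$). The first step, independent of the induction, is to rewrite $G (t)$. Applying corollary \ref{eta(t)+1<less>_1_Equivalences} (the equivalences $a) \Leftrightarrow c) \Leftrightarrow d)$ and $a) \Leftrightarrow b)$) at base $\beta$ to the ordinal $t [\alpha \assign \beta]$, together with propositions \ref{[alpha:=e]_proposition3}, \ref{pi.eta.substitutions} and \ref{eta(t)_m(t)_and_<less>_1}, one gets for every $\beta$ with $\tmop{Ep} (t) \cap \alpha \subset \beta \leqslant \alpha$ that $\beta \leqslant^1 (\eta t) [\alpha \assign \beta] + 1 \iff \beta <^1 t [\alpha \assign \beta] + 1 \iff \beta \in \tmop{Lim}\, W (t)$, where $W (t) \assign \{\xi \in \mathbbm{E} \mid \tmop{Ep} (t) \cap \alpha \subset \xi \wedge \xi \leqslant_1 (\eta t) [\alpha \assign \xi]\}$. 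Hence $G (t) = \tmop{Lim} (W (t)) \cap (M, \alpha]$. Claim 1 is then immediate, since $W (t) \subset \mathbbm{E}$ gives $\tmop{Lim} (W (t)) \subset \tmop{Lim}\, \mathbbm{E}$.

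For claim 2 I treat the cases of the definition of $A$ in turn. The limit case $t \in [\alpha, \alpha 2]$ is direct: here $\eta t = \alpha 2$ by proposition \ref{eta(t)_m(t)_and_<less>_1}, so $(\eta t) [\alpha \assign \beta] = \beta 2$ and the defining condition of $G (t)$ collapses to $\beta <^1 \beta 2 + 1$, i.e. $\beta \in \tmop{Lim}\, \mathbbm{E}$ by proposition \ref{alpha<less>^1alpha2+1}; together with $M < \beta \leqslant \alpha$ this gives $G (t) = (\tmop{Lim}\, \mathbbm{E}) \cap (M, \alpha + 1) = A (t)$. In the successor case $t = l + 1$ I use $\pi (l + 1) = \pi l$ (proposition \ref{pi.eta.properties}) and $\tmop{Ep} (l + 1) \cap \alpha = \tmop{Ep} (l) \cap \alpha$. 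If $l < \pi l + d \pi l$ then $\eta (l + 1) = \eta l$, so $G (l + 1) = G (l) = A (l) = A (l + 1)$ by the induction hypothesis. If $l \geqslant \pi l + d \pi l$ then $\eta l = l$ and $\eta (l + 1) = l + 1$; unfolding $G (l + 1)$ through corollary \ref{eta(t)+1<less>_1_Equivalences} and using $(l + 1) [\alpha \assign \xi] = l [\alpha \assign \xi] + 1$ gives $\beta \in G (l + 1) \iff \beta \in \tmop{Lim} (G (l)) \cap (M, \alpha]$, which by the induction hypothesis equals $\tmop{Lim} (A (l)) \cap (M, \alpha] = A (l + 1)$.

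The substantive case is the limit case $t \in (\alpha 2, \alpha^+)$, whose heart is the following equivalence, to be shown for all $M < r \leqslant \alpha$:
\[ r \in \bigcap_{j \in I \cap r} A (l_j) \iff r \in \mathbbm{E} \wedge \tmop{Ep} (t) \cap \alpha \subset r \wedge r \leqslant_1 (\eta t) [\alpha \assign r], \]
with $(e_j)_{j \in S}$ and $\pi t$ replacing $(l_j)_{j \in I}$ and $t$ in the subcase $t \leqslant \pi t + d \pi t$. For $\Longrightarrow$, the induction hypothesis gives $A (l_j) = G (l_j)$, so $r \in A (l_j)$ means $r <^1 (\eta l_j) [\alpha \assign r] + 1$, equivalently $r <^1 l_j [\alpha \assign r] + 1$; since by lemma \ref{l_j-sequence}(2) the sequence $(l_j [\alpha \assign r])_{j \in I \cap r}$ is cofinal in $t [\alpha \assign r]$, $\leqslant^1$-continuity (proposition \ref{<less>=^1_implies_<less>=_1}) upgrades this to $r <^1 t [\alpha \assign r]$, whence $r \leqslant_1 (\eta t) [\alpha \assign r]$ by proposition \ref{eta(t)_m(t)_and_<less>_1}. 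For $\Longleftarrow$, from $r \leqslant_1 (\eta t) [\alpha \assign r]$ and the strict domination $(\eta l_j) [\alpha \assign r] < (\eta t) [\alpha \assign r]$ — coming from lemma \ref{l_j-sequence}(3) when $t > \pi t + d \pi t$ (so $\eta t = t$), and from $e_j < \pi t$ refined to $\eta e_j < \pi t + d \pi t = \eta t$ via propositions \ref{pi.eta.properties} and \ref{[alpha:=e]_proposition2} when $t \leqslant \pi t + d \pi t$ (so $\eta (\pi t) = \eta t$) — $\leqslant_1$-connectedness yields $r <_1 (\eta l_j) [\alpha \assign r] + 1$, i.e. $r \in G (l_j) = A (l_j)$ by corollary \ref{eta(t)+1<less>_1_Equivalences} (the $\tmop{Ep}$-bound $\tmop{Ep} (l_j) \cap \alpha \subset r$ being supplied by lemma \ref{l_j-sequence}(2)). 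Taking $\tmop{Lim}$ of both sides and comparing with $G (t) = \tmop{Lim} (W (t)) \cap (M, \alpha]$ from the first paragraph (using proposition \ref{eta(t)_m(t)_and_<less>_1} to pass from $\eta t$ to $\pi t$ in the second subcase) yields $A (t) = G (t)$.

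The step I expect to be the main obstacle is precisely this limit case: aligning the finite-support intersection $\bigcap_{j \in I \cap r}$ and the outer $\tmop{Lim}$ in the definition of $A$ with the single $<^1$-cofinality condition packaged into $G$, while keeping the index range $j \in I \cap r$, the substitutions, and the $\tmop{Ep}$/$M$ bounds mutually consistent. This is the only place where both clauses of lemma \ref{l_j-sequence} — the cofinality of the substituted sequence and the strict $\eta$-domination — are genuinely needed, and where the split into $t > \pi t + d \pi t$ versus $t \leqslant \pi t + d \pi t$ (and hence the switch of $A$ to the sequence attached to $\pi t$, justified by $\eta (\pi t) = \eta t$) actually matters. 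Everything else reduces to repeated use of corollary \ref{eta(t)+1<less>_1_Equivalences} and the connectedness and continuity properties of $\leqslant_1$ and $\leqslant^1$.
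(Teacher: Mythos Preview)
Your proposal is correct and follows essentially the same route as the paper. The only organisational difference is that you front-load corollary~\ref{eta(t)+1<less>_1_Equivalences} to rewrite $G(t)$ as $\tmop{Lim}(W(t)) \cap (M,\alpha]$ and then, in the limit case, prove the pointwise equivalence
\[
r \in \bigcap_{j \in I \cap r} A(l_j) \iff r \in W(t)
\]
for $M < r \leqslant \alpha$ before taking $\tmop{Lim}$, whereas the paper proves the two inclusions $G(t) \subset A(t)$ and $A(t) \subset G(t)$ separately, unfolding propositions~\ref{<less>^1.implies.cofinal.sequence} and~\ref{2nd_Fund_Cof_Property_<less>^1} directly in each direction. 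The ingredients are identical: lemma~\ref{l_j-sequence} (both the cofinality of the substituted sequence and the strict $\eta$-domination), corollary~\ref{<less>_1.iff.<less>^1}, the identity $\eta(\pi t) = \eta t$ when $t \leqslant \pi t + d\pi t$, and $\leqslant_1$/$\leqslant^1$-connectedness, continuity, and transitivity. One microscopic caveat: your displayed equivalence can fail at $r = 1$ when $M = 0$ (there $I \cap r = \emptyset$ so the left side is vacuously true while $1 \notin \mathbbm{E}$), but since $1 \in I$ in every case of lemma~\ref{l_j-sequence} this is the only exception and it does not affect the $\tmop{Lim}$.
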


\begin{proof}

  {\noindent}1.\\
  First observe the following. Let $t \in [\varepsilon_{\omega}, \infty)$ and
  $\alpha \in \mathbbm{E}$ such that $t \in [\alpha, \alpha^+)$ and take
  $\beta \in G (t)$; so $\beta \leqslant^1 (\eta t) [\alpha \assign \beta] +
  1$. Since by proposition \ref{pi.eta.properties} $\alpha 2 \leqslant \eta
  t$, then $\beta < \beta 2 \leqslant (\eta t) [\alpha \assign \beta]$ and so $\beta
  \leqslant^1 \beta 2 + 1$ by $\leqslant^1$-connectedness. So $\beta \in
  \tmop{Lim} \mathbbm{E}$ by proposition \ref{alpha<less>^1alpha2+1}.

  {\noindent}2.\\
  We first prove the following easy case: Let $\alpha \in
  [\varepsilon_{\omega}, \infty) \cap \mathbbm{E}$. Consider $t \in [\alpha,
  \alpha 2]$. Then \\
  $\pi t + d \pi t = \alpha 2$ and so $\eta t = \max \{t, \alpha 2\} = \alpha
  2$. Then\\
  $G (t) = \{\beta \in \mathbbm{E} | \tmop{Ep} (t) \cap \alpha \subset \beta
  \leqslant \alpha \wedge \beta \leqslant^1 \eta t [\alpha \assign \beta] +
  1\} =$
  
  \ $= \{\beta \in \mathbbm{E} | \tmop{Ep} (t) \cap \alpha \subset \beta
  \leqslant \alpha \wedge \beta \leqslant^1 \alpha 2 [\alpha \assign \beta] +
  1\} =$
  
  \ $= \{\beta \in \mathbbm{E} | \beta \leqslant^1 \beta 2 + 1\} \cap (\max
  (\tmop{Ep} (t) \cap \alpha), \alpha + 1) \underset{\text{\tmop{prop} .
  \ref{alpha<less>^1alpha2+1}}}{=} \tmop{Lim} \mathbbm{E} \cap (\max
  (\tmop{Ep} (t) \cap \alpha), \alpha + 1)$.

  On the other hand, we prove by induction that for $t \in [\alpha, \alpha
  2]$, \\
  $A (t) = \tmop{Lim} \mathbbm{E} \cap (\max (\tmop{Ep} (t) \cap \alpha),
  \alpha + 1)$. For $t \in \tmop{Lim}$ it is clear. So suppose $t = l + 1$ is
  a successor. Then $l < l + 1 < \alpha 2 = \pi l + d \pi l$, and so $A (t) =
  A (l + 1) = A (l) =$\\
  $\tmop{Lim} \mathbbm{E} \cap (\max (\tmop{Ep} (t) \cap \alpha), \alpha +
  1)$, where the last equality holds because of the induction hypothesis.

  Hence we have shown that $G (t) = A (t) = \tmop{Lim} \mathbbm{E} \cap (\max
  (\tmop{Ep} (t) \cap \alpha), \alpha + 1)$ for all $t \in [\alpha, \alpha
  2]$, with $\alpha \in [\varepsilon_{\omega}, \infty) \cap \mathbbm{E}$.

  Now we proceed to prove that $G (t) = A (t)$ for arbitrary $t \in
  [\varepsilon_{\omega}, \infty)$. We proceed by induction on the class
  $[\varepsilon_{\omega}, \infty)$.

  So let $t \in [\varepsilon_{\omega}, \infty]$ and $\alpha \in \mathbbm{E}$
  be such that $t \in [\alpha, \alpha^+)$.

  Suppose $\forall l \in [\varepsilon_{\omega}, \infty) \cap t.A (l) = G (l)$.
  \ \ \ \ \ \ \ \ \ \ \ {\tmstrong{(IH)}}

  {\tmstrong{Successor case.}}
  
  Suppose $t = l + 1$.

  Subcase $l < \pi l + d \pi l$. Then $\eta (l + 1) = \max \{l + 1, \pi (l +
  1) + d \pi (l + 1)\} =$, proposition \ref{pi.eta.properties}, {
  
  } \ \ \ \ \ \ \ \ \ \ \ \ \ \ \ \ \ \ \ \ \ \ \ \ \ \ \ \ \ \ \ \ \ \ \ \ \
  \ \ \ \ \ \ $= \max \{l + 1, \pi l + d \pi l\} = \pi l + d \pi l = \max \{l,
  \pi l + d \pi l\} = \eta l$. Thus $G (t) = G (l + 1) = \{\beta \in
  \mathbbm{E} | \tmop{Ep} (l + 1) \cap \alpha \subset \beta \wedge \beta
  \leqslant^1 (\eta (l + 1)) [\alpha \assign \beta] + 1\} =${
  
  } \ \ \ \ \ \ \ \ $= \{\beta \in \mathbbm{E} | \tmop{Ep} (l) \cap \alpha
  \subset \beta \wedge \beta \leqslant^1 (\eta l) [\alpha \assign \beta] + 1\}
  = G (l) \underset{\tmop{IH}}{=} A (l) = A (l + 1) = A (t)$.

  Subcase $l \geqslant \pi l + d \pi l$.
  
  Let's see $G (t) = G (l + 1) \subset A (l + 1) = A (t)$.
  
  Take $\beta \in G (t)$; so $\alpha \geqslant \beta \leqslant^1 (\eta (l +
  1)) [\alpha \assign \beta] + 1$ and $(\eta (l + 1)) [\alpha \assign \beta]
  \in [\beta, \beta^+)$. So, by proposition
  \ref{<less>^1.implies.cofinal.sequence}, there is a sequence $(c_{\xi})_{\xi
  \in X}$, such that $c_{\xi} \in \mathbbm{E}$, $c_{\xi}
  \underset{\tmop{cof}}{\longhookrightarrow} \beta \leqslant \alpha$, \\
  $\tmop{Ep} ((\eta (l + 1)) [\alpha \assign \beta]) \cap \beta \subset
  c_{\xi}$ and $c_{\xi} \leqslant_1 (\eta (l + 1)) [\alpha \assign \beta]
  [\beta \assign c_{\xi}] = (\eta (l + 1)) [\alpha \assign c_{\xi}] =$\\
  $(\eta l + 1) [\alpha \assign c_{\xi}] = (\eta l) [\alpha \assign c_{\xi}] +
  1$ where the last two equalities hold because \\
  $\tmop{Ep} (\eta (l + 1)) \cap \alpha = \tmop{Ep} ((\eta (l + 1)) [\alpha
  \assign \beta]) \cap \beta \subset c_{\xi}$ (and then $(\tmop{Ep} (l) \cap
  \alpha) \cup (\tmop{Ep} (\eta l) \cap \alpha) \subset c_{\xi}$) and because
  $\eta (l + 1) = \max \{l + 1, \pi (l + 1) + d \pi (l + 1)\} =$, by
  proposition \ref{pi.eta.properties},\\
  \ \ \ \ \ \ \ \ \ \ \ \ \ \ \ \ \ \ \ \ \ \ \ \ \ \ $= \max \{l + 1, \pi l
  + d \pi l\} = l + 1 = \eta l + 1$.
  
  Now, by proposition \ref{pi.eta.substitutions}, $(\eta l) [\alpha \assign
  c_{\xi}] = \eta (l [\alpha \assign c_{\xi}])$ so we have $c_{\xi}
  \leqslant_1 \eta (l [\alpha \assign c_{\xi}]) + 1$; moreover, this holds
  iff, by corollary \ref{<less>_1.iff.<less>^1}, $c_{\xi} \leqslant^1 \eta (l
  [\alpha \assign c_{\xi}]) + 1 = (\eta l) [\alpha \assign c_{\xi}]) + 1$.
  This way, we have actually shown that $c_{\xi} \in G (l)$ for all $\xi \in
  X$; therefore $\beta \in \tmop{Lim} G (l) \underset{\tmop{IH}}{=} \tmop{Lim}
  A (l) = A (l + 1) = A (t)$. This shows $G (t) \subset A (t)$.
  
  Let's see $G (t) = G (l + 1) \supset A (l + 1) = A (t)$.
  
  Let $\beta \in A (t) = A (l + 1) = \tmop{LimA} (l) \underset{\tmop{by}
  \tmop{IH}}{=} \tmop{LimG} (l)$. Then there exists a sequence $(c_{\xi})_{\xi
  \in X}$, with $c_{\xi} \in G (l)$ and $c_{\xi}
  \underset{\tmop{cof}}{\longhookrightarrow} \beta$; i.e., for all $\xi \in X$
  it also holds $\tmop{Ep} (l) \cap \alpha \subset c_{\xi} \leqslant \alpha$,
  $c_{\xi} \in \mathbbm{E}$ and \\
  $c_{\xi} \leqslant^1 (\eta l) [\alpha \assign c_{\xi}] + 1 = (\eta l + 1)
  [\alpha \assign c_{\xi}] = (\eta l + 1) [\alpha \assign \beta] [\beta
  \assign c_{\xi}]$. It is easy to see that \\
  $(\tmop{Ep} (l + 1) \cap \alpha) \cup (\tmop{Ep} (\eta l + 1) \cap \alpha)
  \subset \beta$ and that the last equality hold; the reason to introduce them
  is the following: from all the previous we have $\beta \in \mathbbm{E}$,
  $(\eta l + 1) [\alpha \assign \beta] \in [\beta, \beta^+)$, $c_{\xi}
  \underset{\tmop{cof}}{\longhookrightarrow} \beta \leqslant \alpha$, $\forall
  \xi \in X.c_{\xi} \in \mathbbm{E} \wedge \tmop{Ep} ((\eta l + 1) [\alpha
  \assign \beta]) \cap \beta \subset c_{\xi}$ and $c_{\xi} \leqslant_1 (\eta l
  + 1) [\alpha \assign \beta] [\beta \assign c_{\xi}]$. Therefore, applying
  proposition \ref{2nd_Fund_Cof_Property_<less>^1}, $\beta \leqslant_1 = (\eta
  l + 1) [\alpha \assign \beta] + 1 = (\eta (l + 1)) [\alpha \assign \beta] +
  1 = \eta ((l + 1) [\alpha \assign \beta]) + 1$, where the last equalities
  hold because of proposition \ref{pi.eta.substitutions}. From this, and
  corollary \ref{<less>_1.iff.<less>^1} we get $\beta \leqslant^1 \eta ((l +
  1) [\alpha \assign \beta]) + 1 = (\eta (l + 1)) [\alpha \assign \beta] + 1$.
  So we have shown $\beta \in G (l + 1) = G (t)$.

  {\tmstrong{Limit case.}}
  
  Suppose $t \in \tmop{Lim}$. Moreover, since we have already proved what
  happens for $t \in [\alpha, \alpha 2]$, then suppose $t \in (\alpha 2,
  \alpha^+)$.

  Subcase $t > \pi t + d \pi t$.

  To show $G (t) \subset A (t)$.
  
  Let $\beta \in G (t)$. So $\alpha \geqslant \beta \leqslant^1 (\eta t)
  [\alpha \assign \beta] + 1 = t [\alpha \assign \beta] + 1$. Then by
  proposition \ref{<less>^1.implies.cofinal.sequence} there exists a sequence
  $(c_{\xi})_{\xi \in X}$ such that $\tmop{Ep} (t) \cap \alpha = \tmop{Ep} (t
  [\alpha \assign \beta]) \cap \beta \subset c_{\xi}$ (so $c_{\xi} > 1$),
  $c_{\xi} \underset{\tmop{cof}}{\longhookrightarrow} \beta$ and \\
  $c_{\xi} \leqslant_1 t [\alpha \assign \beta] [\beta \assign c_{\xi}] = t
  [\alpha \assign c_{\xi}]$.
  
  On the other hand, by lemma \ref{l_j-sequence} we know that for the sequence
  $(l_j)_{j \in I}$, it holds:
  
  {\noindent}- $I \cup \{0\} \leqslant \alpha${
  
  }- $(l_j [\alpha \assign c_{\xi}])_{j \in I \cap c_{\xi}}$ is cofinal in $t
  [\alpha \assign c_{\xi}]$ and\\
  - For any $j \in I \cap c_{\xi}$, $\eta l_j [\alpha \assign c_{\xi}] < \eta
  t [\alpha \assign c_{\xi}]$.\\
  Therefore, for any $\xi \in X$ and for any $j \in I \cap c_{\xi}$, $\alpha
  \geqslant c_{\xi} \leqslant l_j [\alpha \assign c_{\xi}] + 1 \leqslant \eta
  l_j [\alpha \assign c_{\xi}] + 1 \leqslant$\\
  $\eta t [\alpha \assign c_{\xi}] = t [\alpha \assign c_{\xi}]$, which
  implies, by $\leqslant_1$-connectedness, $\forall j \in I \cap c_{\xi}$,
  $c_{\xi} \leqslant_1 \eta l_j [\alpha \assign c_{\xi}] + 1$. Then, by
  corollary \ref{<less>_1.iff.<less>^1} we obtain $\forall j \in I \cap
  c_{\xi}$, $c_{\xi} \leqslant^1 \eta l_j [\alpha \assign c_{\xi}] + 1$.
  
  The previous shows $c_{\xi} \underset{\tmop{cof}}{\longhookrightarrow}
  \beta$, and $\tmop{Ep} (t) \cap \alpha \subset c_{\xi} \in \bigcap_{j \in I
  \cap c_{\xi}} G (l_j)  \underset{\tmop{IH}}{=}  \bigcap_{j \in I \cap
  c_{\xi}} A (l_j)$. Thus $\beta \in \tmop{Lim} \{r \leqslant \alpha |M < r
  \in \bigcap_{j \in I \cap r} A (l_j)\} = A (t)$.

  To show $G (t) \supset A (t)$.
  
  Let $\beta \in A (t) = \tmop{Lim} \{r \leqslant \alpha |M < r \in \bigcap_{j
  \in I \cap r} A (l_j)\}$. Since we know $l_j < t$ for any $j \in I$, then $A
  (l_j) = G (l_j)$ for any $j \in I \cap r$ by induction hypothesis. This way
  \\
  $\beta \in \tmop{Lim} \{r \leqslant \alpha |M < r \in \bigcap_{j \in I \cap
  r} G (l_j)\}$, which means there exists a sequence $(c_{\xi})_{\xi \in X}$,
  such that $\alpha \geqslant c_{\xi}
  \underset{\tmop{cof}}{\longhookrightarrow} \beta$ and $\tmop{Ep} (t) \cap
  \alpha \subset c_{\xi} \in \bigcap_{j \in I \cap c_{\xi}} G (l_j)$; i.e.,
  $\forall j \in I \cap c_{\xi}$, $c_{\xi} \leqslant^1 \eta l_j [\alpha
  \assign c_{\xi}] + 1$. This way, for any $\xi \in X$ and any $j \in I \cap
  c_{\xi}$, $c_{\xi} \leqslant^1 l_j [\alpha \assign c_{\xi}]$ by
  $<^1$-connectedness (because from $\alpha < l_j \leqslant \eta l_j$ follows
  $c_{\xi} < l_j [\alpha \assign c_{\xi}] \leqslant \eta l_j [\alpha \assign
  c_{\xi}] < \eta l_j [\alpha \assign c_{\xi}] + 1$). But by lemma
  \ref{l_j-sequence}, $(l_j [\alpha \assign c_{\xi}])_{j \in I \cap c_{\xi}}$
  is cofinal in $t [\alpha \assign c_{\xi}]$; therefore $\forall \xi \in
  X.c_{\xi} \leqslant^1 t [\alpha \assign c_{\xi}]$ by
  $\leqslant^1$-continuity. \ \ \ (*)
  
  On the other hand, $\forall \xi \in X.c_{\xi} \in \mathbbm{E}$ (because
  $c_{\xi} \in G (l_1) \underset{\text{\tmop{by} 1.}}{\subset} \tmop{Lim}
  \mathbbm{E}$), and since\\
  $\tmop{Ep} (t) \cap \alpha \subset c_{\xi}
  \underset{\tmop{cof}}{\longhookrightarrow} \beta$, then $\tmop{Ep} (t) \cap
  \alpha \subset \beta \in \mathbbm{E}$. So $t [\alpha \assign \beta] \in
  [\beta, \beta^+)$ and\\
  $\tmop{Ep} (t) \cap \alpha = \tmop{Ep} (t [\alpha \assign \beta]) \cap
  \beta$. From all this and the fact that (*) implies \\
  $\forall \xi \in X.c_{\xi} \leqslant_1 t [\alpha \assign c_{\xi}] = t
  [\alpha \assign \beta] [\beta \assign c_{\xi}]$, we conclude \\
  $\beta \in \tmop{Lim} \{\gamma \in \mathbbm{E} | \tmop{Ep} (t [\alpha
  \assign \beta]) \cap \beta \subset \gamma \wedge \gamma \leqslant_1 t
  [\alpha \assign \beta] [\beta \assign \gamma]\}$. This implies, by
  proposition \ref{2nd_Fund_Cof_Property_<less>^1}, $\beta \leqslant_1 t
  [\alpha \assign \beta] + 1 = \eta t [\alpha \assign \beta] + 1$ and
  subsequently, by corollary \ref{<less>_1.iff.<less>^1}, $\beta \leqslant^1
  \eta t [\alpha \assign \beta] + 1$. \\
  So $\beta \in G (t)$.

  All the previous shows $G (t) = A (t)$ for the subcase $t > \pi t + d \pi
  t$.

  Subcase $t \leqslant \pi t + d \pi t$.

  Write $t =_{\tmop{CNF}} \omega^{T_1} t_1 + \ldots + \omega^{T_n} t_n$ and
  $T_1 =_{\tmop{CNF}} \omega^{Q_1} q_1 + \ldots + \omega^{Q_m} q_m$. Note $Q_m
  < T_1$ and then $\omega^{Q_m} < \omega^{T_1}$ (otherwise $T_1 = Q_m
  \leqslant \omega^{Q_m} \leqslant T_1$ and then $T_1 = Q_m \in \mathbbm{E}$;
  from this and the fact that \\
  $t \in (\alpha 2, \alpha^+)$ follows that $T_1 = \alpha$, but then $t
  \leqslant \omega^{\alpha} + \alpha = \alpha 2$, which is contradictory with
  our supposition $t \in (\alpha 2, \alpha^+)$). The previous also shows $T_1
  > \alpha$. This way, the inequalities \\
  $t \leqslant \pi t + d \pi t = \omega^{T_1} + Q_m$ and $Q_m < T_1$ imply
  that $t$ looks like $t =_{\tmop{CNF}} \omega^{T_1} + \omega^{T_2} t_2 \ldots
  + \omega^{T_n} t_n$ with $\omega^{T_2} t_2 \ldots + \omega^{T_n} t_n
  \leqslant Q_m$, and $T_1 > \alpha$.

  Lets show now $G (t) \subset A (t)$.
  
  Let $\beta \in G (t)$. So $\tmop{Ep} (t) \cap \alpha \subset \beta
  \leqslant^1 (\eta t) [\alpha \assign \beta] + 1$ and $\alpha \geqslant \beta
  \in \mathbbm{E}$. Then, by proposition
  \ref{<less>^1.implies.cofinal.sequence}, there is a sequence $(c_{\xi})_{\xi
  \in X}$, such that $c_{\xi} \in \mathbbm{E}$, $c_{\xi}
  \underset{\tmop{cof}}{\longhookrightarrow} \beta$, $\tmop{Ep} (\eta t) \cap
  \alpha = \tmop{Ep} (\eta t [\alpha \assign \beta]) \cap \beta \subset
  c_{\xi}$, and $c_{\xi} \leqslant_1 \eta t [\alpha \assign \beta] [\beta
  \assign c_{\xi}] = \eta t [\alpha \assign c_{\xi}] = \eta \pi t [\alpha
  \assign c_{\xi}]$.
  
  We now need to remember how the sequence $ (e_j)_{j \in S}$ is defined.
  Consider the ordinal $\omega^{Q_m}$. If $Q_m > \alpha$, let $(a_j)_{j \in
  K}$, with $K \leqslant \alpha$ be the sequence obtained by lemma
  \ref{l_j-sequence} applied to $\omega^{Q_m}$. \\
  If $0 \neq Q_m \leqslant \alpha$ let $K \assign \omega^{Q_m} \backslash
  \{0\}$ and $a_j \assign j$ for any $j \in K$. Then
  
  $e_j = \left\{ \begin{array}{l}
    \omega^{\omega^{Q_1} q_1 + \ldots + \omega^{Q_{(m - 1)}} q_{(m - 1)} + q_m
    - 1} j, \text{ with } j \in S \assign \omega \backslash \{0\}
    \text{ iff } Q_m = 0\\
    \\
    \omega^{\omega^{Q_1} q_1 + \ldots + \omega^{Q_{(m - 1)}} q_{(m - 1)} +
    \omega^{Q_m} (q_m - 1) + a_j}, \text{ with } j \in S \assign K
    \text{ iff } Q_m \neq 0
  \end{array} \right.$.

  {\noindent}As we know, $(e_j)_{j \in S}$ is cofinal in $\omega^{T_1} = \pi
  t$. Besides, since $\forall \xi \in X. \tmop{Ep} (\eta t) \cap \alpha
  \subset c_{\xi}$, then \\
  $\forall \xi \in X. \tmop{Ep} (\pi t) \cap \alpha \subset c_{\xi}$; this
  way, for any $\xi \in X$,
  
  - for any $j \in S \cap c_{\xi}$, $\tmop{Ep} (e_j) \cap \alpha \subset
  c_{\xi}$ and
  
  - $(e_j [\alpha \assign c_{\xi}])_{j \in S \cap c_{\xi}}$ is cofinal in
  $\omega^{T_1} [\alpha \assign c_{\xi}]$. \\
  Moreover, notice $\forall j \in S. \eta e_j < \eta t = \eta \pi t$; so
  $\forall j \in S \cap c_{\xi} . \eta e_j [\alpha \assign c_{\xi}] < \eta \pi
  t [\alpha \assign c_{\xi}]$.

  From all our previous work we obtain: \ $\forall \xi \in X. \forall j \in S
  \cap c_{\xi}$.$c_{\xi} \leqslant \eta e_j [\alpha \assign c_{\xi}] + 1
  \leqslant \eta \pi t [\alpha \assign c_{\xi}]$, which implies,
  by $\leqslant_1$-connectedness, $\forall \xi \in X. \forall j \in S \cap
  c_{\xi}$.$c_{\xi} \leqslant_1 \eta e_j [\alpha \assign c_{\xi}] + 1$, which
  in turn is equivalent (by corollary \ref{<less>_1.iff.<less>^1}) to $\forall
  \xi \in X. \forall j \in S \cap c_{\xi}$.$c_{\xi} \leqslant^1 \eta e_j
  [\alpha \assign c_{\xi}] + 1$. Finally, since \\
  $c_{\xi} \underset{\tmop{cof}}{\longhookrightarrow} \beta \supset \tmop{Ep}
  (t) \cap \alpha$ and $\tmop{Ep} (t) \cap \alpha$ is a finite set, then there
  exists $y \subset X$ such that $(c_{\xi})_{\xi \in (X \backslash y)}
  \underset{\tmop{cof}}{\longhookrightarrow} \beta$ and $\forall \xi \in (X
  \backslash y) . \forall j \in S \cap c_{\xi}$.$\tmop{Ep} (t) \cap \alpha
  \subset c_{\xi} \leqslant^1 \eta e_j [\alpha \assign c_{\xi}] + 1$.
  
  The previous paragraph shows $\forall \xi \in X \backslash y$.$M < c_{\xi}
  \in \bigcap_{j \in S \cap c_{\xi}} G (e_j)  \underset{\tmop{IH}}{=}
  \bigcap_{j \in S \cap c_{\xi}} A (e_j)$ and $(c_{\xi})_{\xi \in (X
  \backslash y)} \underset{\tmop{cof}}{\longhookrightarrow} \beta$; i.e., it
  shows $\beta \in \tmop{Lim} \{r \leqslant \alpha |M < r \in \bigcap_{j \in S
  \cap r} A (e_j)\} = A (t)$.

  To show $G (t) \supset A (t)$.
  
  Let $\beta \in A (t) = \tmop{Lim} \{r \leqslant \alpha |M < r \in \bigcap_{j
  \in S \cap r} A (e_j)\} \underset{\tmop{IH}}{=} \tmop{Lim} \{r \leqslant
  \alpha |M < r \in \bigcap_{j \in S \cap r} G (e_j)\}$. Then there exists a
  sequence $(c_{\xi})_{\xi \in X}$ such that $\alpha \geqslant c_{\xi}
  \leqslant^1 \eta e_j [\alpha \assign c_{\xi}] + 1$ for all $j \in S \cap
  c_{\xi}$ and $M < c_{\xi} \underset{\tmop{cof}}{\longhookrightarrow} \beta$.
  Of course, the last inequality means $\tmop{Ep} (t) \cap \alpha \subset
  c_{\xi}$, which implies $\tmop{Ep} (\pi t) \cap \alpha \subset c_{\xi}$.
  
  Now, noting that $\forall j \in S \cap c_{\xi} .c_{\xi} \leqslant e_j
  [\alpha \assign c_{\xi}] < e_j [\alpha \assign c_{\xi}] + 1 \leqslant \eta
  e_j [\alpha \assign c_{\xi}] + 1$, we obtain by $\leqslant^1$-connectedness
  $c_{\xi} \leqslant^1 e_j [\alpha \assign c_{\xi}]$ for all $j \in S \cap
  c_{\xi}$. But the fact that $\forall \xi \in X. \tmop{Ep} (\pi t) \cap
  \alpha \subset c_{\xi}$ implies (by lemma \ref{l_j-sequence}) that the
  sequence $(e_j [\alpha \assign c_{\xi}])_{j \in S \cap c_{\xi}}$ is cofinal
  in $\pi t [\alpha \assign c_{\xi}]$ for any $\xi \in X$, and so we conclude
  $\forall \xi \in X.c_{\xi} \leqslant^1 \pi t [\alpha \assign c_{\xi}]$ by
  $\leqslant^1$-continuity.
  
  From the work done in the previous paragraph follows immediately that \\
  $\forall \xi \in X.c_{\xi} \leqslant_1 \pi t [\alpha \assign c_{\xi}]$; but
  $\pi t [\alpha \assign c_{\xi}] \leqslant_1 \pi t [\alpha \assign c_{\xi}] +
  d \pi t [\alpha \assign c_{\xi}]$ by theorem \ref{Wilken_Theorem1}; thus by
  $\leqslant_1$-transitivity we conclude $\forall \xi \in X.c_{\xi}
  \leqslant_1 \pi t [\alpha \assign c_{\xi}] + d \pi t [\alpha \assign
  c_{\xi}] = (\pi t + d \pi t) [\alpha \assign c_{\xi}] = \eta t [\alpha
  \assign c_{\xi}]$, where the last two equalities hold by proposition
  \ref{pi.eta.substitutions}. Finally applying proposition
  \ref{2nd_Fund_Cof_Property_<less>^1} to \\
  $\forall \xi \in X.c_{\xi} \leqslant_1 \eta t [\alpha \assign c_{\xi}]$ and
  to $M < c_{\xi} \underset{\tmop{cof}}{\longhookrightarrow} \beta$, and using
  the fact that $\forall \xi \in X.c_{\xi} \leqslant \alpha$, we conclude
  $\tmop{Ep} (t) \cap \alpha \subset \beta \leqslant \alpha$ and $\beta
  \leqslant_1 \eta t [\alpha \assign \beta] + 1$. Observe the latter is
  equivalent (by corollary \ref{<less>_1.iff.<less>^1}) to $\beta \leqslant^1
  \eta t [\alpha \assign \beta] + 1$. Thus $\beta \in G (t)$.
\end{proof}

\subsection{Uncountable regular ordinals and the $A (t)$ sets}

Up to this moment we have shown that the sets $A (t)$ consists of the ordinals
that are ``solutions of certain $<_1$-inequalities of the form $x <_1 t$ with
$t \in [x, x^+)$'', but we still do not know whether these solutions indeed
exist. We address this problem now: our purpose is to study closer the $A (t)$
sets and, very specifically, by the introduction of an uncountable regular
ordinal $\kappa$, show that for $t \in [\kappa, \kappa^+)$, the $A (t)$ sets
have to have elements.

In the following we will use the next two propositions.

\begin{proposition}
  \label{X_club_implies_LimX_club}Let $\kappa$ be an uncountable regular
  ordinal and let $X$ be a class of ordinals that are club in $\kappa$. Then
  $\tmop{Lim} X$ is club in $\kappa$.
\end{proposition}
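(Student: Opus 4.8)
The plan is to recall that ``club in $\kappa$'' means closed and unbounded below $\kappa$, and that, per the definition fixed earlier in this work, $\tmop{Lim} X = \{\alpha \in \tmop{OR} \mid \sup (X \cap \alpha) = \alpha\}$. The two halves, closedness and unboundedness, I would treat separately, and I expect unboundedness to be the only place where the hypotheses on $\kappa$ actually enter.

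First I would dispose of \emph{closedness}, which I claim holds for the limit-point class of \emph{any} class of ordinals and needs neither regularity nor the closedness of $X$. Let $\gamma < \kappa$ be a limit ordinal with $\sup (\tmop{Lim} X \cap \gamma) = \gamma$; I want $\gamma \in \tmop{Lim} X$, i.e. $\sup (X \cap \gamma) = \gamma$. Given any $\beta < \gamma$, by hypothesis there is $\delta \in \tmop{Lim} X$ with $\beta < \delta < \gamma$; since $\delta \in \tmop{Lim} X$ we have $\sup (X \cap \delta) = \delta > \beta$, so there is $x \in X$ with $\beta < x < \delta < \gamma$. Thus $X \cap \gamma$ is cofinal in $\gamma$, whence $\gamma \in \tmop{Lim} X$. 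This shows $\tmop{Lim} X$ is closed.

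Next I would prove \emph{unboundedness}, which is the heart of the matter. Fix $\beta < \kappa$. Using that $X$ is unbounded in $\kappa$, I would recursively build a strictly increasing $\omega$-sequence $(x_n)_{n < \omega} \subset X$ with $x_0 > \beta$ and $x_{n + 1} > x_n$, and set $\gamma \assign \sup \{x_n \mid n < \omega\}$. Then $\gamma$ is a limit ordinal with $\beta < \gamma$, and every $x_n \in X \cap \gamma$ with $\sup_n x_n = \gamma$, so $\sup (X \cap \gamma) = \gamma$, i.e. $\gamma \in \tmop{Lim} X$. It therefore only remains to guarantee $\gamma < \kappa$, and this is exactly the step where I must invoke the hypotheses on $\kappa$: since $\kappa$ is regular and uncountable, $\tmop{cof} (\kappa) = \kappa > \omega$, so the supremum of a countable family of ordinals below $\kappa$ stays strictly below $\kappa$. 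Hence $\gamma \in (\beta, \kappa) \cap \tmop{Lim} X$, proving that $\tmop{Lim} X$ is unbounded in $\kappa$.

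Combining the two parts yields that $\tmop{Lim} X$ is club in $\kappa$. The main (indeed the only) obstacle is the control $\gamma < \kappa$ on the supremum of the approximating sequence; without uncountable regularity a cofinal $\omega$-sequence could have supremum equal to $\kappa$ (this fails, e.g., for $\kappa = \omega$ or $\kappa$ of countable cofinality), so that is precisely the point at which the assumption on $\kappa$ is indispensable, while closedness is a purely formal property of limit-point classes.
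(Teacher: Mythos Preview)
Your proof is correct and complete. The paper itself does not give an argument here; it simply records this as a ``known result about club classes,'' so your write-up actually supplies the details the paper omits. As you observe, closedness of $\tmop{Lim} X$ is purely formal and the only substantive use of the hypotheses on $\kappa$ is to keep the supremum of the $\omega$-sequence below $\kappa$; in fact your argument never uses the closedness of $X$, so you have proved the slightly stronger statement that $\tmop{Lim} X$ is club in $\kappa$ whenever $X$ is merely unbounded in $\kappa$.
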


\begin{proof}
  Known result about club classes.
\end{proof}

\begin{proposition}
  \label{Intersection_club_classes}Let $\kappa$ be an uncountable regular
  ordinal and let $(X_i)_{i < I}$ be a sequence of classes of ordinals that
  are club in $\kappa$.
  \begin{itemizedot}
    \item If $|I| < \kappa$, then $\underset{i < I}{\bigcap X_i}$ is club in
    $\kappa$.
    
    \item Suppose $I = \kappa$. Then $\{\xi < \kappa | \xi \in \underset{i <
    \xi}{\bigcap X_i} \}$ is club in $\kappa$.
  \end{itemizedot}
\end{proposition}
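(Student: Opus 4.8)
The plan is to prove the two standard closure facts for club classes—closure under intersections of length $< \kappa$ and closure under diagonal intersections—by the usual interleaving constructions, the only nontrivial ingredient being that $\kappa$ is regular, so that any supremum of fewer than $\kappa$ ordinals below $\kappa$ remains below $\kappa$, and in particular $\kappa > \omega$. Throughout I read ``$X$ club in $\kappa$'' as: $X \cap \kappa$ is closed (contains all its limit points below $\kappa$) and unbounded in $\kappa$.

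For the first bullet I would treat closedness and unboundedness separately. Closedness of $\bigcap_{i<I} X_i$ is immediate: if $\gamma < \kappa$ is a limit point of the intersection, then $\gamma$ is a limit point of each $X_i$, and since each $X_i$ is closed, $\gamma \in X_i$ for every $i$, whence $\gamma \in \bigcap_{i<I} X_i$. For unboundedness I fix $\alpha < \kappa$ and build an increasing $\omega$-sequence $(\gamma_n)_{n<\omega}$ below $\kappa$: set $\gamma_0 \assign \alpha$, and given $\gamma_n$, use unboundedness of each $X_i$ to pick $f_i(n) \in X_i$ with $f_i(n) > \gamma_n$, then put $\gamma_{n+1} \assign \sup_{i<I} f_i(n)$. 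Here regularity together with $|I| < \kappa$ forces $\gamma_{n+1} < \kappa$. Letting $\gamma \assign \sup_{n<\omega}\gamma_n$, regularity and $\omega < \kappa$ give $\gamma < \kappa$; and since $\gamma_n < f_i(n) \le \gamma_{n+1}$ for all $n$, the points $f_i(n) \in X_i$ accumulate exactly at $\gamma$, so $\gamma$ is a limit point of each $X_i$ and hence lies in each $X_i$ by closedness. Thus $\alpha < \gamma \in \bigcap_{i<I} X_i$.

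For the second bullet, write $D \assign \{\xi < \kappa \mid \xi \in \bigcap_{i<\xi} X_i\}$, and for each $\eta < \kappa$ set $Y_\eta \assign \bigcap_{i<\eta} X_i$; by the first bullet $Y_\eta$ is club, since $|\eta| < \kappa$. To see $D$ is closed, let $\gamma < \kappa$ be a limit point of $D$ and fix $i < \gamma$; the elements of $D$ lying in $(i,\gamma)$ are cofinal in $\gamma$, and each such $\xi$ satisfies $\xi \in X_i$ (because $i < \xi$ and $\xi \in D$), so $\gamma$ is a limit point of $X_i$ and therefore $\gamma \in X_i$. As $i < \gamma$ was arbitrary, $\gamma \in D$. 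To see $D$ is unbounded, fix $\alpha < \kappa$, set $\alpha_0 \assign \alpha + 1$, and recursively choose $\alpha_{n+1} \in Y_{\alpha_n}$ with $\alpha_{n+1} > \alpha_n$ (possible since $Y_{\alpha_n}$ is unbounded); put $\xi \assign \sup_{n<\omega}\alpha_n < \kappa$. Then for every $i < \xi$ there is $n$ with $i < \alpha_n$, and for all $m \ge n$ we have $\alpha_{m+1} \in Y_{\alpha_m} \subseteq X_i$, so $\xi$ is a limit point of $X_i$ and hence $\xi \in X_i$; therefore $\xi \in D$ with $\xi > \alpha$.

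These arguments are routine, so I do not expect a genuine obstacle, only two places that demand care. The first is the pair of appeals to regularity—bounding $\gamma_{n+1}$ by $\sup_{i<I} f_i(n) < \kappa$ and keeping the limit suprema below $\kappa$—which is exactly where the hypotheses $|I| < \kappa$ and $I = \kappa$ (via $|\eta| < \kappa$) are consumed. The second, and the step I expect to be most easily mishandled, is ensuring that the approximating points genuinely \emph{accumulate} at the supremum rather than merely being dominated by it; I would guard against this by recording the inequalities $\gamma_n < f_i(n) \le \gamma_{n+1}$ in the first part and $i < \alpha_n \le \alpha_m$ (hence $\alpha_{m+1} \in X_i$ for all large $m$) in the diagonal part explicitly, since these are what upgrade ``bounded by $\gamma$'' to ``limit point of $X_i$.''
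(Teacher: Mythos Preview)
Your argument is correct and is the standard textbook proof of these two facts. The paper itself does not supply a proof at all: it simply records the statement as a ``Known result about club classes,'' so there is nothing to compare against beyond noting that your write-up fills in exactly the details the paper omits.
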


\begin{proof}
  Known result about club classes.
\end{proof}

\begin{proposition}
  \label{A(t)_club_in_kapa}Let $\kappa$ be an uncountable regular ordinal.
  Then $\forall t \in [\kappa, \kappa^+)$, $A (t)$ is club in $\kappa$.
\end{proposition}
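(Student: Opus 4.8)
The plan is to fix an uncountable regular ordinal $\kappa$ and argue by induction on $t \in [\kappa, \kappa^+)$, using that for every such $t$ the epsilon number $\alpha$ occurring in the definition of $A(t)$ is $\kappa$ itself (since $\kappa \in \mathbbm{E}$ and $t \in [\kappa,\kappa^+)$), and that $M = \max(\tmop{Ep}(t)\cap\kappa)$ (or $0$) satisfies $M<\kappa$. Before the induction I would record the standard fact that $\mathbbm{E}\cap\kappa$ is club in $\kappa$: it is unbounded because $\kappa$, being regular and uncountable, is a limit of epsilon numbers below it, and closed because a limit of epsilon numbers is again an epsilon number. By proposition \ref{X_club_implies_LimX_club} it then follows that $\tmop{Lim}\,\mathbbm{E}$ is club in $\kappa$ as well. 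The base case is $t \in [\kappa, \kappa 2]$, where the definition gives $A(t) = (\tmop{Lim}\,\mathbbm{E}) \cap (M, \kappa+1)$; since $M < \kappa$ the class $(M, \kappa+1)$ is a final segment, hence club in $\kappa$, and intersecting it with the club $\tmop{Lim}\,\mathbbm{E}$ yields a club, so $A(t)$ is club.

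For the inductive step assume $A(l)$ is club in $\kappa$ for all $l \in [\kappa,\kappa^+)\cap t$. If $t = l+1$ is a successor, then either $l < \pi l + d\pi l$, so $A(t) = A(l)$ is club by the induction hypothesis, or else $A(t) = \tmop{Lim}\,A(l)$ is club by proposition \ref{X_club_implies_LimX_club}. If $t$ is a limit, having already handled $[\kappa,\kappa 2]$ I may assume $t \in (\kappa 2, \kappa^+)$, and the two remaining subcases are structurally identical: in both $A(t) = \tmop{Lim}\, S$, where $S := \{ r \le \kappa \mid M < r \in \bigcap_{j \in J \cap r} A(u_j) \}$ and $(u_j)_{j\in J}$ is either the sequence $(l_j)_{j\in I}$ produced by lemma \ref{l_j-sequence} for $t$ (when $t > \pi t + d\pi t$) or the sequence $(e_j)_{j\in S}$ produced for $\pi t$ (when $t \le \pi t + d\pi t$). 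In either case lemma \ref{l_j-sequence} gives $J \cup \{0\} \le \kappa$ and $u_j \in (\kappa, \kappa^+)$ with $u_j < t$ (the $l_j$ being cofinal in $t$, the $e_j$ cofinal in $\pi t \le t$), so by the induction hypothesis each $A(u_j)$ is club in $\kappa$. It then suffices to show that $S$ is club, for then $A(t) = \tmop{Lim}\, S$ is club by proposition \ref{X_club_implies_LimX_club}.

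To prove $S$ club I would split on the size of the index ordinal $J \cup \{0\}$. If $J \cup \{0\} < \kappa$, then $J$ is bounded in $\kappa$ and $J \cap r = J$ for every $r \ge J \cup \{0\}$; hence on the final segment $r > \max(M, J\cup\{0\})$ the defining condition of $S$ reduces to $r \in \bigcap_{j \in J} A(u_j)$, which is club by the first item of proposition \ref{Intersection_club_classes}. Agreeing with a club on a final segment, $S$ is itself club. If instead $J\cup\{0\} = \kappa$, then $S$ is, up to the bounded restriction $M < r$ and the harmless omission of the index $0$, exactly the diagonal intersection $\{\xi<\kappa \mid \xi \in \bigcap_{j<\xi} A(u_j)\}$, which is club by the second item of proposition \ref{Intersection_club_classes}; intersecting with the tail $(M,\kappa)$ leaves it club. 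This completes the induction.

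The step I expect to demand the most care is precisely this limit case: recognizing the $r$-dependent intersection $\bigcap_{j\in J\cap r}A(u_j)$ as an instance of the two forms in proposition \ref{Intersection_club_classes} — a genuine small intersection when the index ordinal is bounded below $\kappa$, and a diagonal intersection when it equals $\kappa$ — together with the bookkeeping needed to verify that neither the restriction $M < r$ nor the behaviour of $S$ below $\max(M, J\cup\{0\})$ disturbs its closedness and unboundedness. The remaining ingredients (the club-ness of $\mathbbm{E}\cap\kappa$, and the passage $\tmop{Lim}$ of a club is a club) are routine given propositions \ref{X_club_implies_LimX_club} and \ref{Intersection_club_classes}.
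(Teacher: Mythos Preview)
Your proposal is correct and follows essentially the same inductive structure as the paper's proof: induction on $t\in[\kappa,\kappa^+)$, with the successor case handled via proposition~\ref{X_club_implies_LimX_club} and the limit case via proposition~\ref{Intersection_club_classes}. The only substantive difference is in the limit step: the paper pads the family $(A(u_j))_{j\in J}$ out to a full $\kappa$-indexed sequence by setting $X_i:=A(u_1)$ for $i\notin J$ and then applies the diagonal-intersection clause of proposition~\ref{Intersection_club_classes} uniformly, showing $Y\cap\kappa = X\setminus(M+1)$; you instead split on whether $J\cup\{0\}<\kappa$ (small intersection) or $J\cup\{0\}=\kappa$ (diagonal intersection). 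Both arguments are valid; the paper's padding trick avoids the case split at the cost of one extra verification. One small imprecision: your ``base case $t\in[\kappa,\kappa 2]$'' asserts $A(t)=(\tmop{Lim}\,\mathbbm{E})\cap(M,\kappa+1)$ for \emph{all} such $t$, but the definition only gives this directly for limit $t$; the successor cases in this interval still go through the successor clause (and indeed yield the same set, as shown in the proof of theorem~\ref{teo.A(t)=G(t)}).
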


\begin{proof}
  We prove the claim by induction on the interval $[\kappa, \kappa^+)$.
  
  {\tmstrong{Case}} $\tmmathbf{t = \kappa .}$\\
  Then $A (t) = (\tmop{Lim} \mathbbm{E}) \cap (0, \kappa + 1)$ is club in
  $\kappa$ because $\mathbbm{E}$ is club in $\kappa$ and by proposition
  \ref{X_club_implies_LimX_club}.

  Our induction hypothesis is $\forall s < t.A (s)$ is club in $\kappa$. \ \ \
  \ \ \ \ {\tmstrong{(IH)}}

  {\tmstrong{Case}} $\tmmathbf{t = l + 1 \in [\kappa, \kappa^+).}$\\
  Then $A (t) = A (l + 1) = \left\{ \begin{array}{l}
    A (l) \text{ if } l < \pi l + d \pi l\\
    \\
    \tmop{Lim} A (l) \text{ otherwise}
  \end{array} \right.$; this way, by our IH and proposition
  \ref{X_club_implies_LimX_club}, $A (t)$ is club in $\kappa$ in any case.

  {\tmstrong{Case}} $\tmmathbf{t \in [\kappa, \kappa^+) \cap \tmop{Lim} .}$
  
  Let $(l_j)_{j \in I}$ be the sequence obtained by the application of lemma
  \ref{l_j-sequence} to $t$ and $\kappa$ and $(e_j)_{j \in S}$ the sequence
  obtained by the application of lemma \ref{l_j-sequence} to $\pi t$ and
  $\kappa$. Moreover, in case \\
  $\tmop{Ep} (t) \cap \alpha \neq \emptyset$, let $M \assign \max \tmop{Ep}
  (t) \cap \alpha$; in case $\tmop{Ep} (t) \cap \alpha = \emptyset$, let $M
  \assign 0$. Then by definition

  {\noindent}$A (t) = \left\{ \begin{array}{l}
    (\tmop{Lim} \mathbbm{E}) \cap (M, \kappa + 1) \text{ iff } t \in
    [\kappa, \kappa 2]\\
    \\
    \tmop{Lim} \{r \leqslant \kappa |M < r \in \bigcap_{j \in I \cap r} A
    (l_j)\} \text{ iff } t > \pi t + d \pi t \wedge t \in (\kappa 2,
    \kappa^+)\\
    \\
    \tmop{Lim} \{r \leqslant \kappa |M < r \in \bigcap_{j \in S \cap r} A
    (e_j)\} \text{ iff } t \leqslant \pi t + d \pi t \wedge t \in (\kappa
    2, \kappa^+)
  \end{array} \right.$\\
  and we have some subcases:

  If $t \in [\kappa, \kappa 2]$, then $A (t) = (\tmop{Lim} \mathbbm{E}) \cap
  (M, \kappa + 1)$ is club in $\kappa$ because $\mathbbm{E}$ is club in
  $\kappa$ and because of proposition \ref{X_club_implies_LimX_club}.

  Subcase $t > \pi t + d \pi t \wedge t \in (\kappa 2, \kappa^+)$.
  
  Note it is enough to show that $Y \assign \{r \leqslant \kappa |M < r \in
  \bigcap_{j \in I \cap r} A (l_j)\}$ is club in $\kappa$ because knowing this
  we conclude Lim$Y$ is club in $\kappa$ by proposition
  \ref{X_club_implies_LimX_club}. In order to see that $Y$ is club in
  $\kappa$, we define for any $i \in \kappa$, $X_i \assign \left\{
  \begin{array}{l}
    A (l_i) \text{ iff } i \in I\\
    \\
    A (l_1) \text{ iff } i \nin I
  \end{array} \right.$. Since by lemma \ref{l_j-sequence}, $l_j < t$ for any
  $j \in I$, then by our IH we have that $X_i$ is club in $\kappa$ for any $i
  \in \kappa$; consequently, by proposition \ref{Intersection_club_classes},
  the set \\
  $X \assign \{\xi < \kappa | \xi \in \bigcap_{i < \xi} X_i \}$ is club in
  $\kappa$.

  We now show $Y \cap \kappa = X \backslash (M + 1)$.
  
  $'' \supset''$. Let $r \in X \backslash (M + 1)$. Then $M + 1 \leqslant r <
  \kappa$ and $r \in \bigcap_{i < r} X_i \subset \bigcap_{i \in I \cap r} X_i
  = \bigcap_{i \in I \cap r} A (l_i)$. This shows $r \in Y \cap \kappa$.
  
  $'' \subset''$. Let $r \in Y \cap \kappa$. Then $M < r < \kappa$ and $r \in
  \bigcap_{i \in I \cap r} A (l_i) = \bigcap_{i \in I \cap r} X_i = \bigcap_{i
  \in I \cap r} X_i \cap X_1 =$\\
  $= \bigcap_{i \in I \cap r} X_i \cap \bigcap_{i \in (r \backslash I)} X_i =
  \bigcap_{i < r} X_i$. So $r \in X \backslash (M + 1)$.

  Hence, since $Y \cap \kappa = X \backslash (M + 1)$ and $X$ is club in
  $\kappa$, then $Y = \left\{ \begin{array}{l}
    Y \cap \kappa \cup \{\kappa\} \text{ iff } \kappa \in Y\\
    Y \cap \kappa \text{ otherwise}
  \end{array} \right.$ is also club in $\kappa$.

  Subcase $t \leqslant \pi t + d \pi t \wedge t \in (\kappa 2, \kappa^+)$.
  
  It is enough to show that $Z \assign \{r \leqslant \kappa |M < r \in
  \bigcap_{j \in S \cap r} A (e_j)\}$ is club in $\kappa$ because of the same
  reasons of the previous subcase. For any $i \in \kappa$, let $W_i \assign
  \left\{ \begin{array}{l}
    A (e_i) \text{ iff } i \in S\\
    \\
    A (e_1) \text{ iff } i \nin S
  \end{array} \right.$. Since by lemma \ref{l_j-sequence}, $e_j < \pi t
  \leqslant t$ for any $j \in S$, then by our IH we have that $W_i$ is club in
  $\kappa$ for any $i \in \kappa$; consequently, by proposition
  \ref{Intersection_club_classes}, the set $W \assign \{\xi < \kappa | \xi \in
  \bigcap_{i < \xi} W_i \}$ is club in $\kappa$.

  We show $Z \cap \kappa = W \backslash (M + 1)$.
  
  $'' \supset''$. Let $r \in W \backslash (M + 1)$. Then $M + 1 \leqslant r <
  \kappa$ and $r \in \bigcap_{i < r} W_i \subset \bigcap_{i \in S \cap r} W_i
  = \bigcap_{i \in S \cap r} A (e_i)$. From this we conclude $r \in Z \cap
  \kappa$.
  
  $'' \subset''$. Let $r \in Z \cap \kappa$. Then $M < r < \kappa$ and $r \in
  \bigcap_{i \in S \cap r} A (e_i) = \bigcap_{i \in S \cap r} W_i = \bigcap_{i
  \in S \cap r} W_i \cap W_1 =$\\
  $= \bigcap_{i \in S \cap r} W_i \cap \bigcap_{i \in (r \backslash S)} W_i =
  \bigcap_{i < r} W_i$. So $r \in W \backslash (M + 1)$.

  Therefore, since $Z \cap \kappa = W \backslash (M + 1)$ and $W$ is club in
  $\kappa$, then $Z = \left\{ \begin{array}{l}
    Z \cap \kappa \cup \{\kappa\} \text{ iff } \kappa \in Z\\
    Z \cap \kappa \text{ otherwise}
  \end{array} \right.$ is club in $\kappa$.
\end{proof}

Consider an epsilon number $\alpha \in \mathbbm{E}$, $t \in [\alpha,
\alpha^+)$ and a non-countable regular ordinal $\kappa > \alpha$. The
``solutions to the $<_1$-inequality $x <_1 \eta t [\alpha \assign x] + 1$ in
interval $[0, \kappa]$'' are the same as the ``solutions to the
$<_1$-inequality $x <_1 \eta t [\alpha \assign \kappa] [\kappa \assign x] + 1$
in interval $[0, \kappa]$'', which are, of course, the elements of the set $G
(t [\alpha \assign \kappa])\underset{\text{theorem } \ref{teo.A(t)=G(t)}}{=}A (t
[\alpha \assign \kappa])$. This way, proposition \ref{A(t)_club_in_kapa} shows
us that such solutions are indeed many: $G (t [\alpha \assign \kappa])$ is
club in $\kappa$. So our hierarchy $A (l)_{l \in [\kappa, \kappa^+)}$ captures
all these solutions (in interval $[0, \kappa]$) and such solutions do exist.
Now we just want to make explicit that we get a similar result for arbitrary
``$<_1$-inequality $x <_1 t [\alpha \assign x]$''.

\begin{proposition}
  \label{k-club_x<less>_1t(x)}Let $\kappa$ be an uncountable regular ordinal
  and $\alpha \in \mathbbm{E} \cap \kappa$.\\
  Then for any $t \in [\alpha 2, \alpha^+)$, there are $\gamma \in \mathbbm{E}
  \cap \kappa$ and $s \in [\gamma 2, \gamma^+)$ such that \\
  $\{\beta \in \mathbbm{E} | \tmop{Ep} (t) \cap \alpha \subset \beta \leqslant
  \kappa \wedge \beta \leqslant_1 t [\alpha \assign \beta]\} = \left\{
  \begin{array}{l}
    \mathbbm{E} \cap (\kappa + 1) \text{ iff } t = \alpha 2\\
    \\
    {}[\gamma, \kappa] \cap \bigcap_{\xi \in [\gamma 2, s)} A (\xi [\gamma
    \assign \kappa])  \text{ iff } t \in (\alpha 2, \alpha^+)
  \end{array} \right.$ and the set $\{\beta \in \mathbbm{E} | \tmop{Ep} (t)
  \cap \alpha \subset \beta \leqslant \kappa \wedge \beta \leqslant_1 t
  [\alpha \assign \beta]\}$ is club in $\kappa$.
\end{proposition}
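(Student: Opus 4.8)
The plan is to reduce the description of
$$S \assign \{\beta \in \mathbbm{E} \mid \tmop{Ep}(t) \cap \alpha \subset \beta \leqslant \kappa \wedge \beta \leqslant_1 t[\alpha \assign \beta]\}$$
to the local theory already in hand (Theorem \ref{teo.A(t)=G(t)} and Corollary \ref{eta(t)+1<less>_1_Equivalences}) by two devices: replacing the target $t[\alpha \assign \beta]$ with $(\eta t)[\alpha \assign \beta]$, and re-basing every substitution through a fixed epsilon number. First I would dispose of $t = \alpha 2$. Here $t[\alpha \assign \beta] = \beta 2$, so for $\beta \in \mathbbm{E}$ the clause $\beta \leqslant_1 \beta 2$ holds automatically by Corollary \ref{Wilken_Corollary1}, and $\tmop{Ep}(\alpha 2) \cap \alpha = \emptyset$ makes the side condition vacuous; hence $S = \mathbbm{E} \cap (\kappa + 1)$, which is club in $\kappa$ because $\mathbbm{E}$ is club and $\kappa \in \mathbbm{E}$.

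For $t \in (\alpha 2, \alpha^+)$ I set $M \assign \max(\tmop{Ep}(t) \cap \alpha)$ (or $0$) and let $\gamma$ be the least epsilon number $> M$; then $\gamma \leqslant \alpha < \kappa$, and for epsilon $\beta$ the constraint $\tmop{Ep}(t) \cap \alpha \subset \beta$ is equivalent to $\beta \geqslant \gamma$, so $S \subseteq [\gamma, \kappa] \cap \mathbbm{E}$. By proposition \ref{eta(t)_m(t)_and_<less>_1} claim 3 together with proposition \ref{pi.eta.substitutions}, $\beta \leqslant_1 t[\alpha \assign \beta] \Longleftrightarrow \beta \leqslant_1 (\eta t)[\alpha \assign \beta]$, so only $\eta t$ matters. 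I then split exactly as the definition of $A$ does: if $t > \pi t + d \pi t$ put $s \assign t[\alpha \assign \gamma]$ (so $\eta t = t$ and $\eta s = s$); if $t \leqslant \pi t + d \pi t$ put $s \assign (\pi t)[\alpha \assign \gamma]$ (in this subcase the argument of Theorem \ref{teo.A(t)=G(t)} forces $T_1 > \alpha$, whence $\pi t > \alpha 2$ and $s \in (\gamma 2, \gamma^+)$). Using the composition law for substitutions, $t[\alpha \assign \beta] = s[\gamma \assign \beta]$ for $\beta \in [\gamma, \kappa] \cap \mathbbm{E}$, and each $\xi[\gamma \assign \kappa]$ with $\xi \in [\gamma 2, s)$ lands in $[\kappa 2, s[\gamma \assign \kappa]) \subset [\kappa, \kappa^+)$, so every $A(\xi[\gamma \assign \kappa])$ is defined.

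The heart of the proof is the identity, for $\beta \in [\gamma, \kappa] \cap \mathbbm{E}$,
$$\beta \leqslant_1 s[\gamma \assign \beta] \Longleftrightarrow \beta \in \bigcap_{\xi \in [\gamma 2, s)} A(\xi[\gamma \assign \kappa]),$$
which I would establish by the same connectedness/continuity scheme as in Theorem \ref{teo.A(t)=G(t)}. Writing $A(\xi[\gamma \assign \kappa]) = G(\xi[\gamma \assign \kappa])$ and unwinding $G$ via Corollary \ref{<less>_1.iff.<less>^1} and proposition \ref{pi.eta.substitutions}, membership there means $\beta \leqslant_1 (\eta \xi)[\gamma \assign \beta] + 1$. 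For ($\Rightarrow$) I use the key inequality $\eta \xi < s$ for every $\xi < s$ — valid in both subcases, since $s$ is either a power or satisfies $s > \pi s + d \pi s$ — to get $(\eta \xi)[\gamma \assign \beta] + 1 \leqslant s[\gamma \assign \beta]$, so $\leqslant_1$-connectedness yields each membership. For ($\Leftarrow$), connectedness gives $\beta \leqslant^1 \xi[\gamma \assign \beta]$ for all $\xi < s$, and as $\xi[\gamma \assign \beta]$ is cofinal in $s[\gamma \assign \beta]$ (a limit; or, when $s$ is a successor, one reads off the conclusion from the top index $\xi = s - 1$, where $\eta(s-1) = s-1$) $\leqslant^1$-continuity gives $\beta \leqslant^1 s[\gamma \assign \beta]$, hence $\beta \leqslant_1 s[\gamma \assign \beta]$. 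In the subcase $t \leqslant \pi t + d \pi t$ one finally bridges $s[\gamma \assign \beta] = \pi t[\alpha \assign \beta]$ to the wanted target: Theorem \ref{Wilken_Theorem1} gives $\pi t[\alpha \assign \beta] \leqslant_1 \pi t[\alpha \assign \beta] + d \pi t[\alpha \assign \beta] = (\eta t)[\alpha \assign \beta]$, so by $\leqslant_1$-transitivity and connectedness $\beta \leqslant_1 s[\gamma \assign \beta] \Longleftrightarrow \beta \leqslant_1 t[\alpha \assign \beta]$, completing the identification of $S$.

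The club conclusion is then immediate: each $A(\xi[\gamma \assign \kappa])$ is club in $\kappa$ by proposition \ref{A(t)_club_in_kapa}; the index set $[\gamma 2, s)$ has cardinality $\leqslant |\gamma^+| = |\gamma| < \kappa$ (as $\gamma < \kappa$ and $\kappa$ is regular), so by proposition \ref{Intersection_club_classes} the intersection $\bigcap_{\xi \in [\gamma 2, s)} A(\xi[\gamma \assign \kappa])$ is club, and intersecting with the tail $[\gamma, \kappa)$ keeps it club. I expect the main obstacle to be the bookkeeping around the two subcases — in particular the correct choice $s = (\pi t)[\alpha \assign \gamma]$ rather than $(\eta t)[\alpha \assign \gamma]$ when $t \leqslant \pi t + d \pi t$, where the naive choice fails because $\eta \xi$ can equal $\eta t[\alpha \assign \gamma]$ for $\xi$ just below it, together with the otherwise routine but delicate verification of the composition law $t[\alpha \assign \beta] = s[\gamma \assign \beta]$ under the standing hypothesis $\tmop{Ep}(t) \cap \alpha \subset \gamma \leqslant \beta$.
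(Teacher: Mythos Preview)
Your overall strategy mirrors the paper's: choose $\gamma$ as the least epsilon number containing $\tmop{Ep}(t)\cap\alpha$, then express $S$ as $[\gamma,\kappa]\cap\bigcap_{\xi\in[\gamma 2,s)}A(\xi[\gamma:=\kappa])$ and read off the club property from Proposition~\ref{A(t)_club_in_kapa}. The only structural difference is your choice of $s$: you case-split according to whether $t>\pi t+d\pi t$, putting $s=t[\alpha:=\gamma]$ or $s=(\pi t)[\alpha:=\gamma]$, whereas the paper takes the uniform $s=\min\{z\in[\gamma 2,t[\alpha:=\gamma]]\mid m(z)\geqslant t[\alpha:=\gamma]\}$. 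In the simplest cases (for instance $t=\alpha 3$) the two agree.

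There is, however, a genuine gap in your $(\Leftarrow)$ direction --- and the paper's own argument has the very same gap. You assert that $(\xi[\gamma:=\beta])_{\xi\in[\gamma 2,s)}$ is cofinal in $s[\gamma:=\beta]$, in order to apply $\leqslant^1$-continuity. This is false as soon as $\beta>\gamma$: the map $\xi\mapsto\xi[\gamma:=\beta]$ is an order-isomorphism from $[\gamma,\gamma^+)$ onto $M(\beta,\gamma)\cap[\beta,\beta^+)$ (Corollary~\ref{main_[a:=e]_Isomorphism1}), but that image is not closed in $[\beta,\beta^+)$, so the map is not continuous. Concretely, take $t=\alpha 3$; then in both your setup and the paper's one finds $s=\gamma 3$, and the image of $[\gamma 2,\gamma 3)$ under $\xi\mapsto\xi[\gamma:=\beta]$ is exactly $\{\beta 2+\zeta\mid\zeta<\gamma\}$, with supremum $\beta 2+\gamma<\beta 3=s[\gamma:=\beta]$. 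By Corollary~\ref{m(a)=eta(t[kapa:=a])_in_Class(1)} (applied with $\rho=\kappa$ and the target $\kappa 2+\gamma$) there exist $\beta\in(\gamma,\kappa)\cap\mathbbm{E}$ with $m(\beta)=\beta 2+\gamma$; any such $\beta$ satisfies $\beta\leqslant_1\beta 2+\zeta+1$ for every $\zeta<\gamma$, hence lies in $[\gamma,\kappa]\cap\bigcap_{\xi\in[\gamma 2,\gamma 3)}A(\xi[\gamma:=\kappa])$, yet $\beta\not\leqslant_1\beta 3$, so $\beta\notin S$. Thus the displayed equality fails for this choice of $\gamma,s$, and the continuity step cannot be repaired by a different choice of $s$ alone: taking $s>\gamma 3$ forces the intersection to contain $A(\kappa 3)=\{\beta\mid\beta\leqslant_1\beta 3+1\}$, which is strictly smaller than $S$. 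The difficulty is that $\beta\leqslant_1\beta 3$ is a genuinely diagonal condition ($\forall\zeta<\beta:\beta\leqslant_1\beta 2+\zeta+1$), and no fixed interval $[\gamma 2,s)$ captures it. The club conclusion survives, but the identification of $S$ with a bounded intersection of $A$-classes does not go through as written --- in your proposal or in the paper's proof.
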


\begin{proof}
  Let $\kappa$ and $\alpha \in \mathbbm{E} \cap \kappa$ be as stated. Take $t
  \in [\alpha 2, \alpha^+)$ and consider \\
  $\gamma \assign \min \{e \in \mathbbm{E} \cap \kappa | \tmop{Ep} (t) \cap
  \alpha \subset e\}$ ($\gamma$ exists because $\tmop{Ep} (t) \cap \alpha
  \subset \alpha < \kappa$). Then $t [\alpha \assign \gamma] \in [\gamma 2,
  \gamma^+)$ and $C \assign \{\beta \in \mathbbm{E} | \tmop{Ep} (t) \cap
  \alpha \subset \beta \leqslant \kappa \wedge \beta \leqslant_1 t [\alpha
  \assign \beta]\} =$\\
  \ \ \ \ \ \ \ \ \ \ \ \ \ \ \ $= \{\beta \in \mathbbm{E} | \tmop{Ep} (t
  [\alpha \assign \gamma]) \cap \gamma \subset \beta \leqslant \kappa \wedge
  \beta \leqslant_1 t [\alpha \assign \gamma] [\gamma \assign \beta]\}$.

  We have two cases:

  {\noindent}$\bullet$ Case $t [\alpha \assign \gamma] = \gamma 2$. Then $C =
  \mathbbm{E} \cap (\kappa + 1)$ is (clearly) club in $\kappa$.
  
  {\noindent}$\bullet$ Case $t [\alpha \assign \gamma] \in (\gamma 2,
  \gamma^+)$. Let $s \assign \min \{z \in [\gamma 2, t [\alpha \assign
  \gamma]] | m (z) \geqslant t [\alpha \assign \gamma]\}$ (of course $s$
  exists because $m (t [\alpha \assign \gamma]) \geqslant t [\alpha \assign
  \gamma]$). Let $Z \assign \max (\tmop{Ep} (s) \cap \gamma)$. We show that

  \ \ \ \ \ \ \ \ \ \ $C = [\gamma, \kappa] \cap\underset{\xi \in [\gamma
  2, s)}{\bigcap}A (\xi [\gamma \assign \kappa]) = [\gamma, \kappa]
  \cap\underset{\xi \in [\gamma 2, s)}{\bigcap}G (\xi [\gamma \assign
  \kappa])$. \ \ \ \ \ \ \ {\tmstrong{($\circ$)}}

  {\noindent}``$\subset$''. Take $\beta \in C$. Then $\beta \geqslant \gamma$
  (because $\tmop{Ep} (t) \cap \alpha \subset \beta$) and \ \\
  $\tmop{Ep} (t [\alpha \assign \gamma]) \cap \gamma \subset \beta \in
  \mathbbm{E} \cap (\kappa + 1) \wedge \beta \leqslant_1 t [\alpha \assign
  \gamma] [\gamma \assign \beta]$. \ \ \ \ \ \ \ {\tmstrong{(*)}}
  
  {\noindent}On the other hand, let $\xi \in [\gamma 2, s)$ be arbitrary. Then
  $\tmop{Ep} (\eta \xi + 1) \cap \gamma \subset \gamma \leqslant \beta$, and
  then, using that $\eta \xi + 1\underset{\text{proposition }
  \ref{eta(t)_m(t)_and_<less>_1}}{=} \max \{m (\alpha) | a \in (\alpha,
  \xi]\} + 1 \leqslant t [\alpha \assign \gamma]$ we get \\
  $\beta \leqslant (\eta \xi + 1) [\gamma \assign \beta] \leqslant t [\alpha
  \assign \gamma] [\gamma \assign \beta]$. This, (*) and
  $\leqslant_1$-connectedness imply that \\
  $\beta \leqslant_1 (\eta \xi + 1) [\gamma \assign \beta] = \eta \xi [\gamma
  \assign \beta] + 1$.
  
  Our previous work has shown that $\beta \geqslant \gamma$ and that\\
  $\beta \in \{e \in \mathbbm{E} | \tmop{Ep} (\xi) \cap \gamma \subset e
  \leqslant \kappa \text{\tmop{and}} e \leqslant_1 \eta \xi [\gamma \assign e]
  + 1\} =$\\
  \ \ \ \ $\{e \in \mathbbm{E} | \tmop{Ep} (\xi [\gamma \assign \kappa]) \cap
  \kappa \subset e \leqslant \kappa \text{\tmop{and}} e \leqslant_1 (\eta \xi
  [\gamma \assign \kappa]) [\kappa \assign e] + 1\}\underset{\text{theorem }
  \ref{teo.A(t)=G(t)}}{=}A (\xi [\gamma \assign \kappa])$. Since this was
  done for arbitrary $\xi \in [\gamma 2, s)$, we have shown $\beta \in
  [\gamma, \kappa] \cap\underset{\xi \in [\gamma 2, s)}{\bigcap}A (\xi
  [\gamma \assign \kappa])$.

  {\noindent}``$\supset$''. Take $\beta \in [\gamma, \kappa]
  \cap\underset{\xi \in [\gamma 2, s)}{\bigcap}A (\xi [\gamma \assign
  \kappa])$. Then for any $\xi \in [\gamma 2, s)$, \\
  $\tmop{Ep} (\xi [\gamma \assign \alpha]) \cap \alpha = \tmop{Ep} (\xi) \cap
  \gamma = \tmop{Ep} (\xi [\gamma \assign \kappa]) \cap \kappa \subset \beta
  \leqslant \kappa$ and \\
  $\beta \leqslant_1 (\eta \xi [\gamma \assign \kappa]) [\kappa \assign \beta]
  + 1 = \eta \xi [\gamma \assign \beta] + 1 = \eta \xi [\gamma \assign \alpha]
  [\alpha \assign \beta] + 1$. \ \ \ \ \ \ \ {\tmstrong{(**)}}

  Subcase $s \in \tmop{Lim}$. \ Since by (**) we have that $\forall [\gamma 2,
  s) . \beta \leqslant_1 \eta \xi [\gamma \assign \beta] + 1$, then by
  $\leqslant_1$-connectedness $\forall \xi \in [\gamma 2, s) . \beta
  \leqslant_1 \xi [\gamma \assign \beta]$; but $(\xi [\gamma \assign
  \beta])_{[\gamma 2, s)}\underset{\text{cof}}{\longhookrightarrow}s [\gamma
  \assign \beta]$, thus, by $\leqslant_1$-continuity, $\beta \leqslant_1 s
  [\gamma \assign \beta]$. \ \ \ \ \ \ \ {\tmstrong{(***)}}. On the other
  hand, the inequalities $s [\gamma \assign \beta] \leqslant t [\alpha \assign
  \gamma] [\gamma \assign \beta] \leqslant m (s) [\gamma \assign
  \beta]\underset{\text{corollary } \ref{A_[alpha:=e]_isomorphisms}}{=}m (s
  [\gamma \assign \beta])$ imply, by $\leqslant_1$-connectedness, that $s
  [\gamma \assign \beta] \leqslant_1 t [\alpha \assign \gamma] [\gamma \assign
  \beta]$; from this, (***) and $\leqslant_1$-transitivity we conclude $\beta
  \leqslant_1 t [\alpha \assign \gamma] [\gamma \assign \beta] = t [\alpha
  \assign \beta]$. Hence $\beta \in C$.

  Subcase $s = l + 1$ for some $l \in \tmop{OR}$. Then $t [\alpha \assign
  \gamma] \geqslant s = m (s) \geqslant t [\alpha \assign \gamma]$, that is,
  \\
  $l + 1 = s = t [\alpha \assign \gamma]$. On the other hand, $l \leqslant
  \eta l = \max \{m (a) | a \in (\alpha, l]\} < m (s) = s$, so $\eta l = l$
  and from all this we conclude $t [\alpha \assign \gamma] = s = \eta l + 1$.
  But by hypothesis $\beta \in A (l [\gamma \assign \kappa])$, then \\
  $\beta \leqslant_1 \eta l [\gamma \assign \kappa] [\kappa \assign \beta] + 1
  = \eta l [\gamma \assign \beta] + 1 = (\eta l + 1) [\gamma \assign \beta] =
  s [\gamma \assign \beta] = t [\alpha \assign \gamma] [\gamma \assign
  \beta]$. Hence $\beta \in C$.

  The previous concludes the proof of ($\circ$).

  Finally, since $| [\gamma 2, s) | \leqslant |s| < \kappa$ then
  $\underset{\xi \in [\gamma 2, s)}{\bigcap}A (\xi [\gamma \assign
  \kappa])$ is club in $\kappa$ (by proposition
  \ref{Intersection_club_classes}) and therefore $C\underset{\text{by }
  (\circ)}{=}[\gamma, \kappa] \cap\underset{\xi \in [\gamma 2,
  s)}{\bigcap}A (\xi [\gamma \assign \kappa])$ is club in $\kappa$ too.
\end{proof}

\subsection{Epsilon numbers $\alpha$ satisfying $\alpha <_1 \alpha^+$.
Class(2).}

We comment on this subsection after the next

\begin{corollary}
  \label{kapa<less>^1kapa^+}Let $\kappa$ be an uncountable regular ordinal.
  Then
  \begin{enumeratealpha}
    \item $\kappa <^1 \kappa^+$
    
    \item $\kappa \in \underset{t \in [\kappa, \kappa^+)}{\bigcap A (t)}$
  \end{enumeratealpha}
\end{corollary}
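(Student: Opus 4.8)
I want to prove the two claims of Corollary \ref{kapa<less>^1kapa^+} for an uncountable regular ordinal $\kappa$. The key observation is that $\kappa \in \mathbbm{E}$ (every uncountable regular ordinal is an epsilon number), so all the machinery developed for epsilon numbers applies with $\alpha := \kappa$. The plan is to derive (a) from (b) together with Corollary \ref{alpha<less>_1alpha^+.iff.alpha<less>^1alpha^+}, and to obtain (b) directly from the club-ness result Proposition \ref{A(t)_club_in_kapa}. So the real content is a diagonal/reflection argument showing $\kappa$ lies in the intersection of all the $A(t)$.

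\textbf{Proving (b).} First I would note that by Proposition \ref{A(t)_club_in_kapa}, for every $t \in [\kappa, \kappa^+)$ the class $A(t)$ is club in $\kappa$. A single club class $A(t)$ is in particular unbounded and closed below $\kappa$, but membership of the point $\kappa$ itself is exactly what ``$\kappa \in A(t)$'' asserts, and this is where I must be careful: being club in $\kappa$ does not by itself put $\kappa$ into the class. The clean route is to unwind the definition of $A(t)$ and use the characterization $A(t) = G(t)$ from Theorem \ref{teo.A(t)=G(t)}, reducing ``$\kappa \in A(t)$'' to ``$\kappa \leqslant^1 (\eta t)[\kappa \assign \kappa] + 1 = \eta t + 1$'', i.e. $\kappa \leqslant_1 \eta t + 1$ via Corollary \ref{<less>_1.iff.<less>^1}. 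Thus claim (b) is equivalent to: $\forall t \in [\kappa, \kappa^+).\ \kappa \leqslant_1 \eta t + 1$. I would prove this by transfinite induction on $t \in [\kappa, \kappa^+)$, invoking Proposition \ref{2nd_Fund_Cof_Property_<less>^1} at each step. Concretely, to get $\kappa \leqslant_1 \eta t + 1$ it suffices, by that proposition, to exhibit $\kappa \in \tmop{Lim}\{\gamma \in \mathbbm{E} \mid \tmop{Ep}(\eta t)\cap\kappa \subset \gamma \wedge \gamma \leqslant_1 (\eta t)[\kappa\assign\gamma]\}$. But the set in braces is precisely (the lower part of) $A(\eta t)$, which is club in $\kappa$ by Proposition \ref{A(t)_club_in_kapa}; a club class in $\kappa$ is unbounded, so $\kappa \in \tmop{Lim}$ of it. This is the crux: clubness of the \emph{lower} witnesses supplies the cofinal sequence feeding the Second Fundamental Cofinality Property, which then certifies membership of $\kappa$.

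\textbf{Proving (a).} Once (b) is established, taking $t = \kappa^+$ is not directly available (since $\kappa^+ \notin [\kappa,\kappa^+)$), so instead I would argue that (b) gives $\kappa \leqslant_1 \eta t + 1$ for all $t \in [\kappa,\kappa^+)$, and since the ordinals $\eta t + 1$ are cofinal in $\kappa^+$ as $t$ ranges over $[\kappa,\kappa^+)$ (because $\eta t \geqslant t$ and $t$ itself is cofinal in $\kappa^+$), $\leqslant_1$-continuity yields $\kappa \leqslant_1 \kappa^+$, hence $\kappa <_1 \kappa^+$. Then Corollary \ref{alpha<less>_1alpha^+.iff.alpha<less>^1alpha^+} converts this into $\kappa <^1 \kappa^+$, which is exactly (a).

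\textbf{Main obstacle.} The delicate point is confirming that ``$A(t)$ is club in $\kappa$'' really does force $\kappa$ into the Second Fundamental Cofinality hypothesis for \emph{every} relevant $t$ uniformly, and in particular that the epsilon-number side condition $\tmop{Ep}(\eta t)\cap\kappa \subset \gamma$ is automatically satisfied cofinally often. Since $t \in [\kappa,\kappa^+)$ means $\tmop{Ep}(t)\cap\kappa$ is a finite subset of $\kappa$ (Proposition \ref{[alpha:=e]_proposition1}(a)), and the club members $\gamma$ are unbounded epsilon numbers below $\kappa$, all but boundedly many $\gamma$ dominate this finite set, so the side condition causes no trouble. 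I expect the bookkeeping identifying the defining set of $A(t)$ with the witness set of Proposition \ref{2nd_Fund_Cof_Property_<less>^1}, via $G(t) = A(t)$ and $(\eta t)[\kappa\assign\kappa] = \eta t$, to be the one spot requiring genuine care rather than routine unwinding.
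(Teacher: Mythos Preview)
Your proposal is correct and follows essentially the same route as the paper: use Proposition \ref{A(t)_club_in_kapa} to obtain cofinally many witnesses $\gamma < \kappa$ with $\gamma \leqslant_1 (\eta t)[\kappa\assign\gamma]$, feed these into Proposition \ref{2nd_Fund_Cof_Property_<less>^1} to get $\kappa \leqslant^1 \eta t + 1$ for every $t$, and then pass to $\kappa^+$ by continuity. The only cosmetic differences are that the paper proves (a) first and deduces (b) from the forward-referenced Proposition \ref{Intersection_A(t)_equiv_alpha<less>^1alpha^+}, whereas you derive (b) directly and then (a); also, your mention of ``transfinite induction on $t$'' is superfluous, since the argument you actually give for each $t$ is direct (using clubness of $A(\eta t)$, not any inductive hypothesis).
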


\begin{proof}

  {\noindent}$a)$.\\
  By proposition \ref{A(t)_club_in_kapa}, for any $t \in [\kappa, \kappa^+)$,
  $A (t)$ is club in $\kappa$. This means there exist a sequence
  $(c_{\xi})_{\xi \in X}$ such that $c_{\xi} \in A (t)$ and $c_{\xi}
  \underset{\tmop{cof}}{\longhookrightarrow} \kappa$. But by theorem
  \ref{teo.A(t)=G(t)}, $A (t) = G (t) =$ $\{\beta \in \mathbbm{E} | \tmop{Ep} (t) \cap \kappa \subset \beta
  \leqslant \kappa \in \mathbbm{E} \text{\tmop{and}} \beta \leqslant^1 \eta t
  [\kappa \assign \beta] + 1\}$ which implies $\forall \xi \in X.c_{\xi}
  \leqslant_1 \eta t [\kappa \assign c_{\xi}]$. Now, from all this and
  proposition \ref{2nd_Fund_Cof_Property_<less>^1} we obtain $\kappa
  \leqslant^1 \eta t + 1$. The previous shows that \\
  $\forall t \in [\kappa, \kappa^+) . \kappa \leqslant^1 \eta t + 1$, and
  since the sequence $(\eta t + 1)_{\kappa \leqslant t < \kappa^+}$ is cofinal
  in $\kappa^+$, then $\kappa \leqslant^1 \kappa^+$ by $<^1$-continuity.

  {\noindent}$b)$.\\
  $\kappa <^1 \kappa^+$ is equivalent to $\kappa \in \underset{t \in [\kappa,
  \kappa^+)}{\bigcap A (t)}$ by next proposition
  \ref{Intersection_A(t)_equiv_alpha<less>^1alpha^+}.
\end{proof}

We had seen previously that for $\alpha \in \mathbbm{E}$ and $t \in [\alpha,
\alpha^+)$ arbitrary, the ``solutions of the $<_1$-inequality $x <_1 t [\alpha
\assign x]$ in interval $[0, \kappa]$'' can always be given in terms of our
hierarchy $A (l)_{l \in [\kappa, \kappa^+)}$. But we can tell even more:
Consider $B \assign \min \{\beta \in \mathbbm{E} | \beta <_1 \beta^+ \}$
(previous corollary \ref{kapa<less>^1kapa^+} guarantees the existence of $B$).
Then corollaries \ref{a_limit{e|m(e)=eta(t)[a:=e]}} and
\ref{m(a)=eta(t[kapa:=a])_in_Class(1)} provide the big picture of what happens
in $B \cap \mathbbm{E}$ (indeed, they provide the following characterization
of $B$):

\begin{enumerateromancap}
  \item For any ordinal $\alpha \in B \cap \mathbbm{E}$, $m (\alpha) \in
  [\alpha, \alpha^+)$ and therefore $m (\alpha) = s [B \assign \alpha]$ for
  some \\
  $s \in [B, B^+)$ with $s = \eta s$.
  
  \item For every $s = \eta s \in [B, B^+)$, there are cofinal many ordinals
  in $B$ with $m (\alpha) = s [B \assign \alpha]$.
  
  \item $B$ is the only one ordinal such that I and II hold.
\end{enumerateromancap}

This way, for $\alpha \in B \cap \mathbbm{E}$ (note $B \leqslant \kappa$ by
previous corollary \ref{kapa<less>^1kapa^+}) we have:
\begin{itemizedot}
  \item Case $m (\alpha) = \alpha 2$. Then $\alpha \in \mathbbm{E} \backslash
  A (m (\alpha) [\alpha \assign \kappa])$;
  
  \item Case $\alpha 2 < m (\alpha) \wedge \exists s \in [\alpha 2, m
  (\alpha)) . \eta s + 1 \geqslant m (\alpha)$. Let \\
  $z \assign \min \{s \in [\alpha 2, m (\alpha)) | \eta s + 1 \geqslant m
  (\alpha)\}$. Then $\eta z + 1 = m (\alpha)$ (otherwise the inequalities \\
  $\alpha < z < m (\alpha) < \eta z + 1 \geqslant m (\alpha) + 1$ would imply,
  by $\leqslant_1$-connectedness and proposition
  \ref{eta(t)_m(t)_and_<less>_1}, that $\alpha <_1 m (\alpha) + 1$, which is
  contradictory). Therefore $\alpha \in A (\eta z [\alpha \assign \kappa])
  \backslash A ( {\color{magenta} (\eta z + 1) [\alpha \assign \kappa]}) = A
  (\eta z [\alpha \assign \kappa]) \backslash A (m (\alpha) [\alpha \assign
  \kappa])$.
  
  \item Case $\alpha 2 < m (\alpha) \wedge \forall s \in [\alpha 2, m
  (\alpha)) . \eta s + 1 < m (\alpha)$. Then \\
  $\alpha \in [\underset{s \in [\alpha 2, m (\alpha))}{\bigcap}A (s
  [\alpha \assign \kappa])] \backslash A (m (\alpha) [\alpha \assign
  \kappa])$.
\end{itemizedot}

So our theorems explain us quite well what happens in the segment $[0, B)$,
but what about ordinals bigger that $B$?. Corollary \ref{kapa<less>^1kapa^+}
showed us, for the first time, that the class of ordinals
$\tmmathbf{\tmop{Class} (2) \assign \{\alpha \in \mathbbm{E} | \alpha <_1
\alpha^+ \}}$ is nonempty. We now focus our attention on those ordinals. Our
goals are propositions \ref{gama_in_intersection_then_gama<less>^1gama^+} and
\ref{Class(2)_club_in_kapa} which relate $\tmop{Class} (2)$ with our
hierarchies $(A (t))_{t \in [\kappa, \kappa^+)}$, for $\kappa$ an uncountable
regular ordinal.

\begin{proposition}
  \label{Intersection_A(t)_equiv_alpha<less>^1alpha^+}$\forall \alpha \in
  \tmop{OR} . \alpha <_1 \alpha^+ \Longleftrightarrow \alpha <^1 \alpha^+
  \Longleftrightarrow \alpha \in \bigcap_{t \in [\alpha, \alpha^+)} A (t)$
\end{proposition}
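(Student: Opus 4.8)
The plan is to handle the displayed chain of equivalences by first disposing of the cheap parts and then reducing everything to a single cofinality computation. The leftmost equivalence $\alpha <_1 \alpha^+ \Longleftrightarrow \alpha <^1 \alpha^+$ is precisely corollary \ref{alpha<less>_1alpha^+.iff.alpha<less>^1alpha^+}, so nothing new is required there. Before touching the $A(t)$-sets I would observe that all three conditions force $\alpha \in \mathbbm{E}$: if $\alpha <^1 \alpha^+$ then $\alpha <_1 \alpha^+$ by proposition \ref{<less>=^1_implies_<less>=_1}.1, hence $\alpha <_1 \alpha 2$ by $\leqslant_1$-connectedness and $\alpha \in \mathbbm{E}$ by corollary \ref{Wilken_Corollary1}; and $\alpha \in \bigcap_{t} A(t)$ entails $\alpha \in A(\alpha) \subset \tmop{Lim} \mathbbm{E}$ by theorem \ref{teo.A(t)=G(t)}.1. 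Thus when $\alpha \nin \mathbbm{E}$ all three statements fail and the equivalence is vacuous, so from now on I assume $\alpha \in \mathbbm{E}$ and let $\alpha^+$ be the relevant base epsilon number for every $t \in [\alpha, \alpha^+)$.

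Next I would unfold membership in the intersection. By theorem \ref{teo.A(t)=G(t)}, for each $t \in [\alpha, \alpha^+)$,
\[ A(t) = G(t) = \{\beta \in \mathbbm{E} \mid \tmop{Ep}(t) \cap \alpha \subset \beta \leqslant \alpha \wedge \beta \leqslant^1 (\eta t)[\alpha \assign \beta] + 1\}. \]
Testing the top point $\beta = \alpha$, I note that $\tmop{Ep}(t) \cap \alpha \subset \alpha$ and $\alpha \leqslant \alpha$ hold automatically, and that the substitution $[\alpha \assign \alpha]$ is the identity, so $(\eta t)[\alpha \assign \alpha] = \eta t$ (an easy induction, replacing $\alpha$ by $\alpha$ changing nothing). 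Hence $\alpha \in A(t)$ iff $\alpha \leqslant^1 \eta t + 1$, and since $\alpha 2 \leqslant \eta t$ by proposition \ref{pi.eta.properties}.3 the ordinal $\eta t + 1$ strictly exceeds $\alpha$, so this is just $\alpha <^1 \eta t + 1$. Therefore
\[ \alpha \in \bigcap_{t \in [\alpha, \alpha^+)} A(t) \Longleftrightarrow \forall t \in [\alpha, \alpha^+).\, \alpha <^1 \eta t + 1. \]

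Finally I would close the loop by proving $\forall t \in [\alpha, \alpha^+).\, \alpha <^1 \eta t + 1 \Longleftrightarrow \alpha <^1 \alpha^+$, which is the cofinality argument already used in corollary \ref{kapa<less>^1kapa^+}. For $(\Longleftarrow)$ I use that $\eta t \in [\alpha, \alpha^+)$ (both $t$ and $\pi t + d\pi t$ stay below $\alpha^+$, using proposition \ref{pi.eta.properties}.1 and closure of the epsilon number $\alpha^+$ under the relevant operations), so $\alpha \leqslant \eta t + 1 < \alpha^+$ and $\leqslant^1$-connectedness (proposition \ref{<less>=^1_implies_<less>=_1}.2) gives $\alpha <^1 \eta t + 1$. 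For $(\Longrightarrow)$ the sequence $(\eta t + 1)_{t \in [\alpha, \alpha^+)}$ is cofinal in $\alpha^+$ because $\eta t \geqslant t$, so $\leqslant^1$-continuity (proposition \ref{<less>=^1_implies_<less>=_1}.3) yields $\alpha <^1 \alpha^+$. The one step demanding genuine care is the passage $\alpha \in A(t) \Longleftrightarrow \alpha <^1 \eta t + 1$: it rests on correctly reading off the defining conditions of $G(t)$ at $\beta = \alpha$, on the triviality of the substitution there, and on keeping track of the fact that the base epsilon number associated with $t$ is $\alpha$ itself. Everything else is a routine invocation of the already-established connectedness and continuity of $\leqslant^1$.
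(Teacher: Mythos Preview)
Your argument is correct and follows essentially the same route as the paper: unfold $A(t)=G(t)$ via theorem \ref{teo.A(t)=G(t)}, observe that $\alpha\in G(t)$ amounts to $\alpha<^1\eta t+1$ because the substitution $[\alpha\assign\alpha]$ is the identity, and then use $\leqslant^1$-connectedness and $\leqslant^1$-continuity together with the cofinality of $(\eta t+1)_{t\in[\alpha,\alpha^+)}$ in $\alpha^+$. Your added preliminary check that all three conditions force $\alpha\in\mathbbm{E}$ is a welcome clarification that the paper leaves implicit.
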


\begin{proof}
  Let $\alpha \in \tmop{OR}$. We already know $\alpha <_1 \alpha^+
  \Longleftrightarrow \alpha <^1 \alpha^+$. We now {
  
  }show $\alpha <^1 \alpha^+ \Longleftrightarrow \alpha \in \bigcap_{t \in
  [\alpha, \alpha^+)} A (t)$.
  
  {\noindent}$\Longrightarrow)$.
  
  Suppose $\alpha <^1 \alpha^+$. Let $t \in [\alpha, \alpha^+)$. Then $\alpha
  <^1 \eta t [\alpha \assign \alpha] + 1 = \eta t + 1$ by $<^1$-connectedness.
  So \ $\alpha \in G (t) = \{\beta \in \tmop{OR} | \tmop{Ep} (t) \cap \alpha
  \subset \beta \leqslant \alpha \wedge \beta \leqslant^1 \eta t [\alpha
  \assign \beta] + 1\} \underset{\text{\tmop{theorem} \ref{teo.A(t)=G(t)}}}{=}
  A (t)$. Since this holds for an arbitrary $t \in [\alpha, \alpha^+)$, we
  have then actually shown $\alpha \in \bigcap_{t \in [\alpha, \alpha^+)} A
  (t)$.
  
  {\noindent}$\Longleftarrow)$.
  
  Suppose $\alpha \in \bigcap_{t \in [\alpha, \alpha^+)} A (t)
  \underset{\text{\tmop{theorem} \ref{teo.A(t)=G(t)}}}{=} \bigcap_{t \in
  [\alpha, \alpha^+)} G (t)$. Then for any $t \in [\alpha, \alpha^+)$, $\alpha
  <^1 \eta t [\alpha \assign \alpha] + 1 = \eta t + 1$, and since $(\eta t +
  1)_{\alpha \leqslant t < \alpha^+}$ is cofinal in $\alpha^+$, then $\alpha
  \leqslant^1 \alpha^+$ by $<^1$-continuity.
\end{proof}

\begin{corollary}
  \label{m(a)=eta(t[kapa:=a])_in_Class(1)}Let $\rho$ be an epsilon number such
  that $\rho <_1 \rho^+$. Then\\
  $\forall t \in [\rho, \rho^+) .t = \eta t \Longrightarrow \{\alpha \in \rho
  \cap \mathbbm{E} | \tmop{Ep} (t) \cap \rho \subset \alpha \wedge m (\alpha)
  = t [\rho \assign \alpha]\}$ is confinal in $\rho$.
\end{corollary}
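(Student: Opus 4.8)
The plan is to prove this by the same minimality argument that establishes Corollary \ref{a_limit{e|m(e)=eta(t)[a:=e]}}; indeed the present statement is the ``$\rho <_1 \rho^+$'' analogue of part $b)$ of that corollary, the only structural change being that the bounded witness $m (\alpha) \in [\alpha, \alpha^+)$ is replaced by the hypothesis $\rho <_1 \rho^+$, which lets $t$ range over the whole interval $[\rho, \rho^+)$ rather than $[\rho, m (\alpha))$. So I would fix $t \in [\rho, \rho^+)$ with $t = \eta t$, fix an arbitrary $\gamma \in \rho$, and aim to produce an epsilon number $e$ with $\gamma < e < \rho$, $\tmop{Ep} (t) \cap \rho \subset e$ and $m (e) = t [\rho \assign e]$; cofinality of the target set in $\rho$ then follows since $\gamma$ is arbitrary.

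First I would note that $\eta t \in [\rho, \rho^+)$ (since $\rho^+ \in \mathbbm{E}$ is additively closed, $\pi t + d \pi t \leqslant \pi t \cdot 2 < \rho^+$, and $\eta t \geqslant t \geqslant \rho$), so $\rho < \eta t + 1 < \rho^+$. As $\rho <_1 \rho^+$, $\leqslant_1$-connectedness gives $\rho <_1 \eta t + 1$; this is the one place where the new hypothesis enters, replacing the inequality $\eta t + 1 \leqslant m (\alpha)$ used in the old proof. By Corollary \ref{eta(t)+1<less>_1_Equivalences} this yields $\rho \in \tmop{Lim}\, S$, where $S \assign \{\xi \in \mathbbm{E} \mid \tmop{Ep} (t) \cap \rho \subset \xi \wedge \xi \leqslant_1 t [\rho \assign \xi]\}$; using $t = \eta t$ this is also the set attached to $\eta t$.

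Then I would set $e \assign \min \bigl( (\gamma, \rho] \cap S \bigr)$. Because $\rho \in \tmop{Lim}\, S$, the set $S$ meets $(\gamma, \rho)$, so $e$ exists and $e < \rho$, i.e. $e \in \rho \cap \mathbbm{E}$; moreover $\gamma < e <_1 t [\rho \assign e] = \eta t [\rho \assign e]$. The crux is to check $e \nless_1 \eta t [\rho \assign e] + 1$. Suppose not. Since $\tmop{Ep} (t) \cap \rho \subset e$ forces $e \nin \tmop{Ep} (t)$, propositions \ref{[alpha:=e]_proposition3} and \ref{[alpha:=e]_proposition4} give $t [\rho \assign e] [e \assign \xi] = t [\rho \assign \xi]$ and $\tmop{Ep} (t [\rho \assign e]) \cap e = \tmop{Ep} (t) \cap \rho$ for the relevant $\xi$, so Corollary \ref{eta(t)+1<less>_1_Equivalences} applied to $e$ and $t [\rho \assign e] \in [e, e^+)$ would put $e \in \tmop{Lim}\, S$ as well; hence $S$ would contain some $\varphi$ with $\gamma < \varphi < e$, contradicting the minimality of $e$. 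Therefore $e \nless_1 \eta t [\rho \assign e] + 1$, and together with $e <_1 \eta t [\rho \assign e]$ this gives $m (e) = \eta t [\rho \assign e] = t [\rho \assign e]$, as desired.

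I do not expect a genuine obstacle here: the argument is a transcription of the proof of Corollary \ref{a_limit{e|m(e)=eta(t)[a:=e]}}$\,b)$ once $\rho <_1 \eta t + 1$ is secured from $\rho <_1 \rho^+$. The only points deserving care are the bookkeeping that $\eta t + 1 < \rho^+$ (so that $\leqslant_1$-connectedness applies on the full interval) and the substitution-composition identity $t [\rho \assign e] [e \assign \xi] = t [\rho \assign \xi]$, which is exactly what lets the supposition $e <_1 \eta t [\rho \assign e] + 1$ be pulled back to membership in the single set $S$ and so collide with the minimality of $e$.
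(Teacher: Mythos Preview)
Your proposal is correct and follows precisely the approach the paper intends: it is a direct transcription of the proof of Corollary~\ref{a_limit{e|m(e)=eta(t)[a:=e]}}\,$b)$, with the single modification that the inequality $\eta t + 1 \leqslant m(\alpha)$ used there to obtain $\alpha <_1 \eta t + 1$ is replaced by the hypothesis $\rho <_1 \rho^+$ together with $\eta t + 1 < \rho^+$ and $\leqslant_1$-connectedness. The minimality argument, the use of Corollary~\ref{eta(t)+1<less>_1_Equivalences}, and the substitution identities are all handled exactly as in that earlier proof.
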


\begin{proof}
  {\color{orange} Not hard. Left to the reader.}
\end{proof}

\begin{proposition}
  \label{l_j-sequence-restriction}Let $\alpha, \beta \in \mathbbm{E}$, $t \in
  (\alpha, \alpha^+) \cap \tmop{Lim}$, $\beta < \alpha$ and $\tmop{Ep} (t)
  \cap \alpha \subset \beta$. {
  
  }Let $(l_j)_{j \in I}$ be obtained by lemma \ref{l_j-sequence} applied to
  $t$ and $\alpha$. Since $\tmop{Ep} (t) \cap \alpha \subset \beta$ and {
  
  }$t \in (\alpha, \alpha^+) \cap \tmop{Lim}$, then $t [\alpha \assign \beta]
  \in (\beta, \beta^+) \cap \tmop{Lim}$. Then $(l_j [\alpha \assign \beta])_{j
  \in I \cap (\beta + 1)}$ is the sequence obtained by lemma
  \ref{l_j-sequence} applied to $t [\alpha \assign \beta]$ and $\beta$.
\end{proposition}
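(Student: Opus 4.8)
The plan is to prove the statement by induction on $t$, following verbatim the case analysis that defines $(l_j)_{j \in I}$ in the proof of Lemma~\ref{l_j-sequence}, and checking at each node that the substitution $x \longmapsto x[\alpha \assign \beta]$ commutes with the construction. Before the induction I would dispatch the preliminary claim that $t[\alpha \assign \beta] \in (\beta, \beta^+) \cap \tmop{Lim}$: that $t[\alpha \assign \beta] \in \beta^+$ is proposition~\ref{[alpha:=e]_proposition3}; the strict inequality $\beta = \alpha[\alpha \assign \beta] < t[\alpha \assign \beta]$ follows from $\alpha < t$ and proposition~\ref{[alpha:=e]_proposition2} (both $\alpha$ and $t$ lie in $M(\alpha, \beta)$ since $\tmop{Ep}(t) \cap \alpha \subset \beta$); and $t[\alpha \assign \beta]$ is a limit because, by proposition~\ref{[alpha:=e]_proposition3}, its clean Cantor Normal Form ends in the term $\omega^{T_n[\alpha \assign \beta]} t_n$, whose exponent $T_n[\alpha \assign \beta]$ is nonzero by proposition~\ref{[alpha:=e]_proposition1}.

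Writing $t =_{\tmop{CNF}} \omega^{T_1} t_1 + \ldots + \omega^{T_n} t_n$ and $T_1 =_{\tmop{CNF}} \omega^{Q_1} q_1 + \ldots + \omega^{Q_m} q_m$, the engine of the whole argument is that, because $\tmop{Ep}(t) \cap \alpha \subset \beta$, proposition~\ref{[alpha:=e]_proposition3} makes $t[\alpha \assign \beta]$ already a Cantor Normal Form obtained from that of $t$ by replacing $\alpha$ by $\beta$ in every exponent. Consequently the trichotomy governing the construction (is $Q_m$, or $T_1$, or $T_n$, below, equal to, or above $\alpha$?) is transported to the corresponding trichotomy for $t[\alpha \assign \beta]$ over the base $\beta$, so $t[\alpha \assign \beta]$ falls under the syntactically identical case of Lemma~\ref{l_j-sequence} with $\alpha$ replaced by $\beta$. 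In the non-recursive cases $l_j$ is an explicit expression built from the components of $t$ and the index $j$ using $+, \cdot, \lambda x. \omega^x$; I would apply $[\alpha \assign \beta]$ to that expression and use that the substitution commutes with $+, \cdot, \lambda x. \omega^x$ (propositions~\ref{[alpha:=e]_proposition3}, \ref{[alpha:=e]_proposition4} and corollary~\ref{main_[a:=e]_Isomorphism1}) together with $j[\alpha \assign \beta] = j$ for $j < \beta$ (proposition~\ref{[alpha:=e]_proposition1}) to recognise the result as the expression $l_j'$ produced for $t[\alpha \assign \beta]$; this is legitimate exactly when $\tmop{Ep}(l_j) \cap \alpha \subset \beta$, which is clause (2) of Lemma~\ref{l_j-sequence} for $j \in I \cap \beta$.

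The recursive cases are $Q_m > \alpha$, $T_1 > \alpha$ and $T_n > \alpha$, in which $l_j$ is obtained by attaching a fixed head to an auxiliary sequence $(\xi_j)$ that Lemma~\ref{l_j-sequence} itself produces from a strictly smaller limit ordinal (respectively $\omega^{Q_m}, \omega^{T_1}, \omega^{T_n}$, each in $(\alpha, \alpha^+) \cap \tmop{Lim}$ and $< t$, with $\tmop{Ep} \cap \alpha \subset \beta$ by clause (2) and proposition~\ref{[alpha:=e]_proposition4}). Here I would invoke the induction hypothesis of the present proposition on that smaller ordinal to conclude $(\xi_j[\alpha \assign \beta])_j$ is precisely the auxiliary sequence attached to $\omega^{Q_m}[\alpha \assign \beta]$ over $\beta$, and then re-add the substituted head using that $[\alpha \assign \beta]$ respects $+$; the auxiliary index set passes through the recursion unchanged.

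The step I expect to be the real obstacle is the bookkeeping of index sets, and specifically the behaviour at the top index $j = \beta$. The clean term-by-term identity $l_j[\alpha \assign \beta] = l_j'$ holds for $j \in I \cap \beta$, and this is exactly the index range of clause (2) of Lemma~\ref{l_j-sequence}; indeed in the subcases where $I$ reaches $\alpha$ (e.g.\ $Q_m = \alpha$, or $T_1 = \alpha$, where $I = \alpha \setminus \{0\}$) one computes directly that the boundary index gives $l_\beta[\alpha \assign \beta] = t[\alpha \assign \beta]$, so that index overshoots and must not appear in a sequence that is cofinal strictly below $t[\alpha \assign \beta]$. Thus the careful work is to verify that the restricted index set coincides, in each of the finitely many cases, with the index set $I'$ that Lemma~\ref{l_j-sequence} assigns to $t[\alpha \assign \beta]$ and $\beta$ --- namely $I \cap \beta$, in agreement with clause (2). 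Once the index sets are reconciled, the identifications of the previous two paragraphs complete the induction.
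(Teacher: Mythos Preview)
Your plan is correct and is exactly the approach the paper has in mind: the paper's own proof reads in its entirety ``Long and boring. Left to the reader,'' so the induction on $t$ following the case split of Lemma~\ref{l_j-sequence}, together with the commutation of $[\alpha \assign \beta]$ with $+,\cdot,\lambda x.\omega^x$ (Propositions~\ref{[alpha:=e]_proposition3} and~\ref{[alpha:=e]_proposition4}), is the intended argument.

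Your analysis of the boundary index $j=\beta$ is not only correct but actually sharper than the paper's statement. In the cases $Q_m=\alpha$, $T_1=\alpha$, $T_n=\alpha$ one has $I=\alpha\setminus\{0\}\ni\beta$, and your computation $l_\beta[\alpha\assign\beta]=t[\alpha\assign\beta]$ shows that $\beta$ cannot belong to the index set of the Lemma~\ref{l_j-sequence} sequence for $t[\alpha\assign\beta]$. Hence the literally correct index set is $I\cap\beta$, as you write and as clause~(2) of Lemma~\ref{l_j-sequence} already uses, rather than $I\cap(\beta+1)$ as the proposition is stated. This discrepancy is harmless for the only application (Proposition~\ref{A(t)_arriba_abajo}), since there the index set is immediately intersected with some $r\leqslant\beta$, which erases the extra point; but you are right to flag it, and your proof should simply record that the verified identity of index sets is $I'=I\cap\beta$.
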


\begin{proof}
  Long and boring. Left to the reader.
\end{proof}

\begin{proposition}
  \label{Lim_intersec}Let $A \subset \tmop{OR} \ni \alpha$. Then $\tmop{Lim}
  (A) \cap (\alpha + 1) = \tmop{Lim} (A \cap (\alpha + 1))$
\end{proposition}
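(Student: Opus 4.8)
The plan is to work directly from the definition $\tmop{Lim}(X) \assign \{\beta \in \tmop{OR} \mid \sup(X \cap \beta) = \beta\}$ and to prove the equality at the level of membership, splitting on whether the candidate ordinal $\beta$ satisfies $\beta \leqslant \alpha$ or $\beta > \alpha$. The guiding intuition is that $\tmop{Lim}$ only ``sees'' the portion of its argument strictly below the ordinal being tested, so truncating $A$ above $\alpha$ cannot affect which ordinals $\leqslant \alpha$ are limits, while the truncation does force every limit of $A \cap (\alpha + 1)$ to lie in $\alpha + 1$.

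First I would record the key set-theoretic identity: for every ordinal $\beta \leqslant \alpha$ one has $(A \cap (\alpha + 1)) \cap \beta = A \cap \beta$. Indeed, every element of $\beta$ is $< \beta \leqslant \alpha < \alpha + 1$, hence $\beta \subseteq \alpha + 1$ and $\beta \cap (\alpha + 1) = \beta$; intersecting with $A$ gives the claim. Consequently $\sup((A \cap (\alpha+1)) \cap \beta) = \sup(A \cap \beta)$ for all $\beta \leqslant \alpha$, so among such $\beta$ the conditions defining membership in $\tmop{Lim}(A)$ and in $\tmop{Lim}(A \cap (\alpha+1))$ coincide.

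Next I would show $\tmop{Lim}(A \cap (\alpha+1)) \subseteq \alpha + 1$, which disposes of the ordinals above $\alpha$ on the right-hand side. For $\beta > \alpha$, every element of $A \cap (\alpha+1)$ is $\leqslant \alpha < \beta$, so $(A \cap (\alpha+1)) \cap \beta = A \cap (\alpha+1)$ and therefore $\sup((A \cap (\alpha+1)) \cap \beta) = \sup(A \cap (\alpha+1)) \leqslant \alpha < \beta$; thus $\beta \notin \tmop{Lim}(A \cap (\alpha+1))$. Combining the two observations, an ordinal $\beta$ lies in $\tmop{Lim}(A \cap (\alpha+1))$ if and only if $\beta \leqslant \alpha$ (by this step) and $\sup(A \cap \beta) = \beta$ (by the previous step), i.e. if and only if $\beta \in \tmop{Lim}(A) \cap (\alpha+1)$, which is exactly the desired equality.

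I expect no genuine obstacle here; the proof is a short unfolding of definitions. The only points that call for a moment of care are the standard ordinal bookkeeping distinguishing $\beta \leqslant \alpha$ from $\beta < \alpha$ and the degenerate value $\sup(\emptyset) = 0$, but these treat both sides symmetrically and so do not affect the argument. I would also note that the hypothesis $\alpha \in A$ is used only to guarantee that $A$ and $A \cap (\alpha+1)$ are nonempty, so that $\tmop{Lim}$ is defined on them.
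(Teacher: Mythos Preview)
Your proof is correct and follows essentially the same approach as the paper's: both argue by double inclusion directly from the definition of $\tmop{Lim}$. The only cosmetic difference is that the paper phrases membership in $\tmop{Lim}(X)$ via cofinal sequences $(c_i)_{i\in I}\subset X$ with $c_i \underset{\tmop{cof}}{\longhookrightarrow} r$, whereas you work with the equivalent $\sup(X\cap\beta)=\beta$ formulation and the set identity $(A\cap(\alpha+1))\cap\beta = A\cap\beta$ for $\beta\leqslant\alpha$; your observation about the role of the hypothesis $\alpha\in A$ (ensuring nonemptiness so that $\tmop{Lim}$ is defined) is also accurate.
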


\begin{proof}
  $'' \subset''$. Let $r \in \tmop{Lim} (A) \cap (\alpha + 1)$. Then there
  exist a sequence $(c_i)_{i \in I} \subset A$ such that {
  
  }$c_i \underset{\tmop{cof}}{\longhookrightarrow} r \leqslant \alpha$. Then
  $(c_i)_{i \in I} \subset (\alpha + 1)$. All this means $(c_i)_{i \in I}
  \subset A \cap (\alpha + 1)$ and $c_i
  \underset{\tmop{cof}}{\longhookrightarrow} r$, i.e., {
  
  }$r \in \tmop{Lim} (A \cap (\alpha + 1))$.
  
  $'' \supset''$. Let $r \in \tmop{Lim} (A \cap (\alpha + 1))$. Then there
  exist a sequence $(c_i)_{i \in I} \subset A \cap (\alpha + 1)$ such that
  $c_i \underset{\tmop{cof}}{\longhookrightarrow} r$; note that since
  $(c_i)_{i \in I} \subset (\alpha + 1)$, then $r \leqslant \alpha$. So $r \in
  \tmop{Lim} (A) \cap (\alpha + 1)$.
\end{proof}

The next proposition is (much) easier to prove using theorem
\ref{teo.A(t)=G(t)} and the properties we already know about the substitutions
$t \longmapsto t [\beta \assign \alpha]$. The reader can do that as an easy
exercise. We provide here our original proof.

\begin{proposition}
  \label{A(t)_arriba_abajo}Let $\beta \in \mathbbm{E}$, $\alpha \in
  \mathbbm{E} \cap \beta$ and $t \in [\beta, \beta^+)$ be such that $\tmop{Ep}
  (t) \cap \beta \subset \alpha$.{
  
  }So $t [\beta \assign \alpha] \in [\alpha, \alpha^+)$. Then $A (t) \cap
  (\alpha + 1) = A (t [\beta \assign \alpha])$.
\end{proposition}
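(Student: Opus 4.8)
The plan is to collapse the problem onto the $G$-description of Theorem~\ref{teo.A(t)=G(t)} and then check that both sides are cut out by literally the same condition. Write $s \assign t [\beta \assign \alpha]$, so that $s \in [\alpha, \alpha^+)$, and by Theorem~\ref{teo.A(t)=G(t)} it suffices to show $G (t) \cap (\alpha + 1) = G (s)$. Two substitution facts drive this. First, since $\tmop{Ep} (t) \cap \beta \subset \alpha$, Proposition~\ref{[alpha:=e]_proposition3} gives $\tmop{Ep} (s) \cap \alpha = \tmop{Ep} (t) \cap \beta$; second, Proposition~\ref{pi.eta.substitutions} gives $\eta s = (\eta t) [\beta \assign \alpha]$. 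Unfolding $G$ with ambient epsilon number $\beta$ for $t$ and $\alpha$ for $s$, and using $\alpha < \beta$ to see that intersecting $G (t)$ with $(\alpha + 1)$ merely tightens the bound $\gamma \leqslant \beta$ to $\gamma \leqslant \alpha$, one obtains
\[ G (t) \cap (\alpha + 1) = \{\gamma \in \mathbbm{E} \mid \tmop{Ep} (t) \cap \beta \subset \gamma \leqslant \alpha \wedge \gamma \leqslant^1 (\eta t) [\beta \assign \gamma] + 1\} \]
and $G (s) = \{\gamma \in \mathbbm{E} \mid \tmop{Ep} (t) \cap \beta \subset \gamma \leqslant \alpha \wedge \gamma \leqslant^1 (\eta s) [\alpha \assign \gamma] + 1\}$. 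These conditions agree except in the last clause, so the whole statement reduces to the identity $(\eta t) [\beta \assign \alpha] [\alpha \assign \gamma] = (\eta t) [\beta \assign \gamma]$ for $\gamma \in \mathbbm{E}$ with $\gamma \leqslant \alpha$ and $\tmop{Ep} (t) \cap \beta \subset \gamma$, since then $(\eta s) [\alpha \assign \gamma] = (\eta t) [\beta \assign \alpha] [\alpha \assign \gamma] = (\eta t) [\beta \assign \gamma]$.

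This composition identity is the one ingredient not already packaged in the excerpt, so I would prove the more general ``$x [\beta \assign \alpha] [\alpha \assign \gamma] = x [\beta \assign \gamma]$ whenever $\gamma \leqslant \alpha \leqslant \beta$ are epsilon numbers, $x \in \beta^+$ and $\tmop{Ep} (x) \cap \beta \subset \gamma$'' by induction on the Cantor Normal Form of $x$. The atomic cases ($x \in \mathbbm{E}$) are immediate: the hypothesis forces $x = \beta$ (giving $\gamma$ on both sides) or $x < \gamma$ (where each of the three substitutions acts as the identity), while $x > \beta$ cannot occur because no epsilon number lies in $(\beta, \beta^+)$. The inductive step is routine since substitution commutes with $+$, with $\cdot$, and with $\lambda x . \omega^x$ (Proposition~\ref{[alpha:=e]_proposition3}), and since $\tmop{Ep} (x [\beta \assign \alpha]) \cap \alpha = \tmop{Ep} (x) \cap \beta \subset \gamma$ keeps the hypotheses available at every recursive call. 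Granting the identity, the two set-conditions above coincide termwise and the proposition follows.

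Alternatively — and this is plainly the author's \emph{original} route, for which Propositions~\ref{l_j-sequence-restriction} and \ref{Lim_intersec} were prepared — one argues by induction on $t$ directly along the recursive definition of $A$, never touching the composition identity. In the base range $t \in [\beta, \beta 2]$ both sides compute to $\tmop{Lim} \mathbbm{E} \cap (M, \alpha + 1)$ with $M \assign \max (\tmop{Ep} (t) \cap \beta)$. In the successor case $t = l + 1$ the clause $l < \pi l + d \pi l$ transfers to $l [\beta \assign \alpha]$ through Propositions~\ref{[alpha:=e]_proposition2} and \ref{pi.eta.substitutions}, so the same branch of the definition fires on both sides; the $\tmop{Lim}$ branch is then handled by $\tmop{Lim} A (l) \cap (\alpha + 1) = \tmop{Lim} (A (l) \cap (\alpha + 1))$ (Proposition~\ref{Lim_intersec}) followed by the induction hypothesis. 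The limit case $t \in (\beta 2, \beta^+)$ is the real work: Proposition~\ref{l_j-sequence-restriction} identifies the sequence that Lemma~\ref{l_j-sequence} produces for $(t [\beta \assign \alpha], \alpha)$ with $(l_j [\beta \assign \alpha])_{j \in I \cap (\alpha + 1)}$ (and likewise for $\pi t$ when $t \leqslant \pi t + d \pi t$), after which one commutes $\tmop{Lim}$ past $\cap (\alpha + 1)$, uses $I \cap r = (I \cap (\alpha + 1)) \cap r$ for $r \leqslant \alpha$, and applies the induction hypothesis to each $l_j$ (legitimate since $\tmop{Ep} (l_j) \cap \beta \subset \alpha$ for $j < \alpha$ by Lemma~\ref{l_j-sequence}(2)). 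I expect this limit case — keeping index sets, cofinal sequences, and the $\tmop{Lim}$/intersection bookkeeping aligned — to be the main obstacle; the $G$-based argument avoids it entirely at the cost of the single short CNF induction establishing the composition identity.
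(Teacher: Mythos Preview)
Your proposal is correct, and you have in fact anticipated both proofs. The paper explicitly remarks, just before this proposition, that the $G$-based argument via Theorem~\ref{teo.A(t)=G(t)} is ``(much) easier'' and leaves it as an exercise, then gives as its ``original proof'' precisely your alternative route: induction on $t$ along the recursive definition of $A$, using Proposition~\ref{Lim_intersec} to commute $\tmop{Lim}$ past $\cap(\alpha+1)$, Proposition~\ref{pi.eta.substitutions} to transfer the $\pi l + d\pi l$ clause, and Proposition~\ref{l_j-sequence-restriction} to align the Lemma~\ref{l_j-sequence} sequences. So your second sketch matches the paper's proof essentially step for step.

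On your primary route: the composition identity $x[\beta \assign \alpha][\alpha \assign \gamma] = x[\beta \assign \gamma]$ is not quite ``not already packaged'' --- the paper uses it freely elsewhere (for instance inside the proof of Theorem~\ref{teo.A(t)=G(t)} and of Corollary~\ref{a_limit{e|m(e)=eta(t)[a:=e]}}) without isolating it as a proposition, so in the paper's economy it counts as an obvious consequence of the CNF recursion. Your short induction establishing it is correct; the minor citation slip is that commutation of substitution with $+$ and $\cdot$ lives in Proposition~\ref{[alpha:=e]_proposition4} rather than~\ref{[alpha:=e]_proposition3}. The trade-off you describe is exactly right: the $G$-route replaces the index-set and cofinal-sequence bookkeeping of the limit case by a single CNF induction, at the cost of depending on Theorem~\ref{teo.A(t)=G(t)}; the inductive route is self-contained relative to the $A$-hierarchy but must thread Propositions~\ref{l_j-sequence-restriction} and~\ref{Lim_intersec} through two limit subcases.
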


\begin{proof}
  Let $\beta$ and $\alpha \in \mathbbm{E} \cap \beta$ be as stated. We will
  prove by induction on $[\beta, \beta^+)$ the statement $\forall t \in
  [\beta, \beta^+) . \tmop{Ep} (t) \cap \beta \subset \alpha \Longrightarrow A
  (t) \cap (\alpha + 1) = A (t [\beta \assign \alpha])$.

  Let $t \in [\beta, \beta^+)$. Our induction hypothesis is
  
  $\forall r \in [\beta, \beta^+) \cap t. \tmop{Ep} (r) \cap \beta \subset
  \alpha \Longrightarrow A (r) \cap (\alpha + 1) = A (r [\beta \assign
  \alpha])$ \ \ \ \ \ \ \ {\tmstrong{(IH)}}

  Suppose $\tmop{Ep} (t) \cap \beta \subset \alpha$. Let $M \assign \max
  (\tmop{Ep} (t) \cap \alpha)$.

  {\tmstrong{Case}} $\tmmathbf{t \in [\beta, \beta 2] \cap \tmop{Lim}.}$ Then
  $A (t) = (\tmop{Lim} \mathbbm{E}) \cap (M, \beta + 1)$ and so \\
  $A (t) \cap (\alpha + 1) = (\tmop{Lim} \mathbbm{E}) \cap (M, \beta + 1) \cap
  (\alpha + 1) = (\tmop{Lim} \mathbbm{E}) \cap (M, \alpha + 1) = A (t [\beta
  \assign \alpha])$, where the last equality is because $t [\beta \assign
  \alpha] \in [\alpha, \alpha 2] \cap \tmop{Lim}$.

  {\tmstrong{Case}} $\tmmathbf{t = l + 1 \in [\beta, \beta^+).}$ Then clearly
  $\tmop{Ep} (l) \cap \beta \subset \alpha$ and so
  
  {\noindent}$A (l + 1) \cap (\alpha + 1) = \left\{ \begin{array}{l}
    A (l) \cap (\alpha + 1) \text{ iff } l < \pi l + d \pi l\\
    (\tmop{LimA} (l)) \cap (\alpha + 1) \text{ otherwise}
  \end{array} \right.$\\
  \ \ \ \ \ \ \ \ \ \ \ \ \ \ \ \ \ \ \ \ $\underset{\text{\tmop{prop} .
  \ref{Lim_intersec}}}{=} \left\{ \begin{array}{l}
    A (l) \cap (\alpha + 1) \text{ iff } l < \pi l + d \pi l\\
    \tmop{Lim} (A (l) \cap (\alpha + 1)) \text{ otherwise}
  \end{array} \right.$\\
  \ \ \ \ \ \ \ \ \ \ \ \ \ \ \ \ \ \ \ \ \ \ $=$, by our IH and because of
  proposition \ref{Lim_intersec},\\
  \ \ \ \ \ \ \ \ \ \ \ \ \ \ \ \ \ \ \ \ \ \ $= \left\{ \begin{array}{l}
    A (l [\beta \assign \alpha]) \text{ iff } l [\beta \assign \alpha] <
    \pi (l [\beta \assign \alpha]) + d \pi (l [\beta \assign \alpha])\\
    (\tmop{Lim} A (l [\beta \assign \alpha])) \text{ otherwise}
  \end{array} \right.$\\
  \ \ \ \ \ \ \ \ \ \ \ \ \ \ \ \ \ \ \ \ \ \ $= A (l [\beta \assign \alpha]
  + 1) = A ((l + 1) [\beta \assign \alpha]) = A (t [\beta \assign \alpha])$.

  {\tmstrong{Case}} $\tmmathbf{t \in (\beta 2, \beta^+) \cap \tmop{Lim}.}$ Let
  $(l_j)_{j \in I}$ be obtained by lemma \ref{l_j-sequence} applied to $t$ and
  $\beta$; moreover, let $(e_j)_{j \in S}$ be obtained by lemma
  \ref{l_j-sequence} applied to $\pi t$ and $\beta$.

  Subcase $t > \pi t + d \pi t$.
  
  {\noindent}$A (t) \cap (\alpha + 1) = (\tmop{Lim} \{r \leqslant \beta |M < r
  \in \bigcap_{j \in I \cap r} A (l_j)\}) \cap (\alpha + 1)
  \underset{\text{prop.  \ref{Lim_intersec}}}{=}$\\
  \ \ \ \ \ \ \ \ \ \ \ \ \ \ \ \ \ $= \tmop{Lim} (\{r \leqslant \beta |M < r
  \in \bigcap_{j \in I \cap r} A (l_j)\} \cap (\alpha + 1)) = \tmop{Lim} \{r
  \leqslant \alpha |M < r \in \bigcap_{j \in I \cap r} A (l_j)\} =$\\
  \ \ \ \ \ \ \ \ \ \ \ \ \ \ \ \ \ $= \tmop{Lim} \{r \leqslant \alpha |M < r
  \in \bigcap_{j \in I \cap r} (A (l_j) \cap (\alpha + 1))\}
  \underset{\tmop{IH}}{=}$\\
  \ \ \ \ \ \ \ \ \ \ \ \ \ \ \ \ \ $= \tmop{Lim} \{r \leqslant \alpha |M < r
  \in \bigcap_{j \in I \cap r} A (l_j [\beta \assign \alpha])\} =$\\
  \ \ \ \ \ \ \ \ \ \ \ \ \ \ \ \ \ $= \tmop{Lim} \{r \leqslant \alpha |M < r
  \in \bigcap_{j \in (I \cap (\alpha + 1)) \cap r} A (l_j [\beta \assign
  \alpha])\} = A (t [\beta \assign \alpha])$, \\
  where the last equality holds by proposition \ref{l_j-sequence-restriction}
  and because $\tmop{Ep} (t) \cap \beta \subset \alpha$ implies \\
  $\tmop{Ep} (\pi t + d \pi t) \cap \beta \subset \alpha$ and so $t > \pi t +
  d \pi t \Longleftrightarrow t [\beta \assign \alpha] > (\pi t + d \pi t)
  [\beta \assign \alpha] =$\\
  $\pi (t [\beta \assign \alpha]) + d \pi (t [\beta \assign \alpha])$.

  Subcase $t \leqslant \pi t + d \pi t$.
  
  {\noindent}$A (t) \cap (\alpha + 1) = (\tmop{Lim} \{r \leqslant \beta |M < r
  \in \bigcap_{j \in S \cap r} A (e_j)\}) \cap (\alpha + 1)
  \underset{\text{\tmop{prop} . \ref{Lim_intersec}}}{=}$\\
  \ \ \ \ \ \ \ \ \ \ \ \ \ \ \ \ \ $= \tmop{Lim} (\{r \leqslant \beta |M < r
  \in \bigcap_{j \in S \cap r} A (e_j)\} \cap (\alpha + 1)) =$\\
  \ \ \ \ \ \ \ \ \ \ \ \ \ \ \ \ \ $= \tmop{Lim} \{r \leqslant \alpha |M < r
  \in \bigcap_{j \in S \cap r} A (e_j)\} =$\\
  \ \ \ \ \ \ \ \ \ \ \ \ \ \ \ \ \ $= \tmop{Lim} \{r \leqslant \alpha |M < r
  \in \bigcap_{j \in S \cap r} (A (e_j) \cap (\alpha + 1))\}
  \underset{\tmop{IH}}{=}$\\
  \ \ \ \ \ \ \ \ \ \ \ \ \ \ \ \ \ $= \tmop{Lim} \{r \leqslant \alpha |M < r
  \in \bigcap_{j \in S \cap r} A (e_j [\beta \assign \alpha])\} =$\\
  \ \ \ \ \ \ \ \ \ \ \ \ \ \ \ \ \ $= \tmop{Lim} \{r \leqslant \alpha |M < r
  \in \bigcap_{j \in (S \cap (\alpha + 1)) \cap r} A (e_j [\beta \assign
  \alpha])\} = A (t [\beta \assign \alpha])$,\\
  where the last equality holds by proposition \ref{l_j-sequence-restriction}
  (more precisely, since $(e_j)_{j \in S}$ approximates \\
  $\pi t \in [\beta, \beta^+)$, then $(e_j [\beta \assign \alpha])_{S \cap
  (\alpha + 1)}$ approximates $(\pi t) [\beta \assign \alpha] = \pi (t [\beta
  \assign \alpha]) \in [\alpha, \alpha^+)$)), and because $t \leqslant \pi t +
  d \pi t \Longleftrightarrow t [\beta \assign \alpha] \leqslant (\pi t + d
  \pi t) [\beta \assign \alpha] = \pi (t [\beta \assign \alpha]) + d \pi (t
  [\beta \assign \alpha])$.
\end{proof}

\begin{proposition}
  \label{Intersection_A(t)_down_intersection_A(t)_up}Let $\beta \in
  \mathbbm{E}$ and $\alpha \in \mathbbm{E} \cap \beta$. Consider the set of
  ordinals\\
  $M (\beta, \alpha) = \{q \in \beta^+ | \tmop{Ep} (q) \cap \beta \subset
  \alpha\}$. Then $\bigcap_{t \in [\alpha, \alpha^+)} A (t) = (\alpha + 1)
  \cap \bigcap_{t \in M (\beta, \alpha) \cap [\beta, \beta^+)} A (t)$.
\end{proposition}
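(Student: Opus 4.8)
The plan is to reindex the right-hand intersection by the substitution $t \mapsto t [\beta \assign \alpha]$ and then apply Proposition \ref{A(t)_arriba_abajo} termwise.

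First I would observe that every index $t$ occurring in the right-hand intersection satisfies the hypotheses of Proposition \ref{A(t)_arriba_abajo}. Indeed, if $t \in M (\beta, \alpha) \cap [\beta, \beta^+)$, then $t \in [\beta, \beta^+)$ and, by the definition of $M (\beta, \alpha)$, $\tmop{Ep} (t) \cap \beta \subset \alpha$. Since moreover $\beta \in \mathbbm{E}$ and $\alpha \in \mathbbm{E} \cap \beta$, Proposition \ref{A(t)_arriba_abajo} gives $A (t) \cap (\alpha + 1) = A (t [\beta \assign \alpha])$ for each such $t$. Next I would distribute the factor $(\alpha + 1)$ through the intersection and substitute these termwise equalities:
\[
(\alpha + 1) \cap \bigcap_{t \in M (\beta, \alpha) \cap [\beta, \beta^+)} A (t) = \bigcap_{t \in M (\beta, \alpha) \cap [\beta, \beta^+)} \bigl( A (t) \cap (\alpha + 1) \bigr) = \bigcap_{t \in M (\beta, \alpha) \cap [\beta, \beta^+)} A (t [\beta \assign \alpha]).
\]

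The key step is to recognize the index transformation as a bijection onto the full interval. By Corollary \ref{main_[a:=e]_Isomorphism1}, claim 2 (applied with its ``$\alpha$'' taken to be our $\beta$ and its ``$e$'' taken to be our $\alpha$, so that the hypothesis $e \leqslant \alpha$ becomes $\alpha \leqslant \beta$, which holds because $\alpha \in \mathbbm{E} \cap \beta$), the function $t \mapsto t [\beta \assign \alpha]$ is a bijection from $M (\beta, \alpha) \cap [\beta, \beta^+)$ onto $[\alpha, \alpha^+)$, with inverse $s \mapsto s [\alpha \assign \beta]$. Hence reindexing the last intersection by $s \assign t [\beta \assign \alpha]$ turns it into $\bigcap_{s \in [\alpha, \alpha^+)} A (s)$, which is exactly the left-hand side; chaining the displayed equalities finishes the proof.

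The argument is routine once these two facts are assembled, so I do not anticipate a genuine obstacle. The only point requiring care is the role assignment in Corollary \ref{main_[a:=e]_Isomorphism1}: what makes the reindexed intersection range over all of $[\alpha, \alpha^+)$, rather than over a proper subset, is precisely the surjectivity of $t \mapsto t [\beta \assign \alpha]$ onto $[\alpha, \alpha^+)$. Since that corollary already furnishes both the bijectivity and the explicit inverse substitution, no separate surjectivity check is needed.
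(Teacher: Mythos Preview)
Your argument is correct and is precisely the kind of proof the paper has set up: the author marks this result ``Not hard. Left to the reader,'' having just supplied the two ingredients you invoke (Proposition~\ref{A(t)_arriba_abajo} for the termwise identity $A(t)\cap(\alpha+1)=A(t[\beta\assign\alpha])$ and Corollary~\ref{main_[a:=e]_Isomorphism1} for the bijection $M(\beta,\alpha)\cap[\beta,\beta^+)\to[\alpha,\alpha^+)$). Your role assignment in the corollary is right, and the distribution of $(\alpha+1)$ through the intersection is unproblematic since the index set contains $\beta$ and is therefore nonempty.
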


\begin{proof}
  {\color{orange} Not hard. Left to the reader.}
\end{proof}

\begin{proposition}
  \label{Intersection_alpha_A(t)_may_be_only_alpha}Let $\beta \in
  \mathbbm{E}$, $\alpha \in \mathbbm{E} \cap \beta$ and $M (\beta, \alpha) =
  \{q \in \beta^+ | \tmop{Ep} (q) \cap \beta \subset \alpha\}$. Then
  \begin{enumeratealpha}
    \item $\forall \gamma \in \bigcap_{t \in M (\beta, \alpha) \cap [\beta,
    \beta^+)} A (t) . \gamma \geqslant \alpha$.
    
    \item $\bigcap_{t \in [\alpha, \alpha^+)} A (t) = (\alpha + 1) \cap
    \bigcap_{t \in M (\beta, \alpha) \cap [\beta, \beta^+)} A (t)$; moreover,
    if $\bigcap_{t \in [\alpha, \alpha^+)} A (t) \neq \emptyset$ then\\
    $\bigcap_{t \in [\alpha, \alpha^+)} A (t) = \{\alpha\}$.
  \end{enumeratealpha}
  
\end{proposition}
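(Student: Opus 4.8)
The first displayed equality in (b) is precisely Proposition \ref{Intersection_A(t)_down_intersection_A(t)_up}, so I would simply invoke it. The entire content of the statement then reduces to part (a) together with a short size argument for the ``moreover'' clause; hence my plan is to prove (a) first and deduce (b) from it. Throughout I rely on the standing assumption that $\beta$, and so the whole interval $[\beta, \beta^+)$, lies in the domain $[\varepsilon_{\omega}, \infty)$ of $A$ and $G$, so that Theorem \ref{teo.A(t)=G(t)} is applicable.

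For (a) I would argue by contradiction. Fix $\gamma \in \bigcap_{t \in M(\beta, \alpha) \cap [\beta, \beta^+)} A(t)$ and suppose $\gamma < \alpha$. Since $\beta \in \mathbbm{E}$ we have $\tmop{Ep}(\beta) \cap \beta = \emptyset \subset \alpha$, so $\beta$ itself belongs to the index set $M(\beta, \alpha) \cap [\beta, \beta^+)$; thus $\gamma \in A(\beta)$, and Theorem \ref{teo.A(t)=G(t)} part 1 gives $A(\beta) \subset \tmop{Lim} \mathbbm{E}$, whence $\gamma \in \mathbbm{E}$. Now I would produce a single index $t$ that excludes $\gamma$: take $t \assign \beta + \gamma$. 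Because $\gamma < \beta$ we have $\beta \leqslant \beta + \gamma < \beta 2 < \beta^+$, and since $\beta, \gamma \in \mathbbm{E}$ the Cantor Normal Form of $t$ is $\omega^{\beta} + \omega^{\gamma}$, so $\tmop{Ep}(t) = \{\beta, \gamma\}$ and $\tmop{Ep}(t) \cap \beta = \{\gamma\} \subset \alpha$. Hence $t \in M(\beta, \alpha) \cap [\beta, \beta^+)$, and by assumption $\gamma \in A(t) = G(t)$. But membership in $G(t)$ requires $\tmop{Ep}(t) \cap \beta \subset \gamma$, that is, $\gamma < \gamma$, which is absurd. Therefore $\gamma \geqslant \alpha$, proving (a).

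For the ``moreover'' clause I would set $S \assign \bigcap_{t \in [\alpha, \alpha^+)} A(t)$. By the first equality, $S = (\alpha + 1) \cap \bigcap_{t \in M(\beta, \alpha) \cap [\beta, \beta^+)} A(t)$. By (a) every element of the right-hand intersection is $\geqslant \alpha$, while intersecting with the ordinal $\alpha + 1$ forces each element to be $\leqslant \alpha$; consequently $S \subset \{\alpha\}$. Thus $S$ is either $\emptyset$ or $\{\alpha\}$, so if $S \neq \emptyset$ then $S = \{\alpha\}$, as claimed.

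The only non-routine step is the choice of the witness $t = \beta + \gamma$ in (a): the whole point is that by inserting $\gamma$ into $\tmop{Ep}(t)$ the defining membership condition of $G(t)$ is forced to demand $\gamma < \gamma$. Everything else is either a direct citation (Proposition \ref{Intersection_A(t)_down_intersection_A(t)_up}, Theorem \ref{teo.A(t)=G(t)}) or an elementary squeeze between $\geqslant \alpha$ and $\leqslant \alpha$, so I expect no further difficulty.
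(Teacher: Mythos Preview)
Your argument is correct. The paper omits the proof entirely (``Not hard. Left to the reader''), so there is no alternative approach to compare against; your reduction of (b) to Proposition~\ref{Intersection_A(t)_down_intersection_A(t)_up} together with (a), and your witness $t=\beta+\gamma$ for (a), is exactly the kind of short verification the authors presumably had in mind.
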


\begin{proof}
  {\color{orange} Not hard. Left to the reader.}
\end{proof}

\begin{proposition}
  \label{gama_in_intersection_then_gama<less>^1gama^+}Let $\kappa \in
  \mathbbm{E}$ be an uncountable regular ordinal and $\alpha \in \mathbbm{E}
  \cap \kappa$.\\
  Suppose $\gamma \in \bigcap_{t \in M (\kappa, \alpha) \cap [\kappa,
  \kappa^+)} A (t)$. Then $\gamma <^1 \gamma^+$.
\end{proposition}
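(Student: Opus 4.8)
The plan is to prove $\gamma <^1 \gamma^+$ directly, and the guiding idea is that to certify $\gamma <^1 \gamma^+$ one only has to test the relation against a single cofinal $\omega$-tower, which automatically involves no epsilon number below $\kappa$ and therefore already sits inside the restricted index set $M (\kappa, \alpha)$, no matter how much larger $\gamma$ is than $\alpha$. First I would record two cheap facts. The index set $M (\kappa, \alpha) \cap [\kappa, \kappa^+)$ is nonempty (it contains $\kappa$, since $\tmop{Ep} (\kappa) \cap \kappa = \emptyset \subset \alpha$), so the hypothesis indeed puts $\gamma$ into some $A (t)$. By theorem \ref{teo.A(t)=G(t)} every such $A (t)$ equals $G (t) \subset \tmop{Lim} \mathbbm{E}$ and is contained in $\{\beta \leqslant \kappa\}$; hence $\gamma \in \mathbbm{E}$ and $\gamma \leqslant \kappa$.

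Next I would produce the tower. Set $u_0 \assign \omega^{\kappa + 1}$ and $u_{j + 1} \assign \omega^{u_j}$ for $j \in \omega$. Then each $u_j \in [\kappa, \kappa^+)$, the sequence $(u_j)_{j \in \omega}$ is cofinal in $\kappa^+$, and $\tmop{Ep} (u_j) \cap \kappa = \emptyset$ because the only epsilon number occurring in $u_j$ is $\kappa$ itself. Consequently $u_j \in M (\kappa, \alpha) \cap [\kappa, \kappa^+)$, so the hypothesis gives $\gamma \in A (u_j) = G (u_j)$ for every $j$. Reading off the definition of $G (u_j)$ with base $\kappa$ at the point $\beta = \gamma$, this membership is literally the assertion $\gamma \leqslant^1 (\eta u_j) [\kappa \assign \gamma] + 1$.

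It remains to push these inequalities down to $\gamma$ and pass to the limit. Writing $s_j \assign u_j [\kappa \assign \gamma]$, the substitution rules (proposition \ref{[alpha:=e]_proposition3}) give $s_0 = \omega^{\gamma + 1}$ and $s_{j + 1} = \omega^{s_j}$, so $(s_j)_{j \in \omega} \underset{\tmop{cof}}{\longhookrightarrow} \gamma^+$, the tower over $\gamma + 1$ converging to the least epsilon number above $\gamma$. Since $\tmop{Ep} (u_j) \cap \kappa = \emptyset \subset \gamma$, proposition \ref{pi.eta.substitutions} (and the Note following it) yields $(\eta u_j) [\kappa \assign \gamma] = \eta s_j$, so the inequalities become $\gamma \leqslant^1 \eta s_j + 1$. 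As $s_j \leqslant \eta s_j < \gamma^+$, the ordinals $(\eta s_j + 1)_{j \in \omega}$ remain cofinal in $\gamma^+$; applying $\leqslant^1$-continuity (proposition \ref{<less>=^1_implies_<less>=_1}.3) I conclude $\gamma \leqslant^1 \gamma^+$, and since $\gamma < \gamma^+$ this is exactly $\gamma <^1 \gamma^+$. The same computation covers the degenerate case $\gamma = \kappa$, where $s_j = u_j$ and the conclusion is just corollary \ref{kapa<less>^1kapa^+}.

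The only genuine labor is the substitution bookkeeping: verifying that $\tmop{Ep} (u_j) \cap \kappa$ is empty, that $u_j [\kappa \assign \gamma]$ is the analogous $\omega$-tower over $\gamma + 1$ and hence cofinal in $\gamma^+$, and that $\eta$ commutes with $[\kappa \assign \gamma]$; all three are routine from propositions \ref{[alpha:=e]_proposition3} and \ref{pi.eta.substitutions}. I expect the main obstacle to be conceptual rather than computational, namely recognizing that one never needs to control $A (t)$ for $t$ whose epsilon parameters lie strictly between $\alpha$ and $\gamma$, because a cofinal family of test ordinals carrying no epsilon numbers below $\kappa$ already lies in $M (\kappa, \alpha)$. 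As a cross-check, the same conclusion can be reached by transporting the hypothesis via propositions \ref{A(t)_arriba_abajo} and \ref{Intersection_A(t)_down_intersection_A(t)_up} to $\gamma \in \bigcap_{s \in M (\gamma, \alpha) \cap [\gamma, \gamma^+)} A (s)$ and then evaluating the tower at $\gamma$, but the direct route above is shorter.
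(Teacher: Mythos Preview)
Your proof is correct and follows essentially the same route as the paper's: pick a cofinal $\omega$-tower in $[\kappa,\kappa^+)$ whose only epsilon parameter is $\kappa$ itself (the paper uses $a_0=\kappa$, $a_{n+1}=\kappa^{a_n}$ where you use $u_0=\omega^{\kappa+1}$, $u_{j+1}=\omega^{u_j}$), observe that the tower lies in $M(\kappa,\alpha)$ regardless of $\alpha$, read off $\gamma\leqslant^1(\eta u_j)[\kappa:=\gamma]+1$ from $\gamma\in G(u_j)$, and pass to the limit by $\leqslant^1$-continuity. The only cosmetic difference is that the paper first uses $\leqslant^1$-connectedness to drop from $\eta a_n[\kappa:=\gamma]+1$ to $a_n[\kappa:=\gamma]$ before applying continuity, whereas you apply continuity directly to $(\eta s_j+1)_j$; both are fine since $s_j\leqslant\eta s_j<\gamma^+$ keeps the latter sequence cofinal in $\gamma^+$.
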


\begin{proof}
  Let $a_0 \assign \kappa$, $a_{n + 1} \assign \kappa^{a_n}$.\\
  Observe $\forall e \in \mathbbm{E} . \{a_n |n \in \omega\} \subset [\kappa,
  \kappa^+) \cap M (\kappa, e)$. This way, since $\gamma \in \bigcap_{t \in M
  (\kappa, \alpha) \cap [\kappa, \kappa^+)} A (t)$, then $\gamma \in A (a_n) =
  G (a_n) = \{\beta \in \mathbbm{E} | \tmop{Ep} (a_n) \cap \kappa \subset
  \beta \leqslant \kappa \wedge \beta \leqslant^1 \eta a_n [\kappa \assign
  \beta] + 1\}$ for any $n \in \omega$. Therefore for any $n \in \omega$,
  $\gamma \leqslant^1 \eta a_n [\kappa \assign \gamma] + 1$ and since $\gamma
  \leqslant a_n [\kappa \assign \gamma] \leqslant \eta a_n [\kappa \assign
  \gamma] + 1$, then we conclude by $\leqslant^1$-connectedness $\forall n \in
  \omega . \gamma \leqslant^1 a_n [\kappa \assign \gamma)]$. But the sequence
  $(a_n [\kappa \assign \gamma])_{n \in \omega}$ is confinal in $\gamma^+$,
  therefore by $\leqslant^1$-continuity, $\gamma \leqslant^1 \gamma^+$.
\end{proof}

\begin{proposition}
  \label{Class(2)_club_in_kapa}Let $\kappa \in \tmop{OR}$ be an uncountable
  regular ordinal and $\sigma \in \kappa \cap [\varepsilon_{\omega}, \infty]
  \cap \mathbbm{E}$. Then
  \begin{enumerateroman}
    \item $\bigcap_{t \in M (\kappa, \sigma) \cap [\kappa, \kappa^+)} A (t)$
    is club in $\kappa$.
    
    \item $\tmop{Class} (2) \assign \{\alpha \in \mathbbm{E} | \alpha <_1
    \alpha^+ \}$ is club in $\kappa$.
  \end{enumerateroman}
\end{proposition}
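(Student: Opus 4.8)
The plan is to prove the two claims in order, reducing everything to the clubness statement Proposition~\ref{A(t)_club_in_kapa} together with the machinery relating the sets $A(t)$, the substitutions $x \longmapsto x[\kappa \assign \gamma]$, and the relation $<^1$. Throughout, write $I \assign \bigcap_{t \in M(\kappa, \sigma) \cap [\kappa, \kappa^+)} A(t)$.

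For $i.$ I would first bound the size of the index set $M(\kappa, \sigma) \cap [\kappa, \kappa^+)$. By Corollary~\ref{main_[a:=e]_Isomorphism1} claim $2.$ (applied with $\alpha = \kappa$ and $e = \sigma$, which is legitimate since $\sigma \leqslant \kappa$), the substitution $q \longmapsto q[\kappa \assign \sigma]$ is an order isomorphism of $M(\kappa, \sigma) \cap [\kappa, \kappa^+)$ onto $[\sigma, \sigma^+)$, so these two sets have the same cardinality. Since $\kappa$ is regular uncountable it lies in $\tmop{Lim}\,\mathbbm{E}$, hence there is an epsilon number strictly between $\sigma$ and $\kappa$, whence $\sigma^+ < \kappa$; as $\kappa$ is a cardinal this gives $|M(\kappa, \sigma) \cap [\kappa, \kappa^+)| = |\sigma^+| < \kappa$. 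Each $A(t)$ with $t \in [\kappa, \kappa^+)$ is club in $\kappa$ by Proposition~\ref{A(t)_club_in_kapa}, so $I$ is an intersection of fewer than $\kappa$ many club classes and is therefore club in $\kappa$ by Proposition~\ref{Intersection_club_classes}. This proves $i.$

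For $ii.$ the heart of the matter is the equivalence, valid for every $\gamma \in \mathbbm{E}$ with $\sigma \leqslant \gamma < \kappa$, that $\gamma \in \tmop{Class}(2)$ iff $\gamma \in I$. The direction $(\Leftarrow)$ is immediate: membership in $I$ yields $\gamma <^1 \gamma^+$ by Proposition~\ref{gama_in_intersection_then_gama<less>^1gama^+}, hence $\gamma <_1 \gamma^+$ by Proposition~\ref{Intersection_A(t)_equiv_alpha<less>^1alpha^+}, and $\gamma \in \mathbbm{E}$ because $I \subset A(t) \subset \tmop{Lim}\,\mathbbm{E}$ by Theorem~\ref{teo.A(t)=G(t)} claim $1.$; thus $\gamma \in \tmop{Class}(2)$. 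For $(\Rightarrow)$, assume $\gamma <_1 \gamma^+$, so that $\gamma \in \bigcap_{s \in [\gamma, \gamma^+)} A(s)$ by Proposition~\ref{Intersection_A(t)_equiv_alpha<less>^1alpha^+}. Fix $t \in M(\kappa, \sigma) \cap [\kappa, \kappa^+)$; since $\tmop{Ep}(t) \cap \kappa \subset \sigma \leqslant \gamma$, the value $s \assign t[\kappa \assign \gamma]$ lies in $[\gamma, \gamma^+)$ (Propositions~\ref{[alpha:=e]_proposition2} and~\ref{[alpha:=e]_proposition3}), and Proposition~\ref{A(t)_arriba_abajo} gives $A(t) \cap (\gamma + 1) = A(t[\kappa \assign \gamma]) = A(s)$. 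As $\gamma \in A(s)$, we get $\gamma \in A(t)$; since $t$ was arbitrary, $\gamma \in I$.

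Finally I would assemble the conclusion. Because both $\tmop{Class}(2)$ and $I$ are contained in $\mathbbm{E}$, the displayed equivalence upgrades to $\tmop{Class}(2) \cap [\sigma, \kappa) = I \cap [\sigma, \kappa)$. Since $I$ is club in $\kappa$ by $i.$ and $[\sigma, \kappa)$ is a final segment of $\kappa$, the class $\tmop{Class}(2)$ agrees with a club class above the fixed bound $\sigma < \kappa$ and is therefore itself club in $\kappa$. The hypothesis $\sigma \geqslant \varepsilon_{\omega}$ serves precisely to keep every $\gamma \in [\sigma, \kappa)$ and every substituted value $s \in [\gamma, \gamma^+)$ inside the domain $[\varepsilon_{\omega}, \infty)$ of $A$. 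I expect the most delicate step to be the $(\Rightarrow)$ direction of the equivalence, namely checking that $t \longmapsto t[\kappa \assign \gamma]$ interacts correctly with $A$ through Proposition~\ref{A(t)_arriba_abajo} and that $s \in [\gamma, \gamma^+)$; the only genuinely set-theoretic point is the cardinality bound $|\sigma^+| < \kappa$ underpinning $i.$
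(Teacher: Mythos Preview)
Your argument for part $i.$ is correct and matches the paper's proof: bound the index set via the bijection of Corollary~\ref{main_[a:=e]_Isomorphism1}, then apply Propositions~\ref{A(t)_club_in_kapa} and~\ref{Intersection_club_classes}.

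For part $ii.$ the paper's proof is a one-liner: Proposition~\ref{gama_in_intersection_then_gama<less>^1gama^+} gives $I \subset \tmop{Class}(2)$, and together with part $i.$ this yields unboundedness (closedness of $\tmop{Class}(2)$ being left tacit). You take a more informative route, establishing the two-sided equality $\tmop{Class}(2) \cap [\sigma,\kappa) = I \cap [\sigma,\kappa)$ via Proposition~\ref{Intersection_A(t)_equiv_alpha<less>^1alpha^+} and Proposition~\ref{A(t)_arriba_abajo}; the reverse inclusion you prove is correct and goes beyond what the paper records.

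There is, however, one small gap in your final step. The inference ``$\tmop{Class}(2)$ agrees with a club above a fixed bound $\sigma$, hence is itself club'' is not valid in general: a limit point $\delta \leqslant \sigma$ of $\tmop{Class}(2)$ is not handled by your equivalence. The easiest repair is to observe that part $ii.$ does not depend on $\sigma$, so you may specialise to $\sigma = \varepsilon_\omega$. Since $\tmop{Class}(2) \subset \tmop{Lim}\,\mathbbm{E}$ (from $\alpha <_1 \alpha^+$ one gets $\alpha <^1 \alpha+1$ via Corollary~\ref{alpha<less>_1alpha^+.iff.alpha<less>^1alpha^+} and $\leqslant^1$-connectedness, hence $\alpha \in \tmop{Lim}\,\mathbbm{E}$ by Exercise~\ref{a<less>^1a+1}), your equivalence then reads $\tmop{Class}(2) \cap \kappa = I \cap \kappa$ outright, and clubness follows from part $i.$ Alternatively, verify closedness of $\tmop{Class}(2)$ directly: if $c_\xi \to \alpha$ with each $c_\xi <_1 c_\xi^+$, then for every $t \in [\alpha,\alpha^+)$ eventually $\tmop{Ep}(t) \cap \alpha \subset c_\xi$ and $c_\xi \leqslant_1 t[\alpha \assign c_\xi]$ (by $\leqslant_1$-connectedness, as $t[\alpha \assign c_\xi] < c_\xi^+$), so Proposition~\ref{2nd_Fund_Cof_Property_<less>^1} and $\leqslant_1$-continuity give $\alpha <_1 \alpha^+$.
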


\begin{proof}

  {\noindent}$i$.\\
  Take $\kappa \in \tmop{OR}$ as stated and $\sigma \in \kappa \cap
  [\varepsilon_{\omega}, \infty] \cap \mathbbm{E}$. Then, directly from
  corollary \ref{main_[a:=e]_Isomorphism1}, the function \begin{tabular}{l}
    $f : M (\kappa, \sigma) \cap [\kappa, \kappa^+) \longrightarrow [\sigma,
    \sigma^+)$\\
    \ \ \ \ \ \ \ \ \ \ \ \ \ \ \ \ \ \ \ $t \longmapsto t [\kappa \assign
    \sigma]$
  \end{tabular} is an ($<, +, \cdot, \lambda x. \omega^x$)-isomorphism and
  therefore a bijection. This way, $|M (\kappa, \sigma) \cap [\kappa,
  \kappa^+) | = | [\sigma, \sigma^+) | < \kappa$. \ \ \ \ \ \ \ (1)

  On the other hand, by proposition \ref{A(t)_club_in_kapa}, for all $t \in M
  (\kappa, \sigma) \cap [\kappa, \kappa^+)$, $A (t)$ is club in $\kappa$. \ \
  \ \ \ (2)

  From (1), (2) and proposition \ref{Intersection_club_classes}, we conclude
  $\bigcap_{t \in M (\kappa, \sigma) \cap [\kappa, \kappa^+)} A (t)$ is club
  in $\kappa$.

  {\noindent}$i i$.\\
  Direct from previous proposition
  \ref{gama_in_intersection_then_gama<less>^1gama^+} and $i$.
\end{proof}

{\nocite{Bachmann}} \ {\nocite{Bridge}} \ {\nocite{Buchholz1}} \
{\nocite{Buchholz2}} \ {\nocite{Buchholz3}} \ {\nocite{Buchholz4}} \
{\nocite{BuchholzSch{"u}tte1}} \ {\nocite{Carlson1}} \ {\nocite{Carlson2}} \
{\nocite{Pohlers1}} \ {\nocite{Pohlers2}} \ {\nocite{Rathjen1}} \
{\nocite{Sch{"u}tte2}} \ {\nocite{Sch{"u}tteSimpson}} \
{\nocite{Schwichtenberg1}} \ {\nocite{Setzer}} \ {\nocite{Wilken1}} \
{\nocite{Wilken2}} \ {\nocite{Wilken3}} \ {\nocite{GarciaCornejo0}}

\end{document}